\newcommand\SANSCOMMENTAIRE[1]{}
\theoremstyle{plain}
\newtheorem{theorem}{Theorem}[section]
\newtheorem{remark}[theorem]{Remark}
\newtheorem{prop}[theorem]{Proposition}
\newtheorem{cor}[theorem]{Corollary}
\newtheorem{claim}{Claim}[theorem]
\theoremstyle{definition}
\newtheorem{example}[theorem]{Example}
\newtheorem{definition}[theorem]{Definition}
\newcommand{\say}[1]{``#1''}
\newcommand{\om}{\omega}
\newcommand{\omckx}[1]{\omega_1^{\mathrm{\scriptscriptstyle{CK}}, #1}}
\newcommand{\mo}{On}
\newcommand\Alpha{\mathrm{A}}
\newcommand{\Tau}{\mathrm{T}}
\DeclareMathOperator{\rk}{rk}
\DeclareMathOperator{\Lim}{Lim}
\newcommand{\SG}{\Sigma_\Gamma}
\newcommand{\zG}{\zeta_\Gamma}
\newcommand{\TG}{\Tau_\Gamma}
\newcommand{\eG}{\eta_\Gamma}
\newcommand{\lG}{\lambda_\Gamma}
\newcommand{\gG}{\gamma_\Gamma}
\newcommand{\SGsup}{\Sigma_{\Gamma_{\sup}}}
\newcommand{\SGsupn}{\Sigma_{\Gamma_{\sup}^n}}
\newcommand{\TGsup}{\Tau_{\Gamma_{\sup}}}
\newcommand{\TGsupn}{\Tau_{\Gamma_{\sup}^n}}
\newcommand{\UG}{\mathcal{U}_\Gamma}
\newcommand{\wt}[1]{\widetilde{#1}}
\newcommand{\wh}[1]{\widehat{#1}}
\newcommand{\set}[1]{\wh{#1}}
\newcommand*\rel@kern[1]{\kern#1\dimexpr\macc@kerna}
\newcommand*\wb[1]{%
  \begingroup
  \def\mathaccent##1##2{%
    \rel@kern{0.8}%
    \overline{\rel@kern{-0.8}\macc@nucleus\rel@kern{0.2}}%
    \rel@kern{-0.2}%
  }%
  \macc@depth\@ne
  \let\math@bgroup\@empty \let\math@egroup\macc@set@skewchar
  \mathsurround\z@ \frozen@everymath{\mathgroup\macc@group\relax}%
  \macc@set@skewchar\relax
  \let\mathaccentV\macc@nested@a
  \macc@nested@a\relax111{#1}%
  \endgroup
}
\newcommand{\eqnum}{\refstepcounter{equation}\textup{\tagform@{\theequation}}}
\newcommand{\nott}[1]{\wb{#1}}
\begin{document}

\title{\huge Toward higher-order infinite time Turing machines: simulational $\Gamma$-machines \vspace{1em}}

\author[1]{Olivier Bournez}
\author[2]{Olivier Finkel}
\author[1,2,3]{Johan Girardot\thanks{Corresponding author: \texttt{johan.girardot@ens-paris-saclay.fr}}}

\affil[1]{Institut Polytechnique de Paris, École Polytechnique, LIX Laboratory,  
1 Rue Honoré d'Estienne d'Orves, 91128 Palaiseau, France}

\affil[2]{CNRS, Université Paris Cité and Sorbonne Université, IMJ-PRG,  
8 Place Aurélie Nemours, 75013 Paris, France}

\affil[3]{Université Sorbonne Paris Nord, LIPN, Institut Galilée,  
99 Avenue Jean-Baptiste Clément, 93430 Villetaneuse, France}

\maketitle 

\begin{abstract}
Infinite time Turing machines (ITTMs) have been introduced by Hamkins and Lewis in their seminal article \cite{hamkins_ittm}. The strength of the model comes from a limit rule which allows the ITTM to compute through ordinal stages. This rule is simple to describe and another rule would lead to a different model of ordinal computation. The aim of this article is to define a collection of limit rules for which the models of infinite Turing machine they induce have nice properties, akin to that of the ITTM.
Through the analysis of the universal ITTM and of its preponderant role in the study of the ITTM, we devise a set of four constraints.
A limit rule satisfying those constraints yields a model of infinite machine for which we can define a universal machine.
Adding a fifth constraint, we show that any limit rule definable in set theory which meets those constraints produces a model of infinite machine with the desired properties. Among those, the fact that the supremum of the writable ordinal matches that of the clockable ordinal. That is, with the usual notations, the equality $\lambda = \gamma$ holds for any of those limit rules. Eventually, we provide a counter-example to show that the four first constraints alone are not sufficient.

\end{abstract}

This article is extracted from the fifth chapter of the thesis \cite{these} of Johan Girardot. It constitutes the first main contribution of this thesis and it aims to lay the groundwork for a generalized study of infinite Turing machines.

\section{Introduction}

Infinite time Turing machines (ITTMs) have been first introduced by Hamkins and Lewis in \cite{hamkins_ittm}. Since then it has been thoroughly studied. Recent surveys can be found, such as \cite{merlin, welch_summary}, while new studies like \cite{carl_space_time, durand_gaps_admissibles} continue to appear. Generalizations can also be found in \cite{merlin, hypermachines, koepke_otm}. 
We reintroduce briefly the model of the ITTM to explain the nature of those different generalizations. 

The ITTM is a model of computation that computes through the hierarchy of the ordinals. It is built on the same structure as the classical finite Turing machine. Imagine a finite Turing machine $m$ that somehow managed to compute up to some limit ordinal stage $\alpha$. At this stage, it is in some state $q$, has some real number $x $ (an element of $\left\{0, 1 \right\}^{\mathbb{N}}$, which we more conveniently write $^\om 2$) written on its tape and its head is on some cell $i \in \mathbb{N}$. Following \cite{hamkins_ittm}, we call the tuple of those data the \emph{snapshot} of  the machine at this computation stage. Given this snapshot, the computation step from stage $\alpha$ to $\alpha+1$, that is to say the snaphot at stage $\alpha+1$, is simply determined by the program (or the code) of $m$ using the classical rules of finite Turing machines. And so is the computation up to stage $\alpha+k$ for any finite $k$.

We see that the question left out in this description is: how does the machine ``somehow computes'' up to a limit stage? And what is interesting is that it is enough to answer this in order to have an effective infinite model of computation. As described, once a limit stage is reached by the computation, it carries on through the next successor stages as a classical finite Turing machine, until the next limit stage.
The next definition is Hamkins and Lewis' proposition to answer this question. This defines the model of the ITTM.
\begin{definition}[\cite{hamkins_ittm}]
An \emph{infinite time Turing machine (ITTM)} $M$ is a machine which has the same structure as a three-tape finite Turing machine, those three tapes being called input, output and a working tape and such that:
\begin{itemize}
\item At any stage $\alpha +1$, the snapshot of the machine $M$ is determined by the snapshot of $M$ at stage $\alpha$ and by the program of the machine $m$ as done with classical finite Turing machines.
\item At any limit stage $\alpha$, the head of $M$ is on the first cell, its state is some distinguished state $q_{limit}$ and for all cell $i$, writing $C_i$ for the (class) function that maps an ordinal stage to the value of the cell $i$ at this stage:
\begin{align}\label{eq:limit_rule_2}
C_i(\alpha) = 0 \longleftrightarrow \exists \beta < \alpha \, \forall \delta \in [\beta, \alpha[ \ C_i(\delta) = 0
\end{align}
That is the value of the cell $i$ at limit stage $\alpha$ is set to $0$ if and only if there is some stage $\beta < \alpha$ at which the value of cell $i$ stabilized on $0$ up to stage $\alpha$.
\end{itemize}
\end{definition}

The important part of this definition is the \emph{limit rule} (\ref{eq:limit_rule_2}) which uses the history of the machine to set the value of all cells at a limit stage. This limit rule is usually called the $\limsup$ rule: given some limit stage $\alpha$, the value of $C_i(\alpha)$ is simply defined as the $\limsup$ of the $C_i(\delta)$ for $\delta < \alpha$. Again, it is important to observe that, up to this point, this rule could be any kind of rule that unambiguously sets the value of any cell and at any limit stage as a function of this stage history. Changing this rule would simply yield a different model of ordinal computation. 

\subsection{Generalizations, state-of-the-art and the place of this work among those}

From there, we can already distinguish two kind of generalizations: generalizations can be either \emph{rule-wise} if they try to produce new limit rules, either \emph{machine-wise} if they try to enhance the structure of the machine. 

\vspace{.6em}

For some machine-wise generalizations, the reader can refer to \cite{koepke_otm} in which Koepke investigates on enhancements of the basic machine structure: what can be said if the tape is not $\om$ long but $\alpha$ long for some limit ordinal $\alpha$? This yields $\alpha-$ITTMs and very interestingly provides the missing \say{mechanical} equivalent of $\alpha$-recursion. Further, it is possible to enhance the size of the tapes to $\mo$ itself (the class of all ordinals) as done in \cite{koepke_102_alpha_ittm} to define \say{ordinal Turing machines} (OTM). In this case, this yields a model of computations whose behavior and power are closely related to $L$ the Gödel's constructible universe. For example, Koepke shows how in $L$ every set of ordinals is computable with an OTM from a finite set of ordinal parameters. 

The structure of the machine may also be more deeply modified to define infinite time register machine (ITRM). This is done in \cite{koepke_112_enhanced_theory}: an ITRM has the structure of a register machine and at a limit stage its current line and its registers values are set to the $\liminf$ of their respective previous values. The register may be of ordinal size or again of size $\mo$ as described in \cite{koepke_121_ORM}. The monograph \cite{merlin} widely discusses those generalizations.

Even further, the work of Fischbach and Seyfferth in \cite{seyfferth_ordinal_lambda} could be seen as a radical machine-wise generalization. In this article, they develop an ordinal extension of lambda calculus in which a term can be the $\alpha$-fold composition of another term for $\alpha \in \mo$. Then, as in previous models of computation, the normal form procedure is an extension of the usual procedure for finite terms and again it relies, at limit stages, on an inferior limit.

\vspace{.6em}

As for \say{rule-wise} generalization, to this date and to the best of our knowledge, the only work published is that of Friedman and Welch in \cite{hypermachines}. In this article, the authors devise a new limit rule, based on a $\liminf$ working with a dynamically restricted history. More precisely, there is a fourth tape called the \say{rule tape} and at any limit stage, the value of a cell is set as the inferior limit of a subset of its previous values, which subset is determined by the history of the rule tape. The authors show how this yields a model of computation strictly more powerful than ITTMs but which is equally \say{well-behaved} and which also exhibits strong links with the constructible hierarchy.
We discuss this generalization further in the light of our study of the universal machine in Remark \ref{rmk:hypermachines}. 

\vspace{.6em}

So, machine-wise generalizations provide new and interesting models of computation. They do so, for example, by filling the gaps left in the Church-Turing equivalence generalized to ordinal computation, as done by Koepke's OTM or by Fischbach and Seyfferth's ordinal lambda calculus. On the other hand, the concept of rule-wise generalization has it own interesting features and for the moment has been way less studied.
One of those features is the scope and the easiness with which such generalizations can be devised: as seen, any limit rule plugged in the structure of the infinite time Turing machine yield a new model of ordinal computation. And as a limit rule is ultimately a formula in the language of set theory, this yields a rich parallel with logic and set theory.
Understanding which formulas induce well-behaved and interesting models of infinite machines may gives us new pieces of information regarding those.

It is clear that not any formula which can be seen as limit rule will give rise to as a well-behaved and interesting model as that of ITTMs. One way to measure the interest of such models is to compare them to the classical ITTM. In the case of ITTM, its power can easily be equated with the largeness of its writable and clockable ordinals and one very good representative of its theorical interest is the equality $\lambda = \gamma$. This equality, whose truth was left open in \cite{hamkins_ittm} and established in \cite{welch_main}, links two constants of very different nature. Here $\lambda$ is the supremum of the writable ordinals, that is ordinals appearing on the output of some halting ITTM, 
while $\gamma$ is the supremum of the clockable ordinals, that is ordinals corresponding to the halting time of an ITTM. This equality is fruitful as it provides a link between ITTMs and the constructible hierarchy. For example, with it, we know that all writable sets are written before stage $\gamma = \lambda$ and so are in $L_\lambda$ while the fact that all ordinals below $\lambda$ are writable implies that any set in $L_\lambda$ is writable. 

This equality was proven using the universal ITTM and we will see how, more generally, the successes of the model of the ITTM are themselves rooted in the existence of a universal machine. So naturally, in this article, we will be interested in limit rules which yield a model of infinite machines for which there exists a universal machine. In particular, the main result of this paper is Theorem \ref{thm:sigma_tau_looping_condition}. It proves a structural property which generalizes the equality $\lambda = \gamma$ for such models of infinite machines, which also are definable in set theory while satisfying a safeguard condition.
This safeguard condition is moreover shown to be required in Theorem \ref{thm:contre_exemple_machine}.

Hence, in this paper, working our way through the possibility of devising interesting and well-behaved rule-wise generalizations, our first leading question will be: how and why does the universal ITTM work?

\subsection{Results and organization of the paper}

To begin answering this question, \say{how and why does the universal ITTM work?}, we start in Section \ref{sec:introducing_the_universal_machine} with a brief overview of the ITTM and of common state-of-the-art proof techniques. This will put into light the importante role of simulations and, in particular, of the universal ITTM and provide the main intuitions behind this work. More precisely, 
we begin this section by proving a slightly new structural equality (which generalizes the equality $\lambda = \gamma$ proved in \cite{welch_main}) for the usual ITTMs. This proof shows how the universal machine is canonically used in this kind of advanced proofs developed by Welch in \cite{welch_main}. We then try to understand what is implicitly needed in this proof regarding the universal machine and more generally regarding the possibility of simulating other machines. At the end of this discussion, we exhibit five critical characteristics satisfied by the usual $\limsup$ limit rule. The first four of those form a sufficient condition for a given limit rule to yield a model of ITTM in which a universal machine exists. Again, this section serves as an extended introduction presenting the motivations and intuitions for this article. 

In Section \ref{sec:general_definitions} starts, strictly speaking, the formal work. We provide general definitions for the article as well as a general definition of what we call a limit rule. It can indeed be seen as an operator that maps ordinal words, representing the history of the computation, to collection of symbols in the alphabet of the machine, representing the tapes. With this formal definition we then formalize the five conditions found in the previous section.

In Section \ref{sec:higher_order_ittms}, we start by showing in Theorem \ref{thm:2_sym_machines} that the $\limsup$ and the $\liminf$ operators are, surprisingly, the only two-symbol limit operators satisfying this set of five conditions. This steers the search for higher order machines toward a yet unexplored direction: $n$-symbol infinite Turing machine. We then show that for any $n$-symbol Turing machine that satisfies those five conditions and definable by a set-theoretic formula, we can establish many of the results established for the classical ITTMs. The most important one, Theorem \ref{thm:sigma_tau_looping_condition}, whose proof takes the larger part of the section, is a strong constant equality relating the clockable ordinals and the writable ordinals (For the general case, those constants are introduced in Section \ref{sec:general_definitions}). A corollary of this equality is that for those models, the generalization of the equality $\lambda = \gamma$ also holds.

In Section \ref{sec:counter-example} we eventually show in Theorem \ref{thm:contre_exemple_machine} that the fifth condition, acting as a safeguard and not needed for an operator to yield a model of machine with a universal machine, is however needed for the main theorem, i.e. Theorem \ref{thm:sigma_tau_looping_condition}. It is needed in the sense that, without it, we can devise a limit rule for which the equality of the main theorem does not hold.

\section{Eventually clocking and accidentally clocking}\label{sec:introducing_the_universal_machine}

As mentioned, this section is meant to contextualize this work and its seminal observations within the study of the ITTMs and the techniques commonly used.
To this extent we briefly reintroduce some key concepts of Hamkins ITTMs as well as the proofs of some main results.
This will also highlight the importance of some of the implicit properties of the $\limsup$ rules. Everything we define or redefine, in a more or less formal way, will be formally defined in Section \ref{sec:general_definitions} in a more general setting. 

In the model of the ITTM, as in the finite model of Turing machines, the machine can produce an output: when a machine halts, whatever is written on its output tape is understood as its output. While in the finite Turing machines case the output can be seen as an integer, in the infinite case, it will be seen as a real number.  Hence a real number can be written as output and we say that the real number was \emph{written} and so that it is \emph{writable}.

But more interestingly, as done in \cite{hamkins_ittm}, in the infinite setting of ITTMs we can define different degrees of writability for the real numbers.  As said, a real number $x$ can be \emph{writable} if there is a machine that halts (by reaching its distinguished state $q_{halt}$) and, upon halting, which has $x$ written on its output tape. But it can also be \emph{eventually writable} (sometimes abbreviated e.w.) if there is a machine that never stops and that, at some point, has $x$ written on its output tape and never modifies it afterward. It can be \emph{accidentally writable} (sometimes abbreviated a.w.) if there is a machine in which $x$ is written on any of its tapes at any stage of its computation, even if it is modified or erased just after. It is easy to see that being writable implies being eventually writable implies being accidentally writable.

Then, an ITTM can also ``write'' ordinals. It writes an ordinal when it writes a real number that encodes this ordinal: we say that $x$ encodes $\alpha$ when $x$ describes some well-order $\prec_\alpha$ on $\om$ in the following way:
\begin{align*}
n \prec_\alpha m \longleftrightarrow x[\langle n, m \rangle] = 1
\end{align*}
and such that the order-type of $\prec_\alpha$ is $\alpha$. What motivates this definition is a fundamental result of Hamkins and Lewis, Theorem 2.2 of \cite{hamkins_ittm}, which states that, with this encoding, an ITTM can decide whether a real number encodes a well-order.

So, as ordinals can be written, we may wonder what kind of ordinals can be written, or at least accidentally written. First, given the simple fact that ITTMs have $\om$ cells, they can only write countable ordinals. Moreover, as there are countably many ITTMs, the writable and the eventually writable ordinals are bounded in $\om_1$; that is their union is countable as well. However, it may not seem so clear whether this fact holds for the accidentally writable ordinals, since a single machine will accidentally write many different ordinals through its computation. Still it happens that those are bounded as well. Hence, as introduced in \cite{hamkins_ittm} there exists three supremum ordinals, \emph{$\Sigma$, the supremum of the accidentally writable ordinals}, \emph{$\zeta$, the supremum of the eventually writable ordinals} and \emph{$\lambda$, the supremum of the writable ordinals}.

Additionnaly Hamkins and Lewis define the \emph{clockable} ordinals. An ordinal $\alpha$ is clockable when there is some machine which halts just after carrying out its $\alpha^{th}$ step of computation. That is, through this $\alpha^{th}$ step, it transitions from stage $\alpha$ to stage $\alpha+1$ and, in this latter stage, the machine reached the state $q_{halt}$. Then as previously, there is \emph{$\gamma$ the supremum of the clockable ordinals}. One important question left open in \cite{hamkins_ittm} was: does $\lambda = \gamma$? This was answered positively in Theorem~1.1 of \cite{welch_main} using a universal machine; which now brings us to this machine and its importance.

A universal machine $\mathcal{U}$ is a machine that simulates simultaneously all ITTMs. It was already used by Hamkins and Lewis in Theorem~3.4 of \cite{hamkins_ittm} and its importance appears even more clearly in \cite{welch_main}. 
To illustrate this, we give a proof of a slightly more general equality than that established by Welch, but which relies on the same ideas. As there are three different ways for an ordinal to be written (writable, eventually writable and accidentally writable), we would also can devise three ways for an ordinal to be clockable: and this can be done by saying that: $\alpha$ is \emph{eventually clockable} (e.c.) when the output of some converging machine $m$ stabilizes at stage $\alpha$, and $\alpha$ is \emph{accidentally clockable} (a.c.) when there is a computation in which at stage $\alpha$ some real number $x$ appears for the first time in the computation. Then we can write \emph{$\eta$ the supremum of the eventually clockable ordinals} and \emph{$\Tau$ (capital $\tau$) the supremum of the accidentally clockable ordinals}. And with the latter, we can show the following result, with $\Sigma$ being the supremum of the accidentally writable ordinals.

\begin{prop}[\cite{welch_main}]\label{prop:sigma_2_tau_2}
$\Sigma = \Tau$. That is the supremum of the accidentally writable ordinals is equal to that of the accidentally clockable ordinals.
\end{prop}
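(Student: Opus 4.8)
The plan is to prove the two inequalities $\Tau \le \Sigma$ and $\Sigma \le \Tau$ separately. The first is essentially a reflection/absoluteness statement about $\Sigma$, and the second is where the universal machine does the work; this matches the way the warm-up result foreshadows the looping theorem later in the paper.

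For $\Sigma \le \Tau$ I would argue directly with the universal machine $\mathcal{U}$. Run $\mathcal{U}$ forever; since it reproduces on its own tape the content of every ITTM's tape at every stage, every accidentally writable real occurs somewhere in the run of $\mathcal{U}$, and in particular a code of $\beta$ occurs there for every accidentally writable ordinal $\beta$, and such $\beta$ are cofinal in $\Sigma$. Now let $B$ be the set of stages at which some real occurs in $\mathcal{U}$'s run for the first time; each element of $B$ is accidentally clockable by definition, so $\sup B \le \Tau$. Suppose $B$ were bounded by some $\delta<\Sigma$. Then no new real appears after stage $\delta$, so every real occurring in the run of $\mathcal{U}$ already occurs in the initial segment of the run of length $\delta$; that initial segment lies in some $L_\mu$ with $\delta<\mu<\Sigma$ (using that $\Sigma$ is a limit of admissibles, a standard feature of $\Sigma$), and since $\mu$ is admissible every well-order coded by a real of $L_\mu$ has order type $<\mu<\Sigma$. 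This contradicts the fact that codes of ordinals cofinal in $\Sigma$ occur in the run. Hence $B$ is cofinal in $\Sigma$ and $\Tau\ge\Sigma$.

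For $\Tau \le \Sigma$, let $\alpha$ be accidentally clockable, witnessed by a machine $m$ whose run contains a real $x$ appearing for the first time at stage $\alpha$. Then $x$ is accidentally writable, so $x\in L_\Sigma$. The run of $m$ is produced by a recursion whose only non-trivial clause is the $\limsup$ rule \eqref{eq:limit_rule_2}, which is $\Sigma_2$ over the relevant initial segment, so the birth-stage $\alpha$ of $x$ is a $\Sigma_2$-definable ordinal over $L_\Sigma$ with parameter $x\in L_\Sigma$; the reflection property of $\Sigma$ that Welch uses (and which is reproved in generalized form as part of Theorem~\ref{thm:sigma_tau_looping_condition}) then forces $\alpha<\Sigma$, so $\Tau\le\Sigma$. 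An alternative, more machine-theoretic route is to attach to a faithful simulation of $m$ the usual Hamkins--Lewis ``ordinal clock'' subroutine, so that a genuine code of the current step count is produced at a suitable stage; a code of $\alpha$ then occurs in the resulting computation, making $\alpha$ accidentally writable.

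The main obstacle is this $\Tau\le\Sigma$ direction. In the definability version one must keep honest track of the logical complexity of ``the stage at which $x$ is born'' and match it against the precise reflection strength of $\Sigma$; in the machine version one must implement the ordinal clock correctly through limit stages, where the raw $\limsup$ turns a coherent family of ordinal codes into garbage, forcing the clock to detect that a limit has passed and rebuild a correct code. Either way, this is exactly the place where the subtler structural properties of the $\limsup$ rule — above all the ability of a machine to read off, at a limit, which cells have stabilised — are what make the argument go through, which is why the remainder of the paper isolates these properties as its five conditions.
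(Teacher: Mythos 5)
Your $\Sigma\leqslant\Tau$ half is essentially sound and takes a genuinely different route from the paper: where the paper builds a machine that counts through an accidentally written code of $\Tau$ to make something new appear at a stage $\geqslant\Tau$, you bound all reals appearing in $\mathcal{U}$ by a stage $\delta<\Sigma$, place them in $L_\mu$ for an admissible $\mu<\Sigma$ (using that snapshots at stage $\nu$ are definable over $L_\nu$ and that $\Sigma$ is a limit of admissibles), and contradict the cofinality of the accidentally writable ordinals in $\Sigma$. That argument goes through, modulo the facts you import without proof and the harmless point that simulated reals appear on virtual tapes of $\mathcal{U}$ rather than as literal tape contents.

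The genuine gap is in the $\Tau\leqslant\Sigma$ direction, which is precisely where the paper spends its two universal-machine constructions. Your opening step, \say{$x$ is accidentally writable, so $x\in L_{\Sigma}$}, is not available: all one knows a priori is that $x$ appears by stage $\Tau$, hence $x\in L_{\Tau+1}$; putting it inside $L_\Sigma$ is exactly the inequality $\Tau\leqslant\Sigma$ you are trying to prove (the paper notes that \say{the accidentally writable reals are those of $L_\Sigma$} is Welch's Corollary 3.5, a consequence of this proposition, not an input to it). Similarly, calling the birth stage $\alpha$ \say{$\Sigma_2$-definable over $L_\Sigma$} already presupposes $\alpha<\Sigma$, and appealing to \say{the reflection property of $\Sigma$ that Welch uses} treats as a black box the very content to be established; the paper supplies it by showing, via two machine constructions around $\mathcal{U}$, that any cell reading $0$ at stage $\Sigma$ stabilized before $\zeta$ and that a cell stabilized at $\zeta$ never changes again, so every non-halting computation just repeats the segment $[\zeta,\Sigma[$ and nothing new ever appears at or after $\Sigma$. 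Your fallback \say{ordinal clock} cannot be repaired either: a machine cannot maintain or rebuild a code of its own current stage through arbitrary limit stages, for otherwise arbitrarily large countable ordinals would be accidentally writable, contradicting $\Sigma<\om_1$; the paper's counting trick only counts through ordinals whose codes have already appeared on a tape, and it is what powers the other direction, not this one.
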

This proposition, while stated here in a new way, is very close to Corollary 3.5 of \cite{welch_main} which states that the collection of sets encoded by accidentally writable real numbers is actually $L_\Sigma$. 
In this article from Welch, it was a corollary of the fact that $\lambda = \gamma$ and this fact is itself a corollary of Theorem \ref{prop:sigma_2_tau_2} here, as will be proved in a more general setting in Corollary \ref{cor:lambda_gamma}. Another corollary is the fact that, with previous notations, $\zeta = \eta$. So this downward chain of consequences likely justify this new phrasing. Moreover, as said, the proof of this slightly more general result uses the same technique as that of the main proposition of \cite{welch_main}. Finally, we take for granted in this proof that there exists a universal ITTM $\mathcal{U}$ and that we can design, in a natural way, machines which simulate other machines. Those fact are actually true and we will come back to those.

\begin{proof}
First, suppose that $\Sigma > \Tau$. This implies that $\Tau$ is accidentally writable by some machine $m_\Tau$. So we consider the following machine $\mathcal{M}$. It will use the code of $\Tau$ that appears at some point in $m_\Tau$ to write something for the first time after stage $\Tau$, hence reaching a contradiction.
To do this, first, it keeps the first cell of its working tape always set to $0$ and the first cell of its output tape always set to $1$.
So when using its working tape, it uses cells $i > 0$. Then, it simulates $m_\Tau$ on some part of its working tape (again, without using cell $i=0$) and each time an ordinal $\alpha$ is written in the simulation of $m_\Tau$, it counts for at least $\alpha$ steps. Remember that by \cite[Theorem 2.2]{hamkins_ittm} it can recognize real numbers describing well-orders. This is done by trying to count through it, so it takes at least $\alpha$ steps to recognize that a real number encodes a well-order of order type $\alpha$.
Then, it copies $\alpha$ on its output tape, without using the first cell that it keeps to $1$. 
Now, when $\Tau$ appears for the first time at stage $\alpha_\Tau$: as $\Tau$ appears for the first time in $m_\Tau$ and $\Tau$ is the supremum of the a.c., that is of the stages of first appearance in a given machine, we have $\alpha_\Tau < \Tau$. Then, $\mathcal{M}$ counts for at least $\Tau$ steps and after those writes $\Tau$ on its output tape. This is the first time that $\Tau$ appears on this tape. Also, the whole content of this tape, with the first cell set to $1$ cannot have appeared earlier in the working tape whose first cell is left on $0$.
Hence the real number it writes was not written before and it is written for the first time at least at stage $\alpha_\Tau + \Tau \geqslant \Tau$. And $\alpha_\Tau + \Tau$ is consequently a.c.\ which is a contradiction. So $\Sigma \leqslant \Tau$.

For the other direction, to show that $\Sigma \geqslant \Tau$, we will show that in any computation that does not halt, for any cell on any tape, the values of this cell at stages $\zeta$ and $\Sigma$ match and moreover that a cell set to $0$ at stage $\zeta$ stays forever so. This will prove that nothing new is ever written after stage $\Sigma$, as the machine will be repeating indefinitely the segment of computation $\left[ \zeta, \Sigma \right[$ and so that $\Sigma \geqslant \Tau$. Let $m$ be a machine and $i \in \om$ be some cell on one of its tape.
\begin{itemize}
\item Suppose $C_i(\Sigma) = 0$. Then, by the limsup rule, the cell must have converged to $0$ at some least ordinal stage $\alpha < \Sigma$. That is $C_i(\alpha) = 0$ and stays so up to $\Sigma$. We show that $\alpha < \zeta$ (this is actually Main Proposition of \cite{welch_main}). We design a machine $\mathcal{M}$ that does the following: it launches a copy $\mathcal{U}_1$ of the universal machine and for each ordinal $\beta$ appearing in the computation of $\mathcal{U}_1$, that is accidentally written in a machine simulated by $\mathcal{U}_1$, $\mathcal{M}$ writes this ordinal on its output tape and simulates $m$ up to this ordinal $\beta$. If $C_i^m(\beta) = 0$, it launches a new copy $\mathcal{U}_2$ of the universal machine and each time some ordinal $\beta' > \beta$ is appears in $\mathcal{U}_2$, it simulates a fresh copy of $m$ up to stage $\beta'$ and looks whether the cell $i$ was set back to $1$ between $\beta$ and $\beta'$. If it was, $\mathcal{M}$ goes back to its simulation of the universal machine $\mathcal{U}_1$ and looks for the next such ordinal $\beta$, that is such that $C_i^m(\beta) = 0$. When $\beta > \alpha$, the machine never find $1$'s anymore in the history of cell $i$ after this $\beta$, as all $\beta' > \beta$ generated are strictly between $\alpha$ and $\Sigma$. Hence, $\beta$ is written on the output and $\mathcal{M}$ looks indefinitely for $\beta' > \beta$ at which a $1$ appear on cell $i$. And so $\mathcal{M}$ actually eventually writes some $\beta > \alpha$. As such, $\beta$ is eventually writable and we have $\alpha < \beta < \zeta < \Sigma$ and $C_i(\zeta) = 0$ as well.
\item Suppose now that $C_i(\zeta) = 0$. Then, the value of the cell converged at some $\alpha < \zeta$ and there exists a machine $m_\alpha$ that eventually writes $\alpha$.
We consider the following computation from a machine that we call $\mathcal{W}$: It simulates $m_\alpha$ on some part of its working tape, which will eventually write $\alpha$. Each time an ordinal $\beta$ is written on the simulated output tape of $m_\alpha$, it does the following: while $\beta$ does not change (that is the simulation of $m_\alpha$ is intertwined by dovetailing with the rest of the computation) it simulates $\mathcal{U}$ and, each time $\mathcal{U}$ (accidentally) writes an ordinal $\beta' > \beta$, $\mathcal{W}$ starts by simulating $m$ up to $\beta$ and then up to $\beta'$. If it finds a $1$ in the cell $i$ between stages $\beta$ and $\beta'$, it writes $\beta'$ on the output tape and stops this part of the computation. Note that as the simulation of $m_\alpha$ is still going on, it might start again from the beginning later, if its output value changes. However, at some point $\alpha$ is written in $m_\alpha$ and does not change anymore. At this point, if $\mathcal{W}$ finds a $1$ between stages $\beta = \alpha$ and some $\beta' > \alpha$, this $1$ must appear after $\zeta$ (as the value of the cell converges to $0$ at stage $\alpha$ and up to stage $\zeta$) and then $\beta'$, which it has consequently eventually written is greater that $\zeta$, which is a contradiction. Hence, the value of $C_i$ never changes after stage $\zeta$ and $C_i(\Sigma) = 0$ and this concludes the proof.
\end{itemize}
\end{proof}

The proof of Proposition \ref{prop:sigma_2_tau_2} is somewhat of a classical 
proof in the theory of ITTMs, at least in its usage of simulations and in particular in that of the simulation of the universal machine $\mathcal{U}$. Hence, the question that we postponed: how does the universal ITTM work? Does it actually exist and how can we describe it? 

To answer this, observe that $u$, the classical three-tape finite universal machine, gives almost immediately rise to an infinite time universal machine that simulates a single other ITTM. Indeed, in the finite setting, $u$ is such that given the code of some finite machine $m$, it simulates $n$ steps of $m$ in less than $C n^2$ steps for some constant $C$, supposing that $u$ works in a straightforward fashion. How does it work? Without going into the finicky details, each cell $i$ of $m$ is simulated by some cell $I$ of $u$. That is, for any finite stage  $k$, the value of $I$ in $u$ at stage $C k^2$ is the value of $i$ in $m$ at stage $k$, and the next value to be written in $I$, after some steps, is that of the cell $i$ at stage $k+1$. Outside of those cells $I$, the other cells of $u$ are used to keep track of the simulation, in particular of the position of  the head and of the state of the simulated machine $m$.

Now consider the ITTM $\mathcal{U}_0$ whose code is the same as $u$. Remember indeed that the structure of an ITTM is exactly that of a three-tape classical Turing machine. Similarly, we can see $m$ as an ITTM, which we denote by $M$. What happens when $\mathcal{U}_0$ computes with the code of $M$ as input (which we write $\langle M \rangle$)? By construction, for any finite stage $k$, the snapshot of $\mathcal{U}_0[\langle M \rangle]$ at stage $k$ will be the same as that of $u[\langle M \rangle]$. But $M$ and $m$ share the same code, so this is the same thing as $u[\langle m\rangle]$. Hence, for the first $\om$ steps, the computation of $\mathcal{U}_0[\langle M \rangle]$ is the same as that of $u[\langle m \rangle]$. In other words, the ITTM $\mathcal{U}_0$ with the code of $M$ as input simulates the ITTM $M$ through all the finite stages. 

But what happens a stage $\om$? That is, what is the snapshot of $\mathcal{U}_0[\langle M \rangle]$ at stage $\om$? Take some cell $i$ of $M$. It is simulated by some cell $I$ of $\mathcal{U}_0$. Hence, suppose $i$ in $M$ stabilizes on $0$ before stage $\om$. That is, with the $\limsup$ rule, $C^M_i(\om) = 0$. Then, so does $I$ in $\mathcal{U}_0[\langle M \rangle]$ and $C^{\mathcal{U}_0}_I(\om) = 0$ and same goes for the cells $i'$ such that  $C^M_{i'}(\om) = 1$. Hence, the simulation of the limit rule for the simulated cell $i$ in $M$ comes directly from the limit rule itself applied to the simulating cell $I$ in $\mathcal{U}_0$. This means that at stage $\om$ in the computation $\mathcal{U}_0[\langle M \rangle]$, the content of the different cells $I$ describes exactly that of the cells $i$ in $M$ at stage $\om$. From there, to simulate the ITTM $M$ further through the ordinals, only a few things are missing at limit stages: $(1)$ the auxiliary cells used for the simulation will likely all be set to $1$ and those need to be tidied up, $(2)$ the simulated head of $M$ should be back on the first simulated cell and $(3)$ the simulated state of $M$ should be set to $q_{limit}$. But since at any limit stage $\mathcal{U}_0$ reaches its own distinguished state $q_{limit}$, it is easy from there to modify the machine $\mathcal{U}_0$ in order to have it tidy up its auxiliary cells and to set the correct head position and state for the simulated machine. This takes a finite number of steps and after those the simulation goes on, in time for the next limit ordinal stage.

This previous description yields a universal infinite time machine $\mathcal{U}'_0$, slightly adapted from $\mathcal{U}'_0$, which simulates a single ITTM. From it we can simply enough describe the universal machine $\mathcal{U}$ that simulates all ITTMs at once. $\mathcal{U}$ works with its working tape as if it were $\om$ different tapes in which it simulates in parallel all computations $\mathcal{U}'_0[m]$ for all machines $m \in \om$. For $\mathcal{U}$ to split its working tape, this can be done as follows: first, all even cells (that is cells on the workings tape whose index $i$ is even) are kept aside (metaphorically) and they will constitute a virtual working tape of its own that the machine uses to keep track of its simulations of all the $\mathcal{U}_0[m]$'s. The idea being that its easy for it to ``stay'' on this virtual tape while it reads it as it just needs to shift its head twice to the left or to the right of the real tape to move on this virtual tape. And using this first virtual tape it can split the odd cells into $\om$ tapes, e.g.\ the virtual tapes of some machine $m$ will be constituted of the cells $(f(m, i))_i$ for any computable function (in the finite sense) $f$ that partitions the odd integers into $\om$ unbounded sets. Then, when $\mathcal{U}$ begins, it initializes all its virtual tapes $m$ as would $\mathcal{U}'_0[m]$ initializes its working tape, which takes $\om$ steps. Then begins the simulation itself. By dovetailing, that is simulating the machines $m$ in cascade, $k$ steps of the first $k$ machines, $k+1$ steps of the first $k+1$ machines, etc. it simulates $\om$ steps of all of those machines in $\om$ steps of itself; which should be enough for the description of $\mathcal{U}$.
 
Now, why does the universal machine works? That is, what is implicitly used in this explanation? First, as a cell $i$ of some machine $m$ (we now drop the capital $m$ and only consider ITTMs) will likely be simulated by a cell $I \neq i$ in $\mathcal{U}$, it uses the fact that all cells are governed by the same limit rule. Also, it uses the fact that the limit value of the cell $I$ only depends of the history of this very cell. The context of those cells (e.g. the neighboring cells) may greatly vary. So we rely on the fact that the limit rule only looks at the histories $h_i$ and $h_I$, the histories of respectively the cell $i$ in $m$ and the cell $I$ in $\mathcal{U}$. When a rule only looks at the history of a given cell to define the limit value of this cell, we will say that the rule is $\emph{cell-by-cell}$. 
Further, for a limit ordinal $\alpha$, in order to say that $C_i^m(\alpha) = C_I^{\mathcal{U}}(\alpha)$, where $C_i^m(\alpha)$ denotes the value of cell $i$ in $m$ at stage $\alpha$, we use the fact that $h_i$ and $h_I$ are \emph{somewhat} the same. 

But what is meant by this ? Are the histories $h_i$ and $h_I$ not actually equal? Suppose that $m$ is a very simple machine that continuously blinks its cell $i$. To do so, first, the head of $m$ needs $i$ steps to reach the cell $i$. As the cell $i$ is initially set to $0$, its history begins with $0^i$, that is the word made from the letter $0$ repeated $i$ times. At stage $i+1$, the cell $i$ in $m$ is set to $1$. Its history is now $0^i 1$. And then back to $0$, then again to $1$ and so on. Hence, at some limit stage $\alpha$, its history reads $0^i(10)^\alpha$. 
Now what does the history of cell $I$ in $\mathcal{U}$ read? We can merge in $\mathcal{U}$ the $\om$ steps of initialization with the actual simulation, but in any case, the machine will begin the actual simulation of $m$ after some $k$ strictly greater than $i$. Hence, $h_I$ begins with $0^k$. At stage $k+1$, $\mathcal{U}$ writes a $1$ in cell $I$ and the history of $H_i$ reads $0^k 1$. But, and this is one of important point, after having simulated this step from $k$ to $k+1$ in $m$, $\mathcal{U}$ will then be simulating at least the machine $m+1$ for a certain number of steps and. And it won't be coming back to $m$ before some finite number of steps $k_2$. In the meantime, during those $k_2$ steps, the cell $I$ stays set to $1$. So, after those $k_2$ steps, $H_i$ reads $0^k 1^{k_2}$. And so, as the computation goes on, $H_i$ will look like: $0^k 1^{k_2} 0^{k_3} \ldots 1^{k_\alpha} 0^{k_{\alpha+1}} \ldots$ where each $k_\nu$ are finite and strictly greater than $1$. So, to some extent, $h_i$ and $H_i$ are not quite the same. Still we could say that $h_i$ is the \emph{contraction} of $H_i$, that is the word obtained by contracting any $1^k$ or $0^{k'}$ respectively to a $1$ or a $0$. And $\mathcal{U}$ operates as it should because, as we will say, the $\limsup$ rule is \emph{contraction-proof}. The limit value of a cell does not change if we contract (or conversely dilate) its history. And here, we could say that the dilatation from $h_i$ to $H_i$ is only finite as all $k_\nu$ are finite, but in practice, when we use $\mathcal{U}$ or when we simulate any other machine, it is very common to keep it ``on hold'' for extensive periods of computation, as in the first part of the proof of Proposition~\ref{prop:sigma_2_tau_2}. In such a case the $k_\nu$'s may be infinite ordinals and \emph{a priori} unboundedly big. 

Other than that, keeping a simulated machine ``on hold'' for some infinite ordinal amount of steps $\alpha$, implicitly makes use of the fact that the cells that aren't written on (i.e. whose content is not modified) won't change their content at limit stages. We can say that the $\limsup$ rule is \emph{stable} as the content of a cells which stabilized up to a limit stage does not change at said limit stage.

Eventually, there is a last feature of the Hamkins and Lewis ITTMs that is used implicitly. Take the computation of some machine $m$ up to some limit stage $\alpha$. Then consider a fresh computation of $m$ in which the initial snapshot (that is the content of the tapes, the state and the head position) is set to match that of $m$ after those $\alpha$ steps. In this second computation, stage $0$ corresponds to stage $\alpha$ in the first computation. Still in both cases, observe that it leads to the same subsequent computation. Hence, for any stage $\beta$ we would like, for all cell $i$, writing $s_\alpha$ for the snapshot of $m$ at stage $\alpha$ and $C^{m, s}_i$ for the function that maps an ordinal to the content of the cell $i$ at this ordinal stage in the computation of $m$ that starts with $s$ as its snapshot, that
\begin{align*}
C_i^m(\alpha + \beta) = C_i^{m, s_\alpha}(\beta)
\end{align*}
This clearly holds in the classical model of ITTM for finite $\alpha$ and $\beta$. For $\alpha$ and $\beta$ limits, it means that even if the history $h_i$ of length $\alpha+\beta$ is truncated to the final segment $h'_i$ of length $\beta$, it still yields the same limit value with respect to the $\limsup$ rule. This means more generally that for any histories $h$ and $h'$ such that $h'$ is a final segment (or suffix) of $h$, then $h$ and $h'$ yield the same limit value. This easily seen to be true for the $\limsup$ limit rule and we will say that it is \emph{asymptotical}.

This quick glance into the characteristics usual limit rule and which are implicitly used in the universal machine leads us to distinguish four characteristics (formalized in the next section): those $\limsup$ and $\liminf$ limit rules are \emph{cell-by-cell}, \emph{contraction-proof}, \emph{asymptotic} and \emph{stable}. And, as we will see, those conditions are sufficient for the universal machine to be easily described and may likely be necessary for it to exist in a satisfying form. To those, we can add the \emph{looping stability} which somehow equates to the fact that a machine can be seen to be looping without looking at its entire computation through $\mo$. For example, a $\limsup$ ITTM is seen to be looping if there are two stages sharing the same snapshot such that cells that are set to $0$ at both of those stages are also set to $0$ for the whole segment of computation that spans between those two stages.
Without it, we could easily conceive a pathological machine that repeats for some gigantic ordinal amount of time and after which, thanks to its rule, that escapes the repeating pattern.

As stated, the interesting result, which we show at the beginning of Section~\ref{sec:higher_order_ittms} is that the $\limsup$ and $\liminf$ rules are the only $2$-symbol rules that satisfy this set of five conditions. In other words, they likely are the only $2$-symbol rules allowing for natural simulations and having an easy-to-describe universal machine while also satisfying the condition of looping stability.
Further, as mentioned, the main theorem of this paper shows the following strong result: for a $n$-symbol rule definable by a set-theoretic formula and that satisfies those five conditions, the supremum of the accidentally writable ordinals matches with that of the a.c.\ ordinals, that is $\Sigma = \Tau$ for all those rules. In the last section, we provide a counter-example to show that this theorem is tight. Namely, we show that there exists a limit rule that satisfies all those conditions but that of looping stability and such that, for the machines ruled by it, the equality $\Sigma = \Tau$ does not hold.

\section{General definitions and conditions on operators}\label{sec:general_definitions}

In order to formaly study limit rules, we need first to define what a limit rule is. In \cite{welch_operator}, Welch introduces the concept of operator. A $n$-symbol limit rule can be seen as an operator $\Gamma: {}^{<On}(^\om n) \rightarrow {}^\om n$, that is a function that given any computation history seen as an ordinal-indexed limit sequence of $n$-symbol real numbers (corresponding to tape contents) yields a $n$-symbol real number that will represent the tape content at the next limit stage. 
As noted, the data of an operator $\Gamma$ is enough to produce a model of infinite time Turing machine whose limit stage behavior is ruled by this operator. Such machines will be called $n$-symbol $\Gamma$-machines. We start by introducing some general concepts. 

\subsection{General definitions on ordinal words}

\begin{definition}[Additively closed ordinals]
An ordinal $\alpha$ is \emph{additively closed} when, for all $\beta, \delta < \alpha$, $\beta + \delta < \alpha$. The additively closed ordinals are known to be the ordinals of the forms $\om^\beta$ for some ordinal $\beta$.
\end{definition}

\begin{definition}[Multiplicatively closed ordinals]
An ordinal $\alpha$ is \emph{multiplicatively closed} when, for all $\beta, \delta < \alpha$, $\beta \cdot \delta < \alpha$. The multiplicatively closed ordinals are known to be the ordinals of the forms $\om^{\om^\beta}$ for some ordinal $\beta$.
\end{definition}

\begin{definition}[Ordinal word]
An \emph{ordinal word} $w$ of length $\lambda$ on some alphabet $\Alpha$ is a function from $\lambda$ to $\Alpha$ that maps every ordinal $\iota < \lambda$, seen as a \emph{position} or \emph{index} in the word, to a letter in $\Alpha$. We write $w[\iota]$ for the letter at position $\iota$ in $w$ and $w[\iota, \kappa[$ the subword of $w$ staring at position $\iota < \kappa$ up to position $\kappa$ non included. We write $|w|$ for the length of the word $w$ and when $|w|$ is finite, this coincides with the definition of words in the ordinary theory of formal languages. When $|w|$ is a limit ordinal, we say that $w$ is a limit word. Eventually, we write ${}^{\alpha}\Alpha$ for the set of ordinals words of length $\alpha$ on the alphabet $\Alpha$ and we write ${}^{<\mo}\Alpha$ for the class of ordinal words of any length on the alphabet~$\Alpha$. That is:
\begin{align*}
{}^{<\mo}\Alpha = \bigcup_{\alpha \in \mo} {}^{\alpha}\Alpha
\end{align*}
\end{definition}

\begin{example}
Consider an infinite Turing machine $m$ with two symbols, $0$ and $1$, that computes with any limit rule and whose computation reaches some stage $\alpha$. For any cell $i$, this induces a function $C_i$ that maps $\beta \leqslant \alpha$ to the value of the cell $i$ at stage $\beta$. This also induces an ordinal word $h_i$ of length $\alpha+1$ on the alphabet $2 = \left\{0, 1 \right\}$ such that for any $\beta \leqslant \alpha$, $h_i[\beta] = C_i(\beta)$. Also, the content of the three tapes at any stage $\beta$ can be described, as an ordinal word $w_\beta \in {}^\om 2$, that is a word of length $\om$ on the alphabet $2$. This can simply be done using an usual encoding to describe the three tapes of length $\om$ in a single word of length $\om$.
Combining both those points of view, this computation induces an ordinal word $W \in {}^{\alpha+1}(^\om 2)$, that is a word of length $\alpha+1$ on the alphabet $^\om 2$ such that for any stage $\beta \leqslant \alpha$, $W[\beta] = w_\beta$.
\end{example}

\begin{definition}[Stutter-free]\label{def:stutter-free}
Let $h$ be an ordinal word on some alphabet $\Alpha$. We say that $h$ is stutter-free when for all $\alpha+1$ index of $h$, $h[\alpha] \neq h[\alpha+1]$; that is the $\alpha^{th}$ letter of $h$ is different from the $\alpha+1^{th}$.
\end{definition}

\begin{definition}[Suffixes and prefixes]
Given $u$ and $v$ two ordinal words, we say that $u$ is a \emph{prefix} or an \emph{initial segment} of $v$, written $u \sqsubseteq v$ when $|u| \leqslant |v|$ and for all $\iota < |u|$, $u[\iota] = v[\iota]$. We say that $u$ is a \emph{suffix} or a \emph{final segment} of $v$, written $v \sqsupseteq u$ when $|u| \leqslant |v|$ and there exists $\alpha$ such that $\alpha + |u| = |v|$ and for all $\iota \in \left[\alpha, |v| \right[$, that is for all $\iota = \alpha + \iota' < |v|$, we have $u[\iota'] = v[\iota]$.
\end{definition}

\begin{definition}[Operations on words]
Given two words $u$ and $v$ and $\alpha$ an ordinal, we write $u v$ for the concatenation of $u$ and $v$ of ordinal length $|u| + |v|$ and $u^\alpha$ for the word made from $u$ concatenated $\alpha$ times to itself, of length $|u| \cdot \alpha$.
\end{definition}

\subsection{General definitions on machines}

\begin{definition}[Operators and $\Gamma$-machines, \cite{welch_operator}]
For a natural number $n$, an \emph{$n$-symbol limit rule operator} $\Gamma$ is a (class) function $\Gamma: {}^{<On}(^\om n) \rightarrow {}^\om n$ that maps ordinal words on the alphabet $^\om n$, seen as computation histories of $n$-symbol machines, to element of $^\om n$, seen as tape contents. A \emph{$\Gamma$-machine} is an infinite Turing machine whose limit rule is that induced by the operator $\Gamma$.
\end{definition}

We now introduce basic definitions regarding $\Gamma$-machines. Naturally, as $\Gamma$-machines and Hamkins' ITTM have the same structure, most of those definitions are identical to the definition given in the specific context of Hamkins' ITTM.

\begin{definition}
We write $\Gamma_{\sup}$ and $\Gamma_{\inf}$ the operators associated respectively to the $\limsup$ and the $\liminf$ rule.
\end{definition}

\begin{definition}[Looping, \cite{hamkins_ittm}]
For $\Gamma$ a limit operator and $m$ a $\Gamma$-machine, we say that $m$ is \emph{looping} when the machine never halts and some interval of computation $\left[ \alpha, \alpha + \beta \right[$ repeats itself through the whole computation of $m$ after stage $\alpha$. That is for any stage $\nu \geqslant \alpha$, writing $\nu = \alpha + \beta \cdot \delta + \nu'$ with $\delta$ maximal, the snapshot of $m$ at stage $\nu$ is the same as that of $m$ at stage $\alpha + \nu'$.
\end{definition}

\begin{definition}[Writable real numbers, \cite{hamkins_ittm}]
For $\Gamma$ a limit operator, a real number $x$ is $\Gamma$-\emph{writable} if there is a $\Gamma$-machine which, when computing from the empty input, halts with $x$ written on its output.
\end{definition}

\begin{definition}[Converging, \cite{hamkins_ittm}]
For $\Gamma$ a limit operator and $m$ a machine computing on some input $y$ we say that $m$ \emph{converges} or \emph{converges definitively} when it never halts and when after some stage its output tape is never modified again. We say that $m$ \emph{converges up to some ordinal $\alpha$} when it does not halt before stage $\alpha$ and after some stage $\beta < \alpha$ its output tape is never modified before stage $\alpha$. Observe that the definition of converging involves only the output tape: often, converging machines will continue to modify their working tape through the whole computation.
\end{definition}

\begin{definition}[Eventually writable real numbers, \cite{hamkins_ittm}]
For $\Gamma$ a limit operator, a real number $x$ is $\Gamma$-\emph{eventually writable} if there is a $\Gamma$-machine which converges when computing from the empty input and has $x$ written on its output tape when the content of its output tape stabilized.
\end{definition}

\begin{definition}[Accidentally writable real numbers, \cite{hamkins_ittm}]
For $\Gamma$ a limit operator, a real number $x$ is $\Gamma$-\emph{accidentally writable} if there is a $\Gamma$-machine which, when computing from the empty input, at some stage has $x$ written on any of its tapes
\end{definition}

Then to capture ordinals using real numbers, we need some kind of encoding.

\begin{definition}
We say that a real number $x$ 
\emph{encodes} some ordinal $\alpha$ when the real number encodes a relation $\prec$ on $\om$ of order type $\alpha$ in the the following fashion.
\begin{align*}
i \prec j \longleftrightarrow x[\langle i, j \rangle] = 1
\end{align*}
\end{definition}

\begin{definition}[Writable ordinals, \cite{hamkins_ittm}]
For $\Gamma$ a limit operator, an ordinal $\alpha$ is $\Gamma$-\emph{writable} (resp.\ $\Gamma$-\emph{eventually writable} and $\Gamma$-\emph{accidentally writable}) if some $x$ that encodes $\alpha$ is $\Gamma$-writable (resp.\  $\Gamma$-eventually writable and $\Gamma$-accidentally writable.) We write $\lambda_\Gamma$, $\zG$ and $\SG$ respectively for the supremum of the $\Gamma$-writable, $\Gamma$-e.w.\ and $\Gamma$-a.w.\ ordinals.
\end{definition}

\begin{definition}[Clockable ordinals, \cite{hamkins_ittm}]
An ordinal $\alpha$ is $\Gamma$-\emph{clockable} if there is a $\Gamma$ machine computing from the empty input that halts a stage $\alpha$. That is, it computes for $\alpha$ steps and then, on its next transition, it reaches the state $q_{halt}$. We write $\gamma_\Gamma$ for the supremum of the $\Gamma$-clockable ordinals.
\end{definition}

With those definitions, what Hamkins and Lewis asked in \cite{hamkins_ittm} is whether $\lambda_{\Gamma_{\sup}} = \gamma_{\Gamma_{\sup}}$. As noted, Welch answered this question positively in \cite{welch_main}. More generally, we may wonder under which conditions on $\Gamma$ does the equality $\lambda_\Gamma = \gamma_\Gamma$ holds. As done in the discussion of Section \ref{sec:introducing_the_universal_machine}, we can get a better insight on the question by introducing the following extensions of the concept of clockable, akin to that of the concept of writable.

\begin{definition}[Eventually clockable ordinals]
An ordinal $\alpha$ is $\Gamma$-\emph{eventually clockable} ($\Gamma$\emph{-e.c.} or simply \emph{e.c.} when the context is clear) if there is a $\Gamma$-machine which, computing from the empty intput, never halts and whose output tape stabilizes at stage $\alpha$; that is, it changes its content upon reaching this stage and it never subsequently changes it. We write $\eta_\Gamma$ for the supremum of the $\Gamma$-eventually clockable ordinals.
\end{definition}

\begin{definition}[Accidentally clockable ordinals]
An ordinal $\alpha$ is $\Gamma$-\emph{accidentally clockable} ($\Gamma$\emph{-a.c.} or simply \emph{a.c.} when the context is clear) if there is a $\Gamma$-machine which, when computing from the empty input, writes at stage $\alpha$ on one of its tapes a real number that wasn't written at any previous stage of this computation on any tape. We write $\Tau_\Gamma$, capital $\tau$, for the supremum of $\Gamma$-accidentaly clockable ordinals.
\end{definition}

We sum those constants up in Figure \ref{fig:letters_writable_clockable_with_eta_tau}.

\begin{figure}[h]
\begin{center}
\begin{tabular}{l l l l}
\hline
          & simply & eventually & accidentally \rule{0pt}{2.5ex}\\
writable  &  $\lambda$ & $\zeta$ & $\Sigma$  \\
clockable &  $\gamma$  & $\eta$  & $\Tau$ \\
\hline
\end{tabular}
\end{center}
\caption{Greek letters associated to each definition.}
\label{fig:letters_writable_clockable_with_eta_tau}
\end{figure}

Now, as before, given some limit operator $\Gamma$, the question becomes: does $\Sigma_\Gamma = \Tau_\Gamma$? This is answered positively in Theorem \ref{thm:sigma_tau_looping_condition} for a wide range of operators. And we show in Corollary \ref{cor:zeta_eta} and Corollary \ref{cor:lambda_gamma} that $\Sigma_\Gamma = \Tau_\Gamma$ implies both the equalities $\zeta_\Gamma = \eta_\Gamma$ and $\lambda_\Gamma = \gamma_\Gamma$.

\subsection{Conditions on operators}

\subsubsection{Simulational operators}

\begin{definition}[History of computation]
Given an $n$-symbol operator $\Gamma$ and a $\Gamma$-machine $m$ that does not halt before stage $\alpha$, the \emph{history $H$ of the computation of $m$ up to $\alpha$} is an element of $^\alpha({}^\om n)$, that is, it is an ordinal word of length $\alpha$ on the alphabet made from $n$-symbol real numbers, and such that for all $\beta < \alpha$, $H[\beta] = C^m(\beta)$, meaning that the $\beta^{th}$ letter of $H$ is the content of the tapes of $m$ at stage  $\beta$ (representing the three tapes with a single real). For a given cell $i$, the \emph{cell history $h_i$ of the computation of $m$ up to $\alpha$} is an element of $^\alpha n$ such that for all $\beta < \alpha$, $h_i[\beta] = C_i^m(\beta)$. Further, $h_i = H|_i$, the restriction of $H$ to the cell $i$. 
When $\alpha$ is a limit ordinal, we say that $H$ (resp. $h_i$) is a \emph{limit history} (resp. a \emph{limit cell history}).
\end{definition}

\begin{definition}[Suitable operator, \cite{welch_operator}]\label{def:suitable_operator}
An operator $\Gamma$ is a \emph{suitable} $n$-symbol operator if there is a set-theoretic first-order formula $\varphi(x_1, x_2, x_3, x_4)$ such that for any machine $m$, input $y$, cell $i$, symbol $k$ and stage $\nu$, writing $H_\nu$ the history of $m$ up to stage $\nu$ and $L_\nu$ the $\nu^{th}$ level of the constructible hierarchy, we have
\begin{align*}
\Gamma(H_\nu)[i] = k \iff L_{\nu}[y] \models \varphi(i, m, y, k)
\end{align*}
When $\varphi$ is a $\Sigma_n$ formula, it is a \emph{$\Sigma_n$ operator} and it produces \emph{$\Sigma_n$ machines}.
\end{definition}

\begin{remark}
We may want to restrict the symbols produced at limit stages to $0$ and $1$, that is $\Gamma$ would be an operator which acts from ${}^{<On}(^\om n)$ to ${}^\om 2$.
In this case, it is enough to provide a first-order formula  $\varphi(x_1, x_2, x_3)$ such that, with the same notations,
\begin{align*}
\Gamma(H_\nu)[i] = 0 \iff L_{\nu}[y] \models \varphi(i, m, y)
\end{align*}
\end{remark}

From there, we can formalize the different conditions exhibited and informally defined in the previous section.

\begin{definition}[Stable]
An operator $\Gamma$ is \emph{stable} when for any real number $x$ and limit ordinal $\alpha$,
we have
\begin{align*}
\Gamma(x^\alpha) = x
\end{align*}
\end{definition}

\begin{definition}[Cell-by-cell]
An operator $\Gamma$ is \emph{cell-by-cell} when there exists $\gamma : {}^{<On}n \rightarrow n$ such that, given any limit history $H$ and for all cell $i$, we have
\begin{align*}
\gamma(H|_i) = \Gamma(H)|_i
\end{align*}
where $|_i$ denotes the restriction of the tape history $H$, possibly made of a single tape content, to a single cell; hence yielding a single cell history, that is an element of ${}^{<On} n$. We say that $\gamma$ is the cell restriction of $\Gamma$.
\end{definition}

\begin{definition}[Asymptotic]
A limit rule operator $\Gamma$ is \emph{asymptotic} when for any limit history $H$ of length $\alpha$ and any non-empty final segment $H'$ (with previously introduced notation, $H \sqsupseteq H'$), we have $\Gamma(H) = \Gamma(H')$
\end{definition} 

\begin{example}\label{ex:asymptotic} Let us see how a $\Gamma$-machine $m$ behaves when $\Gamma$ is asymptotic.
For the sake of simplicity, we suppose that $m$ only has a working tape. We write $H(\alpha)$ for the history word of length $\alpha$ of the working tape up to some stage $\alpha$.
Again to make thing simpler, suppose that at stage $0$ some real number $x$ can be found on the working tape and the machine is in the state $q_{\lim}$. Suppose further that there is a limit stage $\alpha > 0$ such that $\Gamma(H(\alpha)) = x$. Then, we look at the computation of $m$ after this stage $\alpha$ and we show how this gives rise to a repetition in the computation; but a repetition that may not be a loop.

By definition of the model of the ITTM, at any stage $\beta + k$ for a finite $k$, the snapshot of $m$ depends only of its code and of its snapshot at stage $\beta$. In this case, as only the working tape is used, in the computation of $m$ stages $0$ and $\alpha$ share the same snapshot. 
This means that, for any finite $k$, the snapshots at stage $k$ and $\alpha + k$ match. That is, $H(\alpha + \om) \sqsupseteq H(\om)$. Then, what happens at limit stage $\alpha+\om$? By asymptoticity, $\Gamma(H(\alpha + \om)) = \Gamma(H(\om))$. And so, snapshots at stages $\om$ and $\alpha + \om$ also match.
Further, inductively, we can show that $\Gamma(H(\alpha \cdot 2)) = \Gamma(H(\alpha)) = x$. And further that $\Gamma(H(\alpha \cdot k)) = \Gamma(H(\alpha)) = x$ for any finite $k$. Is then $m$ actually looping? Consider stage $\alpha \cdot \om$. This stage is the first stage after $\alpha$ whose ordinal number is additively closed. Consequently, by additive closeness, any non-empty final segment of $H(\alpha \cdot \om)$ has itself $H(\alpha \cdot \om)$ as final segment. Hence applying asymptoticity in this case only yields that $\Gamma(H(\alpha \cdot \om)) = \Gamma(H(\alpha \cdot \om))$ and without more assumptions, the limit rule may very well make $m$ exit the repeating pattern at stage $\alpha \cdot \om$.
\end{example}

For the next condition, we need a prior definition. We will define the contraction of a word $w$, that is the stutter-free word which is made by squeezing any segment in $w$ in which a symbol is repeated into a single symbol, until no more repetition can be found. For example, the contraction of the word $aabc^\om$ would be $abc$.
We define the operator $\operatorname{ctr}$, which maps a word to its contraction.

\begin{definition}[Contraction]\label{def:contraction}
We define $\operatorname{ctr}$ for ordinal words as follows. Let $w$ be a word on some alphabet $A$ of length $\alpha$.
Then, there exists an ordinal sequence $(\nu_\mu) \subset \alpha$ of length at most $\alpha$, unbounded in $\alpha$ and starting at $0$, which describes the blocks of equal symbols in $w$.
That is, for $\nu_\mu$ and $\nu_{\mu+1}$ in this sequence,
\begin{align*}
\exists a \in A \, \forall \iota \in \left[ \nu_\mu, \nu_{\mu+1} \right[ \, w[\iota] = a \wedge w[\nu_{\mu+1}] \neq a \\
\end{align*}

With this sequence, $\operatorname{ctr}(w)$, the contraction of the word $w$ is the word of length $|(\nu_\mu)|$ (or of length $|(\nu_\mu)|-1$ if the length of the sequence is not a limit ordinal) in which each individual block is replaced by a single letter, that is such that for $\iota < |(\nu_\mu)|$,
\begin{align*}
\operatorname{ctr}(w)(\iota) = w(\nu_\iota)
\end{align*}
\end{definition}

\begin{definition}
We say that two words $u$ and $v$ are \emph{equal after contraction}, written $u \simeq_{ctr} v$, whenever $\operatorname{ctr}(u) = \operatorname{ctr}(v)$
\end{definition}

Below are a few examples of contraction of words and words equal up to contraction. This wholly coincides with the intuitive idea of contracting every sequence of a symbol which repeats itself into a single symbol. 

\begin{example}~
\begin{itemize}[label={--}]
\item $aaabcccc \simeq_{ctr} abbbc$.
\item The contraction of the limit word $b^\om$, that is of the limit word in which the letter $b$ is repeated $\om$ times, is the single-letter word $b$, made from the letter $b$.
\item For $\alpha, \beta > 0$ and any $\delta$, $(b^\beta a^\alpha)^\delta \simeq_{ctr} (ba)^\delta$.
\end{itemize}
\end{example}

\begin{definition}[Contraction-proof]
A operator $\Gamma$ is \emph{contraction-proof} when for any limit histories $H$ and $H'$ such that $H \simeq_{ctr} H'$, we have $\Gamma(H) = \Gamma(H')$.
\end{definition}

\begin{remark}
As the contraction of a limit word may be a non limit word (take for example $b$, the contraction of $b^\om$), general operators should not only be defined on limit words. However, any non limit word $w$ finishing with some letter $a$ is equal up to contraction to $w a^\om$. So, when considering contraction-proof operators, as we will do, it is enough for those to be defined only on limit words.
\end{remark}

\begin{remark}\label{rmk:refining_with_cell_by_cell}
Observe that for a cell-by-cell operator, those three characteristics, namely being stable, asymptotic or contraction-proof, are naturally refined to cell-wise properties. For example, if we take the case of the property of a cell-by-cell operator $\Gamma$ being contraction-proof and write $\gamma$ its associated cell restriction. Take $h$ and $h'$ two limit cell histories such that $h \simeq_{ctr} h'$. Then we can consider $H$ and $H'$ such that for all $i$, $H|_i = h$ and $H'|_i = h'$. By the properties of being cell-by-cell and contraction-proof, we have:
\begin{align*} 
\gamma(h) = \gamma(H|_i) = \Gamma(H)|_i = \Gamma(H')|_i = \gamma(H'|_i) = \gamma(h')
\end{align*}
which is simply the fact that $\gamma$ is contraction-proof as well, in a cell-by-cell fashion. Conversely, if $\gamma$ satisfy this cell-by-cell contraction-proof property, it is clearly carried over to $\Gamma$.
\end{remark}

\begin{definition}[Simulational operator]
We say that a limit rule operator $\Gamma$ is a \emph{simulational} limit rule operator when it is stable, cell-by-cell, asymptotic and contraction proof.
\end{definition}

\begin{prop}
Let $\Gamma$ be a simulational limit rule operator. Then there exists a universal machine $U_{\Gamma}$ and more generally, a $\Gamma$-machine can be designed to compute using the simulation of any other $\Gamma$-machine.
\end{prop}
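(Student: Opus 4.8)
The plan is to construct $U_\Gamma$ explicitly by lifting the classical finite universal machine, following the informal discussion in Section~\ref{sec:introducing_the_universal_machine}, and to check that each of the four properties of a simulational operator is used exactly where the informal argument invoked it. First I would fix notation: let $u$ be the straightforward three-tape finite universal Turing machine, so that for the code $\enc{m}$ of a finite machine $m$ it simulates $k$ steps of $m$ within $Ck^2$ steps, with each simulated cell $i$ of $m$ tracked by a fixed cell $I = \sigma(i)$ of $u$ and with the auxiliary cells recording the simulated head and state. Let $\mathcal{U}_0$ be the $\Gamma$-machine whose code equals that of $u$. Since the successor-step behaviour of any $\Gamma$-machine is that of a finite Turing machine, $\mathcal{U}_0[\enc{m}]$ and $u[\enc{m}]$ have identical snapshots at all finite stages, so $\mathcal{U}_0[\enc{m}]$ simulates $m$ through the first $\om$ steps.

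The key step is the limit-stage analysis. Fix a limit stage $\delta$ already reached in the computation of $\mathcal{U}_0[\enc{m}]$, and fix a simulated cell $i$ of $m$, tracked by $I=\sigma(i)$. Let $h_i$ be the cell history of $i$ in $m$ up to $\delta$ and $h_I$ the cell history of $I$ in $\mathcal{U}_0[\enc{m}]$ up to the corresponding stage. Here one shows $h_I \simeq_{ctr} h_i$ up to a final segment: each individual symbol written into $i$ by $m$ is written into $I$ by $\mathcal{U}_0$ and then, while $\mathcal{U}_0$ does bookkeeping, is merely re-written unchanged for finitely many (later, in the full dovetailing machine, possibly infinitely many ordinal-many) steps, giving a dilation; the finitely-many setup steps before the simulation of $m$ begins contribute a spurious prefix, but since we compare cell histories as final segments this is absorbed by \emph{asymptotic}ity. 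Then \emph{cell-by-cell} lets us pass from the tape operator $\Gamma$ to its cell restriction $\gamma$, \emph{contraction-proof}ness gives $\gamma(h_I') = \gamma(h_i)$ for the relevant final segments, and so $C^{\mathcal{U}_0}_I(\delta') = C^m_i(\delta)$ at the matching limit stages. The auxiliary cells of $\mathcal{U}_0$, which may have been scribbled in during bookkeeping, are exactly the ones whose histories one does not control; but since $\mathcal{U}_0$ reaches its own state $q_{\lim}$ at every limit stage, we may amend it to a machine $\mathcal{U}_0'$ which, in a finite number of post-limit steps, clears these cells, resets the simulated head to the first simulated cell, and installs the simulated state $q_{\lim}$ — this uses \emph{stable} to know that the cells we are \emph{not} touching (the genuine simulated cells) are unchanged at the limit, and leaves us, finitely many steps past each simulated limit stage, with a faithful simulation of $m$ at that stage. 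By transfinite induction on the computation length of $m$, $\mathcal{U}_0'[\enc{m}]$ simulates $m$ for all ordinal stages (halting when $m$ halts, looping when $m$ loops).

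From $\mathcal{U}_0'$ I would then build the genuine universal machine $U_\Gamma$ simulating all $\Gamma$-machines simultaneously, exactly as in the informal description: split the working tape into $\om$ virtual working tapes via a finite computable partition of the odd cells (reserving the even cells as a scheduling tape), initialise virtual tape $m$ as $\mathcal{U}_0'[\enc{m}]$ would, which costs $\om$ steps, and then dovetail — at stage stage-block $k$ advance the first $k$ simulations by $k$ steps — so that $\om$ steps of every $\mathcal{U}_0'[\enc{m}]$ are simulated within $\om$ steps of $U_\Gamma$, and repeat transfinitely. The limit-stage correctness of $U_\Gamma$ reduces, cell by cell, to the same four-property argument as for $\mathcal{U}_0'$, except that the dilation of each simulated cell history is now genuinely transfinite (a machine may be kept ``on hold'' for unboundedly large ordinal stretches), which is precisely where the full strength of \emph{contraction-proof} (not merely a finite-dilation version) is needed. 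I expect the main obstacle to be exactly this bookkeeping at limit stages: making rigorous the claim that $h_I$ is a dilation-up-to-finite-prefix of $h_i$ \emph{while} the auxiliary cells are simultaneously being overwritten, and ensuring the amendment $\mathcal{U}_0 \rightsquigarrow \mathcal{U}_0'$ (and its analogue for $U_\Gamma$) tidies things in finitely many steps so that the induction closes at the next limit. The ``more generally'' clause — that any $\Gamma$-machine $N$ can be designed to compute using a simulation of any other $\Gamma$-machine $M$ — then follows, since $N$ can run a copy of $\mathcal{U}_0'$ on $\enc{M}$ (or a copy of $U_\Gamma$ and read off the virtual tape of $M$) on a reserved portion of its tapes, dovetailed with its own computation, with the four simulational properties guaranteeing the simulated limit behaviour is faithful.
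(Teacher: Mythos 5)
Your proposal is correct and follows essentially the same route as the paper: the paper's proof simply invokes the Section~\ref{sec:introducing_the_universal_machine} construction (lift the finite universal machine to $\mathcal{U}_0$, tidy at limit stages, split the working tape into $\om$ virtual tapes and dovetail), observing that cell-by-cell plus contraction-proofness give limit-stage correctness and that asymptoticity and stability justify restarting from arbitrary snapshots and keeping simulations on hold. Your write-up is in fact more explicit than the paper's about where each of the four properties enters, but it is the same argument.
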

\begin{proof}
We use the same reasoning as in the description of the $\limsup$ universal machine. It is easy, in virtue of the fact that $\Gamma$ is cell-by-cell and given the code of the finite universal machine $u$, to describe a universal $\Gamma$-machine $\mathcal{U}_0$ that simulates a single other $\Gamma$-machine. From there the description of $U_{\Gamma}$ from $\mathcal{U}_0$ is as previously: we can arrange to virtually split the working tape into $\om$ virtual workings tapes in each of which, for $m \in \om$, we run 
$\mathcal{U}_0$ that simulates $m$. We can also run those machines in cascade (that is $n$ steps of the $n^{th}$ first machine, $n+1$ steps of the $(n+1)^{th}$ first machine and so on) so that in $\om$ steps, we run $\om$ steps of each of those machines, and more generally at any $\alpha$ limit the machine simulated $\alpha$ steps of each simulated machine. And this produces the desired computation, that is at limit stages, the tapes of the simulated machine have the same content as the machines would have because $\Gamma$ is contraction-proof.

Now for the second part. What is a natural way for a machine to use the simulation of another? As briefly sketched, it should first be able to run the simulation of any machine from any snapshot and for as long it wants. And second, it should be able to put this simulation on hold also for as long as it wants. As $\Gamma$ is cell-by-cell, asymptotical and contraction-proof, a machine can run simulations of other machine from any snapshot. As it is stable and contraction-proof, it can put the simulation on hold for as long as it wants. 
\end{proof}

\begin{remark}\label{rmk:hypermachines}
Let us now look at the hypermachines developed by Welch and Friedman in \cite{hypermachines}, under the scrutiny of those new notions. Briefly, the structure of the hypermachine is akin to that of a $2$-symbol ITTM with the difference that it has a new tape called a rule tape. At a limit stage, the values of the cells are computed by a discretionary $\liminf$ operator, that is a $\liminf$ operator that considers only a precise subset of the previous stages over which it takes the $\liminf$. And it is the rule tape which dictates, at a given limit stage, which subset of previous stages should be considered in the $\liminf$. 

Now, as this operator needs to look at the whole rule tape, it is not cell-by-cell. However, it is easily seen to be stable. Then, if it was asymptotic, at some limit stage $\alpha \cdot 2$, the limit rule would not depend on the first half of the history. However, the notions of $1$-stability and $1$-correctness, used to define the operator of the hypermachine, are themselves not asymptotic. With those, it is possible to devise an history of computation for the hypermachine which would contradict asymptoticity.
Similarly, those definitions are not contraction-proof. Hence, the behavior of the limit rule may change after a contraction or a dilation of the history.
A more detail analysis of the hypermachine under those four definitions can be found in \cite[Remark 5.3.34]{these}.

In the end, this shows that is not possible to define a universal machine for the hypermachines with the technique used in the previous proposition. It may still exist but the presence of the rule tape makes a constructive approach seem difficult.

\end{remark}

The following proposition can be seen as a closure property. It is of the utmost importance and is ubitquitous in what follows, often under the guise of the formulation given in Remark \ref{rmk:dovetailing}.

\begin{prop}\label{prop:dovetailing_sim_operator}
Let $\Gamma$ be a simulational limit rule operator. If a real number $x$ is $\Gamma$-e.w.\ and $y$ is $x$-$\Gamma$-writable, then $y$ is $\Gamma$-e.w.
\end{prop}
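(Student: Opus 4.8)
The statement is a transitivity-type result for eventual writability under a simulational operator: if $x$ is $\Gamma$-eventually writable and $y$ is $\Gamma$-writable relative to $x$ (i.e.\ writable by a $\Gamma$-machine with $x$ on an input tape), then $y$ is $\Gamma$-eventually writable outright. The plan is to combine the machine $m_x$ that eventually writes $x$ with the oracle machine $m_y^x$ that writes $y$ from input $x$, using the simulation apparatus provided by the previous proposition (a $\Gamma$-machine can run the simulation of any other $\Gamma$-machine from any snapshot and put it on hold at will, since $\Gamma$ is stable, cell-by-cell, asymptotic and contraction-proof).

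\textbf{Key steps.} First I would fix a $\Gamma$-machine $m_x$ which, on empty input, converges with $x$ on its output tape, stabilizing at some stage $\beta_x$, and a $\Gamma$-machine $m_y^x$ which, on input $x$, halts with $y$ on its output. I then build a machine $\mathcal{W}$ that dovetails two simulations on (virtual) portions of its working tape: a running simulation of $m_x$, and, whenever the simulated output of $m_x$ currently reads some real $z$, a simulation of $m_y^z$ started afresh with $z$ as its input. The machine $\mathcal{W}$ copies the current output of its simulated $m_y^z$ onto its own output tape. Crucially, each time the simulated output of $m_x$ changes, $\mathcal{W}$ discards and restarts the simulation of $m_y^z$ with the new value $z$; since the old simulation is simply abandoned (its working space cleared) this is exactly the ``restart from scratch'' capability granted by the simulational hypotheses. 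Because $m_x$ eventually stabilizes on $x$ at stage $\beta_x$, after that point $\mathcal{W}$ runs one final, uninterrupted simulation of $m_y^x$; this simulation halts after some ordinal number of steps with $y$ on its simulated output, whence $\mathcal{W}$ has $y$ on its own output and never changes it again (after the halt of the simulated $m_y^x$, $\mathcal{W}$ continues its never-halting dovetailing of $m_x$ but leaves the output alone). Hence $\mathcal{W}$ converges with $y$, so $y$ is $\Gamma$-e.w.

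\textbf{Main obstacle.} The delicate point is the behaviour at limit stages of the restart-on-change mechanism: one must check that the simulation of $m_y^z$ running on $\mathcal{W}$'s working tape genuinely reproduces the computation of $m_y^z$ at limit stages, even though $\mathcal{W}$ keeps it ``on hold'' for long (possibly infinite ordinal) stretches while attending to $m_x$ and while $m_x$'s output is still fluctuating. This is precisely what contraction-proofness and stability buy us — the simulated cell histories inside $\mathcal{W}$ are dilations of the true cell histories of $m_y^z$, and held cells do not spuriously flip at limits — but it has to be invoked carefully, and one must also make sure that when $m_x$'s output changes unboundedly often below a limit stage $\delta < \beta_x$, the output tape of $\mathcal{W}$ does not accidentally stabilize at $\delta$ on a wrong value; this is handled by having $\mathcal{W}$ blink a reserved marker cell each time it restarts, so that its output genuinely keeps changing until $m_x$ has settled. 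Everything else — the dovetailing schedule, the virtual splitting of the working tape, copying between tapes — is the same routine bookkeeping already used in the construction of $U_\Gamma$.
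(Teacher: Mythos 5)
Your proof is correct and follows essentially the same route as the paper's: dovetail a simulation of $m_x$ with repeatedly restarted simulations of $m_y$ run on the current output of $m_x$, so that once $m_x$ stabilizes on $x$ the final uninterrupted run of $m_y$ eventually places $y$ on the output, the simulational hypotheses (stability, contraction-proofness, etc.) guaranteeing the simulations behave correctly at limit stages and while on hold. Your extra marker-cell device is harmless but unnecessary, since eventual writability only concerns the final stabilized content of the output tape, not whether it momentarily looked stable earlier.
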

\begin{proof}
This is proved as with classical ITTMs. Let $m_x$ be a machine which eventually writes $x$ and $m_y$ a machine which $x$-writes $y$.
With those, we can design a computation which, in parallel, using a classical dovetailing technique, simulates $m_x$ and tries to writes $y$ using $m_y$ and, as input for $m_y$, whatever is written on the output of $m_x$. It restarts the simulation of $m_y$ each time the output of $m_x$ changes. At some point, $m_x$ eventually wrote $x$ on its output and this output won't be modified again. From this point onward, the computation of $m_y$ will be conducted using $x$, which will eventually yield $y$. See \cite[Theorem 3.5.10]{merlin} and \cite[Proposition 6.3]{winter} for more details.
\end{proof}

\begin{remark}\label{rmk:dovetailing}
Proposition \ref{prop:dovetailing_sim_operator} justifies the following formulation widely used in the following proofs:
\begin{quote}
To show that $y$ is eventually writable, we consider the following machine: first it simulates $m_x$ to eventually write $x$ on some part of its working tape. Then using $x$ it conducts the following computation to write $y$ on its output...
\end{quote}
What is hidden behind this slightly abusive de-interlacing or sequencing of the different writing operations is the more complex but sound dovetailing technique refered to in the previous proof.
\end{remark}

\begin{prop}\label{prop:lim_sup_suitable}
The operator $\Gamma_{\sup}$ is a suitable and simulational $\Sigma_2$ operator. 
\end{prop}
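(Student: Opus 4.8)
The plan is to verify the five (really, for this proposition, four plus the suitability requirement) conditions one at a time for $\Gamma_{\sup}$, recalling that for a $2$-symbol operator acting into $^\om 2$, the value $\Gamma_{\sup}(h)[i]$ at a cell is $1$ if and only if the cell history $h_i$ has cofinally many $1$'s, equivalently it is $0$ if and only if the history is eventually constantly $0$. I would first observe that $\Gamma_{\sup}$ is manifestly \emph{cell-by-cell}: the defining $\limsup$ only inspects the values $h[\delta]$ for $\delta$ below the limit length, so the cell restriction $\gamma(h) = \limsup_{\delta} h[\delta]$ witnesses the definition, and by Remark \ref{rmk:refining_with_cell_by_cell} it then suffices to check stability, asymptoticity and contraction-proofness at the level of $\gamma$ on single cell histories.

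Next I would dispatch the three easy conditions. \emph{Stable}: for a real $x$ and limit $\alpha$, the history $x^\alpha$ restricted to cell $i$ is the constant word with value $x[i]$, whose $\limsup$ is $x[i]$, so $\Gamma_{\sup}(x^\alpha) = x$. \emph{Asymptotic}: if $h \sqsupseteq h'$ with $h'$ a non-empty final segment of length $\beta$, then $h = u h'$ for some prefix $u$, and a symbol occurs cofinally often in $h$ iff it occurs cofinally often in $h'$ (since the tail of $h$ past $|u|$ is exactly $h'$); hence $\gamma(h) = \gamma(h')$. \emph{Contraction-proof}: if $h \simeq_{ctr} h'$, then by Definition \ref{def:contraction} both words decompose into the same ordinal-indexed sequence of maximal blocks with the same block symbols; a symbol $a$ appears cofinally in $h$ iff cofinally many blocks carry the symbol $a$ iff $a$ appears cofinally in $h'$, so the $\limsup$ agrees. (One should note the harmless edge case flagged in the remark after Definition \ref{def:contraction}: contraction-proofness for $\Gamma_{\sup}$ need only be checked on limit words, and dilating a word by repeating symbols never changes which symbols are cofinal.)

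The remaining task — and the only one with any content — is \emph{suitability}, i.e.\ exhibiting a first-order formula $\varphi$ over the constructible hierarchy (relativized to the input $y$) capturing $\Gamma_{\sup}(H_\nu)[i] = 0$, and checking it is $\Sigma_2$. The natural choice, using the simplified two-place form from the Remark after Definition \ref{def:suitable_operator}, is
\begin{align*}
\Gamma_{\sup}(H_\nu)[i] = 0 \iff L_\nu[y] \models \exists \beta \, \forall \delta \, \big( \beta \leqslant \delta \rightarrow C_i^m(\delta) = 0 \big),
\end{align*}
where the snapshot function $C_i^m(\delta)$, being defined by the usual recursion on the Turing transition relation together with the operator itself at earlier limit stages, is $\Delta_1$-definable over the relevant $L_\delta[y]$ levels (this is the standard fact that the ITTM computation relation is $\Delta_1$-definable, used implicitly in the classical theory). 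Granting that, the displayed matrix is a $\Sigma_2$ formula: an existential ordinal quantifier over a $\Pi_1$ (universal ordinal quantifier over $\Delta_1$) matrix. I expect the main obstacle to be precisely the bookkeeping of this definability claim — convincing the reader that the step-by-step computation relation of a $\Gamma_{\sup}$-machine, including the limit behaviour, is uniformly $\Delta_1$ over the constructible levels indexed by the stage, and that consequently the whole formula lands at the $\Sigma_2$ level rather than higher — but this is entirely routine absoluteness and reflection bookkeeping, identical in spirit to the analysis in \cite{welch_operator}, and no genuinely new idea is required beyond citing it.
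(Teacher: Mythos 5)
Your proposal is correct and follows essentially the same route as the paper: the four simulational conditions are verified directly (the paper dismisses them as holding by definition or clearly enough), and suitability is handled by writing the natural $\exists\beta\,\forall\delta$ formula over $L_\nu[y]$ and deferring the recursion-theoretic bookkeeping — that the history/snapshot relation is uniformly definable by the same formula, ``akin to $\Sigma$-recursion'' — to a reference, exactly as the paper defers it to \cite{these} (Proposition 5.3.37) and you defer it to \cite{welch_operator}.
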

\begin{proof}

To show that $\Gamma_{\sup}$ is a suitable $\Sigma_2$ operator, it is enough to provide a $\Sigma_2$ formula defining the $\limsup$ operator in the sense of Definition \ref{def:suitable_operator}. The only technical part is the fact that this formula does not directly take as argument the history of the machine and that the history is itself defined by this same formula. This can be done in a rather straightforward fashion akin to $\Sigma$-recursion. A more detailed construction of this $\Sigma_2$ formula can be found in the proof of Proposition 5.3.37 in \cite{these}.

As for the fact that it is a simulational operator: it is stable, asymptotical and cell-by-cell by definition and it is clearly enough contraction-proof. 

\end{proof}

\subsubsection{Looping stability}

The possibility for an infinite machine to exit a repeating pattern, as presented in Example \ref{ex:asymptotic}, is in a way what enables those machines to compute for so long and to give rise to intricate computations. At the same time it may lead to pathological patterns from the point of view of those being computation models. As presented in Remark \ref{rmk:looping_stability}, this could result in a $\Gamma$-machine which repeats for a very long time (very long when compared to the time it took to enter this repeating pattern) and then suddenly exits this pattern. To avoid those cases, we introduce in  Definition \ref{def:looping_stability} the notion of \emph{looping stability}. It is a simple assumption that let us develop a rather general and sound subclass of the simulational operators for which we can prove strong results like Theorem \ref{thm:sigma_tau_looping_condition}.
Observe that it thus excludes some more exotic models of ITTM, for example cardinal-recognizing ITTMs as introduced in \cite{habic}.

\begin{definition}[Looping stability]\label{def:looping_stability}
A limit rule operator $\Gamma$ satisfies the \emph{looping stability} condition when for any limit history $H$ and limit ordinals $\alpha$ and $\alpha'$, writing $H^\alpha$ for the word in which $H$ is repeated $\alpha$ times, we have:
\begin{align*}
\Gamma(H^\alpha) = \Gamma(H^{\alpha'})
\end{align*}
Equivalently, in a formulation that will be used more often, for any limit history $H$ and limit ordinal $\alpha$:
\begin{align*}
\Gamma(H^\alpha) = \Gamma(H^\om)
\end{align*}
\end{definition} 

\begin{remark}\label{rmk:looping_stability}

Why do we need looping stability? Without it, it seems impossible to establish a general looping condition as was established in the $\Sigma_2$ case and as we'll do in Proposition \ref{prop:looping_condition_gamma}, for the operators satisfying the condition of looping stability. By \say{looping condition}, we mean a criterion that lets us decide, by looking at at some point the history of a machine (so looking at a given and fixed past of the computation), whether the machine is at this point seen to be looping (and so producing a statement about the future of the computation). In other word, it is a criterion that  allows us to decide whether a machine is looping without having to look at its whole computation through $\mo$.
To give an idea of what would be possible without this hypothesis and the difficulty it poses, imagine $\gamma$ a cell-by-cell operator acting as the $\limsup$ operator with the only difference being that for some gigantic additively closed $\tau$, $\gamma((01)^{\tau}) = 0$. 
This operator would be stable and asymptotic. We could also arrange for it to be contraction proof. And with $\tau > \Sigma_{\sup}$, it would be repeating from stage $\Tau_{\sup} = \Sigma_{\sup}$ onward and up to stage $\tau$ where it would magically exit this loop. 

In particular, refining this idea, Theorem \ref{thm:contre_exemple_machine} shows that the main theorem of this section, that is that under some conditions on $\Gamma$, $\Sigma_\Gamma = \Tau_\Gamma$, does not hold without the hypothesis of looping stability.
\end{remark}

As suggested, the first benefit of the condition of looping stability is that it gives a general looping condition for the operators of the class.
\begin{prop}\label{prop:looping_condition_gamma}
Let $\Gamma$ be an operator which is looping stable and asymptotic.
Then the following looping condition holds: a $\Gamma$-machine is looping if and only if there are two ordinals $\alpha$, $\beta$ such that stages $\alpha$, $\alpha + \beta$ and $\alpha + \beta \cdot \om$ share the same snapshot.
\end{prop}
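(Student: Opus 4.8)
Here is how I would attack the statement.

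If $m$ is looping I would just unwind the definition: there is a repeating interval $\left[\alpha,\alpha+\beta\right[$ such that for every stage $\nu\geq\alpha$, writing $\nu=\alpha+\beta\cdot\delta+\nu'$ with $\delta$ maximal, the snapshot at $\nu$ equals the snapshot at $\alpha+\nu'$; specialising to $\nu=\alpha$, $\nu=\alpha+\beta$ and $\nu=\alpha+\beta\cdot\om$ (where $\nu'=0$ in each case) exhibits three stages carrying the common snapshot of stage~$\alpha$.

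Conversely, suppose stages $\alpha$, $\alpha+\beta$ and $\alpha+\beta\cdot\om$ all carry a common snapshot $s$, with $\beta\geq1$. I would first note that $\beta$ is necessarily a limit ordinal: as $\alpha+\beta\cdot\om$ is a limit stage, $s$ has its head on the first cell and is in state $q_{\lim}$, and since a $\Gamma$-machine occupies $q_{\lim}$ only at limit stages, $\alpha+\beta$ is also a limit stage, forcing $\beta$ to be a limit. The next step would be a shift lemma extracted from asymptoticity: in a $\Gamma$-computation, if two stages $\sigma_1$ and $\sigma_2$ carry the same (non-halting) snapshot, then so do $\sigma_1+\mu$ and $\sigma_2+\mu$ for every ordinal $\mu$ (as long as these stages are reached). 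This is a transfinite induction on $\mu$: the successor case is the ordinary transition rule, and for $\mu$ limit the limit rule applied at $\sigma_j+\mu$ depends, by asymptoticity, only on the final segment $\big(C^m(\sigma_j+\mu')\big)_{\mu'<\mu}$ of the history, and these two segments agree by the induction hypothesis. Applied to the pair $(\alpha,\alpha+\beta)$ and iterated over finitely many periods, the lemma already yields periodicity of period $\beta$ below stage $\alpha+\beta\cdot\om$: for all $k<\om$ and $\nu'<\beta$, stages $\alpha+\beta\cdot k+\nu'$ and $\alpha+\nu'$ share a snapshot.

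To push this through the whole computation I would use looping stability. Set $G:=\big(C^m(\alpha+\nu')\big)_{\nu'<\beta}$, the period word; since $\beta$ is a limit, $G$ is a limit history and looping stability applies to it. By the previous paragraph and asymptoticity, the history up to the limit stage $\alpha+\beta\cdot\om$ has $G^\om$ as its final segment starting at position $\alpha$, so $\Gamma(G^\om)$ is the tape content of $s$. Then I would prove by transfinite induction on $\delta$ that the snapshot at $\alpha+\beta\cdot\delta+\nu'$ equals the snapshot at $\alpha+\nu'$ for all $\nu'<\beta$: the case $\delta=0$ is trivial, the successor step follows from the shift lemma applied to $\alpha$ and $\alpha+\beta$, and for $\delta$ a limit the induction hypothesis shows that the history up to $\alpha+\beta\cdot\delta$ has $G^\delta$ as its final segment starting at position $\alpha$, so by asymptoticity and then looping stability $C^m(\alpha+\beta\cdot\delta)=\Gamma(G^\delta)=\Gamma(G^\om)$ is the tape content of $s$; as $\alpha+\beta\cdot\delta$ is a limit stage, its head and state components also match those of $s$, which settles $\nu'=0$, and $0<\nu'<\beta$ then follows from one more application of the shift lemma to $\alpha$ and $\alpha+\beta\cdot\delta$. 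This shows that $\left[\alpha,\alpha+\beta\right[$ repeats throughout the computation after $\alpha$; and since $m$ is in state $q_{\lim}$ at the stages $\alpha+\beta\cdot\delta$ for arbitrarily large $\delta$ — all of which are reached, the periodicity proved so far precluding an earlier halt — the machine never enters $q_{halt}$. Hence $m$ is looping.

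The main obstacle will be the limit step of the last induction. Asymptoticity by itself is useless there: whenever $\beta\cdot\delta$ is additively indecomposable, every nontrivial final segment of the history up to $\alpha+\beta\cdot\delta$ has full length, so asymptoticity affords no reduction, and contraction is of no help either — only looping stability controls $\Gamma(G^\delta)$. This is also why a single coinciding pair of snapshots does not suffice: one can cook up a computation that returns to $s$ at stage $\alpha+\beta$ but escapes the repeating pattern at stage $\alpha+\beta\cdot\om$, which is essentially the mechanism behind the counterexample of Theorem~\ref{thm:contre_exemple_machine}.
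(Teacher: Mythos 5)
Your argument is correct and follows exactly the route the paper intends: the paper dismisses this proposition with the one-line remark that it is ``a direct application of looping stability'', and your shift lemma from asymptoticity plus the use of looping stability at the limit multiples $\alpha+\beta\cdot\delta$ is precisely the application being alluded to. You have simply written out the transfinite induction (and the non-halting bookkeeping) that the paper leaves implicit.
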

\begin{proof}
This is a direct application of looping stability.
\end{proof}

The following corollary will offer a bit more flexibility when using Proposition \ref{prop:looping_condition_gamma}.

\begin{cor}\label{cor:looping_condition_gamma}
Let $\Gamma$ be an operator which is looping stable and asymptotic.
Then the following looping condition holds: a $\Gamma$-machine is looping if and only if there are two ordinals $\alpha$, $\beta$ such that $\alpha$ and $\alpha + \beta$ share the same snapshot and such that the snapshot $\alpha + \beta \cdot \om$ also appears between stages $\alpha$ and $\alpha+\beta$.
\end{cor}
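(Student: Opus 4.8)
The plan is to reduce both implications to Proposition~\ref{prop:looping_condition_gamma}. The forward implication is immediate: if the machine is looping, Proposition~\ref{prop:looping_condition_gamma} provides $\alpha,\beta$ for which stages $\alpha$, $\alpha+\beta$ and $\alpha+\beta\cdot\om$ share the same snapshot, and then the snapshot at $\alpha+\beta\cdot\om$, being the one at $\alpha$, occurs between $\alpha$ and $\alpha+\beta$. So all the work is in the converse. Assume $\alpha$ and $\alpha+\beta$ share a snapshot $s$ and that the snapshot $t$ at stage $\alpha+\beta\cdot\om$ is also the snapshot at some $\nu$ with $\alpha\le\nu<\alpha+\beta$; the idea is to manufacture from this data a new triple of stages meeting the hypothesis of Proposition~\ref{prop:looping_condition_gamma}.

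If $\nu=\alpha$ then $t=s$ and the original triple $\alpha$, $\alpha+\beta$, $\alpha+\beta\cdot\om$ already works. Otherwise $t$ is the snapshot of the limit stage $\alpha+\beta\cdot\om$, hence carries the distinguished limit state with the head on the first cell, so $\nu$ must itself be a limit stage: write $\nu=\alpha+\beta_0$ with $\beta_0$ a limit ordinal and $0<\beta_0<\beta$. Running the mimicking argument of Example~\ref{ex:asymptotic} (determinism at successor steps, asymptoticity at limit steps), the agreement of the snapshots at $\alpha$ and $\alpha+\beta$ gives, for every finite $k$, that stage $\alpha+\beta\cdot k$ carries $s$ and that the content-history over $[\alpha+\beta\cdot k,\alpha+\beta\cdot(k+1)[$ equals $H$, the content-history over $[\alpha,\alpha+\beta[$; hence the content-history over $[\alpha,\alpha+\beta\cdot\om[$ is $H^\om$ and, by asymptoticity, the tape at $\alpha+\beta\cdot\om$ is $\Gamma(H^\om)$, which by hypothesis is the tape of $t$. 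Now set $a:=\nu$ and $b:=\beta\cdot\om$; since $\beta_0<b$ is absorbed we have $\nu+b=\alpha+\beta\cdot\om$, so stages $a$ and $a+b$ both carry $t$.

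It remains to verify that stage $a+b\cdot\om=\nu+\beta\cdot\om^2$ also carries $t$, for then Proposition~\ref{prop:looping_condition_gamma} applied to $(a,b)$ shows the machine is looping. Let $K$ be the content-history over $[\nu,\nu+b[$. Deleting the first $\beta_0$ letters of $H^\om$ one finds $K=H_1H^\om$, where $H=H_0H_1$ with $H_0=H[0,\beta_0[$ and $H_1=H[\beta_0,\beta[$; regrouping the blocks, $K=(H_1H_0)^\om=L^\om$ with $L:=H_1H_0$, the cyclic shift of $H$ past position $\beta_0$. The key observation is that $L$ is a \emph{limit} word: its length is $(\beta-\beta_0)+\beta_0$, a limit ordinal because the limit ordinal $\beta_0$ sits at its end — and this holds irrespective of whether $\beta$ itself is a limit. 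Running the mimicking argument once more for the pair $(a,b)$, the content-history over $[\nu,\nu+b\cdot\om[$ is $K^\om=(L^\om)^\om=L^{\om^2}$, so the tape at $\nu+\beta\cdot\om^2$ is $\Gamma(L^{\om^2})$. Looping stability applied to the limit word $L$ gives $\Gamma(L^{\om^2})=\Gamma(L^\om)=\Gamma(K)$, while asymptoticity gives $\Gamma(K)=\Gamma(H^\om)$ since $K\sqsupseteq H^\om$ (a non-empty final segment). Altogether the tape at $\nu+\beta\cdot\om^2$ is $\Gamma(H^\om)$, the tape of $t$; as $\nu+\beta\cdot\om^2$ is a limit stage, its full snapshot is $t$, and the triple $\nu$, $\nu+\beta\cdot\om$, $\nu+\beta\cdot\om^2$ is as required.

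I expect the difficulty to be organisational rather than conceptual: one must be careful with the absorption arithmetic (e.g.\ $\beta_0+\beta\cdot\om=\beta\cdot\om$ and $(\beta\cdot\om)\cdot\om=\beta\cdot\om^2$), with the exact description of $K$ as a shifted copy of $H^\om$, and above all with the point that the period $L$ of the manufactured loop is a genuine limit word — the only place the precise hypothesis (that $\nu$ lies strictly inside $[\alpha,\alpha+\beta[$ yet has the snapshot of a limit stage) is used, and the only reason looping stability can be invoked on it. Without that, $L$ could be a successor word and the argument would collapse, exactly as looping stability is already indispensable in Proposition~\ref{prop:looping_condition_gamma}.
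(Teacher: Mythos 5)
Your proof is correct and, like the paper, it reduces both directions to Proposition~\ref{prop:looping_condition_gamma}; the difference lies in how the witnessing pair is manufactured. The paper takes $\alpha'$ to be an occurrence of the snapshot $t$ of stage $\alpha+\beta\cdot\om$ inside $[\alpha,\alpha+\beta[$ and, crucially, takes as period the shifted ordinal $\beta'$ given by the next occurrence of $t$ in the repeated copy $[\alpha+\beta,\alpha+\beta\cdot 2[$, so that $\alpha'+\beta'\cdot\om=\alpha+\beta\cdot\om$ holds by ordinal arithmetic and the third snapshot comes for free, with no further appeal to looping stability inside the corollary. You instead keep $a=\nu$ and take the longer period $b=\beta\cdot\om$, which forces you to verify the snapshot at $\nu+\beta\cdot\om^2$ by hand; you do this correctly via the cyclic shift $L=H_1H_0$, looping stability applied to the limit word $L$, and asymptoticity. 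The price is the auxiliary claim that $\nu$ is a limit stage (needed so that $L$ has limit length): strictly speaking, a snapshot in state $q_{limit}$ with the head on the first cell need not occur at a limit stage unless one adopts the usual convention that programs never transition into $q_{limit}$, but this is the same implicit convention the paper relies on when it invokes Proposition~\ref{prop:looping_condition_gamma} (whose repeating segment must likewise have limit length for looping stability to apply), so it is not a gap relative to the paper's own level of rigour. In short: same decomposition, a more explicit but more laborious choice of witnesses; the paper's alignment $\alpha'+\beta'\cdot\om=\alpha+\beta\cdot\om$ is precisely what lets it avoid your $L^{\om^2}$ computation.
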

\begin{proof}
Let $\alpha'$ be the least stage between stages $\alpha$ and $\alpha+\beta$ at which the snapshot of stage $\alpha + \beta \cdot \om$ also appears.
As $\alpha$ and $\alpha+\beta$ share the same snapshot, by asymptoticity, the computation segment $[\alpha, \alpha+\beta[$ is the same as the computation segment $[\alpha+\beta, \alpha+\beta \cdot 2[$. In particular, the snapshot of stage $\alpha + \beta \cdot \om$ also appears at some stage $\alpha' + \beta'$ between stages $\alpha+\beta$ and stages $\alpha+\beta\cdot 2$ and such that $\alpha' + \beta' \cdot \om = \alpha + \beta \cdot \om$. And so, $\alpha'$ and $\beta'$ satisfy the looping condition of Proposition \ref{prop:looping_condition_gamma}.
\end{proof}

Further, Proposition \ref{prop:looping_condition_gamma} justifies the following definition.
\begin{definition}
Let $\Gamma$ be an asymptotic operator that satisfies the looping stability and $m$ a $\Gamma$-machine. We say that $m$ is \emph{seen to be looping at some stage $\delta$} if there are two ordinals $\alpha$ and $\beta$ such that stages $\alpha$, $\alpha + \beta$ and $\alpha + \beta \cdot \om$ share the same snapshot and such that $\alpha + \beta \cdot \om = \delta$.
\end{definition}

\begin{prop}\label{prop:new_snapshot}
Let $\Gamma$ be an operator that satisfies the looping stability as well as being asymptotic and let $m$ be a $\Gamma$-machine that does not halt. Then at any stage $\alpha$, if the machine is not seen to be looping before stage $\alpha \cdot \om^\om$, there is a snapshot that did not appear strictly before stage $\alpha$ that appears strictly before stage $\alpha \cdot \om^\om$.
\end{prop}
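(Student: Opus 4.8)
The plan is to argue by contraposition within the relevant window: I will show that if \emph{every} snapshot occurring strictly before stage $\alpha \cdot \om^\om$ already occurred strictly before stage $\alpha$, then $m$ is seen to be looping before stage $\alpha \cdot \om^\om$, contradicting the hypothesis. So assume that no new snapshot appears in the interval $[\alpha, \alpha \cdot \om^\om[$. The first step is to extract a recurrence of a single snapshot along a cofinal sequence of ``clean'' stages. Since every snapshot at a stage in $[\alpha, \alpha\cdot\om^\om[$ reappears below $\alpha$, in particular it reappears at cofinally many stages: indeed, once we know the computation between any two occurrences of the same snapshot is identical (this uses asymptoticity, as in the proof of Corollary \ref{cor:looping_condition_gamma}), a snapshot that repeats once must repeat unboundedly often. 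The key quantitative point is that the additively-closed ordinals below $\alpha \cdot \om^\om$ that are multiples of $\alpha$, namely the stages of the form $\alpha \cdot \om^k$ for $k < \om$ and more finely $\alpha\cdot\om^{k}\cdot j$, are cofinal in $\alpha\cdot\om^\om$ and leave ``a lot of room'' between consecutive ones.

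The second step is the pigeonhole. Look at the $\om$ snapshots occurring at stages $\alpha, \alpha\cdot 2, \alpha\cdot 3, \dots$ — more convenient: consider stages $\sigma_k = \alpha \cdot \om^k$ for $k < \om$. Each $\sigma_k$ is additively closed (being $\alpha\cdot\om^k$, and $\alpha$ may be taken additively closed after replacing it by a slightly larger additively closed ordinal, or one argues directly with $\om^k$-blocks). Actually the cleaner route: among stages $\alpha, \alpha\cdot 2, \alpha\cdot 3,\ldots$ there are only boundedly many distinct snapshots — wait, that is not immediate. Instead I use that the snapshot set is fixed but the \emph{pattern} of recurrence must stabilize: by asymptoticity, if snapshot $s$ occurs at stages $\mu < \nu$ then the computation on $[\mu,\nu[$ repeats on $[\nu, \nu + (\nu - \mu)[$, hence $s$ is periodic with some period $\pi_s$ from its first occurrence onward (the least such $\pi_s$). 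Pick $s$ = the snapshot at stage $\alpha$; it recurs below $\alpha$ too, but what I need is that from stage $\alpha$ onward it recurs, say at $\alpha + \beta$ for the least $\beta>0$, and then periodically at $\alpha + \beta\cdot\delta$ for all ordinals $\delta$, with the computation on $[\alpha, \alpha+\beta[$ repeating. Now among the periodic reoccurrences $\alpha + \beta\cdot\delta$, choose $\delta = \om$: the stage $\alpha + \beta\cdot\om$ is below $\alpha\cdot\om^\om$ provided $\beta < \alpha\cdot\om^\om$, which holds since $\alpha+\beta \le \alpha\cdot\om^\om$ forces $\beta \le \alpha\cdot\om^\om$ and in fact $\beta$ is much smaller (it is at most the first reoccurrence, which is $< \alpha\cdot 2 \le \alpha\cdot\om$, because the snapshot at stage $\alpha$, being non-new, equals the snapshot at some stage $< \alpha$, and by periodicity below $\alpha$ its period is $<\alpha$, and asymptoticity transports this to $[\alpha,\cdot[$). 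Hence $\alpha$, $\alpha+\beta$, $\alpha+\beta\cdot\om$ all share snapshot $s$, with $\alpha+\beta\cdot\om < \alpha\cdot\om^\om$: so $m$ is seen to be looping at stage $\alpha+\beta\cdot\om < \alpha\cdot\om^\om$ by the definition following Corollary \ref{cor:looping_condition_gamma}, using Proposition \ref{prop:looping_condition_gamma}. This is the desired contradiction, so some new snapshot must appear in $[\alpha, \alpha\cdot\om^\om[$.

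The main obstacle I anticipate is the bookkeeping in the previous paragraph: one must be careful that ``snapshot at stage $\alpha$ is not new'' really forces a \emph{period} below $\alpha$ rather than merely one earlier occurrence, and that asymptoticity legitimately transports this period past stage $\alpha$. The honest argument is: let $s$ be the snapshot at stage $\alpha$ and let $\mu < \alpha$ be an occurrence of $s$; by asymptoticity the computation from $\mu$ and from $\alpha$ coincide, so $s$ reoccurs at $\alpha + (\alpha - \mu)$... but $\alpha - \mu$ need not be small. The fix is to take $\mu$ to be the \emph{largest} stage $<\alpha$ with snapshot $s$ if one exists, or to note that $s$ occurs at cofinally many stages below $\alpha$ (since it reoccurs periodically once it reoccurs once), pick two such occurrences $\mu_1 < \mu_2 < \alpha$ with $\mu_2 - \mu_1$ the period $\pi$, and observe $\pi < \alpha$; then by asymptoticity $s$ reoccurs from $\alpha$ onward with this same period $\pi$, giving $\alpha$, $\alpha + \pi$, $\alpha + \pi\cdot\om$ all with snapshot $s$, where $\pi\cdot\om \le \alpha\cdot\om < \alpha\cdot\om^\om$ (strict since $\pi<\alpha$, so $\pi\cdot\om \le \alpha\cdot\om$ wait need $\pi\cdot\om<\alpha\cdot\om^\om$, which is clear). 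A secondary subtlety: one should confirm the snapshot at stage $\alpha$ does reoccur at all strictly below $\alpha\cdot\om^\om$ — but this is exactly the standing assumption that no new snapshot appears on $[\alpha,\alpha\cdot\om^\om[$, applied at stage $\alpha$ itself (its snapshot is old, hence occurred $<\alpha$, hence by the periodicity argument occurs cofinally and in particular again). Once these points are nailed down the proof is just an invocation of Proposition \ref{prop:looping_condition_gamma}.
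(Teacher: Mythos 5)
Your argument breaks at its key step: from the recurrence of the snapshot $s$ at two stages (say $\alpha$ and $\alpha+\beta$, or $\mu_1$ and $\mu_2$ below $\alpha$) you conclude that $s$ recurs ``periodically at $\alpha+\beta\cdot\delta$ for all ordinals $\delta$,'' and in particular that $\alpha$, $\alpha+\beta$ and $\alpha+\beta\cdot\om$ all carry the snapshot $s$. Asymptoticity only propagates the repetition through the interior of the segment and through finitely many iterations: at the limit stage $\alpha+\beta\cdot\om$ the operator is applied to a tail of the form $H_1^\om$, and nothing forces $\Gamma(H_1^\om)=H_1[0]$. This is exactly the ``exiting the repeating pattern'' phenomenon of Example \ref{ex:asymptotic}, and it is why ``seen to be looping'' is a nontrivial condition: in Proposition \ref{prop:looping_condition_gamma} the agreement of the snapshot at the third stage $\alpha+\beta\cdot\om$ is a \emph{hypothesis}, not a consequence of the first two agreements. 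Even for the classical $\limsup$ rule your claim fails (a cell equal to $0$ at both endpoints may be $1$ somewhere in the segment, so the limit snapshot differs), which is precisely why the ITTM looping criterion carries an extra condition on the cells. Your contrapositive assumption only tells you that the snapshot at $\alpha+\beta\cdot\om$ is an \emph{old} one; it gives no reason for it to equal $s$, so the invocation of Proposition \ref{prop:looping_condition_gamma} is unjustified and the argument collapses.

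What is missing is the descent that the paper's proof is built on. When the limit snapshot $s_1$ at the end of the $\om$ repetitions differs from $s_0=s$, Corollary \ref{cor:looping_condition_gamma} shows $s_1$ cannot occur inside the repeating segment either, so its earlier occurrence (if it is not new) lies strictly below the previous anchor $\alpha_0<\alpha$; one then repeats the construction with $s_1$, producing a strictly decreasing sequence of anchors $\alpha_0>\alpha_1>\dots$, which by well-foundedness must stop after finitely many rounds at a genuinely new snapshot. Each round may lengthen the repeating segment by a factor of $\om$, and summing $\alpha\cdot\om^{i+1}$ over $i<\om$ is what yields the stated bound $\alpha\cdot\om^\om$. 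The fact that your argument would place the new snapshot (or the looping witness) already below roughly $\alpha\cdot\om$ is itself a warning sign that an essential part of the problem has been assumed away.
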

\begin{proof}
Let $m$ be some machine and $\alpha$ an ordinal such that the machine is not seen to be looping before stage $\alpha \cdot \om^\om$. We look at its computation starting from stage $\alpha$ onwards. 
We consider the snapshot $s_0$ that appears in $m$ at stage $\alpha$. 
Either this snapshot appears for the first time, and we're done. Or this snapshot appears at some earlier stage $\alpha_0 < \alpha$.
This means that the snapshot $s_0$ (at stage $\alpha_0$) leads to another occurrence of the snapshot $s_0$ (at stage $\alpha$). By asymptoticity of $\Gamma$, the snapshot $s_0$ (at stage $\alpha$) must also lead to an ulterior occurrence of $s_0$ and so on, for at least $\om$ repetitions. After those $\om$ repetitions, we write $s_1$ for the snapshot of $m$. Observe first that $s_1 \neq s_0$ as otherwise $m$ would be seen to be looping by Proposition \ref{prop:looping_condition_gamma}.
And even stronger: by Corollary \ref{cor:looping_condition_gamma}, $s_1$ does not appear in the whole segment that spans between the two occurrences of $s_0$ and that repeats itself. Hence, again, if $s_1$ does not appear for the first time, there is some least stage $\alpha_1 < \alpha_0$ at which the snapshot $s_1$ occurs.

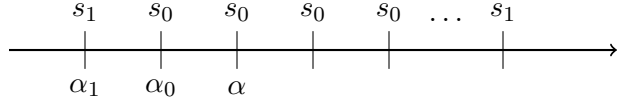
\begin{figure}[h]
\begin{center}
\begin{tikzpicture}
\draw[thick, ->]  (0,0) -- (8,0);

\draw (1, 0.25) -- (1, -0.25);
\node at (1,0.5) {$s_1$};
\node at (1,-0.5) {$\alpha_1$};

\draw (2, 0.25) -- (2, -0.25);
\node at (2,0.5) {$s_0$};
\node at (2,-0.5) {$\alpha_0$};

\draw (3, 0.25) -- (3, -0.25);
\node at (3,0.5) {$s_0$};
\node at (3,-0.5) {$\alpha$};

\draw (4, 0.25) -- (4, -0.25);
\node at (4,0.5) {$s_0$};

\draw (5, 0.25) -- (5, -0.25);
\node at (5,0.5) {$s_0$};

\node at (5.75,0.4) {$\ldots$};

\draw (6.5, 0.25) -- (6.5, -0.25);
\node at (6.5,0.5) {$s_1$};


\end{tikzpicture}\caption{The snapshot $s_0$ repeats $\om$ times and if the snapshot $s_1$ does not appear for the first time after this repetition it must have appeared before stage $\alpha_0$.}
\label{fig:repetition_s_Beta}
\end{center}
\end{figure}

With the same reasoning, as long as the repetition of the segments between the occurrences of $s_i$ does not yield a new snapshot, it yields a snapshot $s_{i+1}$ that appeared earlier, at some least ordinal stage $\alpha_{i+1} < \alpha_i$ (as, by hypothesis, the machine isn't seen to be looping yet). This yields a decreasing sequence of ordinals $(\alpha_i)$.
Consequently, by well-orderdness, there must be some snapshot $s_k$ that appears for the first time at some stage $\alpha_k$ for a finite~$k$. 
 
And how late can this snapshot occur? At worst, the first repeating segment between the occurrences of $s_0$ has length $\alpha$. Then the second repeating segment, between the occurrences of $s_1$ would have length $\alpha \cdot \om$. And, more generally, the repeating segment between the occurrences of $s_i$ would have length $\alpha \cdot \om^i$. This yields the bound
\begin{align*}
\sum_{i < \om} \alpha\cdot\om^{i+1} = \alpha \cdot \om^\om
\end{align*} 
for the appearance of a new snapshot.
\end{proof}

\begin{prop}\label{prop:gamma_machine_loops_countable_stage}
Let $\Gamma$ be a suitable operator that satisfy the looping stability as well as being asymptotic. Then there is some countable stage $\alpha$ such that any $\Gamma$-machine either stopped or is seen to be looping before stage $\alpha$.
\end{prop}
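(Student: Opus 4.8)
The plan is to reduce to a single machine and then push its ``loop-detection time'' below a countable ordinal by a condensation argument. First I would note that since there are only countably many $\Gamma$-machines and a countable supremum of countable ordinals is countable, it suffices to show that each individual $\Gamma$-machine $m$ (computing from the empty input) has either halted or been seen to be looping by some countable stage; the desired $\alpha$ is then the supremum of these stages plus one.

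Next I would check that a non-halting $m$ really is seen to be looping at \emph{some} stage, so that there is something to bound. Here I would iterate Proposition~\ref{prop:new_snapshot}: setting $\alpha_0 = \om$, $\alpha_{\xi+1} = \alpha_\xi\cdot\om^\om$ and $\alpha_\mu = \sup_{\xi<\mu}\alpha_\xi$ at limits, if $m$ were never seen to be looping then for every $\xi$ the proposition supplies a snapshot $s_\xi$ that appears strictly before $\alpha_{\xi+1}$ but not strictly before $\alpha_\xi$; since $\alpha_{\xi+1}\leqslant\alpha_{\xi'}$ for $\xi<\xi'$, the $s_\xi$ are pairwise distinct, so $\xi\mapsto s_\xi$ injects $\mo$ into the set of real numbers, which is absurd. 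Hence I may let $\nu(m)$ be the least stage at which $m$ has halted (in which case $\nu(m)$ is just the halting time) or is seen to be looping in the sense of Proposition~\ref{prop:looping_condition_gamma}.

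The heart of the argument is showing that $\nu(m)$ is countable, and this is where suitability enters. Because $\Gamma$ is suitable via a set-theoretic formula $\varphi$ (Definition~\ref{def:suitable_operator}), the computation of $m$ from the empty input is definable over $L$ --- successor steps are the ordinary Turing transitions, and the content of cell $i$ at a limit stage $\nu$ is the symbol $k$ with $L_\nu\models\varphi(i,m,\emptyset,k)$ --- so ``the correct partial computation of $m$ of length $\rho$'' is uniformly definable over $L$ with fixed finite quantifier complexity, and therefore so are ``$m$ halts at $\delta$'' and ``$m$ is seen to be looping at $\delta$''; in particular $\nu(m)$ is $\Sigma_k$-definable over $L$ from the parameter $m$, for some finite $k$. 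I would then invoke a standard condensation argument to produce a \emph{countable} ordinal $\theta$ with $L_\theta\prec_{\Sigma_k}L$ and with $L_\theta$ closed enough (satisfying a large finite fragment of $\mathrm{ZF}$) to evaluate $\varphi$-satisfaction over every $L_\nu$ with $\nu<\theta$ --- using here that $m$, as a program code, is a natural number and so is fixed by the transitive collapse. Since ``$\nu(m)$ exists'' holds in $L$ by the previous step, it holds in $L_\theta$, so $\nu(m)^{L_\theta}=\delta_0<\theta$; transferring the $\Sigma_k$ statement ``$m$ halts or is seen to be looping at $\delta_0$'' back to $L$, and recalling that $L$ runs the genuine $\Gamma$-computation of $m$, gives $\nu(m)\leqslant\delta_0<\theta$, so $\nu(m)$ is countable, which together with the first step finishes the proof.

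The step I expect to be the main obstacle is the condensation step: one needs a single countable level $L_\theta$ that both reflects the relevant $\Sigma_k$-truths of $L$ and correctly executes the limit rule, and the latter is exactly why ``suitable'' is built the way it is, since the rule is expressed through the constructible levels $L_\nu$. This is the generalization, to an arbitrary suitable, asymptotic and looping-stable operator, of the mechanism Welch uses to bound $\Sigma$ for the classical $\limsup$ machines; everything else --- the reduction in the first step and the counting argument via Proposition~\ref{prop:new_snapshot} --- is routine.
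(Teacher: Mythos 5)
Your argument is correct in outline, but it takes a genuinely different route from the paper's. The paper never separates existence from countability and never uses Proposition~\ref{prop:new_snapshot}: it applies the downward L\"owenheim--Skolem theorem twice, first inside $L_{\omega_2}$ to get three levels $L_{\alpha_1}\prec L_{\alpha_2}\prec L_{\alpha_3}$ together with Devlin's theorem that $L$ acquires no new reals after stage $\om_1$, and then once more to pull this configuration down to countable $\beta_1<\beta_2<\beta_3$; suitability makes the snapshots at $\beta_1$ and $\beta_2$ agree, the segment $[\beta_0,\beta_2[$ then repeats $\om$ times, and the ``no new reals after $\beta_1$'' clause lets one reflect the existential statement ``$x_2$ occurs and later $x'$ occurs'' from $L_{\beta_3}$ down to $L_{\beta_2}$, so that the post-repetition snapshot already occurs inside the repeating segment and Corollary~\ref{cor:looping_condition_gamma} applies, giving one \emph{uniform} countable bound $\beta_3$ for all machines at once. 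You instead (a) prove that a non-halting machine is seen to loop at \emph{some} ordinal stage by iterating Proposition~\ref{prop:new_snapshot} (that step is fine: the snapshots $s_\xi$ are indeed pairwise distinct, and a proper class of distinct reals is absurd), and (b) bound the least halting-or-looping stage by reflecting a definable-over-$L$ statement into a countable level $L_\theta\prec_{\Sigma_k}L$ obtained by a Skolem-hull/condensation argument, using suitability (Definition~\ref{def:suitable_operator}) for the definability and $L$-to-$V$ absoluteness of the computation, and finally taking a supremum over the countably many machines. What each approach buys: the paper's proof is more self-contained --- it needs only plain elementarity between set-sized levels (cheap via L\"owenheim--Skolem), Devlin's lemma, and the looping criterion already proved, and it yields the bound uniformly; yours is conceptually crisp (``the least witness of a definable property of the computation lies below the least countable $\Sigma_k$-stable''), but it leans on the standard yet heavier facts that countable $\theta$ with $L_\theta\prec_{\Sigma_k}L$ exist and that the whole computation, ``halts at $\delta$'' and ``is seen to be looping at $\delta$'' are expressible over $L$ with a fixed finite complexity, plus the extra existence step (a) that the paper's argument avoids entirely. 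With those standard facts granted and the complexity bookkeeping done carefully, your proof goes through.
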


\begin{proof}
Consider ordinal $\omega_2$. By the downward Löwenheim-skolem theorem, we can find limit ordinals $\alpha_1 < \alpha_2 < \alpha_3$ of cardinality $\aleph_1$ and such that the $L_{\alpha_i}$ are elementary substructures of $L_{\omega_2}$. Moreover we can arrange for $\alpha_3$ to be strictly greater than $\alpha_2 \cdot \om$.  Further, by a result of Devlin \cite[II. 5.5]{devlin}, we can show in $L$ that no new real numbers appear in the constructible universe after construction stage $\om_1$. 
Hence, we have:
\begin{align*}
L_{\omega_2} \models \exists \alpha_1 < \alpha_2 < \alpha_3 & \ L_{\alpha_1} \prec L_{\alpha_2} \prec L_{\alpha_3} \\
&\wedge \Lim(\alpha_1) \wedge \Lim(\alpha_2) \wedge \Lim(\alpha_3) \\
&\wedge \alpha_2 \cdot \om < \alpha_3 \\
&\wedge \text{no new real number appear after construction stage } \alpha_1
\end{align*}

Again by the downward Löwenheim-Skolem theorem there exists some countable ordinal $\beta$ such that $L_{\beta} \prec L_{\omega_2}$. This yields three limit countable ordinals $\beta_1 < \beta_2 < \beta_3$ that satisfy the previous formula in $L_\beta$. We show that any $\Gamma$-machine $m$ is seen to be looping before stage $\beta_3$.

Let $m$ be a $\Gamma$-machine computing from the empty input. First, as $\Gamma$ is a suitable operator there is some predicate $\varphi(i, m, 0, k)$ that defines in $L_\alpha$, for any limit $\alpha$, the value of any cell $i$ of $m$ at limit stage $\alpha$ (the $0$ in $\varphi$ stands for the empty input). As $L_{\beta_1} \prec L_{\beta_2}$ (and as those are limit stages), whether in $L_{\beta_1}$ or $L_{\beta_2}$, the computation of the limit rule for some cell $i$ using $\varphi$ yields the same value. Hence, the snapshots of $m$ at stages $\beta_1$ and $\beta_2$ match. For convenience, we let $\beta_0 < \beta_2$ be the least ordinal stage that share the same snapshot as $\beta_2$ and we write $x_2$ the real number appearing at those two stages. As seen in Example \ref{ex:asymptotic}, by asymptoticity of $\Gamma$ this gives rise to a repeating pattern where the segment between stages $\beta_0$ and $\beta_2$ repeats at least $\om$ stages. Let $\beta'_2$ be the limit stage directly after those $\om$ repetitions of the segment of computation $[\beta_0, \beta_2[$ and $x'$ be the real number appearing at this stage. As $\beta'_2 \leqslant \beta_2 \cdot \om$, we also have that $\beta'_2 < \beta_3$. This state of affairs is represented in Figure~\ref{fig:schema_beta_1_2_3}.

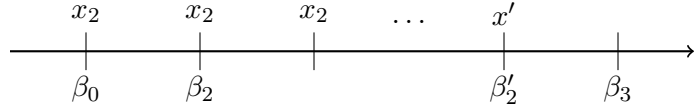
\begin{figure}[h]
\begin{center}
\begin{tikzpicture}
\draw[thick, ->]  (0,0) -- (9,0);

\draw (1, 0.25) -- (1, -0.25);
\node at (1,0.5) {$x_2$};
\node at (1,-0.5) {$\beta_0$};

\draw (2.5, 0.25) -- (2.5, -0.25);
\node at (2.5,0.5) {$x_2$};
\node at (2.5,-0.5) {$\beta_2$};

\draw (4, 0.25) -- (4, -0.25);
\node at (4,0.5) {$x_2$};

\node at (5.25,0.4) {$\ldots$};

\draw (6.5, 0.25) -- (6.5, -0.25);
\node at (6.5,0.5) {$x'$};
\node at (6.5,-0.5) {$\beta'_2$};

\draw (8, 0.25) -- (8, -0.25);
\node at (8,-0.5) {$\beta_3$};

\end{tikzpicture}\caption{The segment that spans between $\beta_0$ and $\beta_2$ repeats $\om$ times and produces $x'$.}
\label{fig:schema_beta_1_2_3}
\end{center}
\end{figure}

By Corollary \ref{cor:looping_condition_gamma}, $m$ is seen to be looping if $x'$ appears at a limit stage in the segment of computation $[\beta_1, \beta_2[$ (as it would then produce the same snapshot as the one appearing at stage $\beta'_2$.)  As $x'$ appears as a snapshot in $m$ at stage $\beta'_2$, it can be defined in $L_{\beta'_2}$. Hence $x' \in L_{\beta'_2+1}$. As $\beta'_2 < \beta_3$, $x'$ is also in $L_{\beta_3}$. So, with $x_2$ being the snapshot that appeared at stage $\beta_2$ and with $C^m(\alpha)$ the function that maps some ordinal stage $\alpha$ to the real number that describes the snapshot at stage $\alpha$ in the computation of $m$:
\begin{align*}
L_{\beta_3} \models \exists \beta_2 < \beta'_2 \, \Lim(\beta_2) \wedge \Lim(\beta'_2) \wedge C^m(\beta_2) = x_2 \wedge C^m(\beta'_2) = x'
\end{align*}

Now, and that is the corner stone of this proof, $L_{\beta_3}$ sees that no new real numbers appeared in the constructible universe after stage $\beta_1$. This means that $x'$ (as well as $x_2$) are in $L_{\beta_1}$ and in $L_{\beta_2}$ and that, by elementarity, we can reflect the previous sentence down to $L_{\beta_2}$. This means that, as depicted in Figure~\ref{fig:schema_delta_beta_1_2_3}, in the computation of $m$ up to stage $\alpha_2$, the real number $x'$ appeared at some limit stage $\delta'_2$ such that there is an earlier limit stage $\delta_2 < \delta'_2$ at which the real number $x_2$ appeared. By definition of $\beta_0$, it must be less than $\delta_2$ and $x'$ appeared at a limit stage in the segment of computation $[\beta_0, \beta_2[$. Hence, by Corollary \ref{cor:looping_condition_gamma}, $m$ is seen to be looping before stage $\beta_3$. 

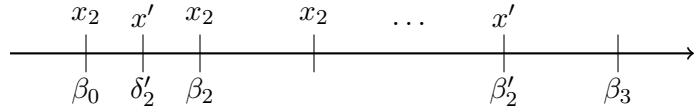
\begin{figure}[h]
\begin{center}
\begin{tikzpicture}
\draw[thick, ->]  (0,0) -- (9,0);

\draw (1, 0.25) -- (1, -0.25);
\node at (1,0.5) {$x_2$};
\node at (1,-0.5) {$\beta_0$};

\draw (1.75, 0.25) -- (1.75, -0.25);
\node at (1.75,0.5) {$x'$};
\node at (1.75,-0.5) {$\delta'_2$};

\draw (2.5, 0.25) -- (2.5, -0.25);
\node at (2.5,0.5) {$x_2$};
\node at (2.5,-0.5) {$\beta_2$};

\draw (4, 0.25) -- (4, -0.25);
\node at (4,0.5) {$x_2$};

\node at (5.25,0.4) {$\ldots$};

\draw (6.5, 0.25) -- (6.5, -0.25);
\node at (6.5,0.5) {$x'$};
\node at (6.5,-0.5) {$\beta'_2$};

\draw (8, 0.25) -- (8, -0.25);
\node at (8,-0.5) {$\beta_3$};

\end{tikzpicture}\caption{By elementarity, there is $\delta_2' < \beta_2$ such that $C^m(\delta_2') = x'$}
\label{fig:schema_delta_beta_1_2_3}
\end{center}
\end{figure}
\end{proof}

\section{Toward higher-order and many-symbol ITTMs}\label{sec:higher_order_ittms}

\begin{theorem}\label{thm:2_sym_machines}
$\Gamma_{\sup}$ and $\Gamma_{\inf}$ are the only simulational operators with two symbols that satisfy the looping stability condition.
\end{theorem}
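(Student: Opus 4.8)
The plan is to push everything down to the cell level. By the cell-by-cell property and Remark~\ref{rmk:refining_with_cell_by_cell}, a simulational $2$-symbol operator $\Gamma$ that satisfies looping stability is entirely determined by its cell restriction $\gamma\colon {}^{<On}2\to 2$, which is again stable, asymptotic, contraction-proof, and (clearly) looping stable; so it is enough to show that $\gamma$ is the cell rule of $\limsup$ or of $\liminf$. By contraction-proofness, $\gamma(h)$ depends only on $\operatorname{ctr}(h)$, which is a stutter-free word on $\{0,1\}$, i.e.\ a word that flips its value at every successor position. If $h$ is eventually constant equal to $a$, then $\operatorname{ctr}(h)$ ends in a final segment $a^{\rho}$ with $\rho$ a limit, so asymptoticity and stability give $\gamma(h)=a$, which is exactly what both $\limsup$ and $\liminf$ give. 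Hence the only real content is the value $\gamma$ takes on the stutter-free words of limit length, equivalently on the infinitely alternating $h$.

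Set $a_0:=\gamma((01)^\om)$; asymptoticity already gives $\gamma((10)^\om)=a_0$, since $(10)^\om$ is a final segment of $(01)^\om$. The theorem then reduces to the single claim that $\gamma(v)=a_0$ for every stutter-free word $v$ of limit length. Indeed, granting this, one checks that $\gamma$ agrees cell-by-cell with $\Gamma_{\sup}$ when $a_0=1$ and with $\Gamma_{\inf}$ when $a_0=0$: for an arbitrary limit history $h$, the word $\operatorname{ctr}(h)$ is stutter-free, and if it has limit length then $h$ is infinitely alternating, hence has cofinally many $1$'s and cofinally many $0$'s, so $\gamma(h)=a_0$ coincides with $\limsup h$ (resp.\ $\liminf h$); and if $\operatorname{ctr}(h)$ has non-limit length then $h$ is eventually constant and we are in the case treated above. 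Transporting this equality back to $\Gamma$ via cell-by-cell yields $\Gamma=\Gamma_{\sup}$ or $\Gamma=\Gamma_{\inf}$.

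To attack the claim, I would first use asymptoticity to delete initial segments: $\gamma(v)=\gamma(v[\delta,|v|[)$ for every $\delta<|v|$. Peeling off everything but the leading term of the Cantor normal form of $|v|$, this reduces to the case $|v|=\om^\beta$ additively closed; $\beta=1$ is the case $v\in\{(01)^\om,(10)^\om\}$, already settled, so assume $\beta\ge 2$. The main mechanism is a dilation trick: replacing a letter $a$ at a singleton block of a stutter-free word by $a^\om$ changes neither $\operatorname{ctr}$ nor, therefore, the value of $\gamma$. For periodic words this settles matters: for instance $(01)^{\om^2}=((01)^\om)^\om$ has the same contraction as $H^{\om^2}$ with $H=0^\om1^\om$, so by looping stability and a second contraction $\gamma((01)^{\om^2})=\gamma(H^{\om^2})=\gamma(H^\om)=\gamma((0^\om1^\om)^\om)=\gamma((01)^\om)=a_0$; the same argument handles $(10)^{\om^2}$, the words $(01)^\alpha$ and $(10)^\alpha$ for every limit $\alpha$, and any $v$ whose tail is of this form.

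The hard part, which I expect to take the bulk of the work, is that a stutter-free word of length $\om^\beta$ is in general not even eventually periodic — its value at each of the cofinally many limit positions is an unconstrained bit — so the dilation-to-a-power route does not apply directly, while asymptoticity alone only deletes prefixes, which washes out any finite surgery performed near the start. Making this case go through requires a finer induction on $\beta$ (equivalently on $|v|$) that shows how to absorb the free choices at limit positions — combining iterated dilations, passages to suffixes, and looping stability — so as to rewrite $\gamma(v)$ as $\gamma$ of a strictly shorter stutter-free word, the induction bottoming out at $\gamma((01)^\om)=a_0$. This is the single genuinely technical point; once it is in place, everything else is the bookkeeping sketched above.
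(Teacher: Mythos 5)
Your overall architecture is the same as the paper's: reduce to the cell restriction $\gamma$ via Remark \ref{rmk:refining_with_cell_by_cell}, dispose of eventually constant cell histories by asymptoticity plus stability, and handle the genuinely alternating histories through their contractions, pinning everything to $a_0=\gamma((01)^\om)$. The problem is that your proposal stops exactly at the decisive step. You prove the key claim ($\gamma(v)=a_0$ for every stutter-free limit word $v$) only for words that are, up to a suffix, of the form $(01)^\alpha$ or $(10)^\alpha$, and you defer the general case --- a stutter-free word of additively closed length whose bit at each limit position is a free choice --- to an unspecified ``finer induction on $\beta$''. That is a genuine gap, not bookkeeping, and it is not clear the induction you hope for exists: asymptoticity only deletes initial segments and cannot shorten a word of additively closed length; contraction-proofness is vacuous on a stutter-free word (it is its own contraction); stability only concerns constant histories; and looping stability (Definition \ref{def:looping_stability}) only relates different limit powers $H^\alpha$, $H^{\alpha'}$ of a fixed limit word $H$. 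A stutter-free word of length $\om^2$ whose bits $b_k$ at the positions $\om\cdot k$ form a non-eventually-periodic sequence is not a nontrivial power, is not (one can check) contraction-equivalent to one, and has no shorter suffix, so none of the five conditions visibly forces its value; your ``single genuinely technical point'' is therefore not a routine refinement but the whole difficulty, and it may require an additional idea or hypothesis (e.g.\ restricting to histories that actually arise in computations) rather than more of the same moves.

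For comparison, the paper's proof of Theorem \ref{thm:2_sym_machines} closes this case in one line: it asserts that a stutter-free two-symbol word is literally the regular alternation, i.e.\ of the form $(01)^\alpha$ or $(10)^\alpha$, and then applies asymptoticity and looping stability --- in other words, it proves essentially the periodic case that you also prove and does not engage with the words you are worried about, even though Definition \ref{def:stutter-free} constrains only successor positions and leaves the letter at each limit position unconstrained. So your diagnosis of where the real work lies is sharper than the paper's treatment, but as written your proposal does not establish the theorem: the central claim remains unproved for non-(eventually-)periodic stutter-free histories, and you should either supply the missing argument explicitly or identify the extra assumption under which it holds.
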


\begin{proof}
First, as seen, $\Gamma_{\sup}$ is a suitable and simulational operator. With the same reasoning, so is its symmetric, $\Gamma_{\inf}$.
Then, let $\Gamma$ be a two-symbol suitable and simulational operator. As $\Gamma$ is cell-by-cell we can write $\gamma$ the cell restriction of $\Gamma$. We now want to show that $\gamma$ is either $\gamma_{\sup}$ or $\gamma_{\inf}$. So let $h \in {}^{<On}2$ a limit cell history on two symbols. We distinguish two cases
\begin{itemize}[label={--}]
\item If there is $s = 0$ or $s = 1$ and some $\alpha$ such that the word $s^\alpha$ is a final segment of $h$ then by asymptoticy $\gamma(h) = \gamma(s^\alpha)$ and by stability $\gamma(h) = s$.
\item Otherwise, this means that both $0$ and $1$ are cofinal in the word $h$. We show that all such $h$ are mapped to the same symbol. Take $h_1$ and $h_2$ two such history who have cofinally $0$'s and $1$'s. From $h_1$ and $h_2$ we can construct by transfinite induction $h'_1$ and $h'_2$ such that $h'_1$ and $h'_2$ are stutter-free and $h'_1 \simeq_{ctr} h_1$ and $h'_2 \simeq_{ctr} h_2$. As $\gamma$ is contraction-proof, $\gamma(h'_1) = \gamma(h_1)$ and $\gamma(h'_2) = \gamma(h_2)$. As $h'_1$ and $h'_2$ are stutter-free, they are simply the regular alternation of $0$ and $1$ : $010101\ldots$ or $101010\ldots$ Hence, there are two ordinals $\alpha$ and $\beta$ such that $h'_1 = (01)^\alpha$ (or $h'_1 = (10)^\alpha$) and $h'_2 = (01)^\beta$ (or $h'_2 = (10)^\beta$). In any case, by asymptoticity and looping stability (which can be applied cell-by-cell as $\Gamma$ is cell-by-cell; see Remark \ref{rmk:refining_with_cell_by_cell}.), $\gamma(h'_1) = \gamma((01)^\om) = \gamma(h'_2)$ and from there $\gamma(h_1) = \gamma(h_2)$. Hence those histories form a single equivalence class w.r.t.\ $\gamma$: either they are all mapped to $1$ and $\gamma = \gamma_{\sup}$. Either they are all mapped to $0$ and $\gamma = \gamma_{\inf}$.
\end{itemize} 
\end{proof}

First, observe that this result is really proper to $2$-symbol $\Gamma$-machines. With $3$ symbols or more, there are many intricate ordinal words that can't be simplified by contraction. Hence this theorem shows how, on the contrary to what we are used to in computability, working with only two symbols is actually a real constraint here. This shows the necessity to consider $n$-symbol machines. 
This will let us consider more complex rules that will still satisfy basic operator constraints, like being simulational.
Still, we will want to build on the previous result established for the $2$-symbol operators $\limsup$ and $\liminf$. To this extent, we will be interested in operators \emph{enhancing} other operators.

\begin{definition}
Let $\Gamma_n$ be an $n$-symbol operator and $\Gamma_k$ be a $k$-symbol operator with $k < n$. We say that $\Gamma_n$ \emph{enhances} $\Gamma_k$ when for any $H \in {}^{<\mo}({}^{\om} k)$ :
\begin{align*}
\Gamma_k(H) = \Gamma_n(H)
\end{align*}
\end{definition}

\begin{remark}
Enhancement is a powerful feature as it allows an operator to behave like another operator it enhances by simply sticking to a subset of symbols. In the case of enhancement of the operators $\Gamma_{\sup}$ or $\Gamma_{\inf}$ is it moreover relatively easy to obtain in the setting of well behaved operators. 

Indeed, take $\Gamma$ an $n$-symbol simulational and looping stable operator with $n>2$. If we suppose that at limit stages $\Gamma$ does not produce symbols for a cell that were not cofinal in the history of the cell (which is a relatively mild assumption that we refer to as \say{strong stability}), then $\Gamma$ enhances either the operator $\Gamma_{\sup}$ or $\Gamma_{\inf}$. To see this, just consider the restriction of $\Gamma$ to $0$ and $1$: it yields a simulational and looping stable $2$-symbol operator and, by Theorem \ref{thm:2_sym_machines}, it must be either $\Gamma_{\sup}$ or $\Gamma_{\inf}$

Still, as much as such enhancement is convenient we may not want to impose this hypothesis of strong stability on our operators. One main reason being that, even without this hypothesis, a simulational and looping stable operator can still emulate a classical ITTM in a fairly straightforward and faithful way. We will call this behavior \emph{emulation}.
\end{remark}

\begin{definition}\label{def:emulation}
	Let $\Gamma_n$ and $\Gamma_k$ be respectively an $n$-symbol operator and a $k$-symbol operator with $k < n$. We say that $\Gamma_n$ \emph{emulates} $\Gamma_k$ when for each $\Gamma_k$-machine $m_k$ there exists a $\Gamma_n$-machine $m_n$ such that:
	\begin{itemize}
		\item at any limit stage the snapshots of $m_k$ and $m_n$ match.
		\item all snapshots of $m_k$ appearing between two limit stages also appear between the same two limit stages and in the same order in $m_n$ but they may be separated by a finite amount of other snapshots.
		\item all snapshot of $m_n$ that are not from $m_k$ have a symbol $s \not\in k$ written in one of their cell.
	\end{itemize}
\end{definition}

With this definition, it is clear that any emulation of a machine will produce a very similar computation. In particular, with the notations of Definition \ref{def:emulation}, $m_k$ writes (or eventually writes) $x$ if and only if $m_n$ does. While $m_n$ may accidentally write more real numbers, this will be enough for our purpose as we will mostly want to harvest the deciding and writing power of $\limsup$ machines (deciding whether a real number is a code for an ordinal, writing a code for $L_\alpha$ given a code for $\alpha$, $\dots$) Proposition \ref{prop:gamma_emulates_limsup} motivates this definition and Proposition \ref{prop:enhancement_implies_emulation} links it with previous definition of enhancement.

\begin{prop}\label{prop:enhancement_implies_emulation}
If $\Gamma_n$ enhances $\Gamma_k$ then it also emulates it.
\end{prop}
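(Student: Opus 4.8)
The plan is to realise the emulation in the most literal way possible: take $m_n$ to be the $\Gamma_n$-machine whose code, state set and transition table are exactly those of the given $\Gamma_k$-machine $m_k$. I would then argue by transfinite induction on the stage $\beta$ that $m_n$ and $m_k$, run on the same input, have the same snapshot at stage $\beta$ --- not merely at limit stages but at every stage. Once this is established, the three clauses of Definition \ref{def:emulation} are immediate: clauses one and two hold because the snapshots coincide exactly, with no interleaved extra snapshots whatsoever, and clause three holds vacuously because $m_n$ never reaches a snapshot that $m_k$ does not.

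The induction is carried along with the invariant that the history $H_\beta$ of $m_n$ up to stage $\beta$ lies in ${}^{\beta}({}^{\om} k)$, i.e.\ every tape content produced so far uses only symbols from $k \subseteq n$. I would check that it holds at stage $0$ (empty input), is preserved at successor stages because a $\Gamma_k$-program only ever writes symbols in $k$ and $m_n$ executes the identical finite-Turing-machine step, and is preserved at limit stages because, $H_\beta$ then being an element of ${}^{<\mo}({}^{\om} k)$, the enhancement hypothesis gives $\Gamma_n(H_\beta) = \Gamma_k(H_\beta)$, while $\Gamma_k$ takes its values in ${}^{\om} k$ by the very type of a $k$-symbol operator. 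With the invariant in place the equality of snapshots follows by the same case split: at successor stages both machines apply the same classical transition, and at a limit stage both reset the head to the first cell, put the state to $q_{\lim}$, and set the tape to $\Gamma_n(H_\beta) = \Gamma_k(H_\beta)$, the two machines sharing the same history $H_\beta$ by the induction hypothesis.

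I do not expect a genuine obstacle; the only points needing a word of care are that a $\Gamma_k$-machine by definition has a $k$-letter alphabet, so it never writes outside $k$, and that the value of $\Gamma_k$ on any history is an element of ${}^{\om} k$ --- together these make the subalphabet invariant genuinely closed under the limit rule. I would also remark that only limit histories which actually occur are needed, so no difficulty arises from the convention (see the remark after Definition \ref{def:contraction}) that contraction-proof operators may be specified on limit words alone. Finally I would note that the emulation obtained here is the strongest conceivable: $m_n$ is indistinguishable from $m_k$ at every stage, hence $m_n$ writes, eventually writes and accidentally writes exactly the real numbers that $m_k$ does.
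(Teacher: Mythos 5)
Your proposal is correct and is essentially the paper's own argument: the paper's proof is the one-liner that it suffices to take $m_n$ to be $m_k$ seen as a $\Gamma_n$-machine, which is exactly your construction. Your transfinite induction with the subalphabet invariant merely spells out why this literal identification works, using the enhancement hypothesis at limit stages precisely as intended.
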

\begin{proof}
With the notations of Definition \ref{def:emulation}, it is enough for $m_n$ to take $m_k$ seen as a $\Gamma_n$-machine.
\end{proof}

\begin{prop}\label{prop:gamma_emulates_limsup}
Let $\Gamma$ be an $n$-symbol simulational and looping stable operator. Then $\Gamma$ emulates the operator $\Gamma_{\sup}$.

\end{prop}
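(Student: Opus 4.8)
The plan is to realise $m_n$ as $m_k$ itself, run with the two symbols $0,1$ recoded by a pair of symbols of $\Gamma$'s alphabet chosen so that $\Gamma$, read through this recoding, reproduces the $\limsup$ rule at limit stages; recall that $n\geq 3$, since emulation is only defined for a strictly smaller alphabet.

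First I would record the relevant behaviour of the cell restriction $\gamma$ of $\Gamma$ (available since $\Gamma$ is cell-by-cell). The argument of Theorem~\ref{thm:2_sym_machines} uses only that the cell restriction is stable, asymptotic, contraction-proof and looping stable, so by Remark~\ref{rmk:refining_with_cell_by_cell} it applies verbatim to cell histories over any fixed pair of symbols: for $a\neq b$, a $\{a,b\}$-history eventually constant equal to $a$ (resp.\ $b$) is sent by $\gamma$ to $a$ (resp.\ $b$) (by stability and asymptoticity), while every other $\{a,b\}$-history — one in which $a$ and $b$ both occur cofinally — is sent, after contraction to the stutter-free alternating word and an application of asymptoticity and looping stability, to a single symbol $c_{ab}$ depending only on the unordered pair. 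Since $a\neq b$, I may orient the pair so that $c_{ab}\neq b$, and since $n\geq 3$ I may moreover take the pair not contained in $\{0,1\}$; fixing such a choice, I call $a$ the code of the bit $1$ and $b$ the code of the bit $0$, and I decode an $m_n$-symbol $s$ as $0$ if $s=b$ and as $1$ otherwise. With this dictionary a cell whose $\{a,b\}$-history is eventually $a$, eventually $b$, or has $a$ and $b$ both cofinal is sent by $\gamma$ to a symbol decoding respectively to $1$, $0$, $1$ — which is exactly the $\limsup$ of the decoded bit sequence, since $c_{ab}\neq b$ forces $c_{ab}$ to decode as $1$.

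Now $m_n$ is the machine whose program is $m_k$'s program with $0,1$ uniformly recoded to $b,a$, amended so that whenever the head reads the symbol $c_{ab}$ it first overwrites it with $a$ in one step and then proceeds. Through successor stages $m_n$ is $m_k$ recoded, so between two consecutive limit stages it passes through the recoded snapshots of $m_k$ in order, separated by at most one interpolated configuration before each step of $m_k$ (the overwrite). At a limit stage the head is on the first cell and the state is $q_{\lim}$ in both machines, and, $\Gamma$ being cell-by-cell, each cell of $m_n$ is set to $\gamma$ of its own recoded history. Over the portion of that history since the previous limit — the only part that matters, by asymptoticity — each cell uses only $a$ and $b$ apart from at most an initial block of $c_{ab}$ (the only occurrences of $c_{ab}$ come from the limit rule, and the amendment rewrites them on first contact), so this value lies in $\{a,b,c_{ab}\}$ and decodes to the $\limsup$ of the corresponding cell of $m_k$. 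Hence, read through the dictionary, $m_n$'s configuration at that limit is $m_k$'s configuration, and the stray $c_{ab}$'s are harmless — they decode as $1$, like $a$, and are eliminated lazily — so the construction stays consistent from one segment to the next.

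It then remains to check Definition~\ref{def:emulation}, identifying each configuration of $m_k$ with its image under the fixed recoding: limit snapshots coincide by the last paragraph; the recoded snapshots of $m_k$ occur in order within each interval $[\beta,\beta+\om[$, separated by the finitely many one-step overwrites; and the interpolated configurations carry a symbol $\notin\{0,1\}$ (one of $a,b,c_{ab}$ does, by the choice of pair, and if none does in some configuration a single cosmetic step writing the symbol $n-1$ in an unused cell repairs this). The one place I would expect to need genuine care is the claim above that a cell of $m_k$ oscillating between $0$ and $1$ cofinally often inside a single segment is nevertheless driven, under the encoding, to a limit value decoding to $1$: this is precisely what the orientation $c_{ab}\neq b$ secures, and it is the reason one cannot simply copy $m_k$ in the same alphabet — such a copy would give the cell the value $c_{01}$, which need not be $1$, and indeed would be $0$ when $\Gamma=\Gamma_{\inf}$ — so that an $n$-symbol encoding is genuinely used. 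Everything else is routine bookkeeping of the kind already carried out for the universal machine.
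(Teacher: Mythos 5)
Your reduction of every limit stage to ``the portion of the history since the previous limit'' is where the argument breaks. Asymptoticity lets you discard a proper initial segment, but at a limit of limit stages (e.g.\ $\omega\cdot\omega$) there is no previous limit: every final segment of the cell history still contains cofinally many intermediate limit positions, and at each of those the limit rule has re-created the symbol $c_{ab}$ --- your overwrite-on-read removes it from the tape afterwards, but it cannot remove those occurrences from the history. So the relevant histories are not ``$\{a,b\}$ up to an initial block of $c_{ab}$'' but histories in which $a$, $b$ and $c_{ab}$ all occur cofinally, and nothing in ``simulational $+$ looping stable'' determines $\gamma$ on those: the value may be $b$ (decoding to $0$ although the limsup is $1$), or a fourth symbol which then contaminates later segments. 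Concretely, take $n=3$ and the cell-by-cell operator with: eventually constant histories mapped to their constant; histories whose cofinal symbols form a pair $\{i,j\}$ mapped to the third symbol; histories with all three symbols cofinal mapped to $0$. This is stable, asymptotic, contraction-proof and looping stable (the classification is a tail property, invariant under contraction and under taking powers). Against the machine $m_k$ that flips cell $0$ forever, every admissible choice of your pair fails: with $\{0,1\}$, $a=1$, $b=0$, the cell reads $2$ at each stage $\omega\cdot k$ and hence $0=b$ at $\omega^2$, decoding to $0$ while the limsup is $1$; with $\{0,2\}$, $a=2$, $b=0$, it reads $1$ at each $\omega\cdot k$ and again $0=b$ at $\omega^2$; and any orientation with $b\neq 0$ already fails on the blank tape, since never-visited cells keep the symbol $0$ forever, which your decoding sends to $1$ while $m_k$ has $0$ there. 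So the invariant ``limit snapshots agree after decoding'' is not maintained.

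This is exactly the difficulty the paper's proof is engineered around: instead of tolerating the stray value $c_{ab}$ and cleaning it lazily, the emulating machine routes every $1\to 0$ transition through the full cycle $w=12\cdots(n-1)0$, so that an oscillating cell's history is, up to contraction, a power of the single word $w$; after arranging and renaming so that $\Gamma(w^\omega)=1$, the value produced at a limit is the very symbol representing $1$, hence iterating never leaves this controlled family, and looping stability --- which your induction uses only to define $c_{ab}$, never across limit stages --- is what settles limits of limits, where the history contracts to $w^\alpha$ and $\Gamma(w^\alpha)=\Gamma(w^\omega)$. Your further move of ``identifying configurations with their recoded images'' also departs from the literal matching demanded by Definition \ref{def:emulation}; since the paper's own renaming step is of a similar flavour I would not call that the fatal point, but the unconstrained value of $\gamma$ on three-symbol histories at limit-of-limit stages is a genuine gap, and it is not repairable within your lazy-cleanup scheme.
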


\begin{proof}
Let $\Gamma$ be an $n$-symbol simulational and looping stable operator. By Theorem \ref{thm:2_sym_machines}, we can suppose w.l.o.g.\ that $n>2$. For this proposition, the difficulty lies in the fact that there may not be two symbols in $n$ with respect to which $\Gamma$ acts like $\Gamma_{\sup}$. That is, it may be the case that for all $i, j \in n$, there is $k \in n-\left\{i, j \right\}$ such that
\begin{align*}
\Gamma((ij)^\om) = k
\end{align*}

To circumvent this issue, we use a simple enough trick. Consider the word $w = 1 2 3 \dots (n-1) 0$ made from all letters in $n$ starting from $1$. Let $k \in n$ such that $\Gamma(w^\om)=k$. With $w' = k (k+1) \ldots (n-1) 0 1 \ldots k-1$, all the letters in $n$ starting from $k$, observe that, by asymptoticty, $\Gamma(w^\om) = \Gamma((w')^\om) = k$. Hence, by renaming the symbols and arranging $w$, we can suppose that $\Gamma(w^\om)=1$.

With this, given a classical ITTM $m$, we describe the emulating machine $m_\Gamma$ of Definition \ref{def:emulation}. It behaves like $m$ with the only difference that when $m$ writes a $0$ over a $1$, $m_\Gamma$ writes, one after the other in the same cell: $2$, $3$, $\dots$, $n-2$, $n-1$ and finally $0$. When $m$ writes a $1$ over a $0$, $m_\Gamma$ simply does so.

This way, when the history of some cell in the classical ITTM $m$ reads $(10)^\om$, it will read $(w)^\om$ in the history of this cell in $m_\Gamma$. And by construction, the operator $\Gamma$ maps this history to $1$. That is, after the limit stage $\om$, the history of this cell in the $\Gamma$-machine reads $(w)^\om 1$ and the computation goes on. From there, by an induction relying on this reasoning and on the property of looping stability, we can show that the simulation carries on in a faithful way, respecting the conditions of Definition \ref{def:emulation}. 

\end{proof}

We will see that a large part of the results for the $\limsup$ machines involving either only writable or only clockable ordinals are true and proved with the exact same proof for $n$-symbol machines able to emulate the classical $2$-symbol machine. The following few results illustrate this.

The first two results also ensure that the writing and clocking constants are well-behaved and meaningful, as they are in the ITTM setting. Note that when possible we include the hypothesis that $\Gamma$ emulates $\Gamma_{\sup}$ (or indifferently $\Gamma_{\inf}$) rather than the stronger hypothesis of looping stability.

\begin{prop}
Let $\Gamma$ be a simulational $n$-symbol operator that emulates the operator $\Gamma_{\sup}$. Then the constants related to the three different kinds of $\Gamma$-writable ordinals are distinct. That is: $\lG < \zG < \SG$
\end{prop}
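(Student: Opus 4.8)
The plan is to follow the classical Hamkins–Lewis argument that $\lambda < \zeta < \Sigma$, porting each construction to $\Gamma$-machines. Two facts are used throughout. First, a universal $\Gamma$-machine and the dovetailing technique are available since $\Gamma$ is simulational (by the proposition following the definition of simulational operator), and a $\Gamma$-machine can run, as a subroutine, the $\limsup$ algorithm that decides whether a real codes a well-order, since $\Gamma$ emulates $\Gamma_{\sup}$ (Proposition~\ref{prop:gamma_emulates_limsup} together with Theorem~2.2 of \cite{hamkins_ittm}). Second, the $\Gamma$-writable, $\Gamma$-e.w.\ and $\Gamma$-a.w.\ ordinals are each closed under successor: from a code for an order of type $\alpha$, a $\Gamma$-machine produces a code for $\alpha+1$ by a trivial rearrangement adding a new top element, and this manipulation respects each of the three writing modes; in particular $\zG$ and $\SG$ are limit ordinals. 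Consequently, it suffices to exhibit a $\Gamma$-e.w.\ ordinal $\delta \geqslant \lG$ (which forces $\lG \leqslant \delta < \zG$) and a $\Gamma$-a.w.\ ordinal $\delta' \geqslant \zG$ (which forces $\zG \leqslant \delta' < \SG$). The non-strict inequalities $\lG \leqslant \zG \leqslant \SG$ are immediate from the definitions.

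For $\lG < \zG$, I would consider the $\Gamma$-machine $N$ that runs the universal $\Gamma$-machine to simulate all $\Gamma$-machines $m_0, m_1, \dots$ on empty input, reserving its output tape as $\om$ consecutive blocks $B_0, B_1, \dots$. Each block starts empty; whenever the simulation of $m_j$ reaches $q_{halt}$, $N$ inspects its output, runs the emulated well-foundedness check, and if that output codes a well-order copies it (re-encoded) into $B_j$, never touching $B_j$ again. The global order read off the output tape is the ordered sum of the blocks. Every output cell is eventually constant, so by stability and asymptoticity the output converges, to a code for $\delta := \sum_j \operatorname{type}(o_j)$ over the halting $j$ with well-founded output. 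Since every $\Gamma$-writable ordinal occurs as one of these $\operatorname{type}(o_j)$ and ordinal sums dominate each summand, $\delta \geqslant \lG$; hence $\delta$ is $\Gamma$-e.w.\ and $\lG < \zG$.

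For $\zG < \SG$, I would run an analogous machine $N'$ that monitors the \emph{outputs} of the simulated machines rather than their halting. $N'$ runs the universal $\Gamma$-machine and maintains blocks $B_0, B_1, \dots$ on a work tape; at each dovetailing round it sets $B_j$ to the current simulated output of $m_j$ if the emulated well-foundedness check currently accepts that output, and to the empty word otherwise. Let $\rho$ be the (countable) supremum of the convergence times of all $\Gamma$-e.w.\ machines together with the running times of the relevant well-foundedness checks. At the $N'$-stage at which the universal simulation has passed stage $\rho$, every $\Gamma$-e.w.\ machine displays, by stability, its final output inside the simulation, so for each $\Gamma$-e.w.\ ordinal $\beta$ some block codes $\beta$; the ordered sum on the work tape therefore codes an ordinal $\delta' \geqslant \sup\{\beta : \beta \text{ is } \Gamma\text{-e.w.}\} = \zG$. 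This ordinal appears on a tape of $N'$ at that stage, so $\delta'$ is $\Gamma$-a.w., and $\zG < \SG$.

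The routine part is closure under successor and the constructions with halting detection. The hard part will be the construction of $\delta'$: unlike halting, convergence of a machine is not detectable by a $\Gamma$-machine, so one cannot build a machine that writes $\delta'$ deliberately. The argument has to be semantic — using only that $\Gamma$ is \emph{stable}, so that outputs of converged machines stay frozen inside the universal simulation — to locate a single countable stage at which all the e.w.\ data is simultaneously displayed; pinning down that stage (and checking that the blocks attached to non-e.w.\ machines only inflate, never corrupt, the recorded ordinal) is the delicate step.
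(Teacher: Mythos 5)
Your proposal is correct and follows essentially the same route as the paper: simulate the universal machine $\UG$ and aggregate (sum) the ordinals produced — those written by halted machines to get an eventually writable ordinal $\geqslant \lG$, and those displayed on the simulated outputs past the convergence stages to get an accidentally writable ordinal $\geqslant \zG$. Your explicit treatment of strictness via closure under successor and of the stage $\rho$ past all convergence times just spells out steps the paper's terser proof leaves implicit.
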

\begin{proof}

$\lG < \zG$ : At some point, all machines that write an ordinal have written their ordinal. So we consider the following machine: it simulates $\UG$ (which exists as $\Gamma$ is simulational) and at each step it writes the sum of all ordinals that have been written by any of the machines that halted. Observe that summing ordinals is possible by emulation of a $\Gamma_{\sup}$-machine.
When all writable ordinals have be written, i.e. when their respective machine stopped, the machine we are describing has eventually written their sum (ordered by the code of the machine) which is by definition greater or equal to $\lG$.

$\zG < \SG$ : With the same reasoning, at some point all eventually writable ordinal appeared in $\UG$. Computing the sum of all ordinals appearing in $\UG$ at the same time (we can't distinguish between those which are eventually writable and accidentally writable) will at some point accidentally write an ordinal greater or equal to $\zG$.
\end{proof}

\begin{prop}\label{prop:write_inf_clock}
Let $\Gamma$ be a simulational $n$-symbol operator that emulates the operator $\Gamma_{\sup}$. Then the supremum of any kind of writable is smaller than or equal to its respective kind of clockable. That is, $\lG \leqslant \gG$, $\zG \leqslant \eG$, $\SG \leqslant \TG$.
\end{prop}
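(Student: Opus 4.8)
The plan is to prove each of the three inequalities by the classical ITTM template: given a code $x$ for an ordinal $\alpha$ that is $\Gamma$-writable (resp.\ $\Gamma$-eventually writable, $\Gamma$-accidentally writable), build a new $\Gamma$-machine that first obtains $x$ and then ``counts through'' the well-order coded by $x$, so that its halting stage (resp.\ the stage at which its output last changes, resp.\ the stage at which a genuinely new real appears on one of its tapes) is an ordinal $\geqslant\alpha$ of the matching clockable flavour; taking suprema over $\alpha$ then gives $\lG\leqslant\gG$, $\zG\leqslant\eG$, $\SG\leqslant\TG$. All the ingredients of this template — recognising that a real codes a well-order, counting down a coded well-order, and the dovetailing of Proposition~\ref{prop:dovetailing_sim_operator} — are available because $\Gamma$ is simulational and emulates $\Gamma_{\sup}$: the emulating machine of Definition~\ref{def:emulation} reproduces the Hamkins--Lewis well-order recogniser (\cite[Theorem~2.2]{hamkins_ittm}), and since emulation only ever \emph{inserts} finitely many extra snapshots between consecutive snapshots of the emulated computation, counting ``to $\alpha$'' in the emulated sense still consumes at least $\alpha$ steps of the ambient $\Gamma$-machine. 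Concretely, for $\lG\leqslant\gG$: let $\alpha$ be $\Gamma$-writable via a $\Gamma$-machine $m$ halting with a code $x$ of $\alpha$ on its output, and modify $m$ so that, instead of entering $q_{halt}$, it hands control to a subroutine which, using the emulated recogniser, counts through the well-order coded by $x$ and only then halts; this is a halting $\Gamma$-machine whose halting stage is $\geqslant\alpha$, hence $\Gamma$-clockable, so $\gG\geqslant\lG$.

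For $\zG\leqslant\eG$: let $\alpha$ be $\Gamma$-eventually writable via a $\Gamma$-machine $m$ that converges with a code $x$ of $\alpha$ on its output. Using Remark~\ref{rmk:dovetailing}, build a non-halting $\Gamma$-machine which dovetails a simulation of $m$ with the following task: whenever $m$'s output currently reads a real $y$, run (restarting on every change of $y$) a counter through the well-order coded by $y$, keeping that counter visibly on its own output tape, and when the count terminates overwrite the output with a fixed distinctive value and idle forever. Once $m$'s output has settled to $x$, the count runs to completion through a well-order of type $\alpha$, so this machine's output changes for the last time at a stage $\geqslant\alpha$; that stage is $\Gamma$-eventually clockable, whence $\eG\geqslant\zG$.

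For $\SG\leqslant\TG$: this is the first half of the proof of Proposition~\ref{prop:sigma_2_tau_2}, transplanted essentially verbatim. Let $\alpha$ be $\Gamma$-accidentally writable, appearing on some tape of a $\Gamma$-machine $m$ at a stage $<\SG$. Build a $\Gamma$-machine $m'$ that simulates $m$ and, reserving one cell of its output tape to a value (for instance, a symbol not used by the emulated $\Gamma_{\sup}$-computation) that never occurs on the tapes of $m$, performs the following whenever a real $y$ appearing on a tape of $m$ is recognised — by the emulated recogniser, which already takes at least $\beta$ steps — as coding an ordinal $\beta$: it counts a further $\geqslant\beta$ steps and then copies that code onto the rest of its output tape. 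When the code of $\alpha$ turns up in the simulation, $m'$ thereby writes on its output, for the first time in its whole computation (by the reserved cell), a real number at a stage $\geqslant\alpha$; that stage is $\Gamma$-accidentally clockable, so $\TG\geqslant\SG$.

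The substance of the proof is just checking that emulation carries over these three capabilities; the only points needing care, none deep, are: (i) that counting ``to $\alpha$'' in the emulated computation still costs $\geqslant\alpha$ steps of the ambient $\Gamma$-machine, which holds because emulation never deletes snapshots; (ii) that in the eventually-writable and accidentally-writable cases the target real appears only after, or intermittently during, other activity, so one must dovetail and restart the counting whenever it changes — exactly the situation legitimised by Proposition~\ref{prop:dovetailing_sim_operator} and by the bookkeeping already used in the proof of Proposition~\ref{prop:sigma_2_tau_2}; and (iii) that the ``new'' real in the last case is genuinely new, which is guaranteed by the reserved cell.
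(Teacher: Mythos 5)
Your proposal is correct and uses essentially the same argument as the paper: obtain the code of the ordinal (by halting, converging, or accidental appearance), use the emulated $\Gamma_{\sup}$-machinery to count through it — which costs at least that many ambient steps — and conclude that a clockable ordinal of the matching kind is at least as large, with the third inequality handled exactly as in the first half of the proof of Proposition~\ref{prop:sigma_2_tau_2} via a reserved cell guaranteeing novelty. The only difference is cosmetic: you argue directly for each writable ordinal and take suprema, whereas the paper phrases each inequality as a proof by contradiction starting from the clockable supremum being writable.
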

\begin{proof}

Suppose that $\lG > \gG$, that is that $\gG$ is writable and consider the following machine toward a contradiction: it writes a code for $\gG$, then it emulates the $\Gamma_{\sup}$-machine that counts through an ordinal with this code to count for $\gG$ steps after which is stops. The emulations takes at least $\gG$ steps. So this machine effectively clocks some ordinal greater or equal to $\gG$, which is a contradiction.

Suppose that $\zG > \eG$, that is that $\eG$ is eventually writable and consider the following machine toward a contradiction: it simulates the machine that eventually writes a code for $\eG$ and, again using the emulation of a $\limsup$ machine, each time a code for an ordinal is written on its output, it counts through this ordinal and then copies the output of this machine to its own output. When the simulated machine stabilizes on a code for $\eG$, the main machine copies it on its output after at least $\eG$ steps, hence eventually clocks an ordinal greater or equal to $\eG$, which is again a contradiction.

As for the last inequality, it is proved exactly as in the first part of the proof of Proposition \ref{prop:sigma_2_tau_2}, replacing $\Sigma$ and $\Tau$ respectively by $\SG$ and $\TG$. Moreover, albeit slightly trickier, it is the same technique as used with the two previous inequalities.
\end{proof}

\begin{prop}\label{prop:closure_prop_enhancing}
Let $\Gamma$ be a simulational $n$-symbol operator that emulates the operator $\Gamma_{\sup}$. Then $\lG$, $\zG$ and $\SG$ are multiplicatively closed.
\end{prop}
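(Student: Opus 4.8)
The plan is to reduce, in all three cases, the multiplicative closure of the supremum at hand to a single computability fact together with the dovetailing machinery already available. The fact is this: there is a classical $\Gamma_{\sup}$-machine which, fed on its tape a pair $\langle x_\beta, x_\delta\rangle$ of codes for ordinals $\beta$ and $\delta$, halts with a code for $\beta\cdot\delta$ on its output. This is routine: from the relations $\prec_\beta$ and $\prec_\delta$ one forms on $\om\times\om$ the anti-lexicographic order ($(a,c)$ before $(a',c')$ iff $c\prec_\delta c'$, or $c=c'$ and $a\prec_\beta a'$), whose order type is $\beta\cdot\delta$, and transports it to $\om$ along a pairing. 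Since $\Gamma$ emulates $\Gamma_{\sup}$, this input--output behaviour is available on $\Gamma$-machines (exactly in the spirit in which the proof of Proposition~\ref{prop:write_inf_clock} uses ``the emulation of the $\Gamma_{\sup}$-machine that counts through an ordinal''); write $y_{\beta\cdot\delta}$ for a code it produces, so $y_{\beta\cdot\delta}$ is $\langle x_\beta,x_\delta\rangle$-$\Gamma$-writable.

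For $\lG$: given $\beta,\delta<\lG$ with $\Gamma$-writable codes $x_\beta,x_\delta$ produced by halting machines $m_\beta,m_\delta$, a single $\Gamma$-machine (this is where simulationality is used) simulates $m_\beta$ on one region of its working tape, then $m_\delta$ on another, assembles $\langle x_\beta,x_\delta\rangle$, runs the emulated multiplication machine, and halts having written a code for $\beta\cdot\delta$; hence $\beta\cdot\delta<\lG$. For $\zG$: given $\beta,\delta<\zG$ with $\Gamma$-e.w.\ codes $x_\beta,x_\delta$, dovetailing the two machines that eventually write them and copying their current outputs to the two halves of its own output tape yields a $\Gamma$-machine that eventually writes $\langle x_\beta,x_\delta\rangle$, so this pair is $\Gamma$-e.w.; as $y_{\beta\cdot\delta}$ is $\langle x_\beta,x_\delta\rangle$-$\Gamma$-writable, Proposition~\ref{prop:dovetailing_sim_operator} gives that a code for $\beta\cdot\delta$ is $\Gamma$-e.w., so $\beta\cdot\delta<\zG$.

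For $\SG$: given $\beta,\delta<\SG$ with $\Gamma$-a.w.\ codes $x_\beta,x_\delta$, note that $\UG$ reproduces every $\Gamma$-a.w.\ real in its own computation, so $x_\beta$ and $x_\delta$ both surface somewhere in the run of $\UG$. Consider the $\Gamma$-machine that simulates $\UG$ and, by dovetailing, submits for each simulated snapshot the task of checking --- via the emulated $\limsup$ well-order recognition --- whether the real then on $\UG$'s tape is an ordinal code, archiving each recognised code into a fresh slot of a region of its working tape split into $\om$ disjoint slots, and, dovetailing over pairs of slots, running the emulated multiplication machine on any two archived codes and displaying the outcome. Just as in the first half of the proof of Proposition~\ref{prop:sigma_2_tau_2}, every $\Gamma$-a.w.\ ordinal code of order type $<\SG$ is recognised in fewer than $\SG$ steps, so both $x_\beta$ and $x_\delta$ get archived and then the multiplication machine is run on that pair, so a code for $\beta\cdot\delta$ appears; hence $\beta\cdot\delta<\SG$.

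The only genuinely delicate point is this last case: codes for $\beta$ and for $\delta$ may surface at wildly different and individually transient moments of $\UG$'s run, so the archiving-and-combining dovetail must be organised so that both are captured \emph{and} the multiplication is completed strictly before stage $\SG$. This is exactly the bookkeeping already carried out in the first half of the proof of Proposition~\ref{prop:sigma_2_tau_2}, so I would appeal to that argument rather than redo the counting; the $\lG$ and $\zG$ cases, and the computability of ordinal multiplication on codes, are routine.
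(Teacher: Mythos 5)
Your overall skeleton agrees with the paper's: ordinal multiplication on codes is computable by a $\Gamma_{\sup}$-machine and hence available by emulation, and the $\lG$ and $\zG$ cases follow by simulating or dovetailing the two writing machines and invoking Proposition \ref{prop:dovetailing_sim_operator}. The gap is in the $\SG$ case, the only delicate one. First, the constraint you impose on yourself --- that both codes be captured and the product be produced \emph{before stage $\SG$} --- is not required: a real is $\Gamma$-accidentally writable as soon as it appears at \emph{some} stage of \emph{some} computation, with no deadline. Worse, the fact you invoke to discharge that constraint (that every a.w.\ code surfaces, and is recognised, below stage $\SG$, attributed to the first half of the proof of Proposition \ref{prop:sigma_2_tau_2}) is not what that argument shows; for a general simulational $\Gamma$ such a bound is essentially the content of Theorem \ref{thm:sigma_tau_looping_condition} ($\SG=\TG$), which requires suitability and looping stability, is proved later, and itself uses the multiplicative closure of $\SG$ (via Proposition \ref{prop:reals_appear_cof}), so appealing to it here would be circular.

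Second, and independently of the timing issue, your archiving construction does not secure the point that actually matters: that the two transient codes for $\beta$ and $\delta$ are simultaneously available to the multiplication routine. You archive each recognised code in a fresh slot of a region split into $\om$ slots, but a tape of length $\om$ can hold at most $\om$ reals at once, while the simulated $\UG$ may produce more than $\om$ distinct codes before the later of the two codes first appears (at this point of the development nothing bounds computations by a countable stage; that again uses suitability and looping stability). So either fresh slots are exhausted before the second code is archived, or slots are reused and the first code may be erased, and the machine cannot know which archived codes to protect. The paper resolves exactly this with a nested simulation: assuming w.l.o.g.\ that a code for $\alpha$ appears in $\UG$ earlier than a code for $\beta$, the machine advances an outer copy $\UG^1$ one step at a time and, holding its current snapshot $s^1$ fixed, reruns a fresh copy $\UG^2$ from scratch until it reaches $s^1$, meanwhile multiplying the ordinals found in $s^1$ with the ordinals appearing in $\UG^2$; once $s^1$ contains the code for $\beta$, the rerun is guaranteed to sweep past the earlier appearance of the code for $\alpha$, so a code for the product appears accidentally. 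Each inner rerun terminates because it only has to reach a snapshot that $\UG$ genuinely attains, which is what keeps the bookkeeping feasible --- this nested-rerun trick is the idea missing from your sketch.
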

\begin{proof} 
We show this for $\SG$. Let $\alpha$ and $\beta$ two accidentally writable ordinals. We want to show that $\alpha \cdot \beta$ is accidentally writable as well. Observe first that the product of two ordinals is computable with a $\Gamma$-machine. This comes from the fact that $\Gamma$ emulates the $\limsup$ operator: there is a $\limsup$ machine that computes this function, and this machine can be emulated by a $\Gamma$-machine.
Now suppose w.l.o.g.\ that (a code for) $\alpha$ appears earlier than (a code for) $\beta$ on one of the tapes of the machines simulated in $\UG$ (and $\UG$ exists as $\Gamma$ is simulational.) We consider the following computation: the machine simulate $\UG^1$, a copy of $\UG$, and at each step of $\UG^1$, writing $s^1$ for its snapshot, it launches $\UG^2$ a fresh copy of $\UG^2$ and it simulates this copy until it reaches the snapshot $s^1$. Meanwhile, it computes the product of any two ordinals appearing in $s^1$ and in $\UG^2$. When $\UG^1$ computed far enough so that $\beta$ appears in $s^1$, $\alpha$ appeared earlier and it will appear again in the computation of $\UG^2$ until it reaches snapshot $s^1$; hence the ordinal $\alpha \cdot \beta$ will be accidentally writable by this machine before $\UG^2$ reaches the snapshot $s^1$.

For $\lG$ and $\zG$, it relies in the same way on the fact that the product of two ordinals is computable, and it is easier as $\alpha$ and $\beta$ can be multiplied once eventually written (resp.\ written.)
\end{proof}

\begin{remark}
While, as illustrated above, many results on ITTMs are readily generalized to $\Gamma$-machines (provided basic hypothesis on $\Gamma$, e.g.\ being simulational), there is one interesting result which does not easily lend itself to generalization. As kindly pointed out by Bruno Durand, this is the case of the Speed-up Lemma of \cite{hamkins_ittm}. This lemma reads: \say{if $\alpha + k$ is clockable for $k \in \om$, then so is $\alpha$} and its proof is a clever argument which relies on combinatorial aspects of the $\limsup$ rule. It is clear that this argument can't be ported as is to $\Gamma$-machines and unclear whether it can be proved differently for those.

Thankfully, the consequences of this fact are limited. One reason is that when it comes to the behavior of infinite machines from a macro perspective (as mostly done here, where the ordinals we consider are limit ordinals closed under many different kind of operations), this lemma is not needed. It is however useful when conducting a finer analysis of gaps and particularly of their beginning and ends. While I reckon most of the analysis regarding the beginning of gaps (see e.g. \cite{durand_gaps_admissibles} and \cite{welch_summary}) and their links to admissible ordinals may be extended to $\Gamma$-machines, the absence of the Speed-up Lemma makes it less clear when it comes to the end of gaps. In particular, this leads to the following question: does there exist a simulational operator $\Gamma$ for which a gap ends at a non-limit ordinal stage?
\end{remark}

To specify the relationship between writable real numbers and the level of the constructible hierarchy, we can encode sets with real numbers.  This is done in a similar fashion as with ordinals. Observe however that, without consequences, it does not yield the exact same encoding for ordinals seen as well-orders and ordinals seen as sets.

\begin{definition}[Encoding of sets]
We say that a real number $x$ encodes a set $a$ when $x$ describes a transitive relation $E$ on $\om$ such that $(\om, E) \simeq (TC(\left\{ a \right\}), \in)$. As with ordinals, a set $a$ is writable (resp. eventually writable and accidentally writable) by a $\Gamma$-machine when a code for $a$ is writable (resp. eventually writable and accidentally writable) by a $\Gamma$-machine.
\end{definition}

\begin{prop}\label{prop:set_to_writable}
Let $\Gamma$ be a simulational $n$-symbol operator that emulates the operator $\Gamma_{\sup}$. If a set $a$ is
in $L_{\lG}$ (resp. in $L_{\zG}$ and in $L_{\SG}$) then it is writable (resp. eventually writable and accidentally writable) by a $\Gamma$-machine.
\end{prop}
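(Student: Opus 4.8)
# Proof Proposal

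The plan is to mimic the classical ITTM argument: any set $a \in L_{\SG}$ (resp.\ $L_{\zG}$, $L_{\lG}$) is built at some construction stage strictly below the relevant supremum, so it suffices to (i) accidentally write (resp.\ eventually write, write) an ordinal code for a stage $\nu$ with $a \in L_\nu$, and then (ii) use the emulation of $\Gamma_{\sup}$ to run the ordinary ITTM procedure that, given a code for $\nu$, produces a code for $L_\nu$, and from that code extracts a code for $a$. Step (ii) is entirely a $\limsup$-machine computation (this is exactly the content of the classical result that from a code for $\nu$ one can write a code for $L_\nu$, using that an ITTM can decide well-orders and can carry out the $L$-recursion along a given well-order), so by Proposition~\ref{prop:gamma_emulates_limsup} it transfers verbatim to a $\Gamma$-machine, using that emulation preserves writability and eventual writability of real numbers, and a fortiori accidental writability.

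First I would treat the accidentally writable case. Take $a \in L_{\SG}$. Since $a$ appears in the constructible hierarchy, there is a limit ordinal $\nu < \SG$ with $a \in L_\nu$ (indeed $a \in L_{\SG}$ means $a$ is constructed at some stage below $\SG$, and we may pad up to a limit stage). By definition of $\SG$ and Proposition~\ref{prop:write_inf_clock}-style reasoning, $\nu$ is $\Gamma$-accidentally writable: some $\Gamma$-machine has, at some stage, a code for $\nu$ on one of its tapes. Now I would describe the machine $\mathcal{M}$ that simulates $\UG$ (which exists since $\Gamma$ is simulational) and for each ordinal code $c$ that appears on a tape of a machine simulated inside $\UG$, it emulates the $\Gamma_{\sup}$-machine that, from $c$ coding an ordinal $\mu$, writes a code for $L_\mu$ and then searches that code for an element $\in$-isomorphic to $a$ in the set-encoding sense; each such computation is dovetailed. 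When the code for $\nu$ shows up inside $\UG$, the corresponding emulated subcomputation accidentally writes a code for $L_\nu$, hence accidentally writes a code for $a$ (which sits inside the code for $L_\nu$). So $a$ is $\Gamma$-accidentally writable.

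For the eventually writable case, replace ``appears in $\UG$'' by ``is eventually written by some machine'': if $a \in L_{\zG}$, fix a limit $\nu < \zG$ with $a \in L_\nu$, and let $m_\nu$ be a $\Gamma$-machine eventually writing a code for $\nu$. Using Proposition~\ref{prop:dovetailing_sim_operator} (in the convenient form of Remark~\ref{rmk:dovetailing}), build a machine that simulates $m_\nu$ to obtain a code for $\nu$ and then, via the emulated $\Gamma_{\sup}$-procedure, writes a code for $L_\nu$ and extracts from it a code for $a$; since the input stabilizes and the subsequent computation halts, the whole thing eventually writes a code for $a$. The writable case is identical with ``eventually writes'' replaced by ``writes (and halts)'' throughout, using a $\Gamma$-machine that writes a code for a limit $\nu < \lG$ with $a \in L_\nu$ and then halts after producing the code for $a$.

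The main obstacle is step (ii): one must be sure that the classical ITTM routine ``code for $\nu$ $\mapsto$ code for $L_\nu$'' really does go through on the emulated machine. This is not a deep issue but it is the only non-routine point: it relies on (a) the ITTM decidability of well-orders (Theorem~2.2 of \cite{hamkins_ittm}), (b) the ability of an ITTM to perform the $L$-recursion along a given well-order, both of which are $\limsup$-machine facts and hence transfer by emulation; and (c) the fact that emulation (Definition~\ref{def:emulation}) preserves exactly which reals are written, eventually written, or accidentally written, which is noted right after that definition. Everything else --- dovetailing, simulating $\UG$, extracting $a$ from a code for $L_\nu$ --- is bookkeeping already licensed by $\Gamma$ being simulational.
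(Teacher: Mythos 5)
Your proposal is correct and takes essentially the same route as the paper's proof: enumerate ordinal codes (via $\UG$ for the accidental case, via a machine that eventually writes, resp.\ writes, the relevant ordinal otherwise), use the emulation of $\Gamma_{\sup}$ to turn a code for an ordinal into a code for the corresponding level $L_\nu$, and extract a code for $a$ from it, dovetailing as in Remark~\ref{rmk:dovetailing}. The only nitpick is that $\nu < \SG$ does not by itself guarantee that $\nu$ is accidentally writable; as in the paper, simply pick an accidentally writable $\alpha \geqslant \nu$ (so that $a \in L_\alpha$), which changes nothing else in your argument.
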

\begin{proof}
 If $a \in L_{\SG}$, there is an accidentally writable ordinal $\alpha$ such that $a \in L_\alpha$. Hence, a code for $a$ is computable from a code for $L_\alpha$. Now consider the following computation that will accidentally write $a$ under the assumption that $a$ is in $L_{\SG}$: as $\Gamma$ is simulational, the machine can enumerate all ordinals below $\SG$. For each of those ordinals $\nu$, as $\Gamma$ emulates the $\limsup$ operator, the machine can write a code for $L_\nu$ on its working tape and try to extract the subcode that would be a code for $a$. Before $\alpha$ appears, what it thus produces is likely gibberish. When $\alpha$ appears, a code for $a$ is accidentally written. This works similarly if $a$ is in $L_{\zG}$ or $L_{\lG}$: the machine simply eventually writes or writes the right $L_\alpha$ and computes a code for $a$ from it.
\end{proof}

\begin{remark}
We may be tempted, under the hypothesis that $\Gamma$ emulates the operator $\Gamma_{\sup}$, to try and immediatly establish the converse implication of Proposition \ref{prop:set_to_writable} (which will be a corollary of Theorem \ref{thm:sigma_tau_looping_condition}). That is to show, for the first case, that if $x$ is accidentally writable then $x \in L_{\SG}$. It would simply build on the work which was already done for the operator $\Gamma_{\sup}$. Suppose that $x$ is accidentally writable by $\Gamma$, then as $\Gamma$ emulates $\Gamma_{\sup}$ a $\Gamma$-machine could use this $x$ to accidentally write greater and greater ordinals in $\Sigma^x_{\Gamma_{\sup}}$ (which is $\Sigma_{\Gamma_{\sup}}$ relativised to computations with $x$ as input). And as we might think that $x \in L_{\Sigma^x_{\Gamma_{\sup}}}$ (after all, $x$ is $x$-writable), this would mean, \emph{a fortiori}, that $x \in L_{\SG}$. But it happens that $x \in L_{\Sigma^x_{\Gamma_{\sup}}}$ simply does not hold in the general case (the fact that $x$ is $x$-writable simply says that $x \in L_{\Sigma^x_{\Gamma_{\sup}}}[x]$). Welch first considered in \cite{welch_operator} the (analogous) set \begin{align*}
F_0 = \left\{ x \in {}^\om 2 \mid x \in L_{\lambda^x_{\Gamma_{\sup}}} \right\}
\end{align*} which he explains is akin to the set 
\begin{align*}
Q =  \left\{ x \in {}^\om 2 \mid x \in L_{\omckx{x}} \right\}
\end{align*}
Both of those sets are thin (i.e. they do not admit non-empty and non-unit closed subsets) which implies that uncountably many real numbers $x$'s are not in $F_0$ or $Q$. A proof of which we give the idea can be found in \cite{kechris_countable_ana_sets}. Consider $F_0$ and suppose that it is not thin. That is, it contains a perfect subset $P$ (i.e.\ closed subset with not isolated points). $P$ is continuously isomorphic to ${}^\om 2$, so for our purpose we can assume that $P = {}^\om 2$. We then consider the application $f : x \mapsto \lambda^x$. It induces a pre-well-ordering $\preceq$ on ${}^\om 2$:
\begin{align*}
x \preceq y \iff \lambda^x \leqslant \lambda^y 
\end{align*}
This pre-well-order has order type $\om_1$ (the $\lambda^x$ are unbounded in $\om_1$) and is $\Sigma_1^1$ (under our the assumption that $P \subset F_0$ and modulo the isomorphism, which implies that $x \in \lambda^x$, we have that $\lambda^x \leqslant \lambda^y$ is equivalent to $x$ being $y$-computable.) Hence, this yields a Lebesgue measurable (as a subset of ${}^\om 2 \times {}^\om 2$) pre-well-order of the real numbers which can be shown with Fubini's theorem to yield a contradiction. 
\end{remark}

We now state the main theorem of this article.

\begin{theorem}\label{thm:sigma_tau_looping_condition}
Let $\Gamma$ be an $n$-symbol suitable, looping stable and simulational operator. Then ${\Sigma_\Gamma = \Tau_\Gamma}$.
\end{theorem}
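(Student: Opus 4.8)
The plan is to establish the two inequalities separately. The inequality $\SG \leqslant \TG$ is already in hand: since a suitable simulational looping-stable $\Gamma$ is in particular simulational and, by Proposition~\ref{prop:gamma_emulates_limsup}, emulates $\Gamma_{\sup}$, this is exactly the third inequality of Proposition~\ref{prop:write_inf_clock}, proved as in the first part of Proposition~\ref{prop:sigma_2_tau_2}. So the whole content of the theorem is the reverse inequality $\TG \leqslant \SG$, which I would prove by showing that every non-halting $\Gamma$-machine $m$ is seen to be looping with a repeating segment of computation contained in $[0,\SG)$. This suffices: a halting $\Gamma$-machine reduces to a non-halting one by replacing $q_{halt}$ with a subroutine that leaves the tapes untouched and merely shifts the head (which loops with period $\om$ and contributes the same $\Gamma$-a.w.\ and $\Gamma$-a.c.\ ordinals); and if, for every non-halting machine, all reals first appear at stages lying inside such a repeating segment below $\SG$, then every $\Gamma$-a.c.\ ordinal is $<\SG$, whence $\TG\leqslant\SG$.

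To locate the repeating segment below $\SG$, I would exploit suitability. Fix a first-order formula $\varphi$ defining $\Gamma$ as in Definition~\ref{def:suitable_operator}, say $\Sigma_k$ in the Lévy hierarchy. For a non-halting $m$ computing on the empty input, the snapshot of $m$ at any limit stage $\mu$ is obtained uniformly by evaluating $\varphi$ over $L_\mu$; consequently, whenever $\nu_0<\nu_1$ are limit ordinals with $L_{\nu_0}\prec_{\Sigma_k}L_{\nu_1}$, the snapshots of $m$ at $\nu_0$ and at $\nu_1$ coincide, and by asymptoticity the segment of computation $[\nu_0,\nu_1)$ repeats. The key step, generalizing Welch's analysis of $\zeta$ and $\Sigma$ and refining the Löwenheim--Skolem argument of Proposition~\ref{prop:gamma_machine_loops_countable_stage}, is to produce such a pair with $\nu_1<\SG$: here one uses the constructible-hierarchy characterization of $\SG$ --- that $L_\SG$ is precisely the class of sets encoded by $\Gamma$-a.w.\ reals, of which Proposition~\ref{prop:set_to_writable} is one direction, together with the closure properties of Proposition~\ref{prop:closure_prop_enhancing} --- to see that $\SG$ is a limit of stages $\nu$ with $L_\nu\prec_{\Sigma_k}L_\SG$, so that any two such $\nu_0<\nu_1<\SG$ do the job.

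Finally, I would invoke looping stability to turn the bare repetition of $[\nu_0,\nu_1)$ into a genuine loop sitting below $\SG$. Looping stability pins the snapshot obtained after $\mu$-many repetitions of $[\nu_0,\nu_1)$ to a single real $x'$, the same for every limit $\mu$; by Proposition~\ref{prop:looping_condition_gamma} this already forces $m$ to be looping. What remains --- and what I expect to be the main obstacle --- is a reflection argument showing that $x'$, which is defined at stage $\nu_1\cdot\om$ and hence still below $\SG$ by the multiplicative closure of $\SG$, in fact occurs as a snapshot of $m$ within $[\nu_0,\nu_1)$, so that by Corollary~\ref{cor:looping_condition_gamma} the repeating segment may be taken inside $[0,\SG)$. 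This is the exact analogue, now carried out relative to $\SG$, of the way Devlin's theorem is used in Proposition~\ref{prop:gamma_machine_loops_countable_stage}, and it is at this point that looping stability is indispensable: the operator constructed in Theorem~\ref{thm:contre_exemple_machine} is precisely a simulational (non-looping-stable) rule for which the snapshot $x'$ escapes far above $\SG$, so that the equality fails. Once the repeating segment is placed below $\SG$, no real first appears at a stage $\geqslant\SG$ in $m$, giving $\TG\leqslant\SG$ and therefore $\SG=\TG$.
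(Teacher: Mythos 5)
Your reduction to the single inequality $\TG\leqslant\SG$, and the final shape of your argument (two limit stages $\nu_0<\nu_1$ related by $\Sigma_k$-elementarity giving a repetition of $[\nu_0,\nu_1[$, then a reflection placing the exit snapshot inside the repeated segment so that Corollary \ref{cor:looping_condition_gamma} applies), is indeed the endgame of the paper's proof. But the two steps you leave to citation are exactly where the content lies, and neither can be obtained the way you suggest. First, you produce the pair $\nu_0<\nu_1<\SG$ from ``the constructible-hierarchy characterization of $\SG$'', i.e.\ that $L_{\SG}$ is precisely the class of sets coded by $\Gamma$-a.w.\ reals. The nontrivial direction of that characterization is itself a corollary of Theorem \ref{thm:sigma_tau_looping_condition}, and the paper explicitly warns (in the remark following Proposition \ref{prop:set_to_writable}) that the tempting direct argument for it fails, since $x$ being $x$-writable only yields $x\in L_{\Sigma^x_{\Gamma_{\sup}}}[x]$, not $x\in L_{\Sigma^x_{\Gamma_{\sup}}}$; so as stated your step is circular. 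Moreover, even granting it, the existence of (cofinally many) $\nu<\SG$ with $L_\nu\prec_{\Sigma_k}L_{\SG}$ does not follow from multiplicative closure: in the paper these elementarity facts ($L_{\zG}\prec L_{\SG}$ and the cofinality of stages $\alpha$ with $L_\alpha\prec L_{\SG}$) are reached only after the long chain of Propositions \ref{prop:eg_inf_TG}--\ref{prop:decide_with_notations}, carried out under the contradiction hypothesis $\SG<\eG$ (the case $\eG\leqslant\SG$ being settled separately by Proposition \ref{prop:eg_inf_sg_imp_thm}), and they rest on the eventually writable snapshot $s_\Sigma$ and the machine $m_\Sigma$ deciding truth in $L_{\SG}$.

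Second, the ``reflection argument'' you defer is not an exact analogue of the Devlin step in Proposition \ref{prop:gamma_machine_loops_countable_stage}: there the key input was that no new reals enter $L$ after stage $\om_1$, which survives the Löwenheim--Skolem collapse; below $\SG$ no such fact is available for free, since new snapshots (hence new reals) appear cofinally in $\SG$ by Proposition \ref{prop:reals_appear_cof}. The substitute the paper must build is that $\zG$ begins a gap of reals of length $\zG^2$, and proving this consumes most of the section (the truth machine $m_\Sigma$, the elementarity $L_{\zG}\prec L_{\SG}$, and the whole notation/quasi-notation apparatus of Proposition \ref{prop:decide_with_notations}); only then can the snapshot at stage $\zG\cdot\om$ be seen to lie in $L_{\zG}$ and be reflected below $\zG$. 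Finally, your intermediate claim that looping stability together with Proposition \ref{prop:looping_condition_gamma} ``already forces $m$ to be looping'' is false: that criterion requires the snapshot at $\nu_0+\beta\cdot\om$ to coincide with the one at $\nu_0$ and $\nu_0+\beta$, which is precisely what the missing reflection has to establish (and what genuinely fails for the non-looping-stable operator of Theorem \ref{thm:contre_exemple_machine}). So the proposal has the right skeleton but leaves the theorem's actual content unproved.
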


We will prove this result step by step. For all that follows in this subsection, we let $\Gamma$ be an $n$-symbol suitable, looping stable and simulational operator. By Theorem \ref{prop:gamma_emulates_limsup}, we can suppose w.l.o.g.\ that it emulates the $2$-symbol operator $\Gamma_{\sup}$.

\begin{prop}\label{prop:eg_inf_TG}
The supremum of the $\Gamma$-e.c.\ ordinals is strictly less than that of the $\Gamma$-a.c.\ ordinals. That is, $\eG < \TG$.
\end{prop}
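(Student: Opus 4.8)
The plan is to prove the sharper statement that $\eG$ is $\Gamma$-accidentally writable, and then read off the strict inequality. Indeed, from a code for an ordinal $\alpha$ a machine emulating $\Gamma_{\sup}$ can compute a code for $\alpha+1$ and, running through the predecessors of each natural number in the coded well-order, codes for all $\beta<\alpha$; hence the class of $\Gamma$-accidentally writable ordinals is an initial segment closed under successor, so its supremum $\SG$ is a limit ordinal that is itself not $\Gamma$-accidentally writable. Thus once $\eG$ is known to be $\Gamma$-accidentally writable we get $\eG<\SG$, and $\SG\leqslant\TG$ by Proposition \ref{prop:write_inf_clock} yields $\eG<\TG$.

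To show that $\eG$ is $\Gamma$-accidentally writable, enumerate the $\Gamma$-machines as $(m_i)_{i<\om}$ and recall that $\eG$ is the supremum, over the $m_i$ converging from the empty input, of their stabilisation stages $e_i$; this is a countable supremum of countable ordinals. I would build a machine $\mathcal{M}$ that simulates the universal machine $\UG$ (which exists since $\Gamma$ is simulational) and, by dovetailing, runs alongside it the looping test of Proposition \ref{prop:looping_condition_gamma} on each simulated $m_i$; by Proposition \ref{prop:gamma_machine_loops_countable_stage} every $\Gamma$-machine on the empty input halts or is seen to be looping before some fixed countable stage, so this test terminates for each $i$. Once $m_i$ is seen to be looping, with repeating block $[\alpha_i,\alpha_i+\beta_i[$, the machine $m_i$ converges if and only if its output is constant on that block, and in that case its last output change $e_i$ occurs at or before $\alpha_i$; all of this $\mathcal{M}$ can decide from the bounded initial segment of the run it has already produced. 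Harvesting from the ordinal-counting sub-computation inside $\UG$ a code for each such $e_i$ (identified by racing the candidate code against a re-simulation of $m_i$ up to stage $\alpha_i$), $\mathcal{M}$ maintains on an auxiliary tape a code for the ordinal sum $\bigoplus$ of all $e_i$ resolved so far; since each $e_i\leqslant\eG$ and every converging machine is resolved, at some stage that tape carries a code for an ordinal $\bigoplus_i e_i\geqslant\sup_i e_i=\eG$, so $\eG$ is $\Gamma$-accidentally writable. This parallels the earlier proof that $\lG<\zG<\SG$ and, as in the first part of the proof of Proposition \ref{prop:sigma_2_tau_2}, it yields accidental rather than eventual writability.

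The delicate step — and the main obstacle — is detecting convergence of the simulated machines: ``$m_i$ never changes its output again'' is not observable at any fixed stage of a naive simulation. This is exactly where looping stability is used: by Propositions \ref{prop:looping_condition_gamma} and \ref{prop:gamma_machine_loops_countable_stage} a non-halting $\Gamma$-machine is seen to be looping by a countable stage, after which its whole computation is determined, so convergence and the stabilisation stage become readable from a bounded portion of the run; without looping stability this mechanism collapses, in line with the counterexample of Theorem \ref{thm:contre_exemple_machine}. The secondary technical point is that one must not try to build codes for the ordinals $e_i$ by the naive ``construct it while simulating'' method (which fails already past $\om$), but rather recover them from the universal machine's own output, as above — the same care that underlies the corresponding classical facts for ITTMs. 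The remaining bookkeeping, namely organising the dovetailed simulation of $\UG$ together with the per-machine looping tests and the emulation of $\Gamma_{\sup}$-counters, is routine given that $\Gamma$ is simulational and emulates $\Gamma_{\sup}$.
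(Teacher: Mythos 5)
There is a genuine gap, and it is not a technicality: your reduction of the statement to the claim that $\eG$ is $\Gamma$-accidentally writable assumes, in effect, the main theorem. At this point of the development the only available comparison is $\SG \leqslant \TG$ (Proposition \ref{prop:write_inf_clock}); nothing yet excludes the configuration $\zG < \SG < \eG < \TG$ of Figure \ref{fig:topology_writing_clocking_looping_stability}, and refuting exactly that configuration is what the remainder of the proof of Theorem \ref{thm:sigma_tau_looping_condition} is devoted to. If $\eG < \SG$ could be obtained by a direct dovetailing construction, Proposition \ref{prop:eg_inf_sg_imp_thm} would immediately yield $\SG = \TG$ and the hard part of the paper would be superfluous. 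The step where your construction concretely fails is ``harvesting from the ordinal-counting sub-computation inside $\UG$ a code for each such $e_i$'': the ordinals whose codes appear anywhere in the computation of $\UG$ are precisely the accidentally writable ones, hence all strictly below $\SG$. If some converging machine $m_i$ stabilizes at a stage $e_i$ with $\SG \leqslant e_i < \eG$ --- which is exactly the possibility that has to be ruled out later --- then no candidate code ever wins your race, $e_i$ is never accounted for on the sum tape, and $\mathcal{M}$ never accidentally writes an ordinal $\geqslant \eG$. So the argument silently assumes that every stabilization stage is accidentally writable, which is (essentially) the conclusion to be proved. Secondary difficulties point the same way: the countable bound of Proposition \ref{prop:gamma_machine_loops_countable_stage} is not available to $\mathcal{M}$ as data, and recognizing ``seen to be looping'' together with the stage $\alpha_i$, and then ``re-simulating $m_i$ up to stage $\alpha_i$'', again presupposes ordinal codes for stages that need not be accidentally writable.

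The paper's own proof avoids writability altogether and is much shorter: by asymptoticity, once a machine has converged it only passes through snapshots that never occurred before its convergence (otherwise it would already have converged at the earlier occurrence), so $\eG$ is the first stage at which every converging machine has entered its set of ``converging snapshots''; this persists inside the simulation carried out by $\UG$, hence at stage $\eG$ the working tape of $\UG$ carries a real number appearing for the first time, i.e.\ $\eG$ itself is accidentally clocked, and the strict inequality $\eG < \TG$ follows without any detour through accidental writability of $\eG$.
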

\begin{proof}
We show that in $\UG$ a new real number appears at stage $\eG$, that is that $\UG$ a.c.\ stage $\eG$. 

For this, observe that once a machine converges, that is once its output stabilizes definitely, it computes through a set of snapshot that never occurred in the computation of this machine before it converged. Indeed, suppose it did earlier, then by asymptoticity the machine would already have converged at this earlier stage. Hence this defines two disjoint sets of snapshot: those before the machine converged and those after, that we can call the converging snapshots. Moreover, as a machine e.c.\ the stage at which it converges, $\eG$ is the first stage at which all converging machines actually converged. Hence it is the first stage at which all the snapshots of those machine belong to their respective set of converging snapshots. This naturally stays true in their simulation done by the universal machine. Hence the real number on the working tape of the universal machine at stage $\eG$ appeared for the first time.
\end{proof}

\begin{prop}\label{prop:eg_inf_sg_imp_thm}
If the supremum of the eventually clockable ordinals is smaller than or equal to that of the accidentally writable ordinals, then the supremum of the accidentally writable ordinals is equal to that of the accidentally clockable ordinals. That is, if $\eG \leqslant \SG$ then $\SG = \TG$.
\end{prop}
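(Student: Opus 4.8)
The plan is to mimic the second half of the proof of Proposition \ref{prop:sigma_2_tau_2}. Since $\SG \leqslant \TG$ always holds by Proposition \ref{prop:write_inf_clock}, it suffices to prove $\TG \leqslant \SG$, and for this I would show that every non-halting $\Gamma$-machine $m$ is looping, with a loop that starts and completes before stage $\SG$, so that no real number appears for the first time in $m$ at or after stage $\SG$; applying this to the universal machine $\UG$ (which simulates every machine, including the halting ones) then bounds every accidentally clockable ordinal by $\SG$. Throughout I would use that $\zG < \SG$ (shown just before Proposition \ref{prop:write_inf_clock}) and that $\zG \leqslant \eG \leqslant \SG$, so that the relevant "eventual" phenomena all take place below $\SG$; I would also use that $\SG$ is multiplicatively closed by Proposition \ref{prop:closure_prop_enhancing}, so that $\rho := \SG - \zG$ is a nonzero limit ordinal.

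Fix a non-halting $\Gamma$-machine $m$. The heart of the argument is a two-step stabilisation claim, mirroring the two bullet points of the proof of Proposition \ref{prop:sigma_2_tau_2}. First I would show that the snapshot of $m$ at stage $\zG$ equals its snapshot at stage $\SG$: if some cell history behaved, on the interval $[\zG, \SG[$, so as to change its $\Gamma$-value compared with its value at $\zG$, then, by simulating $\UG$ and monitoring that cell, one could eventually write an ordinal lying strictly between $\zG$ and $\SG$, contradicting the definition of $\zG$. This is Welch's argument (the Main Proposition of \cite{welch_main}) relativised to $\Gamma$-machines, which is legitimate because $\Gamma$ is simulational and emulates $\Gamma_{\sup}$. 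Second, writing $w$ for the block of tape contents read off the interval $[\zG, \SG[$, I would show that $\Gamma(w^\om) = \Gamma(w)$, i.e. that iterating the block $w$ reproduces the snapshot at $\zG$; again the contrapositive would supply a cell whose history stays "active" through the iteration in a way detectable by a machine that eventually writes an ordinal below $\zG$ and then, past it, would eventually write an ordinal above $\eG$ — and here is precisely where the hypothesis $\eG \leqslant \SG$ is consumed, since it is what lets one phrase the contradiction inside $\SG$.

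With these two facts in hand, I would conclude as follows. By asymptoticity and the equality of the snapshots at $\zG$ and $\SG$, the segment of computation $[\zG, \SG[$ repeats, so the snapshots at stages $\zG$, $\SG = \zG + \rho$ and $\zG + \rho\cdot\om$ all coincide (the last equality uses $\Gamma(w^\om) = \Gamma(w)$ together with looping stability, which further rules out an escape at any stage $\zG + \rho\cdot\om^k$). By Proposition \ref{prop:looping_condition_gamma} the machine $m$ is looping, with period $\rho$ starting at $\zG$; hence, by the definition of looping, for every stage $\nu \geqslant \SG = \zG + \rho$ the snapshot of $m$ at $\nu$ already occurred at some stage $< \SG$, so no real number is written for the first time at a stage $\geqslant \SG$. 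Applying this to $\UG$ bounds every accidentally clockable ordinal by $\SG$ — a first appearance in any machine at stage $\beta$ induces a first appearance on the tape of $\UG$ at some stage $\geqslant \beta$ (cf. the proof of Proposition \ref{prop:eg_inf_TG}), and a halt likewise registers as a new configuration in $\UG$ — so $\TG \leqslant \SG$ and therefore $\SG = \TG$.

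The main obstacle I expect is the second stabilisation step, the $\Gamma$-analogue of Welch's second bullet, and more precisely isolating exactly how $\eG \leqslant \SG$ is used: for the $\limsup$ rule it is enough that cells which are $0$ at $\zG$ stay $0$ on $[\zG, \SG[$, but for a general simulational, looping-stable $\Gamma$ the right invariant is the more opaque statement $\Gamma(w^\om) = \Gamma(w)$, and one must design the eventually-writing monitoring machine so that its failure to produce the desired ordinal below $\SG$ would instead produce an eventually clockable (or eventually writable) ordinal above $\SG$. A secondary, more routine, nuisance is the bookkeeping that turns "no new snapshot after $\SG$ in every non-halting machine" into "$\TG \leqslant \SG$", i.e. absorbing the halting machines and the slowdown of the universal simulation.
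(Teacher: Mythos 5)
Your strategy is genuinely different from the paper's, and it has a real gap at its core. Both of your stabilisation claims are asserted rather than proved, and neither transfers from the classical case the way you suggest. For the first one (the snapshot of $m$ at stage $\zG$ equals its snapshot at stage $\SG$), Welch's two bullets in the proof of Proposition \ref{prop:sigma_2_tau_2} hinge on the specific combinatorics of the $\limsup$ rule: a cell's value at a limit stage is $0$ exactly when it stabilised to $0$ at some earlier stage, so the monitoring machines have a concrete, stage-witnessed event to search for (a convergence stage, or a later $1$). For a general simulational, looping-stable operator the limit value of a cell is an arbitrary stable, asymptotic, contraction-proof function of its entire history, with no witnessing stage; the fact that $\Gamma$ \emph{emulates} $\Gamma_{\sup}$ only means a $\Gamma$-machine can run $\limsup$ computations, not that the cells of an arbitrary $\Gamma$-machine behave like $\limsup$ cells. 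So your monitoring machine has nothing detectable to monitor, and no mechanism is given by which a change of $\Gamma$-value over $[\zG,\SG[$ would yield an eventually writable ordinal above $\zG$. The second claim, $\Gamma(w^\om)=\Gamma(w)$ for the block $w$ read off $[\zG,\SG[$, is the one you flag as the obstacle, and indeed it is not a consequence of looping stability (which only identifies $\Gamma(H^\alpha)$ with $\Gamma(H^\om)$), nor is any concrete use of $\eG\leqslant\SG$ exhibited. Note that your route would essentially establish that every non-halting $\Gamma$-machine is seen to be looping with the loop already completed around stage $\SG$; Remark \ref{rmk:when_does_gamma_machine_loops} points out that even with the full Theorem \ref{thm:sigma_tau_looping_condition} in hand this is open in general (only looping before $\SG\cdot\om^\om$ is known), so the intermediate facts you rely on are strictly stronger than what the paper can prove and cannot be reached by the elementary relativisation you propose.

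The paper's actual proof avoids any structural analysis of the computation below $\SG$ and consumes the hypothesis $\eG\leqslant\SG$ in a much more direct way. Supposing some real $x_m$ appears for the first time in some machine $m$ at a least stage $\alpha_m\geqslant\SG$, one designs a machine that simulates $m$ and, for each real $x$ appearing there, writes $x$ on its output and tries to certify that $x$ already occurred before stage $\SG$, by using a fresh copy of $\UG$ to enumerate accidentally writable ordinals $\alpha$ and rerunning $m$ for $\alpha$ steps to look for $x$. Every real appearing before stage $\SG$ gets certified; $x_m$ is the first one that does not, so the output stabilises on $x_m$ at a stage $\geqslant\alpha_m\geqslant\SG\geqslant\eG$, i.e.\ the machine eventually writes $x_m$ and eventually clocks an ordinal beyond $\eG$, a contradiction. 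This gives $\TG\leqslant\SG$ for all machines at once (halting ones included), and $\SG=\TG$ then follows from Proposition \ref{prop:write_inf_clock}. If you want to keep a proof in the spirit of your proposal, you would first have to supply proofs of your two stabilisation claims for general simulational operators, which is precisely the hard content the paper circumvents here and only partially recovers, by entirely different means, in the proof of the main theorem.
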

\begin{proof}
We suppose that $\eG \leqslant \SG$. We show that, in this case, in any $\Gamma$-machine $m$, nothing new appears after stage $\SG$. This will show that $\SG \geqslant \TG$ and, by Proposition \ref{prop:write_inf_clock}, that $\SG = \TG$.

Suppose that there is some $\Gamma$-machine $m$, a real number $x_m$ and a least ordinal stage $\alpha_m \geqslant \SG$ such that $x_m$ appears for the first time on a tape of $m$ at stage $\alpha_m$. We consider the following computation: it simulates $m$ and at each step of $m$, writing $x$ the real number written on the relevant tape, it simulates a fresh instance of $\UG$. For each ordinal $\alpha$ accidentally written by $\UG$, its simulate a fresh copy of $m$ for $\alpha$ steps and looks whether $x$ appears in $m$ during those $\alpha$ steps. If $x$ appeared for the first time in $m$ before stage $\SG$, this subcomputation will eventullay find an ordinal below $\SG$ great enough so that $x$ is found in the simulation of $m$. Actually, $x_m$ is the first real number accidentally writable by $m$ that can't be found by this subcomputation. Effectively, the computation we designed eventually writes $x_m$ and eventually clocks some ordinal greater or equal to $\alpha_m$. By our assumptions, $\alpha_m$ is itself greater than $\SG$ and so greater than $\eG$. Hence the machine e.c. an ordinal greater than $\eG$ which is a contradiction.
\end{proof}

Now, in virtue of Proposition \ref{prop:eg_inf_sg_imp_thm}, to prove Theorem \ref{thm:sigma_tau_looping_condition}, it suffices to prove that the case $\zG < \SG < \eG < \TG$ leads to a contradiction. 
This case is represented in Figure \ref{fig:topology_writing_clocking_looping_stability}. In the rest of the proof, spanning almost until the end of the section, we suppose toward a contradiction that the constants of $\Gamma$ give rise to this situation.

\begin{figure}[h]
\begin{center}
\begin{tikzpicture}
\draw[thick, ->]  (0,0) -- (12,0);

\draw (2.5, 0.25) -- (2.5, -0.25);
\node at (2.5,0.5) {$\zG$};

\draw (5, 0.25) -- (5, -0.25);
\node at (5,0.5) {$\SG$};


\draw (8, 0.25) -- (8, -0.25);
\node at (8,0.5) {$\eG$};

\draw (11, 0.25) -- (11, -0.25);
\node at (11,0.5) {$\TG$};

\end{tikzpicture}
\caption{Situation of the main constants of the operator $\Gamma$ if $\SG < \eG$.}
\label{fig:topology_writing_clocking_looping_stability}
\end{center}
\end{figure}
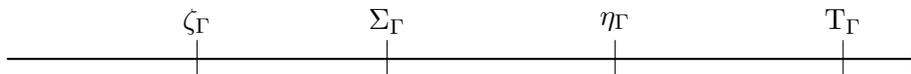

\begin{prop}\label{prop:reals_appear_cof}
In the computation of $\UG$, there are $\SG$ distinct snapshots appearing before stage $\SG$.
\end{prop}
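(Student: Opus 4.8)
The plan is to show two complementary facts about $\UG$: that it cannot be \emph{seen to be looping} below stage $\SG$, and that consequently it keeps producing fresh snapshots densely enough below $\SG$ to accumulate $\SG$ of them.

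First I would show that $\UG$ is not seen to be looping at any stage $\delta < \SG$. Suppose it were. Then by Proposition \ref{prop:looping_condition_gamma} the computation of $\UG$ is genuinely looping: from some stage on it repeats a fixed computation segment, so its snapshot sequence is eventually periodic. Since $\UG$ faithfully simulates every $\Gamma$-machine $m$, the simulated snapshots of $m$ form a sub-datum of this eventually periodic sequence, so (translating back through the fair dovetailing of $\UG$, which carries $m$ through roughly $\nu$ of its own steps by stage $\nu$) $m$ itself is looping and is seen to be looping by a stage comparable with $\delta$. Hence \emph{every} $\Gamma$-machine loops before such a stage, so nothing new is ever written past it, giving $\TG \leq \delta < \SG$; this contradicts $\SG \leq \TG$ (Proposition \ref{prop:write_inf_clock}). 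So $\UG$ is never seen to be looping below $\SG$.

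Next, fix any stage $\alpha < \SG$. Since $\SG$ is multiplicatively closed (Proposition \ref{prop:closure_prop_enhancing}) and $\om^\om < \SG$ (indeed $\SG > \zG > \lG$ is far above $\om^\om$), we have $\alpha \cdot \om^\om < \SG$; by the previous paragraph $\UG$ is not seen to be looping before $\alpha \cdot \om^\om$, so Proposition \ref{prop:new_snapshot} provides a snapshot that does not occur strictly before stage $\alpha$ but does occur strictly before $\alpha \cdot \om^\om < \SG$. Thus in every block $[\alpha, \alpha \cdot \om^\om)$ with $\alpha < \SG$ a new snapshot of $\UG$ appears. Enumerating in increasing order the stages $\langle \nu_\xi \rangle$ at which a snapshot appears for the first time, this yields $\nu_{\xi+1} < \nu_\xi \cdot \om^\om$ while $\sup_\xi \nu_\xi = \SG$, and a routine ordinal-arithmetic bookkeeping using the closure of $\SG$ under $\xi \mapsto \om^\xi$ forces the length of this enumeration to be exactly $\SG$. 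Hence $\UG$ exhibits $\SG$ distinct snapshots before stage $\SG$.

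The delicate point is the last step: converting ``a fresh snapshot in every interval $[\alpha, \alpha \cdot \om^\om)$'' into a count of exactly $\SG$ rather than merely cofinally many. Mere multiplicative closure (Proposition \ref{prop:closure_prop_enhancing}) is not enough for this, since iterating $\alpha \mapsto \alpha \cdot \om^\om$ from below $\SG$ need not exhaust $\SG$; the clean way is to use that $\SG$ is closed under ordinal exponentiation, so that $\nu_0 \cdot (\om^\om)^\rho < \SG$ for every $\rho < \SG$, which is incompatible with $\SG = \sup_\xi \nu_\xi \leq \nu_0 \cdot (\om^\om)^\rho$. Alternatively one can bypass the counting entirely by observing that every ordinal below $\SG$ is accidentally writable (the accidentally writable ordinals being downward closed, as a code for $\beta$ computably yields codes for its initial segments) and that each such ordinal is therefore exhibited by $\UG$ before stage $\SG$, distinct ordinals giving distinct snapshots — but making that argument precise requires controlling, via the fair dovetailing of $\UG$, that these appearances really do happen strictly below stage $\SG$, which is again the heart of the matter.
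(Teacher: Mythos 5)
Your proposal follows essentially the same route as the paper's proof: deduce from $\SG \leqslant \TG$ (Proposition \ref{prop:write_inf_clock}) that $\UG$ is not seen to be looping below $\SG$, apply Proposition \ref{prop:new_snapshot} to intervals $[\alpha, \alpha\cdot\om^\om[$ which stay below $\SG$, and iterate to count $\SG$ distinct snapshots. If anything you are more careful than the paper at the final counting step: where the paper invokes only multiplicative closedness of $\SG$ to justify the transfinite iteration, you correctly observe that closure under exponentiation is what is really needed, and this is indeed available by the same writable-from-codes argument used for Proposition \ref{prop:closure_prop_enhancing}.
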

\begin{proof}
As $\SG \leqslant \TG$ by Proposition \ref{prop:write_inf_clock}, $\UG$ is not seen to be looping before stage $\SG$. Then for any $\alpha<\SG$, a code for $\alpha \cdot \om^\om$ is writable from a code for $\alpha$. Hence $\alpha \cdot \om^\om < \SG$ and by Proposition \ref{prop:new_snapshot} for any $\alpha < \SG$ there is a new snapshot (that is distinct from all previous snapshots) that appears after stage $\alpha$ and before stage $\alpha \cdot \om^\om$. As $\SG$ is multiplicatively closed, we can repeat this reasoning with $\alpha \cdot \om^\om$ and so on; which yields $\SG$ distinct snapshots.
\end{proof}

\begin{prop}\label{prop:write_alphath_distinct_snapshot}
Let $x_\alpha$ be a code for some ordinal $\alpha < \SG$. Then the $\alpha^{th}$ distinct snapshot appearing in the computation of $\UG$ is $x_\alpha$-$\Gamma$-writable.
\end{prop}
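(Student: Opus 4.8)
The plan is to produce, uniformly from a code $x_\alpha$ for $\alpha$, a single $\Gamma$-machine $m$ that simulates $\UG$ step by step while counting, with the help of $x_\alpha$ as a \say{ruler}, the distinct snapshots $\UG$ goes through, and that halts exactly when the $\alpha^{th}$ of them appears. This is available to us because $\Gamma$ is simulational (so a $\Gamma$-machine can faithfully simulate the $\Gamma$-machine $\UG$, correctly through limit stages) and because, as fixed at the start of this subsection, we may assume $\Gamma$ emulates $\Gamma_{\sup}$, giving $m$ the ordinary ITTM computational power needed for the bookkeeping below.

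Concretely, $m$ maintains on a dedicated region of its working tape a code for a $\prec$-initial segment $I \subseteq \om$ of the well-order coded by $x_\alpha$ (initially empty), together with a partial map $g : I \to {}^\om n$ recording in $\prec$-order the distinct snapshots of $\UG$ seen so far, listed by order of first appearance. Each time the simulated $\UG$ enters a snapshot $s$, the machine $m$ tests whether $s \in \operatorname{ran}(g)$ --- a membership test over a countable list of reals, which is a routine subtask for a machine emulating $\Gamma_{\sup}$ and which, when dovetailed with the simulation of $\UG$, never lets that simulation fall behind. If $s$ is new and $I \neq \om$, then $m$ takes $k$ to be the $\prec$-least element of $\om \setminus I$, sets $I := I \cup \{k\}$ and $g(k) := s$, and resumes; if $s$ is new and $I = \om$ already, then exactly $\alpha$ distinct snapshots have appeared strictly before $s$, so $s$ is the $\alpha^{th}$ distinct snapshot of $\UG$, and $m$ writes $s$ on its output tape and halts. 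Termination is guaranteed by Proposition~\ref{prop:reals_appear_cof}: $\UG$ produces $\SG > \alpha$ distinct snapshots before stage $\SG$, so the case \say{$I = \om$} is eventually reached, and then $m$ halts with the desired real on its output, witnessing that it is $x_\alpha$-$\Gamma$-writable.

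The delicate point --- and the place where the hypotheses on $\Gamma$ are really used --- is the behaviour at limit stages: one must check that at every limit stage of $m$'s computation the stored data $I$ and $g$ are precisely the unions of their earlier values, so that $I$ is still a $\prec$-initial segment whose order type equals the number of distinct snapshots seen so far and $g$ is still the order isomorphism onto them. Since $I$ and $g$ are only ever enlarged, this monotone-accumulated tape data is preserved at limits by the emulated $\limsup$ rule; and right after each limit stage $m$ re-examines the current $\UG$-snapshot exactly as it would at a successor stage (note that, as in Proposition~\ref{prop:eg_inf_TG}, a limit snapshot of $\UG$ may itself be new). The whole interleaved computation --- the simulation of $\UG$, the membership subroutine, and the updates of $I$ and $g$ --- is a legitimate $\Gamma$-machine computation precisely because $\Gamma$ is stable, cell-by-cell, asymptotic and contraction-proof, so the reasoning behind the universal-machine proposition applies. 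Granting this bookkeeping, $m$ computes from $x_\alpha$ and halts with the $\alpha^{th}$ distinct snapshot of $\UG$ on its output, which is the claim.
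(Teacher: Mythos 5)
Your proposal is correct and follows essentially the same route as the paper: use $x_\alpha$ to organize working-tape storage for the snapshots of $\UG$ seen so far, simulate $\UG$ (legitimately, since $\Gamma$ is simulational, with the bookkeeping done via emulation of $\Gamma_{\sup}$), compare each snapshot against the stored ones, and halt with the $\alpha^{th}$ distinct snapshot on the output. The only difference is cosmetic — you fill the $\prec$-least unused index in a single pass while the paper phrases it as an induction on $\beta < \alpha$ over saved snapshots — and your appeal to Proposition~\ref{prop:reals_appear_cof} for termination is consistent with the standing hypotheses of this part of the argument.
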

\begin{proof}
We show that there is a machine that, given (the code of) some ordinal $\alpha$ as input, can write the $\alpha^{th}$ distinct snapshot appearing in $\UG$ if it exists and that never halts if it does not.
More precisely it will use $\alpha$ to write down the $\alpha+1$ first distinct snapshots of $\UG$.
The machine works as follows: using $\alpha$ given as input to arrange the information (that is it splits the working tape or some part of it into $\alpha$ virtual tape, each of those being able to store a real number describing a snapshot), it will inductively look for the $\beta$ first distinct snapshots for all $\beta < \alpha$. For $\beta = 0$, the first distinct snapshot of $\UG$ is its initial snapshot. Then, for any $\beta < \alpha$ such that the machine saved the $\beta$ first distinct snapshots, that is such that every $\delta < \beta$ correspond to a distinct saved snapshot, the machine simulates $\UG$ and, at each stage of this simulation, checks whether this snapshot is one of the snapshots saved as the $\delta^{th}$ distinct snapshot for some $\delta < \beta$. If the machine finds one that isn't part of those and consequently is a new snapshot, it saves it as the $\beta^{th}$ distinct snapshot. If the machine eventually finds $\alpha$ distinct snapshots, then the next distinct snapshot it finds (if it finds it) with the same procedure is the $\alpha^{th}$ snapshot it was looking for and it can stop. If at any point the machine does not find the wanted next distinct snapshots, it never halts.
\end{proof}

\begin{prop}\label{prop:snapshot_sgplusun_ew}
In the computation of $\UG$, the order-type of the distinct snapshots is greater than or equal to $\SG+1$ and the $\SG^{th}$ distinct snapshot (that is the last of the $\SG+1$ first snapshots) is eventually writable.
\end{prop}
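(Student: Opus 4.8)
The plan is to prove the two assertions in turn, the first being an immediate consequence of what precedes and the second requiring a dovetailing construction.

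For the claim on the order type: Proposition~\ref{prop:reals_appear_cof} already provides $\SG$ pairwise distinct snapshots occurring in $\UG$ strictly before stage $\SG$, so the distinct snapshots of $\UG$ have order type at least $\SG$. In the case under consideration we have $\SG < \eG$, and Proposition~\ref{prop:eg_inf_TG} tells us that $\UG$ accidentally clocks $\eG$, that is, a real number, and hence a whole snapshot, occurs in $\UG$ for the first time at stage $\eG$. Since $\eG > \SG$, this snapshot is not among those appearing before stage $\SG$, so its index is at least $\SG$; the order type of the distinct snapshots of $\UG$ is therefore at least $\SG+1$ and the $\SG^{th}$ one, $s_\SG$, is well defined.

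To show that $s_\SG$ is eventually writable, I would design a $\Gamma$-machine $\mathcal{M}$, computing from the empty input, that converges with $s_\SG$ on its output tape. The machine $\mathcal{M}$ simulates $\UG$ (which exists since $\Gamma$ is simulational) and keeps on its working tape a set $S$ of snapshots, initially empty, the current output being defined as the first distinct snapshot of $\UG$ not lying in $S$. The set $S$ is grown as follows: whenever a code $x_\alpha$ for an ordinal $\alpha$ appears in $\UG$ (so necessarily $\alpha < \SG$), $\mathcal{M}$ uses it together with the counting procedure of Proposition~\ref{prop:write_alphath_distinct_snapshot}, re-simulating $\UG$ from the start, to add to $S$ the snapshots $s_\beta$ for all $\beta \leqslant \alpha$. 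Because the $\Gamma$-accidentally writable ordinals are cofinal in $\SG$ and $\UG$ accidentally writes each of them, $S$ eventually equals $\{ s_\beta : \beta < \SG \}$, the set of the first $\SG$ distinct snapshots of $\UG$; and once that is the case, the first distinct snapshot of $\UG$ outside $S$ is exactly $s_\SG$, which from then on stays on the output tape forever. Hence $\mathcal{M}$ converges with output $s_\SG$, so $s_\SG$ is $\Gamma$-eventually writable.

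The one genuinely delicate point is the faithful maintenance of $S$, and therefore of the output, through limit stages: as more and more accidentally writable ordinals appear, $S$ runs through transfinitely many values and the output correspondingly runs over pairwise distinct snapshots of ever larger index before settling on $s_\SG$, so a naive overwriting would leave $\Gamma_{\sup}$-noise at limit stages. This is handled exactly as in the standard argument that the stable value of a sufficiently tame transfinite enumeration is eventually writable — available here because $\Gamma$ emulates $\Gamma_{\sup}$ — namely one uses a flag cell to detect, at a limit stage, that accidentally writable ordinals are still appearing, and recomputes $S$ and the output accordingly; after stage $\SG$ no such recomputation is triggered, so the output is genuinely never modified again. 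I expect this bookkeeping to be the only real obstacle, the combinatorial content of the argument being straightforward.
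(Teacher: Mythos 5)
Your first claim is fine: combining Proposition~\ref{prop:reals_appear_cof} with the fact that a new snapshot appears in $\UG$ at stage $\eG > \SG$ (via Proposition~\ref{prop:eg_inf_TG}, or equivalently via $\SG < \TG$ as the paper does) indeed gives order type at least $\SG+1$, so $s_{\SG}$ is well defined.

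The eventual-writability argument, however, has a genuine gap, and it sits exactly at the point you flag as ``the one genuinely delicate point''. Your output is, by design, recomputed every time the set $S$ grows, and since the accidentally writable ordinals are cofinal in $\SG$, the output changes cofinally often below the first stage $\Lambda$ of $\mathcal{M}$ by which codes for ordinals cofinal in $\SG$ have been processed. Hence at stage $\Lambda$ the output tape holds whatever the limit rule $\Gamma$ leaves after a cofinally changing history --- not $s_{\SG}$ --- and the representation of $S$ is in the same state. Your proposed remedy is a flag cell plus a ``recomputation'' of $S$ and of the output at such stages, after which ``no recomputation is triggered, so the output is never modified again''. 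But if nothing rewrites the output at or after $\Lambda$, the machine converges to the limit-rule residue, not to $s_{\SG}$; and if something does rewrite it, that recomputation must \emph{produce} $s_{\SG}$, i.e.\ it must recognise that $S$ has become the full set $\left\{ s_\beta \mid \beta < \SG \right\}$ and identify the first snapshot of $\UG$ outside it. There is no terminating procedure for this: certifying that a snapshot has index $\geqslant \SG$ is exactly the statement that no accidentally writable ordinal witnesses its index, and the machine can never positively detect at any stage that it has exhausted the accidentally writable ordinals (restarting the enumeration only reproduces the same cofinal behaviour). So the stabilisation mechanism is missing. The paper's proof inverts the logic precisely to avoid this: it copies each successive snapshot $s^1$ of a simulation of $\UG$ onto the output \emph{first}, and only then runs the search (via Proposition~\ref{prop:write_alphath_distinct_snapshot} along the accidentally writable ordinals) for an $\alpha$ witnessing that $s^1$ is the $\alpha^{th}$ distinct snapshot; this search terminates exactly when the index is below $\SG$, so stabilisation on $s_{\SG}$ is obtained from the \emph{divergence} of the verification, with no need to detect completion and no limit-stage bookkeeping on a transfinitely growing set $S$. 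Your construction becomes correct essentially only once this idea is inserted, at which point it collapses into the paper's argument.
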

\begin{proof}
As, by Proposition \ref{prop:eg_inf_sg_imp_thm}, the working hypothesis $\SG < \eG$ implies that $\SG < \TG$, there is some machine computation in which a real number appears for the first time after stage $\SG$. As this machine is simulated by the universal machine $\UG$, there is a least stage $\alpha \geqslant \SG$ such that a snapshot $s$ appears for the first time at this stage on the working tape of $\UG$. By Proposition \ref{prop:reals_appear_cof} this snapshot is the $\SG^{th}$ distinct snapshot (that is the last of the $\SG+1$ first snapshots) appearing in the computation of $\UG$.

We show that this snapshot $s$ is eventually writable. Consider this machine: it simulates $\UG$ in a simulation that we call $\UG^1$.  For each snapshot $s^1$ of $\UG^1$, it first copies it on its output and launches a new simulation $\UG^2$ to enumerate the ordinals $\alpha \in \SG$. More precisely, using $\UG^2$, the main machine looks for an ordinal $\alpha$ such that $s^1$ is the $\alpha^{th}$ distinct snapshot of $\UG$. This can be done using the machine described in the proof of Proposition \ref{prop:write_alphath_distinct_snapshot}. If such an $\alpha$ is found, the computation goes on with the simulation of $\UG^1$.
Now, when $s^1$, the snapshot of $\UG^1$, is the $\beta^{th}$ distinct snapshot of $\UG^1$ for $\beta \in \SG$, the simulation $\UG^2$ obviously yields at some point the correct $\alpha$, that is such that $\alpha = \beta$. On the other hand, when $\beta \geqslant \SG$ the simulation of $\UG^2$ never finds an $\alpha$ big enough and the machine never halts. This happens for the first time with the $\SG^{th}$ snapshot of $\UG^1$ and this snapshot is actually eventually writable, as wanted. 
\end{proof}

\begin{prop}\label{prop:topology_ordinals}
For the operator $\Gamma$, there exists some stage $\Alpha < \eG$ such that for each accidentally writable ordinals, there is a machine in which this ordinal appears before stage $\Alpha$.
\end{prop}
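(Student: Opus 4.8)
The plan is to read off the bound $\Alpha$ from a machine we already have in hand, namely the one constructed in the proof of Proposition~\ref{prop:snapshot_sgplusun_ew} to eventually write the $\SG^{th}$ distinct snapshot of $\UG$. First recall that every $\Gamma$-accidentally writable ordinal is strictly below $\SG$ (if $\SG$ itself were a.w.\ then so would be its successor, contradicting the definition of $\SG$). So it is enough to exhibit one stage $\Alpha < \eG$ and one $\Gamma$-machine in which, for every $\gamma < \SG$, a code for $\gamma$ appears before stage $\Alpha$; this covers all a.w.\ ordinals at once.

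Now reconsider the machine $m_s$ from the proof of Proposition~\ref{prop:snapshot_sgplusun_ew}. It simulates a copy $\UG^{1}$ of the universal machine, and for each snapshot $s^{1}$ occurring in $\UG^{1}$ it copies $s^{1}$ to its output and launches a fresh copy $\UG^{2}$ in which it searches for an ordinal $\alpha$ with $s^{1}$ equal to the $\alpha^{th}$ distinct snapshot of $\UG$, using the machine of Proposition~\ref{prop:write_alphath_distinct_snapshot} (for each ordinal-code coding some $\alpha$ that turns up in $\UG^{2}$ it computes the $\alpha^{th}$ distinct snapshot and compares it to $s^{1}$); once a match is found, the simulation of $\UG^{1}$ resumes. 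Two observations drive the argument. First, distinct snapshots are pairwise distinct, so if $s^{1}$ sits at position $\gamma$ the comparison can succeed only for $\alpha=\gamma$; hence at the moment the search attached to position $\gamma$ terminates, a code for $\gamma$ is present on a tape of $m_s$. Second, by Proposition~\ref{prop:snapshot_sgplusun_ew} the output of $m_s$ does converge, and it does so exactly when the simulation of $\UG^{1}$ reaches its $\SG^{th}$ distinct snapshot $s$ (at which point, $\SG$ not being a.w., the search never succeeds and $m_s$ stays on $s$).

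I would then take $\Alpha$ to be the stage at which $m_s$ converges. Since $m_s$ converges, $\Alpha$ is $\Gamma$-eventually clockable, hence $\Alpha < \eG$ (the supremum $\eG$ of the e.c.\ ordinals being a limit). For $\UG^{1}$ to have been simulated as far as its $\SG^{th}$ distinct snapshot by stage $\Alpha$, it must have been driven past every distinct snapshot at a position $\gamma < \SG$, and $\UG^{1}$ moves past such a snapshot only after the search attached to it has terminated; therefore each of those searches terminated strictly before $\Alpha$. By Proposition~\ref{prop:reals_appear_cof} the distinct snapshots of $\UG$ already have order type at least $\SG$ before stage $\SG$, so every $\gamma < \SG$ genuinely occurs as a position; combining this with the first observation, a code for every $\gamma < \SG$ --- and in particular for every a.w.\ ordinal --- appears on a tape of the single machine $m_s$ before stage $\Alpha < \eG$.

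The delicate point is the claim that each of those searches terminates, which I would not prove directly: it is \emph{forced} by the convergence of $m_s$ established in Proposition~\ref{prop:snapshot_sgplusun_ew}, since $m_s$ can only reach the $\SG^{th}$ distinct snapshot after being pushed past every earlier position. The only other thing to check is the strict inequality $\Alpha < \eG$, which is immediate once one knows $\eG$ is a limit ordinal (as is standard for these models), and which in any case can be secured by the observation that the interleaved $\UG^{2}$-searches make the convergence stage of $m_s$ strictly larger than the internal stage of $\UG^{1}$ at which $s$ first appears.
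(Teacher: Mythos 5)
Your proof is correct in its main thrust, and it takes a genuinely different and considerably shorter route than the paper. The paper does not extract the bound from the machine of Proposition~\ref{prop:snapshot_sgplusun_ew}: it introduces the \emph{least} stage $\Alpha \leqslant \TG$ bounding the first appearances, builds a new machine $\mathcal{M}$ whose searches test whether the current snapshot of $\UG^1$ occurs within the first $\beta$ steps of a further copy of $\UG$, rules out $\Alpha > \eG$ by showing $\mathcal{M}$ would then eventually clock an ordinal beyond $\eG$, and then needs a second, more delicate machine $\mathcal{N}$ (using a code for $\zG$ appearing at some $\alpha_\zeta < \eG$ and a machine converging after $\alpha_\zeta + \om$) to exclude the borderline case $\Alpha = \eG$. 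You bypass both contradiction steps by exploiting the precise implementation of the Proposition~\ref{prop:snapshot_sgplusun_ew} machine: since its search certifies the \emph{exact} position $\gamma$ of the current snapshot (the comparison can only succeed for $\alpha=\gamma$), a code for every $\gamma < \SG$ is forced to appear inside that single machine before it converges, so its convergence stage itself can serve as $\Alpha$. What this buys is economy and reuse of work already done; the cost is that you rely on the construction, not merely the statement, of Proposition~\ref{prop:snapshot_sgplusun_ew} (a laxer search, e.g.\ testing whether $s^1$ occurs among the first $\alpha+1$ distinct snapshots, would not force codes for all $\gamma<\SG$ to appear), and you only get the ``some machine'' form of the statement, which is indeed what is asserted, the transfer to appearances in $\UG$ needed later being the same $+\,\om$ remark the paper makes.

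The one step that is not justified as written is the strict inequality $\Alpha < \eG$. Eventual clockability of $\Alpha$ only gives $\Alpha \leqslant \eG$; that $\eG$ is a limit ordinal does not rule out the supremum of the e.c.\ ordinals being attained, and your fallback remark (that the convergence stage of the machine exceeds the internal $\UG^1$-stage at which $s$ first appears) is irrelevant to this point. What you need is that the $\Gamma$-e.c.\ ordinals have no maximum, which does hold for a simulational operator: from any converging machine one builds a machine that mirrors its output and, after each output change, toggles a reserved output cell, so it converges strictly later (stability and asymptoticity guarantee nothing moves afterwards); applying this to your $m_s$ gives an e.c.\ ordinal strictly above its convergence stage, hence $\Alpha < \eG$. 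Note that the paper's two-contradiction argument never needs this fact (it always produces an e.c.\ ordinal \emph{strictly} above $\eG$, or an e.w.\ ordinal above $\zG$). With this small lemma supplied, your argument is complete.
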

\begin{proof}
We use the previously proven fact that there is some eventually writable snapshot that appears after $\SG$ distinct snapshots appeared in $\UG$ to study the time of first appearance of ordinals in any computation. Let $s_\alpha$ be this eventually writable snapshot, as defined in the proof of Proposition \ref{prop:snapshot_sgplusun_ew}, and $\alpha \geqslant \SG$ that stage at which it appears.

We show using this snapshot $s_\alpha$ that there exists some stage $\Alpha < \eG$ such that all accidentally writable ordinals appear in $\UG$ before this stage $\Alpha$. While this is slightly different than the statement of the proposition, remember that at any limit stage $\nu$, inside of the computation of the universal machine $\UG$, all machine also reached stage $\nu$ of their simulation. In other words, in its simulations of other machines, the universal machine is never delayed by more than $\om$ steps. Hence, as $\eG$ is additively closed, it is equivalent to prove this result for $\UG$ or, as initially stated, for all machines individually.

So we consider the following computation of a machine that we call $\mathcal{M}$. It simulates $\UG^1$ a copy of $\UG$, and at each step of $\UG^1$, writing $s$ for its snapshot, $\mathcal{M}$ writes $s$ on its own output tape and launches $\UG^2$, a fresh copy of $\UG$, in order to enumerate the ordinals below $\SG$. For each ordinal $\beta$ appearing in this fresh copy of $\UG$, the main machine launches $\UG^3$, a third fresh copy of $\UG$ and checks whether the snapshot $s$ of the simulation of $\UG^1$ appears in the $\beta$ first steps of $\UG^3$. If it does, the simulation of $\UG^1$ carries on and this procedure is repeated with the next snapshots appearing in $\UG^1$.
If it does not, the simulation of $\UG^2$ continues with its enumeration of the accidentally writable ordinals. Hence, when the simulation of $\UG^1$ reaches stage $\alpha$, the snapshot $s_\alpha$ is written on the output tape of the main machine. As this snapshot does not appear before stage $\SG$, it will never be found by the simulation of $\UG^3$ that is only conducted through accidentally writable stages and $\mathcal{M}$ effectively eventually writes $s_\alpha$. Now the interesting part is: which ordinal is then e.c.\ by this computation?

Let $\Alpha \leqslant \TG$ be the least ordinal stage such that for each accidentally writable ordinal, there is a machine in which it appears before stage $\Alpha$.
We claim that if $\Alpha > \eG$, that is if there is some accidentally writable ordinal $\sigma$ that does not appear before stage $\eG$ in any machine, then the previous computation e.c.\ some ordinal greater than $\eG$. 
First, if such an ordinal $\sigma$ exists, all ordinals greater than $\sigma$ also appears after stage $\eG$. Then, as $\sigma$ is accidentally writable, $\sigma < \SG$ and by Proposition \ref{prop:reals_appear_cof} there is some snapshot $t$ that appears for the first time in $\UG$ after stage $\sigma$. Consequently, when $t$ appears in the first simulation of $\UG$ in $\mathcal{M}$, $\UG^2$, the second simulation of $\UG$, won't stop until it finds some ordinal greater than $\sigma$. And to do this it will compute for at least $\eG$ steps. Consequently, in such a case, when $\mathcal{M}$ converges, it has computed for more than $\eG$ steps and it e.c.\ an ordinal greater than $\eG$; which is a contradiction. Hence $\Alpha \leqslant \eG$. What is left to show is that $\Alpha$ can't be equal to $\eG$.

Suppose now that $\Alpha = \eG$: by minimality of $\Alpha$, for any $\alpha < \eG$, there is some accidentally writable ordinal $\sigma$ that does not appear in $\UG$ before stage $\alpha$. Moreover, by the definition of $\eG$, for any $\alpha < \eG$, there are machines that converge after stage $\alpha$. In particular there is some stage $\alpha_\zeta < \eG$ such that $\zG$ appears for the first time at stage $\alpha_\zeta$ and some machine $m$ that converges at stage $\alpha_m > \alpha_\zeta + \om$, as depicted in figure \ref{fig:cas_A_eta}. With it, we design the following computation $\mathcal{N}$: using a first simulation of $\UG$, that we call $\UG^1$, $\mathcal{N}$ enumerates the accidentally writable ordinals and for every ordinal $\alpha$ it finds, it does the following: it simulates in parallel a copy of $m$ and $\UG^2$, a new copy of $\UG$, until ordinal $\alpha$ or greater appears in $\UG^2$. As $\alpha$ appeared in the simulation $\UG^1$, it must appear at some point in $\UG^2$. When $\alpha$ has been found in $\UG^2$, it is written on the output of $\mathcal{N}$ and then the simulation of $m$ is carried on until there is a proof that $m$ had not yet converged. That is until the output of $m$ is modified for the first time after $\alpha$ has been found. When and if this happens, the main computation carries on with simulation of $\UG^1$ and starts this procedure again with the next $\alpha'$ it produces.

Now, as $m$ is a converging machine, it will have converged (that is reached the stage after which its output won't be modified anymore) after some $\alpha$ has been found when, before reaching the point at which this $\alpha$ is found, the parallel computation of $m$ and $\UG^2$ was carried for at least $\alpha_m$ steps. For this parallel simulation of $\UG^2$ and $m$ to be carried for at least $\alpha_m$ steps, it must have been looking for an ordinal greater than $\zG$; as $\zG$ appeared before, at stage $\alpha_\zeta$.
Observe also that $\zG$ appears at stage $\alpha_\zeta$ implies that any ordinal $\beta < \zG$ appears before stage $\alpha_\zeta + \om$. 
Moreover, by the assumption that $\Alpha = \eG$, ordinals great enough (that is greater than $\zG$ and further, great enough to appear for the first time after stage $\alpha_m$) are indeed found after stage $\alpha_m$. Consequently, at this point,
$m$ is definitely converging and so is $\mathcal{N}$. And this is only possible because such an ordinal greater than $\zG$ has been encountered in $\UG^1$, that $\UG^2$ then found at some computation stage greater than $\alpha_m$. So $\mathcal{N}$ has eventually written this ordinal greater than $\zG$, which is a contradiction.

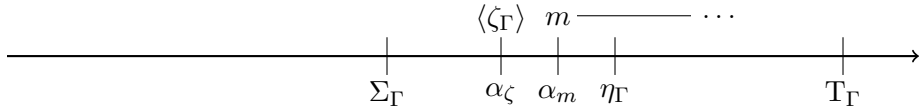
\begin{figure}[h]
\begin{center}
\begin{tikzpicture}
\draw[thick, ->]  (0,0) -- (12,0);


\draw (5, 0.25) -- (5, -0.25);
\node at (5,-0.5) {$\SG$};


\draw (6.5, 0.25) -- (6.5, -0.25);
\node at (6.5,-0.5) {$\alpha_\zeta$};
\node at (6.5,0.5) {$\langle\zG\rangle$};

\draw (7.25, 0.25) -- (7.25, -0.25);
\node at (7.25,-0.5) {$\alpha_m$};
\node at (7.25,0.5) {$m$};
\draw (7.5, 0.535) -- (9,0.535); \node at (9.4, 0.52) {\ldots};

\draw (8, 0.25) -- (8, -0.25);
\node at (8,-0.5) {$\eG$};

\draw (11, 0.25) -- (11, -0.25);
\node at (11,-0.5) {$\TG$};

\end{tikzpicture}
\caption{Case where $\eG = \Alpha$. Some code for $\zeta$ appears at stage $\alpha_\zeta$ and $m$ converges at an ulterior stage $\alpha_m$.}
\label{fig:cas_A_eta}
\end{center}
\end{figure}

\end{proof}

Now, by Proposition \ref{prop:topology_ordinals}, we obtain the state of affairs described in figure \ref{fig:topology_ordinals} for the $\Gamma$-machines.

\begin{figure}[h]
\begin{center}
\begin{tikzpicture}
\draw[thick, ->]  (0,0) -- (12,0);

\draw (2.5, 0.25) -- (2.5, -0.25);
\node at (2.5,0.5) {$\zG$};

\draw (5, 0.25) -- (5, -0.25);
\node at (5,0.5) {$\SG$};

\draw (7, 0.25) -- (7, -0.25);
\node at (7,0.5) {$\Alpha$};

\draw (8, 0.25) -- (8, -0.25);
\node at (8,0.5) {$\eG$};

\draw (11, 0.25) -- (11, -0.25);
\node at (11,0.5) {$\TG$};

\end{tikzpicture}
\caption{Situation of the main constants of the operator $\Gamma$. In this case, any accidentally writable ordinal appeared in some machine before stage $\Alpha$.}
\label{fig:topology_ordinals}
\end{center}
\end{figure}
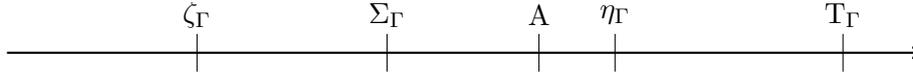

\begin{prop}\label{prop:sigma_model}
In the situation described in Figure \ref{fig:topology_ordinals} there is an eventually writable snapshot $s_{\Sigma}$ and a machine $m_{\Sigma}$ such that given as input $s_{\Sigma}$, any set theoretic formula $\varphi$ and any parameter $p \in L_{\SG}$, this machine decides whether $L_{\SG} \models \varphi(p)$.
\end{prop}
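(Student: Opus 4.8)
The plan is to take for $s_\Sigma$ (a snapshot carrying) the eventually writable object produced in Proposition~\ref{prop:snapshot_sgplusun_ew}: the $\SG$-th distinct snapshot $d_\SG$ of $\UG$, together with the output tape of a machine eventually writing it. What makes $s_\Sigma$ useful is that it marks the exact place in the run of $\UG$ where exactly $\SG$-many distinct snapshots have appeared, so it can be used as an \emph{oracle for the predicate ``$\beta<\SG$''}: mimicking the proof of Proposition~\ref{prop:snapshot_sgplusun_ew}, a $\Gamma$-machine that simulates $\UG$ while running the procedure of Proposition~\ref{prop:write_alphath_distinct_snapshot} can test whether a candidate code $x_\beta$ for an ordinal $\beta$ satisfies $\beta<\SG$; the point is that every ordinal $<\SG$ does surface in $\UG$ (Propositions~\ref{prop:reals_appear_cof} and~\ref{prop:topology_ordinals}), while neither $\SG$ nor any larger ordinal is accidentally writable, so the ``$\UG^2$-search'' for an index succeeds exactly for the snapshots $d_\beta$ with $\beta<\SG$ and freezes at $d_\SG=s_\Sigma$.

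Next I would use $s_\Sigma$ to manufacture an eventually writable \emph{master code} $c_\Sigma$ for the structure $(L_\SG,\in)$. Running $\UG^1=\UG$ and processing its snapshots one at a time as in Proposition~\ref{prop:snapshot_sgplusun_ew}, whenever the current snapshot is confirmed to be the $\beta$-th distinct snapshot for some $\beta<\SG$ the machine has in hand a code $x_\beta$ for $\beta$, hence (since $\Gamma$ emulates $\Gamma_{\sup}$, exactly as in Proposition~\ref{prop:set_to_writable}) it can write a code for $L_\beta$ and amalgamate it into a running code $c$ for $L_\delta$, with $\delta$ the supremum of the ordinals confirmed so far. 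Coding $L_\beta$ by an initial segment of the canonical $<_L$-enumeration makes this amalgamation monotone, so the $\Gamma$-limit of the partial codes at limit stages is again a code for $L$ of the limit ordinal; since the simulation of $\UG^1$ eventually freezes at $s_\Sigma$ and by then every $\beta<\SG$ has been confirmed, $\delta$ reaches $\SG$ and $c$ stabilises to a code $c_\Sigma$ of $L_\SG$. Proposition~\ref{prop:topology_ordinals} is what guarantees that each relevant $\beta$ actually surfaces below a bounded stage, so no $\beta$ is missed and every search loop terminates. We then relabel $s_\Sigma$ to be the (eventually writable) snapshot of this last machine, on whose output tape $c_\Sigma$ sits.

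Finally $m_\Sigma$ decides $L_\SG\models\varphi(p)$ as follows. From $s_\Sigma$ it reads off $c_\Sigma$; from the set-code $\bar p$ of $p$ it locates the element $n$ of $(\om,E_{c_\Sigma})$ realising $p$, which exists because $p\in L_\SG$ and is found by searching for an $n$ whose $E_{c_\Sigma}$-predecessor substructure is isomorphic to $\bar p$'s structure (isomorphism of well-founded relations coded by reals being decidable by a $\Gamma$-machine, since $\Gamma$ emulates $\Gamma_{\sup}$); and from $\varphi$ it evaluates the first-order satisfaction relation of $(\om,E_{c_\Sigma})$ at $\varphi$ with parameter $n$, a bounded $\Gamma$-computation (a finite-depth nesting of $\om$-searches over atomic facts read from $c_\Sigma$). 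It halts with that truth value, which is correct because $(\om,E_{c_\Sigma})\cong(L_\SG,\in)$.

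The main obstacle is the middle step: getting the running amalgam to genuinely \emph{stabilise}. This needs, on the one hand, a coding of the $L$-levels that is monotone under the $\Gamma$-limit rule --- handled by the canonical $<_L$-enumeration, for which $L_\beta$ is an initial segment of $L_{\beta'}$ whenever $\beta\leqslant\beta'$ --- and, on the other hand, the precise bookkeeping (already present in the proof of Proposition~\ref{prop:snapshot_sgplusun_ew}) that the simulation of $\UG^1$ freezes only after \emph{all} ordinals below $\SG$ have been processed, together with the extra observation that the freezing snapshot $s_\Sigma$ can never be mistaken for one of the $d_\beta$ with $\beta<\SG$ because $\SG$ itself is not accidentally writable and so no code for it ever appears in the auxiliary simulation.
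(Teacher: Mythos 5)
Your first step (using an eventually writable snapshot as a flag that a certain search will freeze, in the style of Propositions \ref{prop:snapshot_sgplusun_ew} and \ref{prop:write_alphath_distinct_snapshot}) and your last step (deciding satisfaction in a coded countable structure) are in the spirit of the paper, but the pivotal middle step is a genuine gap: you cannot eventually write a single ``master code'' $c_\Sigma$ for $(L_{\SG},\in)$, and the one-sentence justification (``coding $L_\beta$ by an initial segment of the canonical $<_L$-enumeration makes this amalgamation monotone'') is exactly where it breaks. The levels are handed to your machine in a transfinite stream of order type $\SG$, and the machine has no injective $\om$-indexing of this stream available: no code for $\SG$, nor for any ordinal $\geqslant \SG$, is even accidentally writable, so it cannot pre-allocate natural numbers to levels the way the standard ``write $L_\alpha$ from a code for $\alpha$'' algorithm does. ``Initial segments of the $<_L$-enumeration'' are initial segments of a well-order of type $\geqslant\SG$, not nested subsets of one fixed $\om$-coding; any concrete implementation must keep re-assigning natural numbers as new levels arrive, so the output cells flip cofinally often below the crucial stage, the output never stabilizes, and for a general simulational $\Gamma$ the limit of that history is not controlled at all. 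Moreover, the target of this step is provably too strong for the proposition: if a code of $(L_{\SG},\in)$ were eventually writable, then a code for the ordinal $\SG$ (the ordinals of the coded structure) would be writable from it, hence eventually writable by Proposition \ref{prop:dovetailing_sim_operator}, giving $\SG\leqslant\zG$ and contradicting the unconditionally established $\zG<\SG$; if such a two-line route to the contradiction were sound, none of the machinery of notations and quasi-notations in the rest of the section would be needed.

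The paper's proof avoids ever aggregating $L_{\SG}$ into one real. It takes $s_\Sigma$ to be the converging snapshot of a machine $m$ that converges \emph{after} stage $\Alpha$, which exists because Proposition \ref{prop:topology_ordinals} gives $\Alpha<\eG$; the point of this choice (and a second weakness of yours: the first appearance of the $\SG^{th}$ distinct snapshot only certifies that $\UG$ has passed a stage $\geqslant\SG$, not that all accidentally writable ordinals have already appeared, since $\Alpha$ may lie above that stage) is that the appearance of $s_\Sigma$ in the simulation of $m$ inside $\UG$ signals that every accidentally writable ordinal, hence a code of every $L_\alpha$ with $\alpha<\SG$, has already shown up. The machine $m_\Sigma$ then decides $L_{\SG}\models\varphi(p)$ by a finite recursion on $\varphi$: for an unbounded quantifier it enumerates, for each accidentally writable $\alpha$ appearing in $\UG$, codes of the elements of $L_\alpha$ one at a time, tests the matrix inductively on each candidate, and uses the flag $s_\Sigma$ to know when the witness search over $L_{\SG}$ is exhausted so it can halt with a negative answer. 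Each element is handled through its own (accidentally writable) code; no completed code of $L_{\SG}$ is ever needed, which is precisely what makes the argument go through where your amalgamation cannot.
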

\begin{proof}
Let $m$ be a machine that converges after all accidentally writable ordinal appeared in $\UG$. Such a machine exists as, by Proposition \ref{prop:topology_ordinals}, $A < \eG$.

First we show that we can eventually write the snapshot at which $m$ converges (while the output of $m$ stabilizes at some point, its whole snapshot may not stabilize.) 
Consider the following computation: it simulates $m$ and at each stage where the output appears to have stabilized (that is the output did not change for at least one step), the main machine saves the snapshot of $m$. When $m$ starts to converge definitely, its output is never modified anymore and consequently neither does its saved snapshot. We call $s_\Sigma$ this particular snapshot.
What is now interesting is that when this snapshot appears in $m$, itself simulated in $\UG$, all ordinals appeared in $\UG$. Indeed, it does not appear earlier as otherwise, by asymptoticity of $\Gamma$, the machine would also be converging earlier.

Now we describe the machine $m_{\Sigma}$ that decides whether $L_\Sigma \models \varphi(p)$ with $\varphi$, $p$ and $s_\Sigma$ given as input. As $s_\Sigma$ is given as input, by looking at the simulation of $m$ inside $U_\Gamma$, $m_{\Sigma}$ knows when $m$ reaches snapshot $s_\Sigma$ and so, it knows when all ordinal below $\SG$ appeared. 
Moreover, as the operator $\Gamma$ emulates the usual $\Sigma_2$-rule, given a code for any ordinal $\alpha$, a code for $L_\alpha$ is computable from it. With this in mind, $m_{\Sigma}$ can use $\UG$ to enumerate $L_{\SG}$ and it will know when all elements of $L_{\SG}$ appeared in the iteration. Using this, it works inductively as follows: If $\varphi(p)$ is $\Delta_0$, it is absolute and the machine can directly evaluate it. 
Then, when $\varphi(p) = \exists x \psi(x, p)$ is a $\Sigma_n$ formula: it starts by simulating $\UG$ and it looks at the ordinals that appear in it until the simulation of $m$ inside $\UG$ reaches snapshot $s_\Sigma$. When this occurs, all ordinals below $\SG$ appeared and the machine effectively enumerated all accidentally writable ordinals. Meanwhile, before $s_\Sigma$ appears in $m$, for all ordinal $\alpha < \SG$ appearing in $\UG$, it enumerates $L_\alpha$. For each $x$ coding an element of $L_\alpha$ it inductively (on $\psi$) decides whether $L_{\SG} \models \psi(x, p)$. If it does, that is if $L_{\SG} \models \psi(x, p)$, then $L_{\SG} \models \varphi(p)$ and $m_{\Sigma}$ halts and outputs true. If it doesn't, the iteration of $L_\Sigma$ goes on. If at the end of the iteration, no $x$ such that $L_\sigma \models \psi(x, p)$ has been found, then it outputs false. And the case where $\varphi$ is $\Pi_n$ is dealt with by considering $\neg \varphi$. Observe also, as the induction is finite, that there is no difficulties organizing the information from the different recursive calls. 

\end{proof}

\begin{prop}\label{prop:extension_zG_SG}
$L_{\SG}$ is an end-elementary extension (e.e.e) of $L_{\zG}$. That is, $L_{\zG} \prec L_{\SG}$.  
\end{prop}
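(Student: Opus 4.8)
The plan is to establish $\LzG \prec \LSG$ via the Tarski--Vaught criterion: since $\LzG$ is clearly a substructure of $\LSG$ (both are transitive and $\zG < \SG$), it suffices to show that for every set-theoretic formula $\varphi(x, \bar v)$ and every tuple $\bar p$ of parameters in $\LzG$, if $\LSG \models \exists x\, \varphi(x, \bar p)$ then some witness already lies in $\LzG$. So I fix such a $\varphi$ and $\bar p$, assume $\LSG \models \exists x\, \varphi(x, \bar p)$, and let $x_0$ be the $<_L$-least element of $\LSG$ for which $\LSG \models \varphi(x_0, \bar p)$; this is well-defined because $<_L$ well-orders $\LSG$. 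I will show that $x_0$ is eventually writable, and then that, together with $x_0 \in \LSG$, this forces $x_0 \in \LzG$.

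To see that $x_0$ is eventually writable, recall that a code for $\bar p$ is eventually writable by Proposition \ref{prop:set_to_writable} (as $\bar p \in \LzG$), and that by Proposition \ref{prop:sigma_model} the snapshot $s_\Sigma$ is eventually writable and there is a machine $m_\Sigma$ which, given $s_\Sigma$ together with a formula and a parameter in $\LSG$, decides its truth in $\LSG$. Using the dovetailing convention of Remark \ref{rmk:dovetailing}, consider the machine $\mathcal{M}$ which first eventually writes $s_\Sigma$ and a code for $\bar p$, then enumerates $\LSG$ by simulating $\UG$ --- exactly as in the proof of Proposition \ref{prop:sigma_model}, using $s_\Sigma$ to detect the stage at which this enumeration has been completed --- and which keeps on its output tape the $<_L$-least $x$ encountered so far for which $m_\Sigma$ reports $\LSG \models \varphi(x, \bar p)$. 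Once $s_\Sigma$ and $\bar p$ have stabilised and the enumeration of $\LSG$ is complete, this value is permanently equal to $x_0$; hence $\mathcal{M}$ converges and eventually writes $x_0$.

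It remains to deduce $x_0 \in \LzG$. Since $x_0 \in \LSG$, in particular $x_0$ lies in $L$, so it has a least constructible rank $\mu$ (and $\mu < \SG$). Consider the machine $\mathcal{N}$ which, again by dovetailing, eventually writes $x_0$ by running $\mathcal{M}$ while, for the current approximation of $x_0$ on the output of $\mathcal{M}$, it emulates a $\Gamma_{\sup}$-machine to build the constructible levels $L_0, L_1, L_2, \dots$ in order, searches for the least level containing that approximation, and copies the corresponding ordinal to its own output; it restarts this search whenever the approximation changes. Once the approximation has stabilised to the true $x_0$, which does belong to $\LSG$ and hence has a rank, the search reaches $L_\mu$ and the output of $\mathcal{N}$ stabilises on a code for $\mu$. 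Thus $\mu$ is eventually writable, so $\mu \leqslant \zG$, and therefore $x_0 \in L_\mu \subseteq \LzG$. This witness lies in $\LzG$, so the Tarski--Vaught criterion is met and $\LzG \prec \LSG$.

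I expect the central step --- showing $\mathcal{M}$ genuinely converges --- to be the main obstacle: one must check that the detection, via $s_\Sigma$, of the moment when all of $\LSG$ has been enumerated by $\UG$ is correct, and that the successive calls to $m_\Sigma$ can be organised by dovetailing without ever returning a premature or incorrect verdict. This is in essence a reprise of the proof of Proposition \ref{prop:sigma_model} together with the closure under dovetailing of Proposition \ref{prop:dovetailing_sim_operator}, so it is more bookkeeping than genuinely new argument; by contrast the rank-extraction machine $\mathcal{N}$ is routine once the eventually writable witness is at hand, the one point deserving emphasis being that it is the membership $x_0 \in \LSG$ --- not merely the fact that $x_0$ is eventually writable --- that guarantees the rank search halts, hence that $\mu$ is eventually writable.
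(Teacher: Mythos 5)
Your proof is correct in substance and runs on the same machinery as the paper's --- the eventually writable snapshot $s_\Sigma$ and the deciding machine $m_\Sigma$ of Proposition \ref{prop:sigma_model}, Proposition \ref{prop:set_to_writable} for the parameters, and the enumeration of $L_{\SG}$ through $\UG$ with completion detected via $s_\Sigma$ --- but it is organised differently. The paper argues by induction on the complexity of $\varphi$: it writes $\varphi = \exists x\,\psi$, eventually writes the first witness $\wt{x}$ produced by the enumeration, asserts that $\wt{x}$, being eventually writable, lies in $L_{\zG}$, and then feeds $\psi(\wt{x},p)$ to the induction hypothesis, handling the downward direction by negation. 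You instead apply the Tarski--Vaught test once, for formulas of arbitrary complexity (legitimate, since $m_\Sigma$ decides arbitrary formulas over $L_{\SG}$), and you close the argument with an explicit rank-extraction: the witness $x_0$ is eventually writable \emph{and} lies in $L_{\SG}$, hence the least constructible level containing it can be eventually written, so that level is below $\zG$ and $x_0 \in L_{\zG}$. This buys you two things: no induction on formulas, and an explicit justification of the step the paper states tersely (``eventually writable, hence in $L_{\zG}$''), which, as you rightly emphasise, really uses membership in $L_{\SG}$ and not eventual writability alone.

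One caveat concerning $\mathcal{N}$: as literally described it ``builds the constructible levels $L_0, L_1, L_2, \dots$ in order'', i.e.\ it tries to reach the rank of $x_0$ by counting upwards from what it can itself generate; this is exactly the move the remark following Proposition \ref{prop:set_to_writable} warns against ($x \in L_{\Sigma^x_{\Gamma_{\sup}}}$ can fail, so the rank of $x_0$ need not be reachable from $x_0$ alone). The search should instead range over the levels $L_\nu$ for $\nu$ accidentally writable, enumerated through $\UG$ with $s_\Sigma$ used to detect exhaustion --- exactly as in your machine $\mathcal{M}$ --- keeping the least such level containing the current approximation of $x_0$; since every ordinal below $\SG$ is accidentally writable and $x_0 \in L_{\SG}$, this stabilises on a code for the true rank. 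With that adjustment (and $\mu < \zG$ rather than $\mu \leqslant \zG$, which changes nothing), your argument goes through.
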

\begin{proof}
We show this by induction. By absoluteness $L_{\zG} \prec_{\Sigma_0} L_{\SG}$. Then let $\varphi$ be a $\Sigma_n$ formula such that for some $p \in L_{\zG}$, $L_{\SG} \models \varphi(p)$. We show that $L_{\zG} \models \varphi(p)$. 
By Proposition \ref{prop:set_to_writable}, $p$ is eventually writable. Writing $\varphi(p) = \exists x \psi(x, p)$ with $\psi$ a $\Delta_0$ formula, by Proposition \ref{prop:sigma_model}, given $p$, some $x$ and $s_\Sigma$, $m_\Sigma$ can decide whether $L_{\SG} \models \psi(x, p)$. As $p$ and $s_\Sigma$ are eventually writable, $\wt{x}$, the first $x$ appearing in $\UG$ such that $L_{\SG} \models \psi(x, p)$ is eventually writable as well. So we consider $\psi(\wt{x}, p)$, true in $L_{\SG}$. As both $\wt{x}$ and $p$ are eventually writable, they are in $L_{\zG}$ and we can apply the induction hypothesis: $L_{\zG} \models  \psi(\wt{x}, p)$. And so, $L_{\zG} \models \varphi(p)$.

Conversely, if $L_{\SG} \not\models \varphi(p)$, then $L_{\SG} \models \forall x \, \neg\psi(x, p)$. Hence for all $\wt{x} \in L_{\zG}$, $L_{\SG} \models \neg\psi(\wt{x}, p)$ with $\neg\psi(\wt{x}, p)$ $\Sigma_{n-1}$. So by the previous implication, $L_{\zG} \models \neg\psi(\wt{x}, p)$. That is, $L_{\zG} \models \forall x \, \neg\psi(x, p)$, i.e. $L_{\zG} \not\models \varphi(p)$.
\end{proof}

\begin{cor}\label{cor:zeta_model}
Given as input $s_{\Sigma}$, any set theoretic formula $\varphi$ and any parameter $p \in L_{\zG}$, the machine $m_{\Sigma}$ decides whether $L_{\zG} \models \varphi(p)$.
\end{cor}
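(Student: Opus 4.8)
The plan is to derive this immediately from Proposition \ref{prop:sigma_model} together with the elementarity statement of Proposition \ref{prop:extension_zG_SG}. First I would observe that since $\zG < \SG$ (which holds under the running assumptions depicted in Figure \ref{fig:topology_ordinals}), we have $L_{\zG} \subseteq L_{\SG}$, so any parameter $p \in L_{\zG}$ is also an element of $L_{\SG}$. Hence the hypotheses of Proposition \ref{prop:sigma_model} are satisfied, and the very same machine $m_{\Sigma}$, fed with $s_{\Sigma}$, the formula $\varphi$ and the parameter $p$, halts and correctly decides whether $L_{\SG} \models \varphi(p)$.

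Second, I would invoke Proposition \ref{prop:extension_zG_SG}, which states $L_{\zG} \prec L_{\SG}$. Since $p \in L_{\zG}$, elementarity gives the equivalence $L_{\zG} \models \varphi(p)$ if and only if $L_{\SG} \models \varphi(p)$. Combining this with the previous paragraph, the output of $m_{\Sigma}$ on input $(s_{\Sigma}, \varphi, p)$, which by Proposition \ref{prop:sigma_model} decides the right-hand side of this equivalence, therefore also decides the left-hand side, that is, whether $L_{\zG} \models \varphi(p)$. This is exactly the statement of the corollary.

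I do not expect any real obstacle here: the corollary is a direct consequence of the two preceding results and requires no new construction. The only point meriting a line of care is the verification that $p \in L_{\zG}$ indeed entails $p \in L_{\SG}$, so that Proposition \ref{prop:sigma_model} genuinely applies; this is immediate from $\zG \leqslant \SG$, which is part of the configuration we are working under.
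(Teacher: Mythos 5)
Your proposal is correct and is essentially the paper's own argument: the paper's proof is the one-line observation that, by the elementarity $L_{\zG} \prec L_{\SG}$ of Proposition \ref{prop:extension_zG_SG}, deciding $L_{\zG} \models \varphi(p)$ reduces to deciding $L_{\SG} \models \varphi(p)$, which $m_{\Sigma}$ does by Proposition \ref{prop:sigma_model}. Your extra check that $p \in L_{\zG}$ lies in $L_{\SG}$ is a fine (if routine) point of care and changes nothing.
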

\begin{proof}
By elementarity it is enough to decide whether $L_{\SG} \models \varphi(p)$.
\end{proof}

In this proof toward a contradiction we now have obtained fairly strong results. The fact that $L_{\zG} \prec L_{\SG}$ is clearly way too powerful to be established for any $\Sigma_n$ operator $\Gamma$ and the fact that we can decide whether $L_{\SG} \models\varphi(p)$ seems a direct contradiction of the intuition that $L_{\SG}$ is a bound on what the $\Gamma$-machines may apprehend. Still, reaching a conclusion from there is still somewhat involved. To harvest the previous result, Corollary \ref{cor:zeta_model}, we show how we can use a system of notations to virtually work with $L_{\zG \cdot 2}$ in $L_{\zG}$.

\begin{definition}[Notation]
We define a system of notation in $L_{\zG}$. We write $\top$ for some distinguished tautological sentence. A set $a$ is a notation if and only if:
\begin{itemize}
\item Either $a = \langle \top, x, 0, \beta \rangle$ for $x \in L_{\beta}$ and $\beta \in \zG$.
\item Or $a = \langle \varphi, b_1, \dots, b_n, \alpha, \beta \rangle$ where $\varphi$ is a set-theoretic formula (encoded by a set) without the symbol of equality, $\alpha > 0$ and $\beta$ are ordinals in $\zG$ and the ordered pair $(\alpha, \beta)$ is called the \emph{rank} of $a$, written $\rk(a)$, and $b_1, \dots, b_n$ are notations of rank strictly less (w.r.t.\ lexicographical order) than $(\alpha, \beta)$.
\end{itemize}
\end{definition}

\noindent We now use those notations to encode sets in $L_{\zG^2}$, the level $(\zG)^2$ of the constructible universe.
\begin{definition}[Notation of a set]
For $x \in L_{\zG^2}$, the \emph{least notation of the set $x$}, $\wb{x}$, is a notation inductively defined as follows.
\begin{itemize}
\item If $x \in L_{\beta}$ for some least $\beta \in \zG$:
\begin{align*}
\wb{x} \coloneqq \langle \top, x, 0, \beta \rangle
\end{align*}
\item If $x \in L_{\zG \cdot \alpha +\beta +1 } - L_{\zG \cdot \alpha +\beta}$, with $\alpha > 0$, then $x$ was defined over $L_{\zG \cdot \alpha +\beta}$ by some least formula $\varphi$ (w.r.t.\ some fixed order on formulas) and least parameters $p_1, \dots, p_n \in L_{\zG \cdot \alpha +\beta}$ (w.r.t.\ $<_L$). And by extensionality we can w.l.o.g. suppose that the symbol of equality is not used in $\varphi$. So we define:
\begin{align*}
\wb{x} \coloneqq \langle \varphi, \wb{p_1}, \dots, \wb{p_n}, \alpha, \beta \rangle
\end{align*}
\end{itemize}
\end{definition}

\begin{definition}[Set of a notation]
For a notation $a \in L_{\zG}$, the \emph{set of the notation $a$}, $\widehat{a}$, is inductively defined as follows:
\begin{itemize}
\item If $a = \langle \top, x, 0, \beta \rangle$:
\begin{align*}
\wh{a} \coloneqq x
\end{align*}
\item If $a = \langle \varphi, b_1, \dots, b_n, \alpha, \beta \rangle$ with $\alpha > 0$:
\begin{align*}
\wh{a} \coloneqq \left\{ y \in L_{\zeta \cdot \alpha + \beta} \mid L_{\zeta \cdot \alpha + \beta} \models \varphi(y, \wh{b_1}, \dots, \wh{b_n} ) \right\}
\end{align*}
\end{itemize}
\end{definition}

We draw the attention of the reader to some clear but important features of this system of notations.

\begin{prop}\label{prop:features_of_notation}~\
\begin{itemize}[label={--}]
\item The operator $x \mapsto \nott{x}$ defined on $L_{\zG^2}$ is injective, $\set{\nott{x}} = x$ and the operator $a \mapsto \set{a}$ is surjective onto $L_{\zG^2}$. That is, every set in $L_{\zG^2}$ has a notation. 
\item A notation only uses finitely many notations in its definition.
\item For a notation $a$ of rank $(\alpha, \beta)$, $\set{a} \in L_{\zG\cdot \alpha + \beta +1}$.
\item For $x \in L_{\zG\cdot\alpha + \beta+1}$ with $\alpha, \beta < \zG$, $x$ has a notation of rank less than $(\alpha, \beta)$.
\end{itemize}
\end{prop}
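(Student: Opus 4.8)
The plan is to prove the four items by transfinite induction, leaning on two standing facts: $\zG$ is multiplicatively closed by Proposition~\ref{prop:closure_prop_enhancing}, hence additively closed and a limit ordinal; and $L$ gains no set at a limit level, so every set $x$ has a \emph{first-appearance level}, necessarily a successor $L_{\gamma+1}$. By Euclidean division every $\gamma<\zG^2=\zG\cdot\zG$ is uniquely $\zG\cdot\alpha+\beta$ with $\alpha,\beta<\zG$, and the single arithmetic fact that drives everything is: for $\beta,\beta'<\zG$, one has $\zG\cdot\alpha'+\beta'+1\leqslant\zG\cdot\alpha+\beta$ exactly when $(\alpha',\beta')$ is strictly below $(\alpha,\beta)$ lexicographically (the nontrivial direction: $\alpha'\geqslant\alpha+1$ would already push $\zG\cdot\alpha'$ beyond $\zG\cdot\alpha+\beta$ because $\beta<\zG$). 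The whole proof is a dictionary between the ordinal-indexed $L$-hierarchy and the pair-indexed rank hierarchy, translated through this fact.

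Soundness of the two recursions comes first, and gives item~(2) for free: the rank order on notations is the lexicographic order on pairs of ordinals, hence well-founded; the clause for $\langle\varphi,b_1,\dots,b_n,\alpha,\beta\rangle$ refers only to the finitely many $b_i$ of strictly smaller rank, and dually $\nott x$ refers only to the finitely many parameters of strictly smaller $L$-rank; so both $a\mapsto\set a$ and $x\mapsto\nott x$ are total, and a trivial induction on rank shows that the unfolding of any notation meets only finitely many notations.

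The substantive point is item~(1), which I would prove by induction on $\rk_L(x)$, establishing at once that $\nott x$ is a bona fide notation lying in $L_{\zG}$ and that $\set{\nott x}=x$. If the first-appearance level of $x$ is below $\zG$ we are in the base clause, $\nott x=\langle\top,x,0,\beta\rangle$ with $\beta<\zG$ least, and $\set{\langle\top,x,0,\beta\rangle}=x$ by definition; otherwise write that level as $L_{\zG\cdot\alpha+\beta+1}$ with $\alpha>0$, $\beta<\zG$, so $x$ is defined over $L_{\zG\cdot\alpha+\beta}$ by a least, equality-free (by extensionality) formula $\varphi$ and least parameters $p_1,\dots,p_n\in L_{\zG\cdot\alpha+\beta}$. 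Each $p_i$ has first-appearance level $\leqslant\zG\cdot\alpha+\beta$, so by the arithmetic fact its rank is strictly below $(\alpha,\beta)$; the induction hypothesis makes each $\nott{p_i}$ a notation in $L_{\zG}$ of that rank, whence $\langle\varphi,\nott{p_1},\dots,\nott{p_n},\alpha,\beta\rangle$ is a notation (parameters of strictly smaller rank) lying in $L_{\zG}$, being a finite tuple of elements of the limit level $L_{\zG}$; and substituting $\set{\nott{p_i}}=p_i$ into the defining clause of $\set{-}$ gives $\set{\nott x}=\{\,y\in L_{\zG\cdot\alpha+\beta}:L_{\zG\cdot\alpha+\beta}\models\varphi(y,p_1,\dots,p_n)\,\}=x$. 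This makes $x\mapsto\nott x$ injective (it has $a\mapsto\set a$ as a left inverse), and since every $x\in L_{\zG^2}$ has first-appearance level below $\zG\cdot\zG$, hence of the form $\zG\cdot\alpha+\beta+1$ with $\alpha,\beta<\zG$, the notation $\nott x$ witnesses that $a\mapsto\set a$ is onto $L_{\zG^2}$.

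Items~(3) and~(4) are the matching bookkeeping. For~(3), induct on the rank $(\alpha,\beta)$ of $a$: the base case gives $\set{\langle\top,x,0,\beta\rangle}=x\in L_\beta\subseteq L_{\beta+1}$; in the inductive case the hypothesis puts each parameter $\set{b_i}$ in $L_{\zG\cdot\alpha_i+\beta_i+1}$ with $(\alpha_i,\beta_i)$ below $(\alpha,\beta)$, hence in $L_{\zG\cdot\alpha+\beta}$ by the arithmetic fact, so $\set a$ is definable over $L_{\zG\cdot\alpha+\beta}$ from those parameters and lands in $L_{\zG\cdot\alpha+\beta+1}$. For~(4), given $x\in L_{\zG\cdot\alpha+\beta+1}$ with $\alpha,\beta<\zG$, the first-appearance level $L_{\gamma+1}$ has $\gamma\leqslant\zG\cdot\alpha+\beta$, so by the arithmetic fact the rank of $\nott x$ is $\leqslant(\alpha,\beta)$ lexicographically, and strictly below it once $x$ already lies in $L_{\zG\cdot\alpha+\beta}$. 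I foresee no real obstacle: the argument is entirely routine once the single arithmetic fact above is isolated and once one remembers that $L$ gains nothing at limit levels, which is exactly what lets every first-appearance level be written as $\zG\cdot\alpha+\beta+1$.
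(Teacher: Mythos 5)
Your proposal is correct and follows essentially the same route as the paper's (much terser) proof: induction on rank for the third item, finiteness of the tree of references for the second, and direct inspection of the least notation $\nott{x}$ for the first and fourth, with the lexicographic-rank-versus-ordinal-level arithmetic that the paper leaves implicit spelled out as your single ``arithmetic fact''. Your refinement of the fourth item --- that in general one only gets rank $\leqslant(\alpha,\beta)$, with strictness exactly when $x$ already lies in $L_{\zG\cdot\alpha+\beta}$ --- is accurate and is in fact the form in which the proposition is later used (e.g.\ in Proposition~\ref{prop:decide_with_notations}), so it is a sharpening rather than a deviation.
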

\begin{proof}~\
\begin{itemize}[label={--}]
\item By definition, it is clear that $x \mapsto \nott{x}$ is injective and that $\wh{\wb{x}} = x$. Hence $a \mapsto \set{a}$ is surjective onto $L_{\zG^2}$.
\item Observe that the inductive definition of a notation induces a tree in which each node is a notation. In this tree, every node has finitely many children and by well-orderdness every path is finite. Hence the tree itself is finite.
\item By induction: notations of rank $(0, \beta)$ clearly yield elements of $L_{\zG}$. For a notation $a = \langle \varphi, b_1, \dots, b_n, \alpha, \beta \rangle$ with $\alpha > 0$, by induction $b_1, \dots, b_n$ are notations for some sets $p_1, \dots, p_n$ (i.e. $\set{b_i} = p_i$) of rank strictly less than $(\alpha, \beta)$ and so they are in $L_{\zeta\cdot\alpha + \beta}$. Hence, as a set defined over $L_{\zeta\cdot\alpha + \beta}$, $\set{a}$ is in $L_{\zeta\cdot\alpha + \beta+1}$. 
\item Simply consider $\nott{x}$, the least notation of $x$. 
\end{itemize}
\end{proof}

\begin{prop}\label{prop:decide_with_notations}
There exists a $\Gamma$-machine such that: given as input $s_\Sigma$, a formula $\varphi$, two ordinals $\alpha, \beta \in \zG$ with $\alpha>0$ and notations $p_1, \dots, p_n$ of rank strictly less than $(\alpha, \beta)$, it decides whether
\begin{align*}
L_{\zeta\cdot\alpha + \beta} \models \varphi(\set{p_1}, \dots, \set{p_n})
\end{align*}
\end{prop}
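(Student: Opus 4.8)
The plan is to realise the required machine by an \emph{effective transfinite recursion} on the rank $(\alpha,\beta)$, taken with the lexicographic order (hence well-founded), with an inner recursion on the logical complexity of $\varphi$. The atomic and base cases will be pushed down either to the decision power of $m_{\Sigma}$ provided by Corollary \ref{cor:zeta_model} or to a satisfaction query of strictly smaller rank, and the whole recursion will be made effective with the help of $\UG$ together with the eventually writable snapshot $s_\Sigma$, which plays the role of a clock telling the machine when the part of the constructible hierarchy it needs has been fully enumerated inside $\UG$ (exactly as in the proof of Proposition \ref{prop:sigma_model}, and this is why $s_\Sigma$ must be supplied as input).

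More precisely, I would have the machine evaluate $L_{\zG\cdot\alpha+\beta}\models\varphi(\set{p_1},\dots,\set{p_n})$ by the following clauses. For Boolean combinations, recurse on the immediate subformulas at the same rank $(\alpha,\beta)$. For $\varphi=\exists x\,\psi(x,\vec y)$: by Proposition \ref{prop:features_of_notation} the elements of $L_{\zG\cdot\alpha+\beta}$ are exactly the sets $\set q$ for $q$ a notation of rank strictly below $(\alpha,\beta)$, so the machine simulates $\UG$, enumerates all such notations (they all lie in $L_{\zG}$), and for each candidate $q$ recursively evaluates $\psi$ with the extra parameter $q$, at the same rank and a shorter formula; it answers yes as soon as a witness is found and no once the enumeration is exhausted. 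It is here that $s_\Sigma$ is indispensable: watching, inside its simulation of $\UG$, the converging machine $m$ used to define $s_\Sigma$, the machine detects the stage by which all $\Gamma$-accidentally-writable ordinals, a fortiori all ordinals below $\zG$, and hence all the relevant notations, have appeared. For $\varphi$ atomic, say $y_i\in y_j$ (equality having been banned from notations), the machine unfolds the governing notation $p_j$: if $p_j=\langle\top,x,0,\beta'\rangle$ then $x=\set{p_j}$ lies in $L_{\zG}$ and the question reduces, after the routine check that $\set{p_i}$ itself lies in $L_{\zG}$, to a membership question inside $L_{\zG}$ settled by $m_{\Sigma}$; if $p_j=\langle\chi,b_1,\dots,b_k,\alpha_j,\beta_j\rangle$ with $\alpha_j>0$, then by the definition of $\set{\cdot}$ we have $\set{p_i}\in\set{p_j}$ iff $\set{p_i}\in L_{\zG\cdot\alpha_j+\beta_j}$ (read off from the rank of a least notation of $\set{p_i}$) and $L_{\zG\cdot\alpha_j+\beta_j}\models\chi(\set{p_i},\set{b_1},\dots,\set{b_k})$, a satisfaction query of rank $(\alpha_j,\beta_j)$, strictly below $(\alpha,\beta)$. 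When the recursion reaches a rank $(0,\beta')$ the level $L_{\beta'}$ is itself an element of $L_{\zG}$ and all parameters are direct notations of genuine sets in $L_{\beta'}$, so the query is again settled by $m_{\Sigma}$.

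The recursion is well-founded: each atomic step strictly lowers the rank, and between two successive rank-drops the formula complexity strictly decreases, so along every branch the pair (rank, formula complexity) descends in a well-founded order. The recursion tree may be infinitely branching at existential quantifiers, but a $\Gamma$-machine, which emulates $\Gamma_{\sup}$ and can therefore carry out effective transfinite recursions while dovetailing over the $\UG$-enumeration, can run it to completion and halt with the correct verdict.

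The point requiring the most care is precisely the effectivity of this transfinite recursion on a $\Gamma$-machine: one must keep it confined to the levels $L_{\zG\cdot\gamma+\delta}$ that are genuinely named by legitimate notations, namely those whose ordinal coordinates lie in $\zG$, so that the existential searches range over exactly the right sets, and one must make sure the machine can recognise the end of each such search. Both are handled through $s_\Sigma$: it lets the machine treat the moment $m$ reaches $s_\Sigma$ as the point at which ``all of $L_{\zG}$ is now available''. Everything else, namely organising the finitely many recursive calls attached to each node, comparing the stored ranks lexicographically, computing least notations, and managing the many parallel $\UG$-simulations, is routine bookkeeping of the kind already used throughout this section.
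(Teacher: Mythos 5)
There is a genuine gap, and it is exactly the point the paper's proof is built around: your machine has no way to recognise which of the sets it enumerates are \emph{genuine} notations. Being a notation requires the ordinal coordinates $\alpha,\beta$ (and, recursively, those of all sub-notations) to lie in $\zG$, and a $\Gamma$-machine cannot decide whether an ordinal it encounters is below $\zG$ --- the snapshot $s_\Sigma$ only tells the machine when \emph{all of $L_{\SG}$} has been exhausted inside $\UG$; it gives no handle whatsoever on the boundary $\zG$, so your claim that $s_\Sigma$ lets the machine ``treat the moment $m$ reaches $s_\Sigma$ as the point at which all of $L_{\zG}$ is now available'' is not justified, and neither is the parenthetical ``they all lie in $L_{\zG}$'' in your existential clause. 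Note also that checking $\rk(q)<_{lex}(\alpha,\beta)$ does not help: a pair can be lexicographically below $(\alpha,\beta)$ while its second coordinate is an arbitrary ordinal below $\SG$. The paper's proof confronts this head on: it first describes the machine assuming a fictitious decider $m_{\langle\rangle}$ for notations, then replaces it by a decider $m^{q}_{\langle\rangle}$ for \emph{quasi-notations} (coordinates only required to be in $\SG$, which \emph{is} decidable in the ambient computation), and then proves by a separate induction on $\varphi$ that every quasi-notation the run actually commits to is eventually writable --- because all inputs are --- and is therefore a true notation, so the answers computed with $m^{q}_{\langle\rangle}$ coincide with the intended ones. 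This correctness argument is the substantive content of the proof and is entirely absent from your proposal.

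A secondary, related gap is in your atomic case: deciding $\set{p_i}\in\set{p_j}$ when $\rk(p_i)\geqslant_{lex}(\alpha_j,\beta_j)$ requires deciding whether $\set{p_i}\in L_{\zG\cdot\alpha_j+\beta_j}$, which you dispose of as ``read off from the rank of a least notation of $\set{p_i}$''. The machine is not given a least notation and cannot simply compute one; the paper instead searches over (quasi-)notations $q_i$ of rank below $(\alpha_j,\beta_j)$ and verifies $\set{q_i}=\set{p_i}$ \emph{extensionally}, by recursively testing $\set{r_i}\in\set{q_i}\iff\set{r_i}\in\set{p_i}$ for all $r_i$ of rank below $\rk(p_i)$ (equality having been excluded from the formulas precisely so that this suffices). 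Your outline of the recursion skeleton, the use of Corollary \ref{cor:zeta_model} at the base, and the well-foundedness of the (rank, formula-complexity) descent all match the paper, but without the quasi-notation device and the eventual-writability correctness argument the machine you describe cannot be implemented as stated.
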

\begin{proof}
We show this by induction on $(\alpha, \beta)$. We provide a description of a machine $\mathcal{M}$ that uses the inductive definition of the notations to inductively decide whether $L_{\zeta\cdot\alpha + \beta} \models \varphi(\set{p_1}, \dots, \set{p_n})$. Through this description we will actually describe two recursive subroutines relying on each other. Namely:
\begin{itemize}
\item A subroutine that given $\varphi$, two ordinals $\alpha, \beta \in \zG$ with $\alpha>0$ and notations $p_1, \dots, p_n$, decides whether $L_{\zeta\cdot\alpha + \beta} \models \varphi(\set{p_1}, \dots, \set{p_n})$.
\item A subroutine that given notations $a$ and $b$, decides whether $\set{a} \in \set{b}$.
\end{itemize}

The main difficulty will be the fact that we can't decide whether some set $a$ is a notation. Deciding whether $a$ is of the form $\langle \varphi, b_1, \dots, b_n, \alpha, \beta \rangle$ for some sets $b_1, \dots, b_n$ is fairly straightforward if we forget the condition on $\alpha$ and $\beta$ being in $L_{\zG}$. However deciding this, that is whether $\alpha, \beta \in \zG$, is clearly not doable with what we have established so far. And even if $\alpha$ and $\beta$ are known to be in $\zG$, taking a set that looks like a notation of lower rank, that is of rank $(\alpha', \beta') <_{lex} (\alpha, \beta)$, clearly does not guarantee with the lexicographical order that $\beta'$ is in $\zG$.

Still, despite this difficulty, let us first suppose that there is some machine $m_{\langle \rangle}$ that, given the code of some set $a \in L_{\SG}$, decides whether $a$ is a notation. We now describe $\mathcal{M}$ using $m_{\langle \rangle}$. It works as a triple imbricated induction on $(\alpha, \beta)$ for the outermost one, the rank of $\varphi$ for the next one and the rank of its parameters for the innermost one. We begin with the induction on $(\alpha, \beta)$.

\begin{itemize}[label={$\bullet$}, leftmargin=*]
\item Base case is $(\alpha, \beta) = (1, 0)$. In this case, by hypothesis, the rank of any $p_i$ is $(0, \beta_i)$ with $\beta_i \in \zG$. Hence, for any of those, $p_i = \langle \top, x_i, 0, \beta_i \rangle$ with $\set{p_i} = x_i$ and $\mathcal{M}$ can extract a code for $x_i$ given a code for $p_i$. And by Corollary \ref{cor:zeta_model}, using $s_\Sigma$ which is given as input, it can decide whether $L_{\zeta} \models \varphi(x_1, \ldots, x_n)$.\\

\item Then, when $(\alpha, \beta) >_{lex} (1,0)$, we proceed by induction on $\varphi$. 
\begin{itemize}[label={$\rhd$}]
\item Suppose that $\varphi$ is $\Delta_0$. Then there is not quantifiers and any atomic formula is of the form $\set{p_i} \in \set{p_j}$. We show by induction on $\rk(p_i)$ that $\mathcal{M}$ can decide such atomic formulas using the notations $p_i$ and $p_j$. 
\begin{itemize}
\item Suppose first that we can write $p_i = \langle \top, x_i, 0, \beta_i \rangle$. This means that $x_i \in L_{\zG}$. Then, either $p_j = \langle \top, x_j, 0, \beta_j \rangle$ and $\mathcal{M}$ can extract codes for $x_i$ and $x_j$ and use them to decide whether $\set{p_i} \in \set{p_j}$, as it simply means $x_i \in x_j$. Or $p_j = \langle \psi, c_1, \ldots, c_m, \alpha_j, \beta_j \rangle$ and $\set{p_i} \in \set{p_j}$ if and only if:
\begin{align*}
L_{\zG \cdot\alpha_j + \beta_j} \models \psi(x_i, \set{c_1}, \dots, \set{c_m})
\end{align*}
As $(\alpha_j, \beta_j) <_{lex} (\alpha, \beta)$, taking $p_i$ for a notation of $x_i$, $\mathcal{M}$ can inductively (by to the induction hypothesis on $(\alpha, \beta)$) decide this.

\item Now, if $p_i = \langle \psi_i, b_1, \ldots, b_m, \alpha_i, \beta_i \rangle$ and $p_j = \langle \psi_j, c_1, \ldots, c_k, \alpha_j, \beta_j \rangle$. 
By definition, $\set{p_i} \in \set{p_j}$ if and only if $\set{p_i}$ is in $L_{\zeta \cdot\alpha_j + \beta_j}$ and satisfies $\psi_j$, the formula that defines $\set{p_j}$ over $L_\zeta\cdot\alpha_j + \beta_j$. That is, if and only if: 
\begin{align*}
\begin{cases}
\set{p_i} \in L_{\zeta \cdot \alpha_j + \beta_j} \\
L_{\zeta \cdot\alpha_j + \beta_j} \models \psi_j(\set{p_i}, \set{c_1}, \dots, \set{c_k})
\end{cases}
\end{align*}
First $\set{p_i} \in L_{\zeta \cdot \alpha_j + \beta_j}$ if and only if there exists a notation $q_i$ such that $\rk(q_i) <_{lex} (\alpha_j, \beta_j)$ and with $\set{q_i} = \set{p_i}$. We can w.l.o.g.\ suppose that $\rk(p_i) \geqslant_{lex} (\alpha_j, \beta_j)$ as otherwise $p_i$ satisfies the requirement. And under this assumption, given some notation $q_i$ such that $\rk(q_i) <_{lex} (\alpha_j, \beta_j)$, $\set{q_i} = \set{p_i}$ if and only if for all notation $r_i$ such that $\rk(r_i) < \rk(p_i)$ we have
\begin{align}\label{form:equi_ri}
\set{r_i} \in \set{q_i} \iff \set{r_i} \in \set{p_i} 
\end{align}
Hence using $m_\Sigma$ (to enumerate codes for sets in $L_\Sigma$) and $m_{\langle \rangle}$ (to decide which real numbers encode a notation), $\mathcal{M}$ can enumerate the notations $q_i$ such that $\rk(q_i) <_{lex} (\alpha_j, \beta_j)$ and for each of those $q_i$ enumerate all the $r_i$ such that $\rk(r_i) <_{lex} \rk(p_i)$ and for each of those $r_i$ inductively decide (as $\rk(r_i) <_{lex} \rk(p_i)$) whether the equivalence (\ref{form:equi_ri}) holds; which in turn enables $\mathcal{M}$ to decide whether $\set{q_i} = \set{p_i}$ for all the $q_i$ it encounters; which finally lets it decide whether $\set{p_i} \in L_{\zeta \cdot \alpha_j + \beta_j}$.

As for the second part, deciding whether $L_{\zeta \cdot\alpha_j + \beta_j} \models \psi_j(\set{p_i}, \set{c_1}, \dots, \set{c_k})$ can inductively be done since $(\alpha_j, \beta_j) <_{lex} (\alpha, \beta)$.

\item Last case, if $p_i = \langle \psi, b_1, \ldots, b_m, \alpha_i, \beta_i \rangle$ and $p_j = \langle \top, y, 0, \beta_j \rangle$, then $\set{p_i} \in \set{p_j}$ if and only if there is a notation $q_i = \langle \top, x, 0, \beta_i' \rangle$ with $\set{q_i} = \set{p_i}$ and $x \in y$. And this is decidable with the same reasoning as in the previous case, still under the assumption that notations are decidable using $m_{\langle \rangle}$.

\end{itemize}
Hence, as $\mathcal{M}$ can decide all the atomic formulas in the $\Delta_0$ formula $\varphi$, it can decide $\varphi$.

\item Now, suppose that $\varphi$ is $\Sigma_n$. We write $\varphi = \exists x \, \psi(x, \set{p_1}, \dots, \set{p_n})$. By the results of Proposition \ref{prop:features_of_notation}, $x \in L_{\zeta \cdot \alpha + \beta}$ if and only if $x$ has a notation of rank strictly less than $(\alpha, \beta)$. So, $\mathcal{M}$ does the following: it uses again the fact that given the snapshot $s_\Sigma$ and the machine $m_{\langle \rangle}$ it can enumerate $L_\Sigma$ (and know when it has exhausted it). $\mathcal{M}$ enumerates $L_{\SG}$ and for each notation $a \in L_\Sigma$ such that $\rk(a) < (\alpha, \beta)$ it tests whether $L_{\zG \cdot \alpha + \beta} \models \psi(\set{a}, \set{p_1}, \dots, \set{p_n})$, which it can do by the inductive hypothesis of the induction on the rank of $\varphi$. And ${L_{\zeta\cdot\alpha + \beta} \models \varphi(\set{p_1}, \dots, \set{p_n})}$ if and only this is true for some notation $a$ with $\rk(a) < (\alpha, \beta)$. 

\item When $\varphi$ is $\Pi_n$, we write $\varphi = \forall x \psi$. As in the previous case, $\mathcal{M}$ enumerates the notation $a$ in $L_{\SG}$ and inductively decides whether $L_{\zG \cdot \alpha + \beta} \models \psi(\set{a}, \set{p_1}, \dots, \set{p_n})$. If it fails for some $a$, $\mathcal{M}$ outputs false. If not (and, using $s_\Sigma$, $\mathcal{M}$ knows when it went through all notations in $L_{\SG}$), it outputs true.
This conclude the description of $\mathcal{M}$ under the assumption that $m_{\langle \rangle}$ was given.
\end{itemize}
\end{itemize}

Now, we need to provide a description of $m_{\langle \rangle}$. As we cannot decide whether some set is a notation because it involves deciding whether an ordinal is less than $\zG$, we introduce the slightly more general concept of $\emph{quasi-notation}$. Quasi-notations are defined inductively, as notations, with the only difference that for the case $a = \langle \varphi, b_1, \dots, b_n, \alpha, \beta \rangle$ or for the base case $a = \langle \top, x, 0, \beta' \rangle$ we only require $\alpha, \beta$ and $\beta'$ to be in $\SG$ (instead of $\zG$). The main interest of quasi-notations is that it will be fairly easy to decide whether a set is a quasi-notation: in our case, the conditions, as stated in the proposition, regarding the inputs that are given to $\mathcal{M}$ implies that they are all eventually writable. Consequently, every set that appears in a computation using them is at least accidentally writable. In particular any ordinal appearing is in $\SG$. In such a context, deciding whether a set is a quasi-notation is easy enough given the inductive definition of quasi-notations. So, we can consider the machine $m^{q}_{\langle \rangle}$ that, given some set $a \in L_{\SG}$, decides whether $a$ is a quasi-notation. And with it, we can finish the description of $\mathcal{M}$: it simply uses $m^{q}_{\langle \rangle}$ as if it was the fictitious $m_{\langle \rangle}$. Obviously, for the moment, we don't have any guarantees regarding the behavior of $\mathcal{M}$ computing with $m^{q}_{\langle \rangle}$ instead of $m_{\langle \rangle}$. We will work our way toward this.

First, another important fact regarding quasi-notations is the following: if $a$ is a quasi-notation and $a$ is eventually writable, then $a$ is a notation. To see this, it is enough to observe that, by finite induction, all ordinals appearing in the inductive definition of $a$  will be eventually writable as well and so in $\zG$.
From there the idea is the following: if we manage to prove that the quasi-notations used in the computation of $\mathcal{M}$ (which uses $m^{q}_{\langle \rangle}$) are eventually writable, then it means that they are actually all notations! And so, despite $\mathcal{M}$ working with $m^{q}_{\langle \rangle}$ which simply recognizes quasi-notation, it would actually behave as if it was working with $m_{\langle \rangle}$, which recognizes notations.

So we consider the computation of $\mathcal{M}$ using $m^{q}_{\langle \rangle}$, with input $s_\Sigma$, a formula $\varphi$, two ordinals $\alpha, \beta \in \zG$ with $\alpha>0$ and notations $p_1, \dots, p_n$ of rank strictly less than $(\alpha, \beta)$, as stated above. As noted, all those inputs are eventually writable. Further, by well-orderdness of the lexicographical order on $\SG \times \SG$, the machine also terminates when working with quasi-notation.
So, suppose that $\mathcal{M}$ answers positively, that is, it thinks that $L_{\zeta\cdot\alpha + \beta} \models \varphi(\set{p_1}, \dots, \set{p_n})$ (but this may not be what it actually computed, working with quasi-notations). Suppose also that $\varphi$ is $\Sigma_n$: $\varphi(\set{p_1}, \dots, \set{p_n}) = \exists x \psi(x, \set{p_1}, \dots, \set{p_n})$. As $\mathcal{M}$ answered positively, this means that it found a quasi-notation $a_x$ of rank strictly less than $(\alpha, \beta)$ for which it thinks that ``$L_{\zG\cdot\alpha + \beta} \models \varphi(\set{a_x}, \dots, \set{p_n})$''. As noted earlier, working with the lexicographical order, the rank of $a_x$ being strictly less than $(\alpha, \beta)$ does not imply that it is in $\zG \times \zG$. But the inputs are all eventually writable. Consequently so is $a_x$, as it would be easy to ask $\mathcal{M}$ to write this $a_x$ on its output. This, combined with the previous fact, shows that $a_x$ is actually a notation of rank strictly less than $(\alpha, \beta)$. That is, $\set{a_x} \in L_{\zeta \cdot \alpha + \beta}$ and $\mathcal{M}$ was correct in taking $\set{a_x}$ as a potential witness for $\varphi$.
This gives the main ingredient to show by induction, again on $\varphi$, that $\mathcal{M}$, despite working with quasi-notations, yields the desired result.

\begin{itemize}
\item If $\varphi$ is $\Delta_0$: the only quasi-notations involved are the notations $p_1, \dots, p_n$. 
However when deciding whether $\set{p_i} \in \set{p_j}$,
the machine will use quasi-notations and may be wrong. That is, it may wrongfully think that $\set{p_i} \in \set{p_j}$ because it used a quasi-notation $q_i$ that isn't a notation.
But if it is the case, as the inputs involved are eventually writable, we can eventually write the first quasi-notation $q_i$ that $\mathcal{M}$ uses to decide that $\set{p_i} \in \set{p_j}$, thinking for example that \say{$\set{q_i} = \set{p_i}$}. But this implies that $q_i$ is a notation. And so that $\mathcal{M}$ was correct in picking this $q_i$. Then if it thinks that $\set{q_i} = \set{p_i}$ working with quasi-notations, this is \emph{a fortiori} true with notations.

And so, inductively, we could reproduce the induction scheme used to define the machine to show that $\mathcal{M}$ is correct for each of this decisions, because (a) when something holds for all quasi-notations, it \emph{a fortiori} does for all notations and (b) when something does not hold for a quasi-notation, we can arrange so that $\mathcal{M}$ eventually writes the first quasi-notation appearing in its computation for which the statement does not hold, hence showing that it is actually a notation (the important point in this previous assertion being that, by well-orderdness, at any step of the induction scheme there are only finitely many sets involved--namely the parameters that get created and carried along the inductive way--and that, by induction, all of them are eventually writable.) 

This shows that $\mathcal{M}$ is correct in every of its subchoices done to decide whether the formula $\varphi$ is true and consequently that it is correct when deciding whether $\varphi$ is true.

\item If $\varphi$ is $\Sigma_n$: we write $\varphi(\set{p_1}, \dots, \set{p_n}) = \exists x \psi(x, \set{p_1}, \dots, \set{p_n})$. Suppose that $\mathcal{M}$ answers positively. That is, it found a quasi-notation $a_x$ such that it answers positively with $\psi$, $a_x$ and $p_1, \dots, p_n$ as inputs. As seen, this implies that $a_x$ is eventually writable, so that it is actually a notation and that $\set{a_x} \in L_{\zG \cdot \alpha + \beta}$. By induction, this means that $L_{\zG\cdot\alpha + \beta} \models \psi(\set{a_x}, \set{p_1}, \dots, \set{p_n})$ and so that $L_{\zG\cdot\alpha + \beta} \models \varphi(\set{p_1}, \dots, \set{p_n})$. If it answered negatively, this is done as in the next case, replacing \say{positively} by \say{negatively}.

\item If $\varphi$ is $\Pi_n$, writing $\varphi = \forall x \psi$, the machine answering positively means that for all quasi-notation $a_x$ of rank strictly less than $(\alpha, \beta)$, it answered positively with inputs $\psi$, $a_x$ and $\set{p_1}, \dots, \set{p_n}$. In particular, this means that it answers positively with those inputs for all notation $a_x$ of rank strictly less than $(\alpha, \beta)$ (as notations are also quasi-notations). By induction, it means that for all notation $a_x$ of rank strictly less than $(\alpha, \beta)$, $L_{\zG\cdot\alpha + \beta} \models \psi(\set{a_x}, \set{p_1}, \dots, \set{p_n})$. And by Proposition \ref{prop:features_of_notation}, this implies that $L_{\zG\cdot\alpha + \beta} \models \forall x \, \psi(x, \set{p_1}, \dots, \set{p_n})$. Again, the negative case is done like the $\Sigma_n$ positive case, replacing \say{positively} by \say{negatively}.
\end{itemize}
\end{proof}

\begin{prop}
The ordinal $\zG$ is a gap of real numbers in the constructible universe of length $\zG^2$. That is: \begin{align*}
(L_{\zG^2} - L_{\zG}) \cap \mathcal{P}(\om) = \emptyset
\end{align*}
\end{prop}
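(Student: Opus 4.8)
The plan is to show first that every real of $L_{\zG^2}$ is $\Gamma$-eventually writable, and then to relocate such a real inside $L_{\zG}$.

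For the first part, let $x$ be a real of $L_{\zG^2}$. Since $\zG^2$ is a limit ordinal and every ordinal below it has the form $\zG\cdot\alpha+\beta$ with $\alpha,\beta<\zG$, there is some $\alpha<\zG$ with $\alpha\geqslant 1$ and $x\in L_{\zG\cdot\alpha}$; by Proposition \ref{prop:features_of_notation}, $x$ then has a notation $\nott{x}$ of rank $<_{lex}(\alpha,0)$. A notation is a finite tree whose components are formulas (elements of $L_\om$), ordinals below $\zG$, and sub-notations, so $\nott{x}\in L_{\zG}$, and hence $\nott{x}$ is $\Gamma$-eventually writable by Proposition \ref{prop:set_to_writable}. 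The snapshot $s_\Sigma$ is $\Gamma$-eventually writable by Proposition \ref{prop:sigma_model}, and each $n\in\om$ has the trivial notation $\langle\top,n,0,\om\rangle$, of rank $(0,\om)<_{lex}(\alpha,0)$. Feeding $s_\Sigma$, the $\Delta_0$ formula $u\in v$, the ordinals $\alpha$ and $0$, and the two notations $\langle\top,n,0,\om\rangle$ and $\nott{x}$ to the machine of Proposition \ref{prop:decide_with_notations} (whose hypotheses are met, and all of whose inputs are eventually writable, so that the quasi-notation subtlety handled there is harmless here), we decide, for each $n$, whether $L_{\zG\cdot\alpha}\models n\in x$, which, by $\Delta_0$-absoluteness of the membership relation and since $n,x\in L_{\zG\cdot\alpha}$, is just whether $n\in x$. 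Dovetailing the eventual writings of $s_\Sigma$ and of $\nott{x}$ with these decisions, in the sense of Remark \ref{rmk:dovetailing}, we obtain a $\Gamma$-machine whose output converges to a code for $x$. Hence every real of $L_{\zG^2}$ is $\Gamma$-eventually writable.

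It remains to deduce that every such real already lies in $L_{\zG}$. Here the plan is to exploit the strong properties already obtained: the decision procedure above is, uniformly in $\alpha$, $\beta$ and the notation-parameters, definable over $L_{\zG}$ once $s_\Sigma$ is at hand (this is the combined content of Propositions \ref{prop:extension_zG_SG}, \ref{cor:zeta_model} and \ref{prop:decide_with_notations}), and moreover $\zG^2<\SG$, since $\SG$ is multiplicatively closed by Proposition \ref{prop:closure_prop_enhancing} and $\zG<\SG$ in the situation of Figure \ref{fig:topology_ordinals}, so $L_{\zG\cdot\alpha+\beta}$ is a proper initial segment of $L_{\SG}$ for all $\alpha,\beta<\zG$. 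Reflecting, through the elementarity $L_{\zG}\prec L_{\SG}$, the definition of a $<_L$-least real of $L_{\zG^2}$ lying outside $L_{\zG}$ down below $\zG$ should then produce a contradiction, showing that no such real exists and that $L_{\zG^2}$ and $L_{\zG}$ have the same reals. I expect this last reflection step to be the main obstacle: the notation machinery by itself only delivers eventual writability, and in the situation of Figure \ref{fig:topology_ordinals} eventually writable reals need not all lie in $L_{\zG}$, so one must genuinely use the elementarity established so far to push the offending real back inside $L_{\zG}$; the rest (choosing the ranks, the absoluteness of $\in$, and the dovetailing) is routine.
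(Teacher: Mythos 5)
Your first half is correct and is essentially the paper's own argument: you express the real through the notation machinery, feed notations (which lie in $L_{\zG}$ and are therefore eventually writable by Proposition \ref{prop:set_to_writable}) together with $s_\Sigma$ to the machine of Proposition \ref{prop:decide_with_notations}, and dovetail to eventually write any real of $L_{\zG^2}$. The paper does the same thing, except that it decides $L_{\zG\cdot\alpha+\beta}\models\varphi(n,p_1,\dots,p_n)$ for the formula and parameters defining the new real, rather than deciding $n\in x$ through a notation of $x$ itself; the two variants are interchangeable.

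The second half, however, is a genuine gap: you stop at \say{every real of $L_{\zG^2}$ is eventually writable} and only conjecture that reflecting the $<_L$-least counterexample through $L_{\zG}\prec L_{\SG}$ \say{should} give a contradiction. As you half-suspect, that reflection does not go through as stated: the assertion \say{some real is constructed at a level in $[\zG,\zG^2[$} needs $\zG$ itself as a parameter, and $\zG\notin L_{\zG}$, so it cannot be reflected below $\zG$; dropping the parameter only reflects to \say{some ordinal $\mu<\zG$ has a new real above it}, which is no contradiction, and no parameter-free definition of $\zG$ inside $L_{\SG}$ is available at this point (the end of the proof of Theorem \ref{thm:sigma_tau_looping_condition} even produces cofinally many $\alpha<\zG$ with $L_\alpha\prec L_{\SG}$, so $\zG$ is not, e.g., the least such ordinal). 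The step actually needed --- and the one the paper uses implicitly when it treats the eventual writability of $x$ as an immediate contradiction with $x\notin L_{\zG}$ --- is: an eventually writable real that lies in $L_{\SG}$ already lies in $L_{\zG}$. This follows with the tools you already have: since $\zG^2<\SG$ (as you note), $x\in L_\nu$ for some accidentally writable $\nu$; a machine that dovetails the eventual writing of $x$ with an enumeration, via $\UG$, of accidentally writable ordinals $\nu$ together with emulated codes for $L_\nu$, and copies to its output the first $\nu$ it finds with $x\in L_\nu$ (restarting whenever the candidate for $x$ changes), eventually writes such a $\nu$; hence $\nu<\zG$ and $x\in L_\nu\subseteq L_{\zG}$, contradicting $x\notin L_{\zG}$. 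With this lemma inserted your argument closes; without it, it does not.
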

\begin{proof}
Suppose that some new real number $x$ is defined at some stage $\gamma \in [\zG, \zG^2[$. That is, there is a formula $\varphi$ and parameters $p_1, \dots, p_n \in L_{\gamma}$ such that:
\begin{align*}
x = \left\{ n \in \om \mid L_\gamma \models \varphi(n, p_1, \dots, p_n) \right\}
\end{align*}
and $x \not\in L_\gamma$. As $\gamma < \zG^2$ it can be written $\zG \cdot \alpha + \beta$ with both $\alpha$ and $\beta$ in $L_{\zG}$. By Proposition~\ref{prop:features_of_notation}, all $p_i$ have a notation in $L_{\zG}$, that is an eventually writable notation. Further, for any $n \in \om$, a code for $\nott{n}$ is easily computable. So we can consider the following machine: it starts by eventually writing $\varphi$, $\alpha$, $\beta$, $\wb{p_1} \dots, \wb{p_n}$ and $s_\Sigma$ on some part of its working tape. Once they are written (that is with the usual dovetailling technique of Proposition \ref{prop:dovetailing_sim_operator}), for each $n \in \om$, it computes $\wb{n}$ and uses the parameters it eventually wrote to simulate the machine $\mathcal{M}$ described in Proposition \ref{prop:decide_with_notations}. With it, it decides (and save the result on another part of its working tape) whether $L_{\zG \cdot \alpha + \beta} \models \varphi(n, p_1, \dots, p_n)$ for all $n \in \om$. Once done for all $n$, it can eventually write $x$ on its output, contradicting the fact that $x$ was a new real number appearing at stage $\gamma+1 > \zG$.
\end{proof}

We can now finish the proof of the main theorem.

\begin{proof}[Proof of Theorem \ref{thm:sigma_tau_looping_condition}]
We use the fact that $\zG$ starts a big gap in the constructible universe to show that it is seen to be looping at stage $\zG \cdot \om$. It relies on Corollary \ref{cor:looping_condition_gamma}, in the same way as the proof of Proposition \ref{prop:gamma_machine_loops_countable_stage} does.

We show first that given some accidentally writable ordinal $\alpha$, it is possible for a machine to eventually decide (that is to eventually write $1$ if it is true and $0$ otherwise) whether $L_\alpha \prec L_{\SG}$. In Proposition \ref{prop:sigma_model} we showed that there is a real number $s_\Sigma$ and a machine $m_\Sigma$ such that, given $s_\Sigma$ and some $\varphi$ and $p$ as input, $m_\Sigma$ decides whether $L_{\SG} \models \varphi(p)$. Hence, consider the following machine $\mathcal{N}$: it eventually writes $s_\Sigma$, the snapshot of Proposition \ref{prop:sigma_model}, and for all $\varphi(p)$ with $p \in L_\alpha$ and such that $L_\alpha \models \varphi(p)$, it uses this snapshot (that is with the usual dovetailing construction presented in proof of Proposition \ref{prop:dovetailing_sim_operator}), to eventually decide (once $s_\Sigma$ has been eventually writable, $m_\Sigma$, as simulated in the computation we are describing, always halt!) whether $L_{\SG} \models \varphi(p)$. Once $\mathcal{N}$ iterated through all formulas $\varphi$ and parameters $p \in L_\alpha$, it eventually decides whether $L_\alpha \prec L_{\SG}$.

Now, using $\mathcal{N}$, it is possible to eventually write, if there is one, the first ordinal $\alpha$ such that $L_\alpha \prec L_{\SG}$. Indeed, as once $s_\Sigma$ has been eventually written, the simulation of $m_\Sigma$ inside some machine with $s_\Sigma$ as input always halts, we can design a machine that looks for $\alpha$ such that $L_\alpha \prec L_{\SG}$ and with the usual dovetailling technique, that restarts each time the machine that eventually writes $s_\Sigma$ changes its output. And there is one such $\alpha$ by Proposition \ref{prop:extension_zG_SG}. So there is $\alpha < \zG$ such that $L_\alpha$ is an end-elementary substructure of $L_{\SG}$, as otherwise this computation would eventually write $\zG$ itself. On the other side of $\zG$, if there was a bound on those $\alpha$'s, this bound would be eventually writable and greater than $\zG$. Hence such $\alpha$'s are cofinal in $\SG$. This yields, among many others, two ordinals $\alpha$ and $\alpha'$ such that:
\begin{align*}
    \begin{cases}
    \alpha < \zG < \alpha' \\
    L_{\alpha} \prec L_{\zG} \prec L_{\alpha'}\\
    \end{cases}   
\end{align*}

As the operator $\Gamma$ is suitable, it is defined by some formula $\varphi_{\Gamma}$ at the different levels of the constructible universe. So, in particular, this chain of e.e.e.\  means, looking at the computation of $\UG$, that stages $\alpha$, $\alpha'$ and $\zG$ all share the same snapshot. 
Hence, for any $\nu, \nu'$ in $\left\{\alpha, \alpha',\zG \right\}$ with $\nu < \nu'$ and by asymptoticity, the machine will repeat itself $\om$ times between stages $\nu'$ and $\nu' \cdot \om$. However, it does not yet imply that the machine is looping. More precisely, it does not escape this repetition of the segment $[\nu, \nu'[$ and is actually looping if and only if, by Corollary \ref{cor:looping_condition_gamma}, the snapshot occurring after this repetition is the same as a snapshot occuring between $\nu$ and $\nu'$. 
So we will show that the snapshot appearing at stage $\zG \cdot \om$ in the computation of $\UG$, that is after some final segment of the computation below $\zG$ is repeated $\om$ times, was actually part of said segment of computation.

To do this, we consider first $s_{\zG}$, the snapshot of the computation of $\UG$ at stage $\zG$. As $L_\alpha \prec L_{\zG}$, this snapshot also occurs at stage $\alpha$ and $s_{\zG}$ is definable (as a real) over $L_\alpha$. Hence, $s_{\zG} \in L_{\zG}$. We let $\alpha_0 \leqslant \alpha$ be the least stage at which this snapshot occurs.
Then, we let $s_{\zG \cdot \om}$ be the snapshot (again seen as a real) of $\UG$ occurring at stage $\zG \cdot \om$. At this stage, by asymptoticity, the segment of computation between stages $\alpha_0$ and $\zG$ has been repeated $\om$ times. Moreover, $s_{\zG \cdot \om}$ is definable over $L_{\zG \cdot \om}$ and so it is in $L_{\zG \cdot \om +1 }$. But the gap of real numbers starting at $\zG$ and of length $\zG^2$ spans over $\zG \cdot \om+1$. So $s_{\zG \cdot \om}$ is also in $L_{\zG}$.
Now we consider the following sentence, writing $S(\nu)$ for the snapshot of $\UG$ at some stage $\nu$:
\begin{align*}
\exists \nu, \nu' \, (\nu < \nu' \wedge S(\nu) = s_{\zG} \wedge S(\nu') = s_{\zG \cdot \om})
\end{align*}
It is naturally true in $L_{\alpha'}$ as it witnesses stages $\alpha_0$ and $\zG \cdot \om$ (As $\zG$, the ordinal $\alpha'$ is multiplicatively closed). Moreover, this sentence is actually expressible in the language of $L_{\zG}$ as both $s_{\zG}$ and $s_{\zG \cdot \om}$ are in $L_{\zG}$. As such, by elementarity, it can be reflected down to $L_{\zG}$. This implies that there is some stage $\nu' < \zG$ such that its snapshot is $s_{\zG \cdot \om}$ and such that it appears after an occurrence of the snapshot $s_{\zG}$. As $\alpha_0$ is the least stage whose snapshot is $s_{\zG}$, $\alpha_0 < \nu'$.
As wanted, the repeating segment $[\alpha_0, \zG[$ in the computation of $\UG$ yields at stage $\zG \cdot \om$ a snapshot that is part of it, namely that appeared at stage $\nu'$. Hence $\UG$ is seen to be looping at stage $\zG \cdot \om < \SG < \TG$, which is a contradiction.
\end{proof}

The two other structural equalities are now corollaries of the main theorem.  

\begin{cor}\label{cor:zeta_eta}
Let $\Gamma$ be an $n$-symbol suitable, looping stable and simulational operator. Then $\zeta_\Gamma = \eta_\Gamma$.
\end{cor}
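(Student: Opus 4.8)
The plan is to derive the corollary from Theorem \ref{thm:sigma_tau_looping_condition} by proving that every $\Gamma$-eventually clockable ordinal is $\Gamma$-eventually writable; since Proposition \ref{prop:write_inf_clock} already gives $\zG \leqslant \eG$, this yields $\zG = \eG$. Throughout we may assume, as in the proof of the main theorem, that $\Gamma$ emulates $\Gamma_{\sup}$, so the usual counting, well-order-recognition, ordinal-comparison and $L_\alpha$-writing routines are available; in particular $\UG$ exists since $\Gamma$ is simulational.

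First I would record that $\eG < \SG$: Proposition \ref{prop:eg_inf_TG} gives $\eG < \TG$ and Theorem \ref{thm:sigma_tau_looping_condition} gives $\TG = \SG$. Hence any $\Gamma$-e.c.\ ordinal $\alpha$ satisfies $\alpha \leqslant \eG < \SG$, so the $\Gamma$-a.w.\ ordinals are cofinal above $\alpha$. Fix such an $\alpha$, witnessed by a machine $m$ whose output stabilizes exactly at stage $\alpha$; equivalently, $\alpha$ is least with the property that the output content of $m$ is constant on $[\alpha, \mo[$. I would describe a machine $\mathcal{N}$ that eventually writes a code for $\alpha$. It simulates $\UG$, keeping track of the largest ordinal for which a code has so far appeared on one of the tapes of $\UG$; whenever a code for an ordinal $\beta$ strictly above this running maximum appears, $\mathcal{N}$ simulates a fresh copy of $m$ for $\beta$ steps and searches, in increasing order, for the least $\gamma < \beta$ such that the output content of $m$ is constant on $[\gamma, \beta[$, writing a code for it on its own output when one is found. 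If $\beta > \alpha$, this least $\gamma$ is exactly $\alpha$: a shorter constancy interval $[\gamma, \beta[$ with $\gamma < \alpha$, together with constancy on $[\alpha, \mo[$, would force constancy on $[\gamma, \mo[$, contradicting the minimality of $\alpha$. Since the $\Gamma$-a.w.\ ordinals are cofinal above $\alpha$, cofinally many $\beta > \alpha$ are processed, the processed values strictly increase, and from the first $\beta > \alpha$ onward the output of $\mathcal{N}$ is a code for $\alpha$ and is never changed again. Thus $\mathcal{N}$ converges, $\alpha$ is $\Gamma$-eventually writable, and $\alpha < \zG$. As $\alpha$ was an arbitrary $\Gamma$-e.c.\ ordinal, $\eG \leqslant \zG$, and with $\zG \leqslant \eG$ we conclude $\zG = \eG$.

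I expect the only real work to be the routine bookkeeping inside $\mathcal{N}$: enumerating the ordinals below a coded $\beta$ in increasing order and recognizing when they are exhausted, and deciding, for a fixed $\gamma$, whether the output content of $m$ stays constant on the whole of $[\gamma, \beta[$ --- at limit stages it suffices to compare the simulated content with the recorded value. Both are standard for $\Gamma_{\sup}$-emulating machines, and since all the ordinals involved are bounded by a multiplicatively closed ordinal below $\SG$ (Proposition \ref{prop:closure_prop_enhancing}) these sub-computations halt. The only non-elementary ingredient is the equality $\TG = \SG$ of Theorem \ref{thm:sigma_tau_looping_condition}, used precisely to guarantee $\alpha < \SG$ so that suitable measuring ordinals $\beta > \alpha$ appear in $\UG$.
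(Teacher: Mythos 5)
Your proposal is correct and takes essentially the same route as the paper's proof: both get $\zG \leqslant \eG$ from Proposition \ref{prop:write_inf_clock}, use $\SG = \TG$ from Theorem \ref{thm:sigma_tau_looping_condition} to place the relevant eventually clockable ordinal $\alpha$ below $\SG$, and then enumerate accidentally writable ordinals through $\UG$ while running fresh simulations of $m$ up to those ordinals to detect its convergence. The only differences are cosmetic --- the paper argues by contradiction and eventually writes the first accidentally writable $\nu \geqslant \alpha > \zG$, whereas you eventually write $\alpha$ itself via the least-constancy-point search; just make sure $\mathcal{N}$ does not overwrite its output with a different code for the same ordinal when later $\beta$'s are processed (e.g.\ compare order types before rewriting), which is routine bookkeeping.
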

\begin{proof}
By Proposition \ref{prop:gamma_emulates_limsup}, $\Gamma$ emulates, w.l.o.g., the operator $\Gamma_{\sup}$. So we can apply Proposition \ref{prop:write_inf_clock} which yields $\zeta_\Gamma \leqslant \eta_\Gamma$. Now, suppose that $\zeta_\Gamma < \eta_\Gamma$. This means that there is some machine $m$ and ordinal $\alpha$ such that $m$ converges at stage $\alpha$ with $\zG < \alpha < \eG$. As $\SG = \TG$ by Theorem \ref{thm:sigma_tau_looping_condition}, $\alpha < \SG$. Then we can consider the following machine: using $\UG$, it enumerates ordinals below $\SG$. For each ordinal $\nu$ produced by $\UG$, the main machine copies it to its output and then simulates $\nu$ steps of a fresh simulation of $m$ and, after those $\nu$ steps, goes on with the simulation of $m$ until its output changes. If it does, the computation goes on with the simulation of $\UG$ and the enumeration of accidentally writable ordinals. If it doesn't, it actually eventually writes the ordinal $\nu$. And the output of the simulation of $m$ does not change if and only if $\nu \geqslant \alpha$. Hence this computation eventually writes the first ordinal greater than $\alpha$ to appear, which contradicts the fact that $\zG < \alpha$.
\end{proof}

\begin{cor}\label{cor:lambda_gamma}
Let $\Gamma$ be an $n$-symbol suitable, looping stable and simulational operator. Then $\lambda_\Gamma = \gamma_\Gamma$.
\end{cor}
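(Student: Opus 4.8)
The plan is to follow the same pattern as the proof of Corollary \ref{cor:zeta_eta}. The inequality $\lG \leqslant \gG$ is already supplied by Proposition \ref{prop:write_inf_clock}, so I would suppose toward a contradiction that $\lG < \gG$ and fix a $\Gamma$-clockable ordinal $\alpha$ with $\alpha > \lG$, witnessed by a $\Gamma$-machine $m$ that halts at stage $\alpha$. The whole aim is then to show that $\alpha$ is in fact $\Gamma$-writable, which contradicts $\alpha > \lG$.

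The first step is to place $\alpha$ below $\SG$. Modify $m$ so that, instead of reaching $q_{halt}$, it flips a reserved output cell once and then idles forever: this produces a converging $\Gamma$-machine whose output stabilises at stage $\alpha$ (up to a finite offset), so $\alpha \leqslant \eG$ and hence $\gG \leqslant \eG$. Now Proposition \ref{prop:eg_inf_TG} gives $\eG < \TG$, and Theorem \ref{thm:sigma_tau_looping_condition} gives $\TG = \SG$; therefore $\alpha < \SG$, that is, $\alpha$ is $\Gamma$-accidentally writable. In particular a code $x_\alpha$ for $\alpha$ appears at some stage on a tape of the universal machine $\UG$ (which exists, $\Gamma$ being simulational, and which simulates all $\Gamma$-machines).

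The second step is to design a $\Gamma$-machine $m^\dagger$ that writes $\alpha$. It simulates $\UG$ and, by the usual dovetailing bookkeeping (cf.\ Proposition \ref{prop:dovetailing_sim_operator} and Remark \ref{rmk:dovetailing}), examines one after another every real number that appears in the computation of $\UG$. For each such real $x$ it first tests, by emulating the $\Gamma_{\sup}$-machine that recognises well-orders, whether $x$ codes an ordinal $\nu$; if so, using $x$ as a clock to simulate $m$ for $\nu+1$ steps, it tests whether $m$ reaches $q_{halt}$ for the first time exactly at stage $\nu$. Since $m$ halts at stage $\alpha$ and at no earlier stage, this last test succeeds for $x = x_\alpha$ and fails for every other $x$; so when $m^\dagger$ reaches $x_\alpha$ it copies it to its output and halts. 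Hence $\alpha$ is $\Gamma$-writable, the desired contradiction, and $\lG = \gG$.

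The part needing the most care in a full write-up is the one routinely left implicit in this section: arranging the dovetailing so that $m^\dagger$ genuinely reaches every real produced by $\UG$ while also carrying out the potentially long sub-simulations ``run $m$ for $\nu$ steps'', keeping the simulation of $\UG$ on hold in the meantime (legitimate since $\Gamma$ is stable and contraction-proof). Everything else — recognising well-order codes, simulating a machine for a prescribed ordinal number of steps, and detecting its first halting stage — is available through the emulation of $\Gamma_{\sup}$ provided by Proposition \ref{prop:gamma_emulates_limsup}, exactly as in the proofs of Propositions \ref{prop:write_inf_clock} and \ref{prop:eg_inf_sg_imp_thm}.
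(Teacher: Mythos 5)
Your proposal is correct and takes essentially the same route as the paper's own proof: $\lG \leqslant \gG$ from Proposition \ref{prop:write_inf_clock}, then a bound on the halting stage below $\SG$ via Theorem \ref{thm:sigma_tau_looping_condition} (which you usefully make explicit through $\gG \leqslant \eG < \TG = \SG$ using Proposition \ref{prop:eg_inf_TG}), and finally a search through the reals appearing in $\UG$ to write that halting stage, contradicting $\alpha > \lG$. The only cosmetic difference is that the paper looks for an accidentally writable ordinal bounding the halting stage and computes a code for it from the bounding code, whereas you look for a code of $\alpha$ itself, which quietly invokes the (easy, and elsewhere implicitly used) downward closure of the accidentally writable ordinals below $\SG$.
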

\begin{proof}
Again by Proposition \ref{prop:gamma_emulates_limsup} and Proposition \ref{prop:write_inf_clock}, $\lambda_\Gamma \leqslant \gamma_\Gamma$. Then, if a machine halts after stage $\lambda_\Gamma$, it must still do so before stage $\SG$ in virtue of Theorem \ref{thm:sigma_tau_looping_condition}. But then, it is easy to simulate this machine along the accidentally writable ordinals to look for and write the accidentally writable ordinal stage at which this machine halts, which proves that every halting stage is actually also writable.
\end{proof}

\begin{cor}
Let $\Gamma$ be an $n$-symbol suitable, looping stable and simulational operator. A set $a$ is
in $L_{\lG}$ (resp. in $L_{\zG}$ and in $L_{\SG}$) if and only if it is writable (resp. eventually writable and accidentally writable) by a $\Gamma$-machine.
\end{cor}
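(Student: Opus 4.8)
The three implications ``in $L_\bullet$ $\Rightarrow$ writable/e.w./a.w.'' are exactly Proposition \ref{prop:set_to_writable}, so the task reduces to the three converse implications. The plan is to deduce all of them from Theorem \ref{thm:sigma_tau_looping_condition} and its Corollaries \ref{cor:zeta_eta} and \ref{cor:lambda_gamma}, from Proposition \ref{prop:write_inf_clock}, and from the fact (Proposition \ref{prop:closure_prop_enhancing}) that $\lG$, $\zG$, $\SG$ are multiplicatively --- hence additively --- closed, so in particular limit ordinals. The last ingredient is that the \emph{proof} of Proposition \ref{prop:eg_inf_sg_imp_thm} in fact establishes something slightly stronger than its statement: as soon as $\eG \leqslant \SG$, no real number appears for the first time, on any tape of any $\Gamma$-machine, at a stage $\geqslant \SG$. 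Since every $\Gamma$-e.w.\ ordinal is a fortiori $\Gamma$-a.w., we have $\eG = \zG \leqslant \SG$ by Corollary \ref{cor:zeta_eta}, so this stronger conclusion is available unconditionally here.

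For the accidentally writable case I would argue as follows. Let $a$ be $\Gamma$-a.w.\ and $x$ an a.w.\ code for $a$; by definition $x$ occurs on some tape of some $\Gamma$-machine at some stage, hence, by the remark above, it already occurs at some stage $\alpha < \SG$. The tape contents at stage $\alpha$ are definable over $L_\alpha$ --- this is immediate at successor and finite stages, and at limit $\alpha$ it is exactly suitability of $\Gamma$ --- so $x \in L_{\alpha+1} \subseteq L_{\SG}$ since $\SG$ is a limit ordinal. The set $a$ is then the transitive collapse of the well-founded extensional relation coded by $x$, a $\Sigma_1$-definable operation, so $a \in L_{\alpha+\delta} \subseteq L_{\SG}$ for a small $\delta$, using that $\SG$ is additively closed. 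The eventually writable and the writable cases are the same argument applied to the output tape: if $a$ is $\Gamma$-e.w.\ via a machine $m$ whose output stabilizes at stage $\alpha$, then $\alpha$ is $\Gamma$-e.c.\ and one checks $\alpha < \eG = \zG$ (Corollary \ref{cor:zeta_eta}), whence $x \in L_{\alpha+1} \subseteq L_{\zG}$ and $a \in L_{\zG}$; if $a$ is $\Gamma$-writable via $m$ halting at stage $\alpha$, then $\alpha$ is $\Gamma$-clockable and $\alpha < \gG = \lG$ (Corollary \ref{cor:lambda_gamma}), whence $x \in L_{\alpha+1} \subseteq L_{\lG}$ and $a \in L_{\lG}$.

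The only delicate point is the strict inequalities $\alpha < \zG$ and $\alpha < \lG$, i.e.\ the fact that $\zG$ and $\lG$ are not attained as an e.c.\ resp.\ clockable ordinal. For the clockable case this is a standard delay argument: replacing in the program of $m$ the transition into $q_{halt}$ by an $\om$-step dummy loop followed by $q_{halt}$ yields a machine clocking $\alpha + \om > \alpha$, so $\alpha$ is not the supremum $\gG$. For the e.c.\ case one argues similarly, inserting an $\om$-step delay before each modification of the output tape, which slows the witnessing machine down while leaving its eventual output unchanged, so that its stabilization stage exceeds $\alpha$; verifying that this really pushes the stabilization stage strictly past $\alpha$ --- in particular when the last change of the output occurs through the limit rule rather than through an explicit write --- is the one spot that needs a careful (but routine) argument, and is where I expect the main work of the proof to lie. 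Everything else is bookkeeping about definability over levels $L_\alpha$ and the closure properties of $\lG,\zG,\SG$.
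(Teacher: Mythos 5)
Your outline coincides with the paper's: the forward direction is Proposition \ref{prop:set_to_writable}, and for the converse you use Theorem \ref{thm:sigma_tau_looping_condition} (together with Corollaries \ref{cor:zeta_eta} and \ref{cor:lambda_gamma}) to bound the stage at which a code $x$ for $a$ appears, conclude $x$ lies in $L_{\SG}$ (resp.\ $L_{\zG}$, $L_{\lG}$), and then pass from the code to the coded set. The gap is in that last step. You assert that the transitive collapse of the well-founded extensional relation coded by $x$ lies in $L_{\alpha+\delta}$ for a ``small'' $\delta$, invoking only additive closure of $\SG$. This is false in general: the collapse of a relation coded by a real in $L_{\alpha+1}$ can first appear arbitrarily far beyond $\alpha$. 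Already $L_{\om+1}$ contains arithmetic reals coding well-orders of every recursive order type; the collapse of a code for $\epsilon_0$ is the ordinal $\epsilon_0$ itself, which first appears at level $\epsilon_0+1$, not at any $\om+\delta$ with $\delta$ small. The $\Sigma_1$-definability of the collapsing operation does not rescue this: to carry out the recursion along $E$ \emph{inside} a level of $L$ one needs $\Sigma$-collection, i.e.\ admissibility of that level, and additive or multiplicative closure of $\SG$ is far too weak a substitute.

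This is precisely the ingredient the paper's proof supplies and yours omits: it observes that $\zG$ is admissible (as for $\Gamma_{\sup}$), hence that $\SG$ is a limit of admissible ordinals (otherwise the largest admissible below $\SG$ would be eventually writable), picks an admissible $\alpha<\SG$ with the code in $L_\alpha$, and obtains the collapse map, hence $a\in L_\alpha\subset L_{\SG}$, by $\Sigma$-recursion inside the admissible $L_\alpha$. The same admissibility input is needed, and missing, in your eventually-writable and writable cases (there one uses admissibility of $\zG$ and the analogous fact for $\lG$). By contrast, the strictness questions you single out as the delicate point (that the halting or stabilization stage is strictly below $\gG$, resp.\ $\eG$) are minor bookkeeping and are not where the substance of the proof lies.
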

\begin{proof}
The first implication is simply Proposition \ref{prop:set_to_writable}. We show the converse implication for accidentally writable real numbers, the two other cases are similar.

Suppose that the set $a$ is accidentally writable. That is a code for $a$, which we write $\hat{a}$, appears in the computation of some $\Gamma$-machine. As, by Theorem \ref{thm:sigma_tau_looping_condition}, $\Sigma_\Gamma = \Tau_\Gamma$, this codes appears in this computation before stage $\Sigma_\Gamma$ and so is itself (as a real) in $L_{\SG}$. 

Then, as with the operator $\Gamma_{sup}$, $\zG$ is an admissible ordinal and so $\SG$ is a limit of admissible ordinals--as otherwise the last admissible ordinal below $\SG$ would be eventually writable. So there is $\alpha$ admissible and smaller than $\SG$ such that $\hat{a} \in L_\alpha$.
As a code for $a$, $\hat{a}$ describes a transitive relation $E$ on $\om$ such that $(\om, E) \simeq (TC(\left\{ a \right\}), \in)$. The isomorphism described by it yields an application $f$ on $\om$ inductively defined as, using $E$ in infix notation:
\begin{align*}
f(i) = b \longleftrightarrow \forall c \in b \, \exists jEi \,  (f(j) = c) \wedge \forall j E i \, \exists c \in b \, (f(j) = c)
\end{align*}
As $L_\alpha$ is admissible, this function is definable in it by $\Sigma$-recursion and for some $i_a$, $f(i_a) = a$. Also, for some $E$-least $i_0$, $i_0$, $f(i_0) = \emptyset \in L_{\alpha}$. Hence, by external induction resting on the use of $\Sigma$-collection, for all $i$, $f(i) \in L_{\alpha}$. In particular, $a \in L_{\alpha} \subset L_{\SG}$, as wanted.
\end{proof}

\begin{cor}[$\lambda$-$\zeta$-$\Sigma$ theorem]\label{cor:lambda-zeta-Sigma}
Let $\Gamma$ be an $n$-symbol suitable, looping stable and simulational operator. Then the following chain of elementary end-extension holds.
\begin{align*}
L_{\lG} \prec_{\Sigma_1} L_{\zG} \prec_{\Sigma_2} L_{\SG}
\end{align*}
\end{cor}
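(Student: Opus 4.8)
The plan is to treat the two links separately. For $L_{\zG} \prec_{\Sigma_2} L_{\SG}$ there is essentially nothing to do: Proposition~\ref{prop:extension_zG_SG} already gives the stronger statement $L_{\zG} \prec L_{\SG}$ (full end-elementarity), and a fortiori $\Sigma_2$-elementarity. So all of the work lies in establishing $L_{\lG} \prec_{\Sigma_1} L_{\zG}$. Since $\lG < \zG$ (by the proposition asserting $\lG < \zG < \SG$), $L_{\lG}$ is a transitive initial segment of $L_{\zG}$, so $\Sigma_1$ statements are upward absolute from $L_{\lG}$ to $L_{\zG}$; only the downward reflection requires an argument.

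For that downward direction I would proceed as follows. Let $\varphi$ be a $\Sigma_1$ formula and $p \in L_{\lG}$ (so $p$ is $\Gamma$-writable, by Proposition~\ref{prop:set_to_writable}) with $L_{\zG} \models \varphi(p)$; writing $\varphi(p)$ as $\exists x\, \psi(x,p)$ with $\psi$ a $\Delta_0$ formula (collapsing a tuple of existential quantifiers into one), there is a witness $x_0 \in L_{\zG}$. By Proposition~\ref{prop:set_to_writable} the set $x_0$ is $\Gamma$-eventually writable, hence $\Gamma$-accidentally writable, so a code for $x_0$ appears in the computation of some $\Gamma$-machine, and therefore in the computation of $\UG$, at a stage below $\SG$. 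I would then design a $\Gamma$-machine $\mathcal{M}$ that first writes $p$ by a halting subcomputation and then simulates $\UG$, inspecting (by the usual dovetailing over the virtual tapes) the reals present in the computations of the machines simulated inside $\UG$; for every such real $x$ coding a transitive set it checks, by a halting subcomputation, whether $\psi(\set{x},p)$ holds — this is decidable since $\psi$ is $\Delta_0$, hence absolute, and can be evaluated on the decoded structure together with a code for $p$ — and it halts and outputs $x$ as soon as it finds one.

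Since a code for a witness does eventually surface in $\UG$, the machine $\mathcal{M}$ halts: on that code, or on an earlier-appearing code of some witness $x^\ast$. Hence $x^\ast$ is $\Gamma$-writable, so $\set{x^\ast} \in L_{\lG}$ by the corollary identifying the writable sets with the elements of $L_{\lG}$; and $\psi(\set{x^\ast},p)$ holds, so by $\Delta_0$-absoluteness $L_{\lG} \models \psi(\set{x^\ast},p)$, whence $L_{\lG} \models \varphi(p)$. This is the required reflection, and together with the automatic upward direction and the $L_{\zG} \prec L_{\SG}$ of Proposition~\ref{prop:extension_zG_SG} it yields $L_{\lG} \prec_{\Sigma_1} L_{\zG} \prec_{\Sigma_2} L_{\SG}$.

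The main obstacle I anticipate is insisting that $\mathcal{M}$ truly \emph{halts}, rather than merely eventually writing its output: halting is precisely what upgrades the witness from eventually writable to writable, and it is the reason one searches through $\UG$ (so that the search terminates the moment any witness appears) instead of, say, trying to enumerate the $L$-hierarchy below $\lG$. Well-foundedness of that search rests on already knowing, through Proposition~\ref{prop:set_to_writable}, that some witness is accidentally writable; and the final identification $\set{x^\ast} \in L_{\lG}$ rests on the writable$\,\leftrightarrow\,L_{\lG}$ equivalence, which is where Theorem~\ref{thm:sigma_tau_looping_condition} enters. The remaining points — dovetailing the inspection of the infinitely many virtual tapes of $\UG$, and decoding reals as transitive structures to evaluate $\Delta_0$ formulas — are routine and of exactly the kind used repeatedly above.
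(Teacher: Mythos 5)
Your handling of the second link is where the proposal breaks down. Proposition \ref{prop:extension_zG_SG} is not a standing result you may quote here: it is proved only under the hypothesis, assumed \emph{toward a contradiction} inside the proof of Theorem \ref{thm:sigma_tau_looping_condition}, that $\zG < \SG < \eG < \TG$ (the paper announces this reductio just before Figure \ref{fig:topology_writing_clocking_looping_stability}, and all of Propositions \ref{prop:reals_appear_cof}--\ref{prop:decide_with_notations} live inside it). Its conclusion $L_{\zG} \prec L_{\SG}$ is precisely one of the \say{way too powerful} facts that the paper then uses to reach the contradiction, and the text says explicitly that full elementarity of $L_{\zG}$ in $L_{\SG}$ cannot be expected for a general $\Sigma_n$ operator. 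Once the reductio is closed, that proposition is unavailable, so \say{there is essentially nothing to do} for $L_{\zG} \prec_{\Sigma_2} L_{\SG}$ is wrong: this link needs its own argument, and (unlike the $\Sigma_1$ upward direction) neither direction of it is automatic. The paper's proof supplies exactly this missing piece, in the Welch style: for $\varphi(p) = \exists x \forall y\,\psi(x,y,p)$ true in $L_{\zG}$ with $p \in L_{\zG}$, hence eventually writable by the corollary following Theorem \ref{thm:sigma_tau_looping_condition}, a $\Gamma$-machine eventually writes a witness $\wt{x}$, and a further machine then eventually writes the first $y$ with $L_{\SG} \models \neg\psi(\wt{x},y,p)$ if one exists; such a $y$ would be eventually writable, hence in $L_{\zG}$, contradicting the truth of $\varphi(p)$ there, so $\varphi(p)$ holds in $L_{\SG}$ (the converse direction is handled symmetrically). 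You need to run an argument of this kind rather than cite Proposition \ref{prop:extension_zG_SG}.

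Your first link, by contrast, is essentially right and is the same kind of argument the paper points to via \cite{welch_main}: upward $\Sigma_1$ absoluteness is free, and for the downward direction your halting search through $\UG$ for a coded witness, combined with the correspondences writable $\leftrightarrow L_{\lG}$ and eventually writable $\leftrightarrow L_{\zG}$ (which, as you note, rest on Theorem \ref{thm:sigma_tau_looping_condition}) and $\Delta_0$-absoluteness, does the job; the deferred decoding and dovetailing details are of the routine kind the paper itself defers (Remark \ref{rmk:dovetailing}). One small point to make explicit: organize the search so that the machine is guaranteed to actually test a witness code, e.g.\ spawn in parallel one halting check per pair (stage of $\UG$, virtual tape) encountered so far and halt globally as soon as any check succeeds, so that no single unsuccessful check can block the enumeration before the code of the known witness is examined.
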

\begin{proof}
This theorem was first established in \cite{welch_main} for the ITTM and, using Theorem \ref{thm:sigma_tau_looping_condition}, the proof is the same in the general case. We give one direction of the $\Sigma_2$ end-extension.

Suppose that some $\Sigma_2$ formula $\varphi(p) = \exists x \forall y \, \psi(x, y, p)$ is true in $L_{\zG}$, for some $p \in L_{\zG}$. Then, since $p \in L_{\zG}$, by previous corollary, it is eventually writable. Using it, a $\Gamma$-machine can look for $\wt{x}$, a witness of $\varphi(p)$, and it can eventually write it as well. Then, using now this $\wt{x}$ (that is using the previous machine which eventually writes $x$), another machine can eventually write, if it exists, the first $y$ it finds such that $L_{\SG} \models \neg \psi(\wt{x}, y, p)$. But such a $y$ can't be found in $L_{\zG}$, since $\varphi(p)$ is true in it. And it can't be found in $L_{\SG} - L_{\zG}$ either, since it would be eventually writable. So, in the end, $\varphi(p)$ is also true in $L_{\SG}$.
\end{proof}

\begin{remark}\label{rmk:when_does_gamma_machine_loops}
A question remains: for an operator $\Gamma$ that satisfies the condition of Theorem \ref{thm:sigma_tau_looping_condition}, is a $\Gamma$-machine that does not halt seen to be looping at stage $\SG$? In virtue of this theorem, we know that no new snapshot appears after stage $\SG$. It is however not yet enough to apply Corollary \ref{cor:looping_condition_gamma}. As it stands, using this corollary, it can be shown that, for such an operator, $\Gamma$-machines will be seen to be looping before stage $\SG \cdot \om^{\om}$.
Still, it may be possible to show that, in fact and as in the classical case, those machines are seen to be looping as soon as they reach stage $\SG$.
\end{remark}

\section{A counter-example without the looping condition}\label{sec:counter-example}

By Theorem \ref{thm:sigma_tau_looping_condition}, for an operator satisfying the looping condition (as well as being suitable and simulational and enhancing the $\limsup$ machine), we have $\SG = \TG$. 
In this section, to show that the looping condition is a necessary condition, we construct a suitable and simulational operator that does not satisfy the looping condition and for which $\SG < \TG$. The definition of this operator, the \emph{tick operator}, is done in Definition \ref{def:tick_operator} and is fairly straightforward. However, to prove that it indeed forms a counter-example, one convincing way to do it is to introduce another operator, the \emph{escaping operator} whose behavior is easier to study and to show that both those operators behave in a similar way. 

The intuition behind this counter-example, mentioned in Remark \ref{rmk:looping_stability}, is fairly simple as well: the machines ruled by the tick operator will be repeating for most of their computation and they will only exit those repetitions every tick, that is every $\tau$ steps for a very big $\tau$. After they exit their repetition, they compute a bit and \say{quickly} start to repeat again until the next tick. Hence with a well chosen $\tau$, we will be able to obtain great length of computation, that is greater than $\tau$, with only few different real numbers written in the computations, and so with small accidentally writable ordinals, that is smaller than $\tau$.

\begin{theorem}\label{thm:contre_exemple_machine}
There exists a suitable and simulational operator $\Gamma$ that enhances the operator $\Gamma_{sup}$ and such that $\SG < \TG$. 
\end{theorem}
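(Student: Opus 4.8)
The plan is to realise the intuition of Remark~\ref{rmk:looping_stability}: produce a three-symbol operator $\Gamma$ that behaves as $\Gamma_{\sup}$ almost everywhere but, on a single ``blinking'' pattern of some large additively closed ordinal length $\tau$, returns the escape value $0$ rather than $\limsup$, so that a machine which has been looping for $\tau$ steps is forced out of its loop. Concretely I would take $\Gamma$ cell-by-cell with cell restriction $\gamma$ given by: $\gamma(w)=0$ whenever $\operatorname{ctr}(w)$ has a non-empty final segment equal to $(02)^{\tau}$ or to $(20)^{\tau}$, and $\gamma(w)$ equals the $\limsup$ value of $w$ otherwise. Since the exceptional clause mentions the symbol $2$, $\Gamma$ agrees with $\Gamma_{\sup}$ on every history over $\{0,1\}$, hence enhances $\Gamma_{\sup}$ and, by Proposition~\ref{prop:gamma_emulates_limsup}, emulates it. Suitability is obtained as in Proposition~\ref{prop:lim_sup_suitable} by a level-by-level set-theoretic formula; the only new point is that this formula must name $\tau$ (so $\tau$ must be a definable ordinal) and recognise inside $L_\nu$ that a cell history of length $\nu$ ends in a $(02)^{\tau}$-block, which is harmless once $\tau$ is admissible — below stage $\tau$ the history is too short to trigger the clause, so $L_\nu$ need not even know $\tau$ there.

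Checking that $\Gamma$ is simulational is where $\tau$ first matters. Stability is immediate, since $x^{\alpha}$ contracts to the single letter $x$, which has no length-$\tau$ final segment; contraction-proofness is built in because the exceptional clause reads only $\operatorname{ctr}(w)$; and the delicate point, asymptoticity, goes through precisely because $\tau$ is additively closed: every non-empty final segment of a word whose contraction ends in $(02)^{\tau}$ again has a length-$\tau$ tail contracting to $(02)^{\tau}$ or $(20)^{\tau}$ (a mere parity shift, which is why both must be declared escapes), and conversely a final segment cannot gain a length-$\tau$ tail the whole word lacked. As admissibility entails additive closure, I would simply demand $\tau$ admissible. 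To make the counterexample actually work, I would then fix $\tau$ to be the least admissible ordinal that is both $>\SGsup$ and closed under $y\mapsto\SGsup^{y}$ (that is, $\SGsup^{y}<\tau$ for all $y\in L_\tau$); such a $\tau$ exists, is countable, and is definable, so $\Gamma$ is a suitable, simulational operator enhancing $\Gamma_{\sup}$.

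It remains to compare $\SG$ and $\TG$. For the lower bound on $\TG$, consider the \emph{tick machine}: after a fixed finite setup it forever blinks a designated clock cell between $0$ and $2$; since $\tau$ is additively closed the clock history reaches $(02)^{\tau}$ exactly at stage $\tau$, where $\Gamma$ sets the cell to $0$; reading that $0$, the machine flips a cell that was $0$ throughout, producing a tape content that has not occurred before in its own computation, and halts. Thus $\tau$ is $\Gamma$-accidentally clockable (in fact $\Gamma$-clockable), so $\TG\ge\tau$. For the upper bound $\SG<\tau$: below stage $\tau$ every $\Gamma$-machine runs verbatim as a $\Gamma_{\sup}$-machine (its histories are too short to trigger an escape), so every real appearing before stage $\tau$ lies in $L_{\SGsup}\subseteq L_\tau$; when a cell first triggers an escape at a stage $\ge\tau$, the whole tape content there has been produced by a $\Gamma_{\sup}$-computation on data in $L_\tau$, hence lies in $L_\tau$ by closure of $\tau$ under $y\mapsto\SGsup^{y}$, and from there the computation again proceeds as a $\Gamma_{\sup}$-computation on $L_\tau$-data until its next escape. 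Iterating this over the successive escapes one shows that every real ever appearing on a tape of a $\Gamma$-machine lies in $L_\tau$; since $\tau$ is admissible, the ordinals coded by such reals are all $<\tau$, so $\SG<\tau\le\TG$. The main obstacle is exactly this choice of $\tau$: it must be simultaneously definable (for suitability), additively closed (for asymptoticity), and large enough to absorb every real a $\Gamma$-machine can ever write, and one must check the last requirement is not circular — it is not, since the closure operation $\SGsup^{\bullet}$ does not involve $\Gamma$. To keep the post-escape bookkeeping manageable I would first carry out this analysis for an auxiliary ``escaping operator'' whose escapes are easier to track, and then transfer the conclusion to the ``tick operator'' $\Gamma$, as the section introduction suggests.
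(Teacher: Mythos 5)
Your operator design is a legitimate realisation of the idea in Remark~\ref{rmk:looping_stability}, and most of the surrounding checks do go through: enhancement of $\Gamma_{\sup}$ is clear (though emulation then follows from Proposition~\ref{prop:enhancement_implies_emulation}, not Proposition~\ref{prop:gamma_emulates_limsup}, which assumes the looping stability you are deliberately destroying); stability and contraction-proofness are immediate; asymptoticity does hold for the reason you give, since for additively indecomposable $\tau$ every non-empty suffix of the history meshes correctly with the block structure of the contraction; the suitability of the rule and the definability of $\tau$ need the same care the paper spends on uniform characterizability (Definition~\ref{def:characterizable}, Propositions~\ref{prop:big_charac_ordinal} and~\ref{prop:multiplicity_sc_ordinals}), but are plausible; and your clock machine does give $\TG \geqslant \tau$.

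The genuine gap is the upper bound $\SG < \tau$. Your argument correctly handles one inter-escape phase: from a snapshot $y \in L_\tau$, a $\limsup$-phase only produces reals in $L_{\Sigma^{y}_{\Gamma_{\sup}}}[y] \subseteq L_\tau$ (using admissibility and your closure hypothesis). But escapes recur --- a machine can re-arm its clock by writing a $1$ to break the alternation and blinking afresh --- so the escape stages form a transfinite set, and at a stage $\rho$ which is a \emph{limit} of escape stages the snapshot is the $\Gamma$-limit of a history containing unboundedly many escapes below $\rho$; it is not produced by a single $\limsup$-run from an $L_\tau$-snapshot, and a limit of $L_\tau$-reals need not lie in $L_\tau$ --- that is exactly how these machines create new reals in the first place. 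Concretely, a machine can run phases $n = 0, 1, 2, \dots$, in phase $n$ end-extending a monotonically written well-order code from type $\alpha_n$ to some $\alpha_{n+1} < \Sigma^{y_n}_{\Gamma_{\sup}}$, and at the limit of the phases it accidentally writes a code for $\sup_n \alpha_n$; your closure hypothesis bounds each step but not this supremum, and the least admissible $\tau$ closed under $y \mapsto \Sigma^{y}_{\Gamma_{\sup}}$ has cofinality $\om$, so excluding that such a chain is cofinal in $\tau$ requires a real argument (a replacement/reflection property of $L_\tau$ beyond admissibility, or a different choice of $\tau$) which you do not give. This is precisely the difficulty the paper's proof is organised around: it ticks at multiples of a single ordinal $\tau = \chi$ chosen above $\Sigma_{esc}$ and $\Tau_{esc}$ of an auxiliary escaping operator, proves that escaping machines are seen to loop below $\chi$ (Proposition~\ref{prop:gamma_loop_loops_at_delta}, via a $\Sigma_4$-elementary chain), and then shows by a stage-by-stage bisimulation (the Claim inside the proof of Theorem~\ref{thm:contre_exemple_machine}, whose limit case works because a limit of tick stages is again a tick stage) that the tick machine's snapshots are exactly the escaping machine's, whence $\Sigma_\tau = \Sigma_{esc} < \chi < \Tau_\tau$. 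Your final sentence gestures at this auxiliary-operator strategy, but your actual argument does not use it; without it, or a substitute for it, the inequality $\SG < \tau$ --- and hence the theorem --- is not established, and it is not even clear that it holds for your operator with the $\tau$ you chose.
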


Before defining the escaping operator that will help us prove this result for the tick operator we need some preliminary results.

\begin{definition}
For $n \in \om$, the $n$-symbol $\limsup$ operator is the cell-by-cell operator $\Gamma_{\sup}^n$ defined by the single cell operator $\gamma_{\sup}^n$ itself defined as follows. For $h \in {}^{<On}n$,
\begin{align*}
\gamma_{\sup}^n(h) = k \text{ when } k \in n \text{ is the greatest integer cofinal in } h
\end{align*}
with the convention that if $h$ is not a limit word, its last letter is considered cofinal in it.
\end{definition}

\begin{prop}[Looping condition for $\Gamma_{\sup}^n$-machine]\label{prop:looping_condition_n-symbols_limsup}
A $\Gamma_{\sup}^n$-machine is looping if and only if there are two limit stages $\mu < \nu$ sharing the same snapshot and such that for each cell $i$, writing $s_\nu$ for the snapshot at stage $\nu$ and $s_\nu[i]$ for the value of cell $i$ in $s_\nu$:
\begin{align*}
\max_{\nu' \in \left[\mu, \nu \right[}(C_i(\nu')) = s_\nu[i]
\end{align*}
\end{prop}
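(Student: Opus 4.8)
The plan is to prove the two implications separately; the substantive content is the ``if'' direction.

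For the ``only if'' implication, I would start from a looping $\Gamma_{\sup}^n$-machine, say with the segment of computation $\left[\alpha, \alpha+\beta\right[$ repeating through the whole computation after stage $\alpha$, and then produce convenient witnesses rather than work with $\alpha$ directly: set $\mu := \alpha + \beta\cdot\om$ and $\nu := \alpha + \beta\cdot\om\cdot 2$. Since $\beta \geqslant 1$ both are limit ordinals, and by the definition of looping both carry the same snapshot as stage $\alpha$. For a fixed cell $i$, the definition of looping makes $C_i$ restricted to $\left[\mu,\nu\right[$ periodic, with period the values of $C_i$ on $\left[\alpha, \alpha+\beta\right[$; hence every value attained there is attained cofinally often below $\nu$, and conversely every value cofinal below $\nu$ occurs somewhere in $\left[\mu,\nu\right[$. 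Applying the $\gamma_{\sup}^n$ rule then gives $s_\nu[i] = C_i(\nu) = \max_{\nu'\in\left[\mu,\nu\right[} C_i(\nu')$, which is exactly the required condition.

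For the ``if'' implication, I would fix limit stages $\mu < \nu$ with a common snapshot and satisfying $\max_{\nu'\in\left[\mu,\nu\right[}C_i(\nu') = s_\nu[i]$ for every cell $i$, let $\beta$ be defined by $\mu+\beta=\nu$, and prove by transfinite induction on $\rho \geqslant \mu$ that, writing $\rho = \mu + \beta\cdot\delta + \rho'$ with $\rho' < \beta$ and $\delta$ maximal, the snapshot at stage $\rho$ equals the snapshot at stage $\mu+\rho'$ — which is precisely the statement that the machine loops with period $\left[\mu,\nu\right[$. The base case is trivial, and successor steps follow from determinism of the transition function together with the induction hypothesis (using the snapshot equality at $\mu$ and $\nu$ in the boundary case $\rho'+1=\beta$). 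The real work is the limit case: there the head is on the first cell and the state is $q_{limit}$ at both stages, so it reduces to comparing $C_i(\rho)$ with $C_i(\mu+\rho')$ for each cell $i$. Using the induction hypothesis, the history of cell $i$ over $\left[\mu,\rho\right[$ is periodic with period the restriction of $C_i$ to $\left[\mu,\nu\right[$; hence the set of values cofinal below $\rho$ equals the set of values cofinal below $\mu+\rho'$ in the original computation when $\rho'>0$, and equals the set of all values occurring in the whole period $\left[\mu,\nu\right[$ when $\rho'=0$ and at least $\om$ periods precede $\rho$. Feeding this into the $\gamma_{\sup}^n$ rule — and, in the $\rho'=0$ case, invoking the hypothesis $\max_{\nu'\in\left[\mu,\nu\right[}C_i(\nu') = s_\nu[i] = C_i(\mu)$ — yields $C_i(\rho)=C_i(\mu+\rho')$ in every case, completing the induction.

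I expect the bookkeeping in the limit case of the ``if'' direction to be the main obstacle: one must run through the several shapes of $\beta$ (successor versus limit) and of the number of elapsed periods to be certain that neither a value surviving only in the pre-$\mu$ part of the history nor a ``spike'' value appearing inside a single period without being sustained is promoted by the $\limsup$ — and it is precisely the max-condition $\max_{\nu'\in\left[\mu,\nu\right[}C_i(\nu') = s_\nu[i]$ that rules out the latter, which is why it is needed in the statement. A shortcut for part of this: once one has checked, by running the same nested induction only $\om$ periods far, that $\left[\mu,\nu\right[$ repeats $\om$ times and that the snapshot at stage $\mu+\beta\cdot\om$ agrees with that at $\mu$, one may quote Proposition \ref{prop:looping_condition_gamma} for the (easily verified) looping-stable and asymptotic operator $\Gamma_{\sup}^n$ to conclude that the machine is looping.
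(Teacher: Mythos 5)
Your proposal is correct and follows essentially the same route as the paper, whose (much terser) proof rests on exactly your key observation: the $\limsup$ of a cell's history after the segment $\left[\mu, \nu \right[$ has repeated a limit ordinal number of times equals the maximum of that cell's values on the segment, so the max-condition forces the snapshot $s_\nu$ to recur and the repetition to persist. Your two-direction write-up with the explicit transfinite induction (and the optional appeal to Proposition \ref{prop:looping_condition_gamma}) is just a fully detailed version of that argument.
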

\begin{proof}
The condition $\max_{\nu' \in \left[\mu, \nu \right[}(C_i(\nu')) = s_\nu[i]$ simply comes from the fact that once the segment of computation $[\mu, \nu[$ is repeating, the $\limsup$ of the value of some cell $i$ after it repeated some limit ordinal amount of time is equal to the maximum value of the cell $i$ in this segment of computation $[\mu, \nu[$. This ensures that the snapshot $s_\nu$ also appears after the segment $[\mu, \nu[$ has been repeated a limit ordinal amount of time.
\end{proof}

\begin{prop}
For $n \in \om$, the $n$-symbol $\limsup$ operator $\Gamma_{\sup}^n$ is a suitable and simulational operator.  
\end{prop}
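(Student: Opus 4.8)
The plan is to verify the two properties separately, in each case mirroring the argument already given for the $2$-symbol operator $\Gamma_{\sup}$ in Proposition \ref{prop:lim_sup_suitable}.

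For the \emph{simulational} part: by construction $\Gamma_{\sup}^n$ is cell-by-cell with cell restriction $\gamma_{\sup}^n$, so by Remark \ref{rmk:refining_with_cell_by_cell} it suffices to show that $\gamma_{\sup}^n$ is stable, asymptotic and contraction-proof. Stability is immediate: for $k \in n$ and $\alpha$ a limit ordinal, the constant cell history $k^\alpha$ has $k$ as its unique cofinal symbol, so $\gamma_{\sup}^n(k^\alpha) = k$. For asymptoticity I would first note that a non-empty final segment $h'$ of a limit cell history $h$ is again a limit word (if $\delta + |h'| = |h|$ with $|h|$ a limit ordinal and $|h'| > 0$, then $|h'|$ cannot be a successor, hence is itself a limit ordinal), and then use the key observation that a symbol $k \in n$ occurs cofinally in $h$ if and only if it occurs cofinally in the tail $[\delta, |h|[$ of $h$, which is exactly $h'$; so $h$ and $h'$ have the same set of cofinal symbols and hence the same greatest cofinal symbol. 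For contraction-proofness, by the remark following Definition \ref{def:contraction} I only need to compare two limit cell histories $h$ and $h'$ with $\operatorname{ctr}(h) = \operatorname{ctr}(h') =: c$, and it is enough to check that $\gamma_{\sup}^n(h)$ depends only on $c$. Here I would split on whether $c$ is a limit word: if $c$ is a limit word, then since $c(\iota) = h(\nu_\iota)$ for the block sequence $(\nu_\mu)$ of $h$, which is cofinal in $|h|$, a symbol is cofinal in $h$ exactly when it is cofinal in $c$, so $\gamma_{\sup}^n(h) = \gamma_{\sup}^n(c)$; if instead $c$ is a non-limit word ending in a letter $a$, then $h$ consists of finitely many blocks, its last block has limit length (again because $|h|$ is a limit ordinal and a successor-length last block would make $|h|$ a successor), so $a$ is the unique cofinal symbol of $h$ and $\gamma_{\sup}^n(h) = a$, the last letter of $c$. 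In both cases the value depends only on $c$, so $\gamma_{\sup}^n$, and hence $\Gamma_{\sup}^n$, is contraction-proof.

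For the \emph{suitable} part I would exhibit a set-theoretic first-order formula $\varphi(i, m, y, k)$ expressing, over $L_\nu[y]$, that $\limsup_{\delta < \nu} C_i^m(\delta) = k$; concretely this is the conjunction of $\exists \beta < \nu \, \forall \delta \in [\beta, \nu[ \ C_i^m(\delta) \leqslant k$ and $\forall \beta < \nu \, \exists \delta \in [\beta, \nu[ \ C_i^m(\delta) = k$. As in the proof of Proposition \ref{prop:lim_sup_suitable}, the only technical point is that the cell-history function $C_i^m$ is not an argument of $\varphi$ but is itself defined by this same formula at the limit stages below $\nu$; this is handled by writing the definition of $C^m$ as a $\Sigma$-recursion along the levels of $L$, exactly as for $\Gamma_{\sup}$, and I would refer to the detailed construction in \cite{these}, observing that passing from the single symbol $0$ of the $2$-symbol rule to the finite alphabet $n$ changes nothing essential in that bookkeeping.

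The only routine-but-fiddly portion is the case analysis in the contraction-proof step (limit versus non-limit contraction, and the verification that the last block of a limit word with finite contraction must itself have limit length); the one genuinely delicate ingredient, the self-referential nature of the defining formula, is already settled for $\Gamma_{\sup}$ and carries over verbatim.
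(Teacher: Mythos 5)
Your proposal is correct and takes essentially the same route as the paper, whose proof is simply the observation that the argument for the $2$-symbol $\limsup$ operator carries over; you have just spelled out the cell-by-cell verification of stability, asymptoticity and contraction-proofness and the $\Sigma$-recursive definition of the suitability formula in the $n$-symbol setting. One negligible slip: when the contraction is a non-limit word the history need not consist of finitely many blocks (e.g.\ $(01)^\om 2^\om$ has contraction of length $\om+1$), but your argument only uses that the \emph{last} block has limit length, which is exactly what holds, so the conclusion is unaffected.
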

\begin{proof}
This is the same proof as done for the usual $2$-symbol $\limsup$ operator.
\end{proof}

\begin{prop}\label{prop:n-symbols_limsup}
For $n>1$, the $n$-symbol $\limsup$ operator $\Gamma_{\sup}^n$ is as powerful as the usual $\limsup$ operator $\Gamma_{\sup}$. That is $\SGsupn = \SGsup$ and $\TGsupn = \TGsup$
\end{prop}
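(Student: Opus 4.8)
The plan is to show the two operators are mutually emulable in both directions, so that each kind of writable/accidentally-writable ordinal is preserved. The inequalities $\SGsup \leqslant \SGsupn$ and $\TGsup \leqslant \TGsupn$ are immediate: since $\Gamma_{\sup}^n$ enhances $\Gamma_{\sup}$ (restricting an $n$-symbol $\limsup$ history to the two symbols $0,1$ gives exactly the $2$-symbol $\limsup$), any $\Gamma_{\sup}$-machine is literally a $\Gamma_{\sup}^n$-machine with the same computation, hence writes and accidentally writes the same ordinals. So the real content is the reverse direction: every $n$-symbol $\limsup$ machine can be simulated by a $2$-symbol $\limsup$ machine in a way that preserves (accidentally) writable ordinals.

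For the reverse direction I would encode each cell of an $n$-symbol machine $m$ by a fixed finite block of cells of a $2$-symbol machine $m'$, writing the current symbol $k \in n$ in unary (or in a fixed binary coding of width $\lceil \log_2 n\rceil$) in that block; the head position and state of $m$ are tracked on auxiliary cells exactly as for the universal machine. At successor stages $m'$ simulates one step of $m$ in finitely many of its own steps, which is routine. The key point is the limit rule: when $m$ applies $\gamma_{\sup}^n$ to a cell history $h$, the resulting symbol is the greatest $k\in n$ cofinal in $h$. I need the simulated block to carry exactly that value at the limit stage. This is where some care is needed, because a naive unary encoding (write $1^k 0^{n-k}$ for symbol $k$) does not interact correctly with $\limsup$: if the simulated cell oscillates between symbols $2$ and $1$, the first bit of the block oscillates and limsups to $1$ (good for that bit) but the second bit also oscillates and limsups to $1$, yielding the encoding of symbol $2$ — which is in fact correct here, but one must check it is correct in general. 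The clean fix is to use the "thermometer" encoding: symbol $k$ is represented by $1^k 0^{n-k}$ (the first $k$ bits set), and observe that for each position $j < n$, the $j$-th bit of the block is $1$ at stage $\nu$ iff the simulated symbol is $\geqslant j+1$; since $\limsup$ of a $2$-symbol history is $1$ iff the value is cofinally $1$, the $j$-th bit limsups to $1$ iff the simulated symbol is $\geqslant j+1$ cofinally often, iff $\limsup$ of the simulated symbols is $\geqslant j+1$. Hence the block limsups precisely to the thermometer encoding of $\gamma_{\sup}^n(h)$. After the limit stage $m'$ spends finitely many steps tidying its auxiliary cells and resetting the simulated head/state, exactly as in the description of the universal $\limsup$ machine; this delay is finite so it does not affect limit-stage snapshots in the relevant sense.

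With this simulation in hand, a $\Gamma_{\sup}^n$-machine $m$ that writes (resp. eventually writes, accidentally writes) a real $x$ yields a $\Gamma_{\sup}$-machine $m'$ that writes (resp. eventually writes, accidentally writes) the corresponding real; since codes for ordinals are themselves $2$-symbol reals, decoding the block-encoding back to a genuine real is itself computable by a $\Gamma_{\sup}$-machine (indeed we may arrange the output tape of $m'$ to already be in the right $2$-symbol form), so the same ordinals are $\Gamma_{\sup}$-writable (resp. e.w., a.w.). This gives $\SGsupn \leqslant \SGsup$ and $\TGsupn \leqslant \TGsup$, and combined with the trivial inequalities we conclude $\SGsupn = \SGsup$ and $\TGsupn = \TGsup$. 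The main obstacle, as indicated, is getting the encoding to commute with the $\limsup$ limit rule cell-by-cell; the thermometer encoding is the device that makes this work, and once it is chosen everything else is the standard bookkeeping already used for the universal machine.
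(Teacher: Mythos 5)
Your proposal is correct and follows essentially the same route as the paper: the easy inequalities come from enhancement, and the reverse direction is the block simulation of an $n$-symbol cell by a unary (thermometer) group of $2$-symbol cells $1^k 0^{\dots}$, whose bit-wise $\limsup$ reproduces exactly the $n$-symbol $\limsup$ at limit stages. Your bit-by-bit justification of why the encoding commutes with the limit rule is the same observation the paper makes (the paper uses $n-1$ cells rather than $n$, an immaterial difference), so there is nothing to add.
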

\begin{proof}
For $n>1$, $\Gamma_{\sup}^n$ enhances $\Gamma_{\sup}$, so we naturally have that $\SGsupn \geqslant \SGsup$ and $\TGsupn \geqslant \TGsup$. In the other direction, we show that a $\Gamma_{\sup}$-machine can simulate a $\Gamma_{\sup}^n$-machine.

For finite machines, an $n$-symbol machine can be easily simulated by a $2$-symbol machine by representing a cell of the $n$-symbol machine by $n-1$ cells of the $2$-symbol machines. We choose the following encoding: the $n$-symbol cell that reads $k \in n$ is represented by $n-1$ cells (namely, $2$-symbol cells) that read, once aggregated, $1^k 0^{n-k-1}$. Observe that at a limit stage, those are equivalent:
\begin{itemize}[label=--]
\item The $n$-symbol operator yields $k$ for cell $i$.
\item $k$ is the greatest cofinal value of $i$ in the $n$-symbol cell up to this limit stage.
\item $1^k 0^{n-k-1}$ is the aggregated value of the $n-1$ $2$-symbol cell simulating $i$ with the longest $1$-prefix and which is cofinal in this limit stage.
\item the $\limsup$ operator yields $1^k 0^{n-k-1}$ for the cells simulating $i$.
\end{itemize}
Hence, at any limit stage, the $n$-symbol $\limsup$ operator is correctly simulated by the usual $2$-symbol $\limsup$ operator and with it the simulation is faithfully carried out through all the ordinals.
\end{proof}

\begin{definition}
For $\Gamma$ an asymptotic operator and $H$ a limit segment of history, we say that $H$ is a \emph{looping pattern w.r.t.\ $\Gamma$} or a \emph{$\Gamma$-looping pattern} when a machine with a history of the form $H_0 H^\om$ is looping and, more precisely, looping over the segment of history $H$.
\end{definition}

In the previous definition, we ask $\Gamma$ to be asymptotic in order to ensure that different initial segments $H_0$ won't lead to different behaviors after $H$ repeated $\om$ times. So it is equivalent to ask that one machine with this history loops or to ask that any machine with this history loops.

\begin{example}\label{ex:looping_pattern}
Take $\Gamma^n_{\sup}$, the $n$-symbol $\limsup$ operator. Then, with Proposition \ref{prop:looping_condition_n-symbols_limsup} in mind, $H$ is a looping pattern w.r.t.\ $\Gamma^n_{\sup}$ if and only if for all cell $i$:
\begin{align*}
H[0][i] = \max_{\nu < |H|}(H[\nu][i])
\end{align*}
Hence, if for some $\Gamma^n_{sup}$-machine the history reads at some point $H_0 H^\om$, we know that the machine is looping and that the segment of history $H$ will repeat indefinitely. 
\end{example}

\begin{definition}[the $3$-symbol escaping machine]\label{def:jump_operator}
The \emph{$3$-symbol escaping operator} $\Gamma_{esc}$ is a $3$-symbol $\limsup$ operator with the only difference that its behavior changes after the machine has been repeating itself for some limit ordinal amount of times (or when such repeating stages are cofinal). It is called a "escaping" operator as this change of behavior can be seen as a Turing jump escaping the loop.

It can be defined using the $\limsup$ operators $\Gamma_{102}$ and $\Gamma_{210}$. Those are defined using cell-by-cell $\limsup$ operators $\gamma_{102}$ and $\gamma_{210}$. Both are usual $3$-symbol $\limsup$ operators working on the alphabet $3$ with the only difference being the order of priority of the letters in $3$. That is for a limit cell history $h \in {}^{<On}3$,
\begin{equation*}
  \gamma_{102}(h) = 
    \begin{cases}
      1 & \text{if } 1\text{'s are cofinal in } h  \\
      0 & \text{if } \gamma_{102}(h) \neq 1 \text{ and } 0\text{'s are cofinal in } h \\
      2 & \text{if } \gamma_{102}(h) \neq 1 \text{ and } \gamma_{102}(h) \neq 0
    \end{cases}       
\end{equation*}
and
\begin{equation*}
\gamma_{210}(h) =
    \begin{cases}
      2 & \text{if } 2\text{'s are cofinal in } h\\
      1 & \text{if } \gamma_{210}(h) \neq 2 \text{ and } 1\text{'s are cofinal in } h  \\
      0 & \text{if } \gamma_{210}(h) \neq 2 \text{ and } \gamma_{210}(h) \neq 1
    \end{cases}       
\end{equation*}
From there, $\Gamma_{esc}$ is defined as follow. For a limit history $H \in {}^{<On}({}^\om 3)$:
\begin{equation*}
\Gamma_{esc}(H) =
    \begin{cases}
      \Gamma_{210}(H) & \text{ if } H = H_0 \cdot (H_1)^\om \text{ for a } \Gamma_{102}\text{-looping pattern } H_1 \\
      \Gamma_{210}(H) & \text{ if } H \text{ is a limit of terms of the form } H_0 \cdot (H_1)^\om \\ 
      & \text{ with the } H_1 \text{'s being } \Gamma_{102}\text{-looping patterns} \\
      \Gamma_{102}(H) & \text{ else}\\
    \end{cases}       
\end{equation*}   

\end{definition}

\begin{prop}
The escaping operator $\Gamma_{esc}$ is a $\Delta_4$-suitable operator. It is moreover stable, asymptotical and contraction-proof but not cell-by-cell.
\end{prop}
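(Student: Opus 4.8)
I would prove the four claimed properties of $\Gamma_{esc}$ in turn, treating "$\Delta_4$-suitable", "stable", "asymptotic", "contraction-proof" and "not cell-by-cell" separately, and reusing as much as possible the fact that $\Gamma_{102}$ and $\Gamma_{210}$ are plain $3$-symbol $\limsup$ operators, which are already known (Proposition on $\Gamma_{\sup}^n$) to be suitable and simulational.

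\textbf{Suitability.} First I would observe that the three clauses in the definition of $\Gamma_{esc}$ are mutually exclusive and exhaustive, and that the ``escaping'' condition --- ``$H = H_0 (H_1)^\om$ for a $\Gamma_{102}$-looping pattern $H_1$, or $H$ is a limit of such'' --- is a genuine set-theoretic predicate of the history $H$. Using the characterization of $\Gamma_{102}$-looping patterns from Example \ref{ex:looping_pattern} (namely $H_1[0][i] = \max_{\nu < |H_1|} H_1[\nu][i]$ for every cell $i$), being a looping pattern is a $\Pi_1$-type condition over the relevant level of $L$, so the escaping condition is roughly $\Sigma_2$ (``there exist $H_0, H_1$ with $H = H_0 H_1^\om$ and $H_1$ is a looping pattern'', plus the limit variant). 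The value $\Gamma_{102}(H)[i]$ or $\Gamma_{210}(H)[i]$ is itself $\Sigma_3$-expressible (it is a $\limsup$, hence $\Sigma_2$ over the history, and we must thread through the $\Sigma$-recursion defining the history itself, as in Proposition \ref{prop:lim_sup_suitable}, costing one more level). Combining a $\Sigma_2$ case-split with $\Sigma_3$ answers in each case gives a formula that is $\Delta_4$ (both the formula and its negation are $\Sigma_4$, since in each branch we can bound the quantifiers), so $\Gamma_{esc}$ is a $\Delta_4$-suitable operator. I would present this as a bookkeeping argument, referring to the proof of Proposition 5.3.37 in \cite{these} for the $\Sigma$-recursion part rather than redoing it.

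\textbf{Stable, asymptotic, contraction-proof.} For these I would argue that on an input of the form $x^\alpha$ (or any history that is \emph{not} of escaping form), $\Gamma_{esc}$ agrees with $\Gamma_{102}$, which has all three properties; and I must check that the escaping branch is never triggered spuriously, or that when it is, $\Gamma_{210}$ also gives the right value. Concretely: $\Gamma_{esc}(x^\alpha)$ --- here $x^\alpha$ \emph{is} of the form $H_0 H_1^\om$ with $H_1 = x$ a single-letter history, which is trivially a $\Gamma_{102}$-looping pattern --- so the escaping branch fires and $\Gamma_{esc}(x^\alpha) = \Gamma_{210}(x^\alpha) = x$, giving stability. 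For asymptoticity and contraction-proofness the key point is that ``being (a limit of) $H_0 H_1^\om$ with $H_1$ a $\Gamma_{102}$-looping pattern'' is preserved under passing to non-empty final segments and under contraction/dilation: a final segment of $H_0 H_1^\om$ is either $H_1^\om$-like or still of the form $H_0' H_1^\om$, and contraction of a looping pattern is again a looping pattern (contraction commutes with the $\max$ characterization of Example \ref{ex:looping_pattern}); and when the history is of escaping form both before and after, $\Gamma_{210}$ --- being a $\limsup$ operator --- is itself asymptotic and contraction-proof, while when it is of escaping form neither before nor after, we fall back on $\Gamma_{102}$. The one case needing a little care is when a history is escaping but its final segment / contraction is not of the form covered by the first two clauses; I would argue that this cannot happen because the ``limit of terms $H_0 H_1^\om$'' clause is exactly designed to absorb these boundary cases, and spell out the short verification.

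\textbf{Not cell-by-cell, and the main obstacle.} Finally, to see $\Gamma_{esc}$ is not cell-by-cell, I would exhibit two histories $H$ and $H'$ and a cell $i$ with $H|_i = H'|_i$ but $\Gamma_{esc}(H)[i] \neq \Gamma_{esc}(H')[i]$: take $H$ to be a genuine $\Gamma_{102}$-looping pattern repeated $\om$ times, where on cell $i$ the history is, say, $(01)^\om$-contracted to alternating blocks with max value $0$ somewhere it matters --- the point being that whether the escaping branch fires depends on the \emph{global} shape of $H$ (whether \emph{all} cells form a looping pattern), not on cell $i$ alone, so I can modify $H$ on some other cell $j \neq i$ to destroy the looping-pattern property, leaving $H|_i$ untouched while switching $\Gamma_{esc}$ from the $\Gamma_{210}$ branch to the $\Gamma_{102}$ branch, which on a well-chosen $i$ gives different outputs (e.g. a cell whose history has both $1$'s and $2$'s cofinal: $\gamma_{102}$ returns $1$, $\gamma_{210}$ returns $2$). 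The main obstacle I anticipate is \textbf{the $\Delta_4$ complexity count}: getting the exact level right requires carefully tracking (a) the cost of the $\Sigma$-recursion that reconstructs the computation history inside $L$, (b) the quantifier cost of recognizing looping patterns and their limits, and (c) why the negation also lands at $\Sigma_4$ (so that it is $\Delta_4$ and not merely $\Sigma_4$); this is the delicate part and is where I would spend most of the proof's prose, possibly deferring some of it to the thesis \cite{these}.
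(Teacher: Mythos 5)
Your proposal follows essentially the same route as the paper: stability, asymptoticity and contraction-proofness are inherited from the two constituent $\limsup$ operators together with the observation that the escaping condition (being of the form $H_0(H_1)^\om$ with $H_1$ a $\Gamma_{102}$-looping pattern, or a limit of such) is preserved under final segments and contraction; the $\Delta_4$ formula is only sketched with the details deferred (exactly as the paper does, citing the thesis); and the failure of the cell-by-cell property comes from the global nature of the escaping condition, where your explicit two-history counterexample is in fact more detailed than the paper's one-line remark. One small slip, harmless to the conclusion: a single letter $x$ is not a $\Gamma_{102}$-looping pattern, since looping patterns are by definition \emph{limit} segments of history, so for instance $\Gamma_{esc}(x^\om)$ is computed by the $\Gamma_{102}$ clause rather than the escaping clause — but both clauses are stable and return $x$ on $x^\alpha$, so stability holds regardless of which branch fires.
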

We give this proof of this first fact for the sake of completeness, but it is actually enough to be convinced that $\Gamma$ is a $\Sigma_n$-suitable operator for any $n$.
\begin{proof}
As it is built from two $\limsup$ operators, the escaping operator is easily seen to be stable and contraction-proof. It is also asymptotical as when the history $H$ can be written $H = H_0 (H_1)^\om$ (with $H_0$ possibly empty), any final segment of $H$ can be written in the same way (possibly with a different $H_0$). Same goes when $H$ is a limit of such terms. And it is not cell-by-cell as for a given cell $i$, the operator needs to consider the whole machine history $H$ rather than only the cell history $h_i$.

As for it being $\Delta_4$-suitable, the definition provides the skeleton that we can use to define a formula $\varphi_{esc}$ which suits Definition \ref{def:suitable_operator}. The details rest on a recursive definition, as in the proof of Proposition \ref{prop:lim_sup_suitable}. A detailed proof with the definition of $\varphi_{esc}$ can be found in \cite[Proposition 5.5.9]{these}.

\end{proof}

\begin{prop}\label{prop:looping_condition_gamma_loop}
A $\Gamma_{esc}$-machine is looping if and only if there are two limit stages $\mu < \nu$, both ruled by the operator $\Gamma_{210}$, that is both following the repetition of a $\Gamma_{102}$-looping pattern or a limit of thereof, sharing the same snapshot and such that writing $s_\nu$ for the snapshot at stage $\nu$ and writing $\max^{210}$ for the $\max$ operator with the order $2 \succ 1 \succ 0$, we have:
\begin{align}\label{eq:max_loop_condition}
\max^{210}_{\nu' \in \left[\mu, \nu \right[}(C_i(\nu')) = s_\nu[i]
\end{align}
In this case, the machine is \emph{seen to be looping} at stage $\nu$.
\end{prop}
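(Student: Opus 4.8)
The strategy is to adapt the proof of Proposition~\ref{prop:looping_condition_n-symbols_limsup}, the looping condition for $\Gamma_{\sup}^3$-machines, the extra ingredient being that one must keep track of which limit stages fall under the $\Gamma_{210}$-clause of Definition~\ref{def:jump_operator} rather than the $\Gamma_{102}$-clause.

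For the \say{if} direction, suppose $\mu<\nu$ are as in the statement and let $\beta$ be the ordinal with $\mu+\beta=\nu$, so that the segment $[\mu,\nu[$ is the candidate loop. Since the snapshots at stages $\mu$ and $\nu$ coincide, a transfinite induction shows that this segment recurs after stage~$\nu$: successor steps are governed by the local transition function, and at a limit stage which is not a period boundary the relevant history and the history one period earlier share a non-empty common final segment, so asymptoticity of $\Gamma_{esc}$ forces the same value. The work is at the period boundaries $\mu+\beta\cdot\lambda$, $\lambda$ a limit: there I would argue that such a stage is still governed by $\Gamma_{210}$ --- since $\nu$ is $\Gamma_{210}$-governed, the history up to $\nu$ terminates with $\omega$-many copies of a $\Gamma_{102}$-looping pattern (or is a limit of such histories), and that terminal shape is inherited by the history one period later, hence by every period boundary --- and then $\Gamma_{210}$ returns for each cell $i$ the maximum in the order $2\succ 1\succ 0$ of the symbols cofinal in the repeating history, which is $\max^{210}_{\nu'\in[\mu,\nu[}(C_i(\nu'))=s_\nu[i]$ by hypothesis, so the snapshot is again $s_\nu$. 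Hence the segment $[\mu,\nu[$ repeats through the whole computation and the machine is looping.

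For the \say{only if} direction, fix by the definition of looping a segment $[\alpha,\alpha+\beta[$ that repeats through the whole computation of the machine. The key point is that this segment is a $\Gamma_{102}$-looping pattern: by asymptoticity the snapshot at stage $\alpha+\beta\cdot\omega$ equals $\Gamma_{esc}$ evaluated on the $\omega$-fold repetition of the segment, and since the machine loops this snapshot is the one that starts the segment; going through the three cases of Definition~\ref{def:jump_operator}, the $\Gamma_{102}$-case gives exactly the equalities characterising a $\Gamma_{102}$-looping pattern in Example~\ref{ex:looping_pattern}, and in the two $\Gamma_{210}$-cases the segment is (commensurable with) a $\Gamma_{102}$-looping pattern already sitting inside the repetition, which forces the same conclusion. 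Consequently every period boundary $\alpha+\beta\cdot\lambda$ with $\lambda$ a limit is $\Gamma_{210}$-governed; taking $\mu=\alpha+\beta\cdot\omega$ and $\nu=\alpha+\beta\cdot\omega\cdot 2$ yields two $\Gamma_{210}$-governed stages with equal snapshot, and since there $\Gamma_{210}$ computes the $2\succ1\succ0$-maximum over the repeating segment, the persistence of the loop forces $\max^{210}_{\nu'\in[\mu,\nu[}(C_i(\nu'))=s_\nu[i]$, which is the condition in the statement.

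The step I expect to be the main obstacle is the bookkeeping around $\Gamma_{210}$-governance: showing both that histories terminating in (limits of) $\omega$-fold repetitions of $\Gamma_{102}$-looping patterns are closed under the segment-repetitions arising in a loop --- so that governance persists at every period boundary --- and that a segment which loops under $\Gamma_{esc}$ must genuinely be a $\Gamma_{102}$-looping pattern rather than an artefact of the operator switching regimes. Both reduce to elementary but fiddly combinatorics of periodic ordinal words under $\limsup$-type operators with two different symbol orderings; the remainder is a transcription of the $\Gamma_{\sup}^3$ argument together with repeated appeals to stability and asymptoticity of $\Gamma_{esc}$.
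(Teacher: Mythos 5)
Your \say{if} direction is essentially the paper's argument (asymptoticity handles non-boundary and successor-boundary stages, $\Gamma_{210}$-governance propagates to limit boundaries, and condition (\ref{eq:max_loop_condition}) gives the recurring snapshot there), so that half is fine. The genuine gap is in the \say{only if} direction, precisely at what you call the key point: the claim that the repeating segment is itself a $\Gamma_{102}$-looping pattern is false in general, and your treatment of the two $\Gamma_{210}$-cases (\say{the segment is (commensurable with) a $\Gamma_{102}$-looping pattern already sitting inside the repetition, which forces the same conclusion}) does not establish it. A $\Gamma_{esc}$-machine can loop over a period that genuinely exploits the escape rule inside each period: for instance, a machine which blinks a cell between $1$ and $2$ (an inner $\Gamma_{102}$-looping pattern), so that after $\om$ inner blocks the resulting $\Gamma_{210}$-ruled stage puts a $2$ in that cell, then harvests this $2$, resets, and starts blinking again. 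The outer period begins with a snapshot containing a $2$ in a cell in which $1$'s occur cofinally inside the period, so $\max^{102}_{\nu' \in \left[\mu,\nu\right[}(C_i(\nu')) = 1 \neq 2$ and the characterization of Example~\ref{ex:looping_pattern} (transposed to the order $1\succ 0\succ 2$) fails: the period is not a $\Gamma_{102}$-looping pattern, even though (\ref{eq:max_loop_condition}) holds and the limit period boundaries are $\Gamma_{210}$-ruled via looping patterns sitting strictly \emph{inside} the period and the limit clause of Definition~\ref{def:jump_operator}. The presence of such an inner pattern does not transfer the $\max^{102}$-domination property to the whole period, so the statement you rely on to get $\Gamma_{210}$-governance of every boundary is simply not available.

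The paper proves something weaker, which is all that is needed: only that the two stages $\mu$ (end of $H_0H^\om$) and $\nu$ (end of $H_0H^\om H^\om$) are ruled by $\Gamma_{210}$. The implication \say{then the repeated segment is a $\Gamma_{102}$-looping pattern} appears there only inside a reductio: if the boundary were \emph{not} $\Gamma_{210}$-ruled, then $\Gamma_{esc}$ would act as $\Gamma_{102}$ from that point onward, the looping machine would then satisfy the $\Gamma_{\sup}^{3}$-looping condition of Proposition~\ref{prop:looping_condition_n-symbols_limsup} for the order $1\succ 0\succ 2$, so $H^\om$ would be a $\Gamma_{102}$-looping pattern and the boundary would be $\Gamma_{210}$-ruled after all, a contradiction. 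Once $\Gamma_{210}$-governance of $\mu$ and $\nu$ is secured this way (and note that your second boundary $\nu$ needs the same argument, or the observation that the $\Gamma_{210}$-shape of a final segment is preserved when a prefix is prepended, rather than inheritance from the false claim), your final step --- that $\Gamma_{210}$ evaluates to the $2\succ1\succ0$ maximum over the repeating segment, which must equal the recurring snapshot --- is correct and coincides with the paper's conclusion. So the repair is to drop the assertion that the period is a $\Gamma_{102}$-looping pattern and replace it by the reductio establishing $\Gamma_{210}$-governance of the two chosen boundaries.
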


\begin{proof}
Suppose that a $\Gamma_{esc}$-machine is looping and let $H$ be the shortest repeating pattern and $\alpha$ be the stage at which it starts looping. Let $H_0$ be the history up to stage $\alpha$. Then the history reads: $H_0 H H H \ldots$. In particular, when $H$ has repeated $\om$ times, it reads $H_0 H^\om$. As it is looping, $\Gamma_{esc}(H_0 H^\om) = H[0]$. Let $\mu$ be the stage below which the history $H_0 H^\om$ spans. We show that stage $\mu$ is ruled by the operator $\Gamma_{210}$, that is that $H^\om$ is a $\Gamma_{102}$-looping pattern or limit of such looping patterns.
Suppose that $H^\om$ isn't.
Then it is easy to see that for any limit $\alpha$, $H^\alpha$ is neither a $\Gamma_{102}$-looping pattern nor a limit of looping patterns. Bu then it means that after this point, $\Gamma_{esc}$ always acts as the operator $\Gamma_{102}$. But then $H^\om$ satisfies Proposition \ref{prop:looping_condition_n-symbols_limsup} and $H^\om$ is a $\Gamma_{102}$-looping pattern, which contradict our first assumption.
Hence, and by definition of the escaping operator, $\Gamma_{esc}(H_0 H^\om) = \Gamma_{210}(H_0 H^\om)$, i.e. the stage $\mu$ below which the history$H_0 H^\om$ spans is ruled by the $\Gamma_{210}$-operator. Then, let $\nu$ be the stage corresponding to the history $H_0 H^\om H^\om$. So the segment of history delimited by $\mu$ and $\nu$ is $H^\om$ itself. With the same reasoning, stage $\nu$ is also ruled by the $\Gamma_{210}$-operator and $\Gamma_{esc}(H_0 H^\om H^\om) = H[0] = \Gamma_{esc}(H_0 H^\om)$, which means that the snapshot at stage $\mu$ and $\nu$ match. Eventually, the fact that the machine is repeating implies that $\Gamma_{esc}((H^\om)^\om) = H[0]$. But also $\Gamma_{esc}((H^\om)^\om) = \Gamma_{210}((H^\om)^\om) $ and this ensures that condition (\ref{eq:max_loop_condition}) on the repeating segment (here $H^\om$), is satisfied as well.

Conversely, suppose that some $\mu$ and $\nu$ satisfying the hypothesis are given. Let $H_0$ be the segment of history that spans between stages $0$ and $\mu$ and $H$ the segment that spans between stages $\mu$ and $\nu$. As the snapshots at stages $\mu$ and $\nu$ match and $\Gamma_{esc}$ is asymptotical, the history reads $H$ again after $H_0 H$ appeared. And this goes on for at least $\om$ repetitions of $H$. As $\nu$ is ruled by the operator $\Gamma_{210}$, so are all stages below which $H_0 H^k$ spans for a natural number $k$. Hence, as a limit of stage limit of stages ruled by $\Gamma_{210}$, the stage below which spans $H_0 H^\om$ is also ruled by $\Gamma_{210}$. Hence $\Gamma_{esc}(H_0 H^\om) = \Gamma_{210}(H_0 H^\om)$. And $\Gamma_{210}(H_0 H^\om) = \Gamma_{210}(H_0 H)$ by hypothesis (\ref{eq:max_loop_condition}). And $\Gamma_{210}(H_0 H) = \Gamma_{210}(H_0) = H[0]$ by the fact that $\mu$ and $\nu$ share the same snapshot and are both ruled by $\Gamma_{210}$. Hence the stage corresponding to $H_0 H^\om$ also shares the same snapshot and is ruled by $\Gamma_{210}$. Further, carrying this reasoning on by transfinite induction, shows that the machine is looping. 

\end{proof}

\begin{prop}\label{prop:gamma_loop_loops_at_delta}
Let $m$ be a $\Gamma_{esc}$-machine that does not halt and let $(\alpha, \beta, \delta)$ the lexicographicaly least triple of additively closed ordinals such that $\alpha < \beta < \delta$ and
\begin{align*}
L_{\alpha} \prec_{\Sigma_4} L_{\beta} \prec_{\Sigma_4} L_{\delta}
\end{align*} 
Then $m$ is seen to be looping at the latest at stage $\delta \cdot \om$.
\end{prop}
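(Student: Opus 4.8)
The plan is to transpose to $\Gamma_{esc}$ the reflection argument used in Proposition~\ref{prop:gamma_machine_loops_countable_stage} and in the final step of Theorem~\ref{thm:sigma_tau_looping_condition}, using the $\Gamma_{esc}$-looping criterion of Proposition~\ref{prop:looping_condition_gamma_loop} in place of the ordinary looping condition, and letting the third elementary level $L_\delta$ play the rôle that the absoluteness of reals (the gap) played there. Existence of the triple is the standard L\"owenheim--Skolem argument: among additively closed $\xi$, those with $L_\xi \prec_{\Sigma_4} L_\theta$ for $\theta$ tall enough to capture the relevant facts about $m$ are cofinal and closed enough to extract an increasing triple, so a lexicographically least one is available. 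Since $\Gamma_{esc}$ is $\Delta_4$-suitable, both the symbol its limit rule writes in a cell at a limit stage $\nu$ and, as a sub-computation of the same $\Delta_4$ formula, whether the rule acts at $\nu$ as $\Gamma_{102}$ or as $\Gamma_{210}$ are read off a $\Delta_4$ predicate evaluated in $L_\nu$; hence $\Sigma_4$-elementarity, which entails $\Delta_4$-elementarity, forces the snapshots of $m$ at $\alpha$, $\beta$ and $\delta$ to coincide -- call the common value $s$ -- and the three stages to carry the same ``rule status''. As usual, a snapshot occurring at a stage $\leqslant\nu$ is definable over $L_\nu$, hence lies in $L_{\nu+1}$.

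Let $\alpha_0\leqslant\alpha$ be the least stage with snapshot $s$. First, $m$ must escape: if its limit rule never acted as $\Gamma_{210}$, its computation would coincide with a non-halting run of the $\Gamma_{102}$-machine of the same code, which by Proposition~\ref{prop:gamma_machine_loops_countable_stage} applied to $\Gamma_{102}$ (a relabelling of the $3$-symbol $\limsup$ operator, hence suitable, asymptotic and looping stable) would be seen to be looping, i.e.\ by Proposition~\ref{prop:looping_condition_n-symbols_limsup} would enter and $\om$-fold repeat a $\Gamma_{102}$-looping pattern, which is exactly what triggers the escape -- a contradiction. Next, by asymptoticity of $\Gamma_{esc}$ -- which also makes the rule status asymptotic, since a final segment of $H_0 H_1^\om$ is again of the form $H_0' H_1^\om$ -- and by the additive closure of $\alpha$, $\beta$, $\delta$, the segment $[\alpha_0,\alpha[$ has order type $\alpha$ and repeats identically, snapshots and rule statuses alike, for $\om$ iterations, reaching stage $\alpha\cdot\om\leqslant\beta$; likewise $[\alpha_0,\beta[$ reaches $\beta\cdot\om\leqslant\delta$ and $[\alpha_0,\delta[$ reaches $\delta\cdot\om$.

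It then remains to produce, at a stage $\leqslant\delta\cdot\om$, the configuration of Proposition~\ref{prop:looping_condition_gamma_loop}: two stages ruled by $\Gamma_{210}$, with equal snapshot, and satisfying the maximality condition~(\ref{eq:max_loop_condition}). The snapshot produced at $\delta\cdot\om$ is a cell-wise $\max^{210}$ over the repeated segment $[\alpha_0,\delta[$, hence is $\Delta_4$-definable over $L_\delta$ from $s$ and that segment; the sentence asserting that some stage below $\delta$ carries this snapshot, at an escape checkpoint, right after an occurrence of $s$ is $\Sigma_4$ once these snapshots are taken as parameters, so -- using that the inner repetition of $[\alpha_0,\beta[$ already forces the relevant checkpoint and the corresponding snapshot to occur below $\beta$ -- it reflects from $L_\delta$ down to $L_\beta$, placing such a pair inside the repeated segment. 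Combined with the $\om$-fold repetition this is exactly the criterion of Proposition~\ref{prop:looping_condition_gamma_loop} read through the analogue of Corollary~\ref{cor:looping_condition_gamma}, so $m$ is seen to be looping by stage $\delta\cdot\om$. The main obstacle is precisely this reflection: verifying~(\ref{eq:max_loop_condition}) and arranging that both chosen stages genuinely sit at escape checkpoints (are $\Gamma_{210}$-ruled) is delicate, because $\Gamma_{esc}$ may escape repeatedly before settling into a loop, so one must track the $\Delta_4$-definable escape predicate together with its asymptoticity through the reflection, exploiting the exactness of additive closure -- so that the $\om$-fold repetitions land precisely at $\alpha\cdot\om$, $\beta\cdot\om$, $\delta\cdot\om$ -- and the minimality of the triple.
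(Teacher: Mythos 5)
Your setup (matching snapshots and rule status at $\alpha$, $\beta$, $\delta$, and the $\om$-fold repetitions reaching $\alpha\cdot\om$, $\beta\cdot\om$, $\delta\cdot\om$) agrees with the paper, but the core of the argument is missing. What must be verified is exactly what you flag as ``delicate'' and then do not carry out: that two $\Gamma_{210}$-ruled stages at or below $\delta\cdot\om$ share the same snapshot \emph{and} satisfy the maximality condition (\ref{eq:max_loop_condition}) of Proposition \ref{prop:looping_condition_gamma_loop}. Your proposed mechanism -- reflect from $L_\delta$ down to $L_\beta$ the sentence ``some stage below the top carries the snapshot of stage $\delta\cdot\om$, at an escape checkpoint, after an occurrence of $s$'' -- cannot be run as stated. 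First, that sentence is not known to hold in $L_\delta$: the only witnessing stage you have, $\delta\cdot\om$, lies above $\delta$, and unlike in Theorem \ref{thm:sigma_tau_looping_condition} (where $\alpha'$ sits above $\zG\cdot\om$) or Proposition \ref{prop:gamma_machine_loops_countable_stage} (where $\beta_3 > \beta_2\cdot\om$), there is no elementary level above $\delta\cdot\om$ here. Second, the snapshot at $\delta\cdot\om$ is only definable over $L_{\delta\cdot\om}$, hence lies in $L_{\delta\cdot\om+1}$; it is not known to belong to $L_\beta$, nor even to $L_\delta$, so it cannot serve as a parameter in a reflection between $L_\beta$ and $L_\delta$. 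In the earlier arguments this membership was supplied by an extra ingredient (Devlin's ``no new reals after $\om_1$'' in Proposition \ref{prop:gamma_machine_loops_countable_stage}, the gap of reals of length $\zG^2$ in the main theorem); no analogue is available for an arbitrary least triple $(\alpha,\beta,\delta)$, and establishing the required membership is essentially equivalent to the step you skipped. Finally, the ``analogue of Corollary \ref{cor:looping_condition_gamma}'' for $\Gamma_{esc}$ is invoked but never proved; that corollary rests on looping stability, and the criterion actually available for $\Gamma_{esc}$, namely Proposition \ref{prop:looping_condition_gamma_loop}, demands precisely the $\max^{210}$ condition you leave unchecked.

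The paper avoids all of this by comparing the two checkpoints $\beta\cdot\om$ and $\delta\cdot\om$ directly: both are shown to be ruled by $\Gamma_{210}$ (the case where $\beta$ is ruled by $\Gamma_{102}$ being excluded because, by elementarity, every symbol occurring in a cell on $[\alpha,\beta[$ already occurs cofinally below $\alpha$, so that the segment $[\alpha,\beta[$ is a $\Gamma_{102}$-looping pattern), and condition (\ref{eq:max_loop_condition}) is obtained by reflecting, for a cell $i$ and symbol $s$, the statement $\forall \nu > \alpha \ C_i(\nu) \prec_{210} s$ from $L_\beta$ up to $L_\delta$ -- its parameters $\alpha$, $i$, $s$ genuinely lie in $L_\beta$ -- which rules out any $\succ_{210}$-larger symbol appearing in a cell during $[\beta\cdot\om, \delta[$ and hence forces the snapshots at $\beta\cdot\om$ and $\delta\cdot\om$ to coincide. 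This upward reflection with small parameters is the step your sketch replaces by an unavailable downward reflection, so the proposal has a genuine gap at the heart of the proof.
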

\begin{proof}
By the e.e.e.\ hypothesis, the snapshots at stage $\alpha$, $\beta$ and $\delta$ match. Hence, by asymptoticty, the segment of history $[\alpha, \beta[$ repeats $\om$ times up to stage $\beta \cdot \om$. If at stage $\beta$ the operator $\Gamma_{esc}$ acts as $\Gamma_{210}$ then it does so at any stage $\beta \cdot k$ and also at stage $\beta \cdot \om$, as a limit of stages ruled by $\Gamma_{210}$. Else, suppose that at stage $\beta$ the operator $\Gamma_{esc}$ acts as $\Gamma_{102}$. In this case, we can show that $H_\beta$, the segment of history that spans between stages $\alpha$ and $\beta$ is a $\Gamma_{102}$-looping pattern, which contradicts this supposition. This comes from the fact that by elementarity, for any cell $i$, any symbol appearing in $i$ between stages $\alpha$ and $\beta$ also appears in $i$ cofinally in stage $\alpha$, and so that the value of cell $i$ at stages $\alpha$ and $\beta$ is at least greater (w.r.t.\ $\preceq_{210}$) that the maximum of the values appearing in cell $i$ between those stages. In definitive, $\beta \cdot \om$ must be ruled by the operator $\Gamma_{210}$

We write $s_{\beta \cdot \om}$ for the snapshot that appears at this stage. It may be different from $s_{\beta}$. However, $s_\delta = s_\beta$ and, again by asymptoticity, the segment of history $[\beta, \delta[$ starts repeating from stage $\delta$ and at least up to stage $\delta \cdot \om$. This is depicted in Figure \ref{fig:gamma_loop_machine_looping}. With the same justification, stage $\delta \cdot \om$ is ruled by the operator $\Gamma_{210}$.

We claim that stages $\beta \cdot \om$ and $\delta \cdot \om$ satisfy the looping condition of Proposition \ref{prop:looping_condition_gamma_loop}. First, as shown, at both those stages, operator $\Gamma_{esc}$ acts like $\Gamma_{210}$. Then, writing 
$H_\delta$ for the history that spans between stages $\beta$ and $\delta$, observe that $H_\beta \sqsubset H_\delta$. Hence, the repetition of $H_\delta$ encloses that of $H_\beta$. This implies that for any cell $i$:
\begin{align*}
\Gamma(H_\beta ^ \om)[i] \preceq_{210} \Gamma(H_\delta ^ \om)[i]
\end{align*}
The inequality may be strict for some cell $i$ if and only if a symbol greater (w.r.t.\ $\preceq_{210}$) than $\Gamma(H_\beta \cdot \om)[i]$ appears between stages $\beta \cdot \om$ and $\delta$ in the cell $i$. We show that this is not possible. To do this, we show that:
\begin{align*}
\max^{210}_{\nu \in \left[\alpha, \beta \right[}(C_i(\nu)) \succeq_{210} \max^{210}_{\nu \in \left[\beta \cdot \om, \delta \right[}(C_i(\nu))
\end{align*}
Suppose that this is not the case. This means that there is some symbol $s \in 3$ appearing in cell $i$ between stages $\beta \cdot \om$ and $\delta$ and greater than any symbol appearing in $i$ between stages $\alpha$ and $\beta$. The second half of the affirmation translates to:
\begin{align*}
L_\beta \models \forall \nu > \alpha \, C_i(\nu) \prec_{210} s
\end{align*}
But reflecting this affirmation up to $L_\delta$ contradicts the fact that $s$ appeared between stages $\beta \cdot \om$ and $\delta$. This shows that stages $\beta \cdot \om$ and $\delta \cdot \om$ share the same snapshot and moreover that the condition (\ref{eq:max_loop_condition}) of Proposition \ref{prop:looping_condition_gamma_loop} holds, which ends the proof.

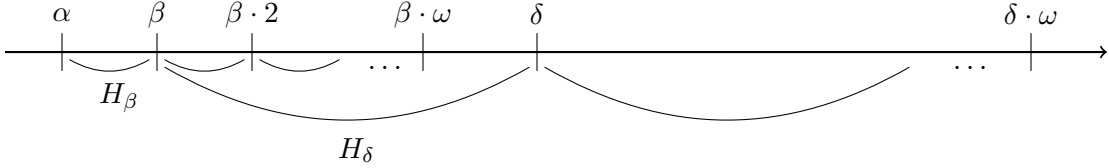
\begin{figure}[h]
\begin{center}
\hspace*{-.475in}
\begin{tikzpicture}
\draw[thick, ->]  (-0.75,0) -- (13.75,0);

\draw (0, 0.25) -- (0, -0.25);
\node at (0,0.5) {$\alpha$};

\node at (0.75, -0.6) {$H_\beta$};

\draw (1.25, 0.25) -- (1.25, -0.25);
\node at (1.25,0.5) {$\beta$};

\draw (2.5, 0.25) -- (2.5, -0.25);
\node at (2.5,0.5) {$\beta \cdot 2$};

\draw[bend right]  (0+0.1, -0.1) to (1.25-0.1, -0.1);
\draw[bend right]  (1.25+0.1, -0.1) to (2.5-0.1, -0.1);
\draw[bend right]  (2.5+0.1, -0.1) to (3.75-0.1, -0.1);

\node at (4.25,-0.2) {$\ldots$};

\draw (4.75, 0.25) -- (4.75, -0.25);
\node at (4.75,0.5) {$\beta \cdot \om$};

\node at (3.875, -1.3) {$H_\delta$};

\draw (6.25, 0.25) -- (6.25, -0.25);
\node at (6.25,0.5) {$\delta$};
\draw[bend right]  (1.25+0.1, -0.2) to (6.25-0.1, -0.2);
\draw[bend right]  (6.25+0.1, -0.2) to (11.25-0.1, -0.2);

\node at (11.95,-0.2) {$\ldots$};

\draw (12.75, 0.25) -- (12.75, -0.25);
\node at (12.75,0.5) {$\delta \cdot \om$};

\end{tikzpicture}
\caption{Computation of a $\Gamma_{esc}$-machine between stages $\alpha$ and $\delta \cdot \om$.}
\label{fig:gamma_loop_machine_looping}
\end{center}
\end{figure}
\end{proof}

We now define the $\tau$-ticking operator which will be, in definitive, the operator that interests us to prove Theorem \ref{thm:contre_exemple_machine}. However, the operator $\Gamma_{esc}$ will help us define the right $\tau$-tick operator (which depends of an ordinal $\tau$) as well as help us study its behavior. It is worth noting that the idea behind the ticking operator is similar to Habic's cardinal recognizing ITTMs, which we mentioned earlier as a model that does not satisfy the condition of looping stability.

\begin{definition}[the $3$-symbol tick machine]\label{def:tick_operator}
For $\tau$ a limit ordinal, the \emph{$\tau$-ticking operator} $\Gamma_{\tau}$ is a $3$-symbol $\limsup$ operator with the only particularity that its behavior is altered at limit stages that are multiple of $\tau$. It is called a "ticking" operator as this change of behavior can be seen as a regular tick. $\Gamma_{\tau}$-machines will naturally be able to harvest this tick and, at least, to know when their current stage is a multiple of $\tau$.

$\Gamma_{\tau}$ can be defined as a cell-by-cell operator $\gamma_\tau$ defined using $\gamma_{102}$ and $\gamma_{210}$; themselves defined as in Definition \ref{def:jump_operator}. For a limit history $h \in {}^{<On}3$:

\begin{equation*}
\gamma_\tau(h) =
    \begin{cases}
      \gamma_{210}(h) & \text{ if } |h| = \tau \cdot \alpha  \text{ for some ordinal } \alpha\\
      \gamma_{102}(h) & \text{ else}\\
    \end{cases}       
\end{equation*}

That is before stage $\tau$ and more generally at any stage that is not a multiple of $\tau$, a $\Gamma_\tau$-machine behaves like a $\Gamma_{\sup}^3$ machine with the order $1 \succ 0 \succ 2$ on its alphabet. And at stages $\tau \cdot \alpha$, a $\Gamma_\tau$-machine behaves like a $\Gamma_{\sup}^3$ machine with the order $2 \succ 1 \succ 0$ on its alphabet.
\end{definition}

\begin{definition}[uniformly characterizable ordinals]\label{def:characterizable}
An ordinal $\alpha$ is \emph{uniformly characterizable} with respect to the constructible hierarchy if there is a sentence $\psi_\alpha$ such that for all real number $y$:
\begin{align*}
\begin{cases}
L_\alpha[y] \models \psi_\alpha \\
\forall\beta < \alpha \, L_\beta[y] \not\models \psi_a
\end{cases}
\end{align*}
It is $\Sigma_n$-\emph{uniformly characterizable} when $\psi_\alpha$ is $\Sigma_n$.
\end{definition}

\begin{prop}\label{prop:big_charac_ordinal}
There exists an additively closed $\Pi_2$-uniformly characterizable ordinal $\chi$ strictly greater than $\Sigma_{esc}$ and $\Tau_{esc}$, which are respectively the supremum of the ordinals accidentally written by a $\Gamma_{esc}$-machine and the supremum of the ordinals a.c.\ by a $\Gamma_{esc}$-machine.
\end{prop}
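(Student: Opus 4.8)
The plan is in two parts: first bound $\Sigma_{esc}$ and $\Tau_{esc}$ by an ordinal definable over $L$ without parameters, then exhibit a $\Pi_2$-uniformly characterizable additively closed ordinal above that bound. For the bound, let $\delta$ be the top of the lexicographically least triple $\alpha<\beta<\delta$ of additively closed ordinals with $L_\alpha\prec_{\Sigma_4}L_\beta\prec_{\Sigma_4}L_\delta$. By Proposition~\ref{prop:gamma_loop_loops_at_delta} every non-halting $\Gamma_{esc}$-machine is seen to be looping by stage $\delta\cdot\om$, hence produces no new real after that stage; and no $\Gamma_{esc}$-machine halts later than $\delta\cdot\om$ either, since if $m$ halted at $\theta>\delta\cdot\om$ then the machine $m'$ obtained from $m$ by suppressing the transition into $q_{halt}$ is non-halting and, by Proposition~\ref{prop:gamma_loop_loops_at_delta}, meets the looping criterion of Proposition~\ref{prop:looping_condition_gamma_loop} with witnessing stages $<\delta\cdot\om<\theta$ — and since $m$ and $m'$ agree below $\theta$, the same criterion is met by $m$, contradicting that $m$ halts at $\theta$. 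Together with the standard observation that a $\Gamma_{esc}$-computation produces, by stage $\nu$, only reals coding ordinals below the next admissible ordinal above $\nu$, this yields $\Sigma_{esc},\Tau_{esc}\le\xi_0$, where $\xi_0$ is the least admissible ordinal above $\delta\cdot\om$; like $\delta$, this $\xi_0$ is a countable ordinal definable over $L$ without parameters.

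For the characterizable ordinal above $\xi_0$, take $\chi$ to be the least additively closed ordinal $\alpha$ with $L_\alpha\models\psi$, where $\psi$ is the conjunction of the sentence ``$\forall\beta\,\forall\gamma\,\exists\eta\,(\eta=\beta+\gamma)$'' — which is genuinely $\Pi_2$ and asserts exactly that the height of the model is additively closed — with the sentence ``there is a transitive model $m$ of $\mathrm{KP}+(V=L)$ which is a limit of admissible ordinals and in which it holds that there are additively closed $a<b<c$ with $L_a\prec_{\Sigma_4}L_b\prec_{\Sigma_4}L_c$''. The decisive point for the complexity is that in the second conjunct the genuinely universal content of ``$L_a\prec_{\Sigma_4}L_b$'' (a quantification over codes of $\Sigma_4$-formulas) is absorbed into the satisfaction relation of the set $m$, and satisfaction in a set is $\Delta_1$; thus that conjunct has the form $\exists m\,(\Delta_1)$, i.e.\ $\Sigma_1$, and a conjunction of a $\Pi_2$ sentence with a $\Sigma_1$ sentence is equivalent to a $\Pi_2$ sentence, so $\psi$ is $\Pi_2$.

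It remains to check three things. That $\psi$ characterizes $\chi$: let $\mu$ be the least recursively inaccessible ordinal such that $L_\mu$ contains a $\Sigma_4$-e.e.e.\ triple of additively closed ordinals together with the set-sized, $\Delta_1$-definable satisfaction relations certifying it; such $\mu$ exists, and carrying a triple it satisfies $\mu>\delta$. The second conjunct of $\psi$ is $\Sigma_1$, hence upward persistent, and first becomes true just above $\mu$, so the least additively closed level at which both conjuncts hold is the least additively closed ordinal $>\mu$, which is $\chi$; there $L_\chi$ contains $L_\mu$ with the relevant satisfaction relations and is additively closed, so $L_\chi\models\psi$, whereas for $\beta<\chi$ either $\beta$ is not additively closed or $L_\beta$ contains no such model, so $L_\beta\not\models\psi$. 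That $\chi>\xi_0$: being a limit of admissibles above $\delta$, $\mu$ has an admissible ordinal strictly between $\delta\cdot\om$ and itself, so the least admissible above $\delta\cdot\om$, namely $\xi_0$, satisfies $\xi_0<\mu<\chi$, whence $\chi>\Sigma_{esc}$ and $\chi>\Tau_{esc}$. That $\chi$ is oracle-independent: $\psi$ speaks only of the constructible hierarchy and of satisfaction in constructible set levels, both absolute between $L$ and $L[y]$ for $y$ a real, so $L_\chi[y]\models\psi$ and $L_\beta[y]\not\models\psi$ for every $\beta<\chi$ and every real $y$; and $\delta$, hence $\xi_0$, is likewise unaffected by adjoining a real, since the $\Gamma_{esc}$-computations involved run from the empty input and the elementarity condition defining $\delta$ concerns set-sized constructible levels — one may invoke here the oracle-relativised form of the Löwenheim--Skolem argument used in Proposition~\ref{prop:gamma_machine_loops_countable_stage}. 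Thus $\chi$ is additively closed, $\Pi_2$-uniformly characterizable in the sense of Definition~\ref{def:characterizable}, and strictly greater than $\Sigma_{esc}$ and $\Tau_{esc}$.

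The main obstacle is the complexity bookkeeping just above. The ordinals one would naively reach for in order to overshoot $\delta\cdot\om$ — least $\Sigma_n$-stable ordinals, or least levels modelling fragments of $\mathrm{ZF}^-$ — have characterizations well above $\Pi_2$, so the move of quantifying over a \emph{model} of $\mathrm{KP}$ that sees the $\Sigma_4$-elementary triple, rather than over the triple itself, is what pushes the one genuinely high-complexity ingredient inside a $\Delta_1$ set-satisfaction predicate and leaves only the single $\forall\forall\exists$ alternation that additive closedness forces. The other delicate point, which I expect to need the most care, is the uniformity clause of Definition~\ref{def:characterizable}: one must ensure that neither $\delta$ — defined via an elementarity condition a new real might conceivably perturb — nor the witnessing ordinal $\mu$ moves on passing from $L$ to $L[y]$, and it is here that absoluteness of the constructible hierarchy and of set-satisfaction, together with the oracle-freeness of the defining computations, must be brought in.
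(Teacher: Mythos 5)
Your proposal follows essentially the same route as the paper: bound $\Sigma_{esc}$ and $\Tau_{esc}$ using the $\delta$ of Proposition~\ref{prop:gamma_loop_loops_at_delta} together with the next admissible ordinal, and then characterize $\chi$ by a sentence whose only genuine quantifier alternation is the $\Pi_2$ additive-closedness clause, the $\Sigma_4$-elementarity being absorbed into ($\Delta_1$) satisfaction over set-sized levels — exactly the paper's bookkeeping, merely packaged inside a transitive $\mathrm{KP}+V{=}L$ model rather than asserting the triple and an admissible above it directly. One step deserves tightening: the assertion that $\mu>\delta$ ``since $L_\mu$ carries a triple'' tacitly assumes that every $\Sigma_4$-e.e.e.\ triple of additively closed ordinals has top at least $\delta$, i.e.\ that the lexicographically least triple also has the least top, which is not justified. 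This is harmless because the proof of Proposition~\ref{prop:gamma_loop_loops_at_delta} never uses lexicographic minimality: applying it to the triple contained in $L_\mu$ bounds $\Tau_{esc}$ by (that triple's top)$\,\cdot\,\om<\mu$ and $\Sigma_{esc}$ by an admissible below $\mu$, giving $\Sigma_{esc},\Tau_{esc}<\mu<\chi$ directly and bypassing the comparison of $\mu$ with $\delta$ altogether; your extra care about halting machines and about oracle-independence of the characterization is sound and goes slightly beyond what the paper records.
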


\begin{proof}
As $\Gamma_{esc}$ is not cell-by-cell, it is not simulational and we can't apply Proposition \ref{prop:write_inf_clock}. However, we know that every accidentally writable real number is accidentally writable before stage $\Tau_{esc}$ and that $\Tau_{esc} < \delta$ with $\delta$ defined in Proposition \ref{prop:gamma_loop_loops_at_delta}. So, for any $\Gamma_{esc}$-a.w.\ ordinal $\alpha$, a code for $\alpha$ is in $L_\delta$ and $\alpha$ is in $L_{\delta^+}$ where $\delta^+$ is the least admissible ordinal greater than $\delta$. Hence, we also know that $\Sigma_{esc} \leqslant \delta^+$.

Now let $\chi$ be the least ordinal that witnesses the existence of $\alpha$, $\beta$, $\delta$ and $\delta^+$ such that the following is true in $L_\chi$:
\begin{align*}
\begin{cases}
L_{\alpha} \prec_{\Sigma_4} L_{\beta} \prec_{\Sigma_4} L_{\delta} \\
\delta^+ \geqslant \delta \\
 L_{\delta^+} \models \mathrm{KP}  \\
\forall \mu, \nu \, \exists \xi \, (\xi = \alpha + \beta)
\end{cases}
\end{align*}
This description yields a uniform (as it only quantifies over ordinals or set of the form $L_\alpha$) characterization of $\chi$ strictly greater than $\Sigma_{esc}$ and $\Tau_{esc}$. (And note that because of Induction, $\mathrm{KP}$ is not finitely axiomatizable but this is not an issue when dealing with structures that already satisfy Foundation.)
\end{proof}

\begin{prop}\label{prop:multiplicity_sc_ordinals}
Given a $\Sigma_n$-uniformly characterizable ordinal $\alpha$, there exists a $\Sigma_m$ sentence $M_\alpha$ with $m = \max(n, 3)$ and such that for all ordinal $\nu$:
\begin{align*}
\exists \mu > 0 \, (\nu = \alpha \cdot \mu) \longleftrightarrow L_\nu[y] \models M_\alpha
\end{align*}
\end{prop}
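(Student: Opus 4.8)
The plan is to characterise, from inside $L_\nu[y]$, the property that the ordinal height $\nu$ is a positive multiple of $\alpha$. Two ingredients are needed. First, by uniform characterisability, the ordinal $\alpha$ is recoverable inside any $L_\nu[y]$ with $\nu>\alpha$ as \emph{the least ordinal $a$ such that $L_a[y]\models\psi_\alpha$}, and crucially this least ordinal is $\alpha$ no matter what $y$ is, so the truth value we obtain will depend only on $\nu$; moreover the degenerate case $\nu=\alpha$ is detected internally by the statement that $\psi_\alpha$ holds at the top level while no proper initial level satisfies it. Second, once $\alpha$ is available as an element $a$, I would express ``the ordinal height is a positive multiple of $a$'' as an internal sentence, splitting into three cases according to the shape of $\mu$ in $\nu=\alpha\cdot\mu$: the case $\mu=1$ (height equals $\alpha$), the case $\mu$ a successor $\geq 2$ (height $=\beta+\alpha$ for some positive multiple $\beta$ of $\alpha$), and the case $\mu$ a limit (the multiples of $\alpha$ are cofinal in the height, using that the class of multiples of $\alpha$ is closed).

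Concretely, I would take $M_\alpha := (A\wedge B)\vee C$, where $A$ says ``$\psi_\alpha$ holds, or $L_a[y]\models\psi_\alpha$ for some ordinal $a$'' (equivalently, by uniform characterisability, ``the height is at least $\alpha$''); $B$ says ``no ordinal $a$ satisfies $L_a[y]\models\psi_\alpha$''; and $C$ says ``there is an ordinal $a$ which is the least with $L_a[y]\models\psi_\alpha$, and the ordinals form a positive multiple of $a$'', the last clause being unfolded as the disjunction of ``the multiples of $a$ are cofinal in the ordinals'' and ``there is an ordinal $\beta$ which is a positive multiple of $a$ and such that the ordinals strictly above $\beta$ have order type exactly $a$''. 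Here ``$x$ is a multiple of $a$'' abbreviates $\exists\mu\,(a\cdot\mu=x)$, and ``the ordinals above $\beta$ have order type exactly $a$'' abbreviates the conjunction ``every ordinal $\xi\geq\beta$ lies within distance $<a$ of $\beta$'' and ``every distance $<a$ from $\beta$ is realised below the height'', all phrased using ordinal addition and multiplication.

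For correctness I would argue by cases on $\nu$. If $\nu<\alpha$, then $A$ fails ($L_\nu[y]\not\models\psi_\alpha$ and $L_a[y]\not\models\psi_\alpha$ for all $a<\nu$, by uniform characterisability), and indeed $\nu$ is not a positive multiple of $\alpha$ since already $\alpha\cdot 1=\alpha>\nu$. If $\nu=\alpha$, then $A\wedge B$ holds: $L_\nu[y]=L_\alpha[y]\models\psi_\alpha$, and every $a<\nu=\alpha$ has $L_a[y]\not\models\psi_\alpha$; and $\nu=\alpha\cdot 1$ is a positive multiple. If $\nu>\alpha$, then $B$ fails (witnessed by $a=\alpha$) while $A$ holds, so $M_\alpha$ reduces to $C$; the first two conjuncts of $C$ force $a=\alpha$ (this is the point where uniform characterisability is used, together with $\alpha<\nu$), and the final clause holds exactly when $\nu$ is a positive multiple of $\alpha$: the cofinal disjunct catches the case $\mu$ limit (by closure of the class of multiples of $\alpha$ under suprema, equivalently continuity of ordinal multiplication in its second argument), and the ``$\beta+\alpha$'' disjunct catches the case $\mu=\mu'+1\geq 2$ with $\beta=\alpha\cdot\mu'$. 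Conversely both disjuncts plainly force $\nu$ to be a positive multiple of $\alpha$.

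The complexity count is where the care goes, and I expect it to be the main obstacle. The statements ``$L_a[y]\models\psi_\alpha$'' are $\Sigma_n$ uniformly in the ordinal $a$ (hence their negations, even under a bounded universal quantifier, are $\Pi_n$), and $\psi_\alpha$ itself is $\Sigma_n$; the purely arithmetical parts (cofinality of the multiples of a given ordinal, and ``the ordinals above $\beta$ have order type exactly $a$'') cost at most three alternations of ordinal quantifiers over a $\Delta_1$ matrix for ordinal addition and multiplication, even without assuming $L_\nu$ admissible, so they are $\Sigma_3$. Putting $M_\alpha=(A\wedge B)\vee C$ into prenex normal form and arranging the leastness clause of $C$ so that its extra alternation is absorbed into the $\Sigma_3$ arithmetical block rather than stacking on top of the $\psi_\alpha$-block, one obtains a $\Sigma_{\max(n,3)}$ sentence. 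This last syntactic bookkeeping is of the same nature as in the proof of Proposition~\ref{prop:lim_sup_suitable}, and a fully detailed version can be given along the lines of the corresponding argument in \cite{these}.
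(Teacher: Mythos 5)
Your construction of $M_\alpha$ and the verification of the equivalence are essentially the paper's: the first disjunct isolates the case $\nu=\alpha$ via leastness of the level satisfying $\psi_\alpha$, and the second recovers $\alpha$ inside $L_\nu[y]$ as the least $a$ with $L_a[y]\models\psi_\alpha$ and then asserts internally that the ordinal height is a positive multiple of $a$ (your cofinal-or-successor formulation is a harmless variant of the paper's closure condition $\forall\mu\,(\exists\gamma\,(\gamma=a\cdot\mu)\rightarrow\forall\delta<a\,\exists\gamma\,(\gamma=a\cdot\mu+\delta))$). The gap is in the complexity count, which is the actual content of the $\max(n,3)$ bound. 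You estimate the statements $L_a[y]\models\psi_\alpha$ as $\Sigma_n$ and their negations as $\Pi_n$, and from those premises the claimed bound does not follow: your clause $B$ is then $\Pi_n$, so $A\wedge B$ contains the conjunction of the top-level $\Sigma_n$ sentence $\psi_\alpha$ with a $\Pi_n$ sentence, which is not $\Sigma_{\max(n,3)}$ in general; likewise the leastness clause of $C$ places a $\Pi_n$ conjunct under the existential $\exists a$, so prenexing gives only $\Sigma_{n+1}$ once $n\geqslant 3$ (already for $n=3$, the case relevant to the later application to the tick operator, your $M_\alpha$ comes out $\Sigma_4$ rather than $\Sigma_3$). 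No rearrangement can absorb a genuine $\Pi_n$ alternation into a $\Sigma_3$ arithmetical block, so the final sentence of your complexity paragraph is not justified.

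The missing ingredient, stated explicitly in the paper's proof, is that for a set-sized level the predicate $L_a[y]\models\psi_\alpha$ is uniformly $\Sigma_1$ (essentially $\Delta_1$), independently of $n$: one asserts the existence of the set $L_a[y]$ together with a satisfaction relation for it, or equivalently relativizes $\psi_\alpha$ to the set $L_a[y]$, which bounds all of its quantifiers. With that estimate, $B$ is $\Pi_1$, the leastness clause in $C$ is $\Sigma_1\wedge\Pi_1$, the arithmetical parts are $\Sigma_3$ as you say, and the only place where $n$ enters is the unrelativized occurrence of $\psi_\alpha$ (about $L_\nu[y]$ itself, where the trick is unavailable); everything then prenexes to $\Sigma_{\max(n,3)}$. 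So your argument becomes correct once this standard fact replaces your $\Sigma_n$ estimate, but as written the bookkeeping step that the proposition is really about does not go through.
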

\begin{proof}
Let $\psi_\alpha$ as defined in Definition \ref{def:characterizable} and consider the following formula.
\begin{align*}
M_\alpha = (\psi_\alpha \wedge \forall \beta \, L_\beta \not\models \psi_\alpha)  \vee  \exists \alpha_0 \, & \left[ L_{\alpha_0} \models \psi_\alpha \wedge \forall \beta < \alpha_0 \ L_\beta \not\models \psi_\alpha \right.  \\ 
&  \left. \wedge \forall \mu \, (\exists \gamma \, (\gamma = \alpha_0 \cdot \mu) \implies \forall \delta < \alpha_0 \ \exists \gamma \, (\gamma = \alpha_0 \cdot \mu + \delta)) \right]
\end{align*}
Then $L_\nu \models M_\alpha$ if and only if: either $\nu = \alpha$, in which case $L_\nu \models \psi_{\alpha}$ and $\nu$ is the least such, or $\nu > \alpha$ and for all $\alpha \cdot \mu < \nu$, we also have $\alpha \cdot \mu + \delta < \nu$ for any $\delta < \alpha$. Which is equivalent to saying that $\nu$ is a multiple of $\alpha$. Having in mind that, given $\beta$, the predicate $L_\beta \models \psi_\alpha$ is $\Sigma_1$, it is clear that $M_\alpha$ is $\Sigma_m$ with $m = \max(n, 3)$.
\end{proof}

\begin{prop}
For $\tau$ an additively closed and $\Sigma_3$-characterizable limit ordinal, the $\tau$-tick operator $\Gamma^\tau_{tick}$ is a $\Delta_4$-suitable and simulational operator. If $\tau$ is greater than $\Sigma_{\sup}$ this operator does not satisfy the looping stability condition.
\end{prop}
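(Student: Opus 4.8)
\emph{$\Delta_4$-suitability.} The plan is to establish the three claims in turn --- $\Delta_4$-suitability, then the four conditions making $\Gamma_\tau$ simulational, then the failure of looping stability --- reusing the machinery already set up for $\Gamma_{\sup}^n$ and for $\Gamma_{esc}$. For suitability, following the template of Proposition \ref{prop:lim_sup_suitable} and of the construction of $\varphi_{esc}$ in \cite[Proposition 5.5.9]{these}, I would exhibit a single set-theoretic formula
\[
\varphi_\tau \;:=\; \bigl(M_\tau \wedge \psi_{210}(i,m,y,k)\bigr)\ \vee\ \bigl(\neg M_\tau \wedge \psi_{102}(i,m,y,k)\bigr),
\]
where $\psi_{102}$ and $\psi_{210}$ are the (low-complexity, at most $\Delta_3$) formulas defining the $3$-symbol $\limsup$-type operators $\gamma_{102}$ and $\gamma_{210}$ of Definition \ref{def:jump_operator}, obtained exactly as the usual $\limsup$ is in Proposition \ref{prop:lim_sup_suitable}, and where $M_\tau$ is the $\Sigma_3$ sentence provided by Proposition \ref{prop:multiplicity_sc_ordinals} (applied to the $\Sigma_3$-characterizable $\tau$, so that $m=\max(3,3)=3$), characterising ``$\nu$ is a nonzero multiple of $\tau$'' inside $L_\nu[y]$. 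Since $M_\tau$ is $\Sigma_3$ and $\neg M_\tau$ is $\Pi_3$, this Boolean combination is equivalent to both a $\Sigma_4$ and a $\Pi_4$ formula, hence is $\Delta_4$; the usual circularity (the history fed to $\varphi_\tau$ is itself built using $\varphi_\tau$) is handled by the $\Sigma$-recursion argument of Proposition \ref{prop:lim_sup_suitable}.

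\emph{The easy simulational conditions.} By construction $\Gamma_\tau$ is cell-by-cell with cell restriction $\gamma_\tau$, so by Remark \ref{rmk:refining_with_cell_by_cell} I only need stability, asymptoticity and contraction-proofness of $\gamma_\tau$ on cell histories. Stability is immediate, since on a constant history $s^\alpha$ both $\gamma_{102}$ and $\gamma_{210}$ return $s$. Asymptoticity rests on the ordinal-arithmetic lemma that if $\nu=\beta+\delta$ with $\delta>0$, then $\nu$ is a multiple of $\tau$ iff $\delta$ is: one writes $\beta=\tau\cdot\mu_0+\beta_0$ with $\beta_0<\tau$, left-cancels, and uses that every nonzero multiple of $\tau$ absorbs $\beta_0$ on the left because $\tau$ is additively closed. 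Hence a nonempty final segment of a cell history has length a multiple of $\tau$ exactly when the whole history does, so the same one of $\gamma_{102},\gamma_{210}$ governs the history and all its final segments, and each of these is asymptotic (same proof as for $\Gamma_{\sup}^n$); so is $\gamma_\tau$.

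\emph{Contraction-proofness --- the main obstacle.} One must show $\gamma_\tau(h)=\gamma_\tau(h')$ whenever $h\simeq_{ctr} h'$ are limit cell histories. A limit word and its contraction have the same set $S$ of cofinally occurring symbols, and $\gamma_{102}$ and $\gamma_{210}$ disagree on a history only when $2\in S$ and $|S|\ge 2$; the whole difficulty therefore concentrates on showing that, for histories with such a cofinal set, $|h|$ and $|h'|$ fall on the same side of the ``multiple of $\tau$'' dichotomy, so that a single one of the two $\limsup$-type operators --- each contraction-proof like $\Gamma_{\sup}^n$ --- governs both. I expect this verification, which must pin down how contraction and dilation can move the length across a multiple of $\tau$ and must exploit the additive (indeed full multiplicative) closure of $\tau$, to be the delicate and error-prone core of the proof.

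\emph{Failure of looping stability for $\tau>\Sigma_{\sup}$.} When $\tau>\Sigma_{\sup}$ the additively closed $\tau$ exceeds $\omega^\omega$, so writing $\tau=\omega^\delta$ we get $\delta>\omega$; hence $\omega^2<\tau$ is not a multiple of $\tau$ while $\omega\cdot\tau=\omega^{1+\delta}=\omega^\delta=\tau$ is. Take a limit history $H$ of length $\omega$ whose cell $0$ reads the alternating word $(20)^\omega$ (and is arbitrary elsewhere). In $\Gamma_\tau(H^\omega)$ the cell-$0$ history is $(20)^{\omega^2}$, of length $\omega^2$, not a multiple of $\tau$, so $\gamma_\tau$ acts as $\gamma_{102}$ and returns $0$ (the cofinal set being $\{0,2\}$, on which $0$ outranks $2$ for $\gamma_{102}$); in $\Gamma_\tau(H^\tau)$ the cell-$0$ history is $(20)^{\omega\cdot\tau}$, of length $\tau$, a multiple of $\tau$, so $\gamma_\tau$ acts as $\gamma_{210}$ and returns $2$. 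Thus $\Gamma_\tau(H^\tau)\neq\Gamma_\tau(H^\omega)$, contradicting Definition \ref{def:looping_stability} (the hypothesis $\tau>\Sigma_{\sup}$ being needed not here but for the downstream conclusion $\Sigma_{\Gamma_\tau}<\Tau_{\Gamma_\tau}$ of Theorem \ref{thm:contre_exemple_machine}, where the tick must fall past every accidentally writable stage).
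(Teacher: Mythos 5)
Your handling of $\Delta_4$-suitability (combining $M_\tau$ from Proposition \ref{prop:multiplicity_sc_ordinals} with the defining formulas of $\gamma_{102}$ and $\gamma_{210}$), of stability, of the cell-by-cell property, and of asymptoticity via the additive-closure cancellation lemma agrees with what the paper does, and your refutation of looping stability is a correct, purely operator-level counterexample ($\Gamma_\tau(H^{\om})\neq\Gamma_\tau(H^{\tau})$ for $H$ whose cell $0$ reads $(20)^{\om}$); the paper instead argues through a machine that detects ticking stages and exits a long repetition, but your word-level argument is valid and arguably cleaner.

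The genuine gap is contraction-proofness, which you explicitly leave unproven, and the route you sketch for it would fail. You propose to show that two $\simeq_{ctr}$-equivalent limit cell histories whose cofinal symbol set contains $2$ together with another symbol must have lengths on the same side of the \say{multiple of $\tau$} dichotomy. That is not true: for the intended $\tau$ (the countable $\chi$ of Proposition \ref{prop:big_charac_ordinal}, and generally whenever $\operatorname{cf}(\tau)=\om$) write $\tau=\sup_k(\ell_0+\dots+\ell_k)$ with $\ell_k$ increasing, and let $h'$ be the word of length $\tau$ made of $\om$ alternating blocks of $2$'s and $0$'s of lengths $\ell_0,\ell_1,\dots$; then $\operatorname{ctr}(h')=(20)^{\om}=\operatorname{ctr}\bigl((20)^{\om}\bigr)$, yet $|(20)^{\om}|=\om$ is not a multiple of $\tau$ while $|h'|=\tau$ is, so $\gamma_\tau\bigl((20)^{\om}\bigr)=\gamma_{102}\bigl((20)^{\om}\bigr)=0$ whereas $\gamma_\tau(h')=\gamma_{210}(h')=2$. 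So dilation can push a history across a tick, the dichotomy is not $\simeq_{ctr}$-invariant, and the condition cannot be reduced, as you hope, to contraction-proofness of the two constituent $\limsup$ operators; some genuinely different argument (or a weakened, simulation-oriented form of the contraction-proof condition) is required. The paper itself dispatches this point in one line as \say{easily seen}, deferring details to the thesis, so it gives you nothing to lean on here: as written, your proposal does not establish the simulational part of the proposition, and your own suspicion that this is the delicate core is well founded.
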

\begin{proof}
As it is built from two $\limsup$ operators, the $\tau$-tick operator is easily seen to be stable, cell-by-cell and contraction-proof for any limit ordinal $\tau$. It is only asymptotical when $\tau$ is additively closed as otherwise, taking a final segment of an history could displace the position of the next tick, that is of the next stage multiple of $\tau$.

Then, observe that it is easy for a machine to detect the "ticking" stages (i.e. stages multiple of $\tau$): it can simply have a cell whose value regularly alternates between $1$ and $2$ and it will read $2$ at a limit stage if and only if this stage is a ticking stage. From there, if $\tau > \Sigma_{\sup}$, we can design a machine that waits for stage $\tau $ to exit a repetition that repeated for more than $\om$ times, which falsifies the looping stability condition.

As for it being $\Delta_4$-suitable, the definition again provides the skeleton to define a formula and Proposition \ref{prop:multiplicity_sc_ordinals} ensures that with can distinguish with a formula of bounded complexity whether the limit stage is a multiple of $\nu$. For a complete proof, the reader is referred to \cite[Proposition 5.5.16]{these}.

\end{proof}

\begin{proof}[Proof of Theorem \ref{thm:contre_exemple_machine}]

We claim that for a great enough $\tau$, any $\tau$-tick machine will behave like the escaping machine having the same code with the only difference that it will be considerably \say{slower} as it will often be repeating for considerable amount of time (namely $\tau$ steps) before exiting the repetition and acting again like its escaping counterpart. Hence, it will still produce the same accidentally writable ordinals. More precisely, for $\tau = \chi$, the admissible characterizable ordinal defined in Proposition \ref{prop:big_charac_ordinal} such that both $\tau > \Sigma_{esc}$ and $\tau > \Tau_{esc}$, we will show that we have $\Tau_{\tau} > \tau$ and $\Sigma_{esc} = \Sigma_{\tau}$. This implies that:
\begin{align*}
\Sigma_{\tau} = \Sigma_{esc} < \tau < \Tau_{\tau}
\end{align*}
which proves the theorem.
We now need to show that both $\Sigma_{esc} = \Sigma_{\tau}$ and $\Tau_{\tau} > \tau$ hold. Observe that the second equality is immediate as $\tau$ is clearly clockable by a $\tau$-tick machine. For the first one, we make the following claim.

\begin{claim}
Let $m$ be a machine code, $m_{esc}$ the $\Gamma_{esc}$-machine with the code $m$ and ruled by the escaping operator and $m_{\tau}$ the $\Gamma_\tau$-machine with the code $m$ and ruled by the the $\tau$-tick operator. We write $(\alpha_\nu)$ for the limit stages of $m_{esc}$ that are ruled by $\Gamma_{210}$ (that is stages where some looping pattern $H_1$ has been repeating for $\om$ limit times, or limit of those) and $(\beta_\nu)$ the limit stages of $m_{\tau}$ that also are ruled by $\Gamma_{210}$ (that is stages multiple of $\tau$). Then, we claim that:
\begin{itemize}
\item For any ordinal $\nu$, the snapshots at stage $\alpha_\nu$ in $m_{esc}$ and $\beta_\nu$ in $m_\tau$ match.

\item For any ordinal $\nu$, we can write $H_0 \cdot H_1^\om$ for the history of $m_{esc}$ up to stage $\alpha_{\nu+1}$ and the history of $m_{\tau}$ up to stage $\beta_{\nu+1}$ reads $H_0 \cdot H_1^\tau$.

\end{itemize}
\end{claim}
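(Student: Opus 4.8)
The plan is to prove both items of the Claim simultaneously by transfinite induction on $\nu$, treating the correspondence of history words as the invariant that propagates and reading off the agreement of snapshots at the $\Gamma_{210}$-stages as a by-product. The one fact that makes the two computations track each other is the following consequence of Proposition~\ref{prop:looping_condition_n-symbols_limsup} and Example~\ref{ex:looping_pattern}: if $H_1$ is a $\Gamma_{102}$-looping pattern and $K$ is any word, then for all limit ordinals $\rho,\rho'$
\begin{align*}
\Gamma_{102}(K H_1^{\rho}) = \Gamma_{102}(K H_1^{\rho'}) \qquad\text{and}\qquad \Gamma_{210}(K H_1^{\rho}) = \Gamma_{210}(K H_1^{\rho'}),
\end{align*}
since in each case the value at a cell $i$ is the maximum, for the order $1\succ 0\succ 2$ resp.\ $2\succ 1\succ 0$, of the symbols occurring at cell $i$ inside one period $H_1$: every such symbol recurs in each copy of $H_1$, hence is cofinal in $H_1^{\rho}$ for every limit $\rho$. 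In words: once a $\Gamma_{102}$-looping pattern has been entered, the number of iterations performed before the next $\Gamma_{210}$-stage does not affect the snapshot produced there. This is exactly the slack that lets $m_\tau$ replace each $\omega$-block of $m_{esc}$ by a $\tau$-block without disturbing what happens at the ticks.

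For the inductive step, suppose the snapshots of $m_{esc}$ at $\alpha_\nu$ and of $m_\tau$ at $\beta_\nu$ coincide; call the common snapshot $s$ (for $\nu=0$ this is the common initial snapshot, covered uniformly by setting $\alpha_{-1}=\beta_{-1}=0$). Strictly between $\alpha_\nu$ and the next $\Gamma_{210}$-stage of $m_{esc}$ every limit stage is ruled by $\Gamma_{102}$ by definition of the sequence $(\alpha_\nu)$, and strictly between $\beta_\nu$ and $\beta_\nu+\tau$ every limit stage of $m_\tau$ is a non-multiple of $\tau$ — here we use that $\tau$ is additively closed, so that $\beta_\nu$ and $\beta_\nu+\tau$ are consecutive multiples of $\tau$ — hence is also ruled by $\Gamma_{102}$. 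So from $s$ both machines run exactly the same computation, that of a pure $\Gamma_{\sup}^3$-machine with priorities $1\succ 0\succ 2$ started in $s$, until they first meet a $\Gamma_{210}$-stage. If this $\Gamma_{102}$-computation halts, both machines halt at the same stage and the sequences $(\alpha_\nu),(\beta_\nu)$ terminate, so the Claim is vacuous beyond that point; otherwise it is eventually periodic (a non-halting $\Gamma_{\sup}^3$-machine loops, via the simulation of Proposition~\ref{prop:n-symbols_limsup} and the looping theorem for $\limsup$-machines): there are a pre-loop segment $K$ and a minimal $\Gamma_{102}$-looping pattern $H_1$ with computation $K H_1 H_1 H_1 \cdots$. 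For $m_{esc}$ the first $\Gamma_{210}$-stage is reached after exactly $\omega$ iterations of $H_1$, because $K H_1^{\omega}$ is the first prefix of this word of the escape form of Definition~\ref{def:jump_operator} (here the minimality, i.e.\ non-periodicity, of $H_1$ matters); so the piece of the history of $m_{esc}$ between $\alpha_\nu$ and $\alpha_{\nu+1}$ is $K H_1^{\omega}$. For $m_\tau$ the first $\Gamma_{210}$-stage after $\beta_\nu$ is the next multiple of $\tau$, and since $\beta_\nu$ is a multiple of $\tau$, $|K|,|H_1|<\tau$ (see below) and $\tau$ is additively and multiplicatively closed, the stage $\beta_\nu+|K|+|H_1|\cdot\rho$ first becomes a multiple of $\tau$ precisely at $\rho=\tau$; so $\beta_{\nu+1}=\beta_\nu+\tau$ and the corresponding piece of the history of $m_\tau$ is $K H_1^{\tau}$. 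This is the second item at stage $\nu+1$, and the first then follows from the displayed identities, which give $\Gamma_{210}(K H_1^{\omega})=\Gamma_{210}(K H_1^{\tau})$. At a limit index $\nu$ one has $\alpha_\nu=\sup_{\mu<\nu}\alpha_\mu$ and $\beta_\nu=\sup_{\mu<\nu}\beta_\mu$, again $\Gamma_{210}$-stages — by the "limit of looping patterns" clause of Definition~\ref{def:jump_operator} on one side and because a supremum of multiples of $\tau$ is a multiple of $\tau$ on the other — and by the inductive hypothesis each $\omega$-block of $m_{esc}$ below $\alpha_\nu$ matches a $\tau$-block of $m_\tau$ below $\beta_\nu$ with the same period, so the symbols cofinal at each cell up to $\alpha_\nu$ and up to $\beta_\nu$ coincide and $\Gamma_{210}$ delivers matching snapshots once more.

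It remains to justify $|K|,|H_1|<\tau$, which is where the choice $\tau=\chi$ from Proposition~\ref{prop:big_charac_ordinal} is used. By Proposition~\ref{prop:gamma_loop_loops_at_delta}, $m_{esc}$ is seen to be looping at the latest at stage $\delta\cdot\omega$, where $\delta$ is the ordinal of that proposition; by construction $\delta<\delta^+<\chi=\tau$, and since $\delta^+$ is admissible, $\delta\cdot\omega<\delta^+<\tau$. Hence every genuine escape of $m_{esc}$, together with every pre-loop segment $K$ and period $H_1$ arising along the way, occurs below $\delta\cdot\omega<\tau$; once $m_{esc}$ is seen to be looping it simply repeats a fixed pattern of length $<\tau$, and the induction continues with $K$ empty and $H_1$ that pattern (so $m_\tau$ likewise settles into a permanent loop, now with $\tau$-scaled periods).

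The delicate point, and the part deserving the most care in the write-up, is this bookkeeping around the dilation. One must check that $m_\tau$ never meets a multiple of $\tau$ \emph{before} the current looping pattern has been iterated exactly $\tau$ times — combining the additive and multiplicative closure of $\tau$ with $|K|,|H_1|<\tau$ and with the fact that $\beta_\nu$ is itself a multiple of $\tau$ — and one must track the decompositions so that the literal statement ``the history of $m_{esc}$ up to $\alpha_{\nu+1}$ is $H_0 H_1^{\omega}$ while that of $m_\tau$ up to $\beta_{\nu+1}$ is $H_0 H_1^{\tau}$'' is read coherently across values of $\nu$, the word $H_0$ at stage $\nu+1$ being the $\tau$-dilation of $H_0$ at stage $\nu$ with the segment $K H_1^{\omega}$ (resp.\ $K H_1^{\tau}$) appended. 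Everything else — halting, the limit case, the reduction to the looping behaviour of $\Gamma_{\sup}^3$-machines — is routine given the identities of the first paragraph, and the Claim immediately yields $\Sigma_{esc}=\Sigma_{\tau}$ since the two machines accidentally write the same reals, completing the proof of Theorem~\ref{thm:contre_exemple_machine}.
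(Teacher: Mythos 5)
Your proof is correct and follows essentially the same route as the paper's: a transfinite induction on $\nu$ that matches each block $K H_1^{\om}$ of $m_{esc}$ between consecutive $\Gamma_{210}$-stages with the $\tau$-dilated block of $m_\tau$, uses asymptoticity to propagate equality of snapshots through the shared $\Gamma_{102}$-segments, invokes the $\limsup$/looping-stability identities to get matching snapshots at the next $\Gamma_{210}$-stage, and handles limit $\nu$ via cofinal symbols; your explicit bound $|K|,|H_1| < \delta\cdot\om < \delta^+ < \tau$ is just a sharper version of what the paper phrases loosely as $\tau > \Tau_{esc}$. The one small discrepancy is your appeal to multiplicative closure of $\tau$ (Proposition \ref{prop:big_charac_ordinal} only provides additive closure), but nothing is lost: additive indecomposability of $\tau$ already guarantees that the block of length $\tau$ is exhausted by a limit number of copies of $H_1$, which is all that the $\Gamma_{210}$-evaluation at the tick requires.
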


\begin{proof}
We show this by induction. Consider stages $\alpha_0$ and $\beta_0$. By definition, before those stages, respectively in machines $m_{esc}$ and $m_{\tau}$, the limit operators behaved simply as $\Gamma_{102}$. Hence, both those machines started repeating at the latest from stage $\Sigma_{\sup}$ onward. That is, the history of $m_{esc}$ below $\alpha_0$ reads $H_0 H_1^\om$ where $H_1$ is some history of the machine below $\Sigma_{\sup}$ and at most the history between stages $\zeta_{\sup}$ and $\Sigma_{\sup}$ while the history of $m_\tau$ below $\beta_0$ reads (by closedness of $\tau$ and as $\tau > \Sigma_{\sup}$) $H_0 H_1^\tau$. Hence, as stages $\alpha_0$ and $\beta_0$ are both ruled by the operator $\Gamma_{210}$ and as this operator is asymptotic and satisfies the looping stability, both stages share the same snapshot.

Then, we consider successor stages $\alpha_{\nu+1}$ and $\beta_{\nu+1}$ for some successor ordinal $\nu+1$. By induction hypothesis, $\alpha_\nu$ and $\beta_\nu$ share the same snapshot. By asymptoticity, the computation of both machines match while they are both ruled by the operator $\Gamma_{102}$, that is, respectively, until stages $\alpha_{\nu+1}$ and $\beta_{\nu+1}$. And, for the first time after stage $\alpha_\nu$, the machine $m_{esc}$ produces an history of the form $H_0 H_1^\om$ for $H_1$ a looping pattern 
at stage $\alpha_{\nu+1}$. Hence the history below stage $\alpha_{\nu+1}$ can be written $H_0 H_1^\om$. As $\tau > \Tau_{esc}$, $\tau$ is also greater than the length of the history $H_0 \cdot H_1^\om$ and the machine $m_\tau$ is repeating $H_1$ up to stage $\beta_{\nu+1}$, the history before this stage reads $H_0 \cdot H_1^\tau$. Again, by the $\limsup$ definition of operator $\Gamma_{210}$, this implies that $m_{esc}$ and $m_\tau$ share the same snapshot at respectively stages $\alpha_{\nu+1}$ and $\beta_{\nu+1}$.

Eventually, for some limit $\nu$: Consider the sequence $(\alpha_{\nu'})_{\nu' < \nu}$ and the ordinal $\bigcup_{\nu' < \nu} \alpha_{\nu'}$. First, as the escaping operator also acts like the $\Gamma_{210}$ operator at stages that are limit of repeating histories (i.e. limit of histories of the form $H_0 \cdot H_1^\alpha$), we have:
\begin{align*}
\alpha_{\nu} = \bigcup_{\nu' < \nu} \alpha_{\nu'}
\end{align*}
On the other hand, this naturally holds for $\beta_{\nu}$, as a limit of multiple of $\tau$:
\begin{align*}
\beta_{\nu} = \bigcup_{\nu' < \nu} \beta_{\nu'}
\end{align*}
Moreover, by induction hypothesis, we can control the history before stages $\alpha_\nu$ and $\beta_{\nu}$. Namely, the history before stage $\beta_{\nu}$ is the same as that before $\alpha_{\nu}$ with the only difference that between some stages $\beta_{\nu'}$ and $\beta_{\nu'+1}$, it reads $H^{\nu'}_0 \cdot (H^{\nu'}_1)^\tau$ instead of $H^{\nu'}_0 \cdot (H^{\nu'}_1)^\om$ for some specific $H^{\nu'}_0$ and $H^{\nu'}_1$. Hence, by the definition of the $\limsup$ rule, stages $\alpha_{\nu}$ and $\beta_{\nu}$ share the same snapshot and this proves the claim.
\end{proof}
Finally, from the previous claim, it is clear that the machines $m_{esc}$ and $m_\tau$ sharing the same machine code accidentally write the same ordinals. And with it, more generally, that as wanted $\Sigma_{esc} = \Sigma_\tau$; which ends the proof.
\end{proof}

\section{Conclusion}


Simply put, the aim of this paper was to try and answer the following question: \say{Why did we choose the $\limsup$ rule for the definition of the ITTM and not another rule? And what makes the ITTM with this limit rule work so well?} Clearly this question, as is, does not have a satisfying answer. This is surely why it is bugging and, at the same time, a great kick start for an inquiry. So we started by investigating into some compelling results of the ITTM. What came into light, as presented in Section \ref{sec:introducing_the_universal_machine}, is the key role of the universal ITTM in the study of the model of Hamkins and Lewis. Then, in the same section, upon looking more closely at the universal ITTM and at its construction, we saw how said construction is actually grounded on a set of  four implicit properties, which the $\limsup$ rule satisfies. And what makes this observation potent is that, once formalized, those four properties actually are sufficient for a limit rule to yield a model of infinite Turing machine in which a universal machine exists. So working backward from there, we defined the concept of simulational operator, which is a limit operator (that is the (class) function behind a limit rule) which satisfies this set of four properties. Those operators, as said, yield a model of infinite Turing machine for which there exists a universal machine.  

Then, unstacking further, our aim was to establish for this class of operators (i.e.\ for the models of machine they yield) the same high-level results as were established for the classical ITTM. However, the concept of simulational operator alone falls a bit short of this goal.
But when we add another constraint, which we called looping stability, we take a more than significant step in this direction. First, as shown in Proposition \ref{prop:gamma_emulates_limsup}, the operators from this new class (that is, the simulational and looping stable operators) can emulate the computation of a classical ITTM. But further, among other similarities, they can be said to be as \say{well-behaved}, and that in virtue of the main theorem, Theorem \ref{thm:sigma_tau_looping_condition}, which states that for such an operator $\Gamma$ definable in set theory, $\Sigma_\Gamma$ equals $\Tau_\Gamma$. This equality, which holds for the classical ITTM, is one of the most compelling results in the study of ITTMs and it draws fruitful links between it and the fine structure theory. Finally, in Section \ref{sec:counter-example}, we prove that the constraint of looping stability is indeed needed, in the sense that there exists a simulational operator definable in set theory for which the previous equality does not hold.

Another significant result is the $\lambda$-$\zeta$-$\Sigma$ theorem of \cite{welch_main}. As done in Corollary \ref{cor:lambda-zeta-Sigma} It can be established in the setting of simulational and looping stable operators using Theorem \ref{thm:sigma_tau_looping_condition}. However, this theorem deals with operators definable in set theory with formulas of any complexity. That is, while the $\limsup$ operator is defined by a $\Sigma_2$ formula, other operators may be defined by higher-order formulas. And a question is whether, for those operators, we can establish a higher-order equivalent of the $\lambda$-$\zeta$-$\Sigma$ theorem. This was partially answered in the Chapter 6 of the thesis of Johan Girardot. In this chapter, a higher-order operator is designed. It satisfies the constraints which we established here and, for the model of infinite machine it yields, we can generalize the $\lambda$-$\zeta$-$\Sigma$ theorem up to a $\Sigma_3$ elementary end-extension. 

\bibliographystyle{plainurl}
\bibliography{sources}

\end{document}